\renewcommand{\epsilon}{\varepsilon}
\newcommand{\Z}{{\mathbb Z}}
\newcommand{\N}{\mathbb N}
\newcommand{\Q}{\mathbb Q}
\newcommand{\R}{\mathbb R}
\newcommand{\C}{\mathbb C}
\newcommand{\Prob}{\mathbb P}
\newcommand{\E}{\textnormal{\mbox{I\negthinspace E}}}
\newcommand{\Cov}{\mathrm{Cov}}
\newcommand{\eps}{\varepsilon}
\newcommand{\bea}{\begin{eqnarray*}}
\newcommand{\eea}{\end{eqnarray*}}
\newcommand{\be}{\begin{eqnarray}}
\newcommand{\ee}{\end{eqnarray}}
\newcommand{\ba}{\begin{array}}
\newcommand{\ea}{\end{array}}
\newcommand{\al}[1]{\begin{align*}#1\end{align*}}
\newcommand{\als}[1]{\begin{align}#1\end{align}}
\newcommand{\beq}{\begin{equation}}
\newcommand{\eeq}{\end{equation}}
\newcommand{\bi}{\begin{itemize}}
\newcommand{\ei}{\end{itemize}}
\newcommand{\bpm}{\begin{pmatrix}}
\newcommand{\epm}{\end{pmatrix}}
\newcommand{\proc}{(X_t)_{t\in \Z}}
\newcommand{\spec}{\boldsymbol{\mathfrak{f}}}
\newcommand{\specdis}{\boldsymbol{\mathfrak{F}}}
\newcommand{\specdisred}{\boldsymbol{\mathfrak{G}}}
\newcommand{\copper}{\mathcal{I}}
\newcommand{\cid}{\overset{\mathcal{D}}{\longrightarrow}}
\newcommand{\cum}{\text{\rm cum}}
\newcommand{\flt}[1]{\frac{\floor{(n-|k|)\tau_{#1}}}{n-|k|}}
\newcommand{\cet}[1]{\frac{1+\floor{(n-|k|)\tau_{#1}}}{n-|k|}}
\newcommand{\fltp}[1]{\frac{\floor{(n-|k|)\tau^{\prime}_{#1}}}{n-|k|}}
\newcommand{\cetp}[1]{\frac{1+\floor{(n-|k|)\tau^{\prime}_{#1}}}{n-|k|}}
\DeclarePairedDelimiter{\ceil}{\lceil}{\rceil}
\DeclarePairedDelimiter{\floor}{\lfloor}{\rfloor}
\theoremstyle{plain}
\newtheorem{theorem}{Theorem}[section]
\newtheorem{prop}{Proposition}[section]
\newtheorem{lemma}{Lemma}[section]
\theoremstyle{remark}
\newtheorem{example}{Example}[section]
\newtheorem{ass}{Assumption}[section]
\newtheorem{rem}{Remark}[section]
\newcommand{\uppi}{\pi}
\newcommand{\dd}{\mathrm{d}}
\newcommand{\rrvert}{\vert}
\newcommand{\rrVert}{\Vert}
\newcommand{\llvert}{\vert}
\newcommand{\llVert}{\Vert}
\newcommand{\IP}{\mathds{P}}
\newcommand{\IR}{\mathds{R}}
\newcommand{\IZ}{\mathds{Z}}
\begin{document}

\begin{frontmatter}
	\title{
		The integrated copula spectrum}
	\runtitle{The integrated copula spectrum}
	
	\begin{aug} 
\author[A]{\fnms{Yuichi} \snm{Goto}\ead[label=e1]{yuu510@fuji.waseda.jp}},
\author[B]{\fnms{Tobias} \snm{Kley}\ead[label=e2,mark]{tobias.kley@uni-goettingen.de}}
\author[C]{\fnms{Ria} \snm{Van Hecke}\ead[label=e3,mark]{ria.vanhecke@rub.de}}
\author[D]{\fnms{Stanislav} \snm{Volgushev}\ead[label=e4,mark]{stanislav.volgushev@utoroto.ca}}
\author[E]{\fnms{Holger} \snm{Dette}\ead[label=e5,mark]{holger.dette@rub.de}}
\and 
\author[F]{\fnms{Marc}
\snm{Hallin}\ead[label=e6,mark]{hallinmarc@gmail.com}}

\address[A]{Waseda University, Tokyo, Japan,
	\printead{e1}}

\address[B]{Georg-August-Universit\" at  G\" ottingen, Germany,
	\printead{e2}}

\address[C]{Ruhr-Universit\" at Bochum
	\printead{e3}}

\address[D]{University of Toronto, Canada
	\printead{e4}}

\address[E]{Ruhr-Universit\" at Bochum, Germany
	\printead{e5}}

\address[F]{Universit\' e libre de Bruxelles, Belgium
	\printead{e6}}


\end{aug}

\begin{abstract}
Frequency domain methods form a ubiquitous part of the statistical toolbox for time series analysis. In recent years, considerable interest has been given to the development of new spectral methodology and tools capturing dynamics in the entire joint distributions and thus avoiding the limitations of classical, $L^2$-based spectral methods. Most of the spectral concepts proposed in that literature suffer from one major drawback, though: their estimation requires the choice of a smoothing parameter, which has a considerable impact on estimation quality and poses challenges for statistical inference. In this paper, associated with the concept of copula-based spectrum, we introduce the notion of \textit{copula spectral distribution function} or \textit{integrated copula spectrum}. This integrated copula spectrum retains the advantages of copula-based spectra but can be estimated without the need for smoothing parameters. We provide such estimators, along with a thorough theoretical analysis, based on a functional central limit theorem, of their asymptotic properties. We leverage these results to test various hypotheses that cannot be addressed by classical spectral methods, such as the lack of time-reversibility or asymmetry in tail dynamics. 
\end{abstract}

\begin{keyword}[class=MSC]
	\kwd[Primary ]{62M15}
	\kwd{62G30}
	\kwd[; secondary ]{62G10}
\end{keyword}

\begin{keyword}
	\kwd{Copula}
	\kwd{Ranks}
	\kwd{Time series}
	\kwd{Frequency domain}
	\kwd{Time reversibility}
\end{keyword}

\end{frontmatter}

\section{Introduction}

Spectral methods always have been central in the analysis of time series and  remain  (see \cite{vSachs20} for a recent review) a very active domain of methodological and applied statistical research. Their applications are without number, ranging from econometrics and finance (with classical monographs such as \cite{Granger15}) to geophysics \citep{geoph11},   fluid mechanics \citep{fluid19}, environmetrics, and climate change \citep{climate02}. 

Powerful as they are, classical spectral methods, however, suffer from the significant limitations inherited from their  $L^2$ nature: being covariance-based,  
 they fail to capture important distributional features such as dependence without correlation (as typically observed in financial returns), time-irreversibility,   asymmetric dependence between high and low quantile values, or higher-order dynamics. This has motivated, in the past decades, a rich strand of literature   replacing covariances with  alternative measures of dependence   related to joint distributions, copulas, and characteristic functions. Pioneering contributions in this direction were made by \cite{Hong1999}, who proposes a generalized characteristic function-based concept of  spectral density. In the specific problem of testing pairwise independence (rather than pairwise non-correlation), \cite{hong2000} introduces
a test statistic based on spectra derived from joint distribution functions and copulas at different lags. More recent contributions introduce the notions of \textit{Lap\-lace, quantile-based}, and \textit{copula spectral densities} and  \textit{spectral density kernels}, involving  various quantile-related spectral concepts, along with the
corresponding sample-based (smoothed) periodograms.
That strand of literature includes \cite{Li2008,Li2012,Li2013},
\cite{Hagemann2013}, \cite{DetteEtAl2013, DetteEtAl2016} and \cite{LeeRao2012}. Extensions to 
locally stationary  and multivariate time series are considered in \cite{birr2014} and \cite{BarunikKley2019}, respectively. An analysis of related concepts under long-range dependence can be found in \cite{lim2021quantile}. The utility of quantile and copula spectra for model building and model assessment is demonstrated in \cite{BirrEtAl2019} and \cite{li2021}, while an application of quantile- and copula-based spectral techniques to the analysis of cryptocurrency returns can be found in \cite{su2021quantile}. An extension of these concepts to the analysis of extreme events, which is related in spirit but different in many other respects, was considered by \cite{DavisMikoschZhao2013}. Finally, in the time domain, \cite{linwha2007}, \cite{DavisMikosch2009}, and \cite{hlow2014} introduced the related concepts of \textit{quantilograms} and \textit{extremograms}.

Unfortunately, despite many attractive properties,   spectral densities---whether traditional~$L^2$ or generalized---in practice suffer from several drawbacks; among them,  the need to choose a smoothing parameter to ensure consistent estimation and a lack of process convergence of the resulting estimators when indexed by frequencies. The latter makes it challenging to use them for inferential purposes such as testing for specific time series features.    

In the classical $L^2$ world, this drawback has motivated the  recourse to \textit{$L^2$ spectral distribution functions $\specdis$} resulting from the integration of the spectral density over frequencies. In contrast to spectral densities, such integrated spectra can be estimated without the need for smoothing. Estimation of $\specdis$ along with process convergence of the resulting estimators under increasingly general conditions was discussed in \cite{grenander57}, \cite{ibragimov63}, \cite{brillinger69}, \cite{dahlhaus1985AsymptoticNormality}, and \cite{anderson93} among others. Applications of this process convergence to various testing problems 
 are provided in \cite{priestley87}, Section 6.2.6 and \cite{anderson93}. An extension to related processes indexed by more general classes of functions is considered in \cite{dahlhaus88, mikosch1997}. Integrated versions of certain normalized periodograms were also studied in \cite{kluppelberg1996} under various tail assumptions (including the infinite-variance case) on the underlying time series and extended to long-memory processes in \cite{kokoszka1997integrated}. 

The aim of the present paper is to combine the attractive features of copula--based spectra with the theoretical merits of spectral distributions. To this end, we define the \textit{copula spectral distribution function}, which arises from integrating copula spectral densities over frequencies. We provide estimators which are based on partial sums of copula periodograms and do not require the choice of smoothing parameters. 

The remaining paper is organized as follows. Copula spectral distribution functions are formally defined in Section~\ref{sec:defest} where their estimation is also discussed. Weak convergence (as  stochastic processes) of the estimators from Section~\ref{sec:defest} is established in Section~\ref{sec:th}. Section~\ref{sec:inf} shows how this process convergence can be combined with sub--sampling to construct uniform confidence bands for integrated copula spectra and test various hypotheses about the underlying time series. Section~\ref{sec:sim} demonstrates the finite-sample properties of the methodology from Section~\ref{sec:inf} in an extensive simulation study. All proofs and additional simulation results are deferred to a series of Appendices.

\section{Integrated copula spectra -- definition and estimation}\label{sec:defest}

In what follows, let $\proc$ denote a strictly stationary real-valued time series. Denote by $F$ the marginal distribution function \label{sym:margdistX} of $X_0$ and by $\tau\mapsto q_{\tau}=F^{-1}(\tau) := \inf\{x \in \R: \tau \leq F(x)\},\,\tau\in(0,1)$ the corresponding quantile function. As argued in \cite{DetteEtAl2013, DetteEtAl2016}, a natural way to capture the nonlinear dynamics of   $\proc$ is the analysis of its \textit{copula spectral density} 
\als{\label{copulaspecdensintroduction}
\mathfrak{f}(\omega; \tau_1,\tau_2):=\frac{1}{2\pi}\sum_{k \in \Z}\gamma_{k}^{U}(\tau_1,\tau_2)e^{-i\omega k},\qquad \omega\in\R,\,(\tau_1,\tau_2)\in (0,1)^2
}
where  $U_t := F(X_t)$, 
\[
\gamma_{k}^{U}(\tau_1,\tau_2) := {\text{\rm Cov}}(I\{U_k \leq \tau_1\}, I\{U_0 \leq \tau_2\}) = C_k(\tau_1,\tau_2) - \tau_1\tau_2,
\]
and  $C_k$ denotes the copula of the random vector $(X_k,X_0)$; here $I\{A\}$ denotes the indicator function of $A$. To ensure the existence of $\mathfrak{f}$, it suffices to assume that the $\gamma_{k}^{U}(\tau_1,\tau_2)$ are absolutely summable over $k \in \Z$ for each pair $(\tau_1,\tau_2)$, which we throughout  implicitly assume. As shown in \cite{DetteEtAl2013, DetteEtAl2016, BirrEtAl2019}, copula spectral densities enjoy many attractive properties; see also \cite{Li2013, li2021} for similar findings in the setting of Laplace spectra. They exist without any moment assumptions, are invariant under strictly increasing marginal transformations (hence  are   scale--free), and provide a complete characterization of the pairwise copulas---hence the pairwise dependencies---of the  series at arbitrary lags. The last point is in stark contrast to classical spectral densities which are unable to capture many important properties of time series such as lack of time-reversibility, conditional heteroscedasticity, or asymmetry between upper- and lower-tail  dynamics.   

Yet, despite their flexibility, copula spectral densities are sharing with the traditional ones an important practical drawback: the choice of a smoothing parameter is required to obtain consistent estimators. Selecting this smoothing parameter is difficult in practice and poses substantial challenges for inference. Indeed, larger bandwidths lead to smaller variance but larger (asymptotic) bias and the exact amount of bias depends on unknown smoothness properties of the underlying copula spectral density. The need for local smoothing also leads to difficulties in obtaining results that hold uniformly in frequencies (more formally, no process convergence is possible). This poses a major roadblock for subsequent inference procedures. We note that those drawbacks are not limited to copula spectral densities but also appear  in the estimation of classical, $L^2$--based spectral densities.

Motivated by the above discussion, we propose to consider \textit{copula spectral distribution functions} which are defined as  
\als{\label{specdisestimatorintro}
\specdis(\lambda ;\tau_1,\tau_2):=\int_0^{\lambda}\spec(\omega; \tau_1,\tau_2)\dd\omega , \qquad\lambda\in [0,\pi].
}
Copula spectral distributions inherit the virtues of copula spectral densities and are conveying the same information; at the same time, their estimation (as discussed below) does not involve the choice of smoothing parameters, and process convergence can be established in   quantile levels and frequencies simultaneously (see Theorem~\ref{weakconvintegspectrum} below). 

Before proceeding to estimation, let us  provide two examples of hypotheses about time series dynamics that can be conveniently formulated and tested through the use of spectral distribution functions. 

\begin{example} \label{ex:tr} Testing for time-reversibility. \rm A strictly stationary process $(X_t)_{t \in \Z}$ is called pairwise time-reversible at lag $k$ iff $(X_0,X_{k}) \stackrel{d}{=} (X_0,X_{-k})$. A process is pairwise time-reversible if it is time-reversible for all lags $k \geq 1$. Determining if data can be modeled as  a time-reversible process has important consequences for subsequent modeling:  testing for time-reversibility therefore has attracted substantial interest in the literature---see \cite{br67} for an early contribution, and chapter 8 in \cite{goojier2017} for an overview. Copula spectral distribution functions provide a natural way of assessing time-reversibility since a process is pairwise time-reversible if and only if the imaginary part of the corresponding spectral distribution function is uniformly zero:
	\[
	\Im \specdis(\lambda ;\tau_1,\tau_2) = 0\qquad \text{for all $\lambda$, $\tau_1$, and $\tau_2$.}
	\]
	We will leverage this property of spectral distribution functions in Section~\ref{sec:timerev} to construct a test that has power against the lack of  (pairwise)  time-reversibility at specified or unspecified  lag.
\end{example}

\begin{example}\label{ex:sym}
	Assessing symmetry of tail dynamics. \rm It is well known that financial time series   exhibit asymmetric dependence structures in left- and right-hand tails, respectively---see  \cite{jr2003}, \cite{li2021},  among many others. Copula spectral distributions provide a natural model-free way to access this kind of asymmetry in tail dynamics. From a distributional perspective, asymmetry in tail dynamics corresponds to asymmetry in the lag-$k$ copula~$C_k$ of~$(X_0,X_k)$ for some lag $k$: if $$C_k(\tau_1,\tau_2) -\tau_1\tau_2 \neq C_k(1-\tau_1,1-\tau_2) -(1-\tau_1)(1-\tau_2)$$ for small values of~$\tau_1,\tau_2$, then the tail behavior of $(X_t,X_{t+k})$ is asymmetric. Copula spectral distributions provide a natural way of assessing this type of asymmetry since $$\specdis(\lambda ;\tau_1,\tau_2) = \specdis(\lambda ;1-\tau_1,1-\tau_2)$$ for all $\lambda$ is equivalent to $$C_k(\tau_1,\tau_2) -\tau_1\tau_2 = C_k(1-\tau_1,1-\tau_2) -(1-\tau_1)(1-\tau_2)$$ for all $k$. A more formal discussion of the corresponding null hypothesis and testing procedure is provided in Section~\ref{sec:symmetry}     
\end{example}  

We next discuss estimation. Recall  the definition \citep{DetteEtAl2016}  of the  \textit{copula rank periodogram} (in short, the CR periodogram): 
\als{\label{copperintro}
	\copper_{n,R}^{\tau_1,\tau_2}(\omega):=\frac{1}{2\pi n}d_{n,R}^{\tau_1}(\omega)d_{n,R}^{\tau_2}(-\omega),\qquad \omega\in\R,(\tau_1,\tau_2)\in[0,1]^2
}
with
\als{\label{dnRtaudef}
	d_{n,R}^{\tau}(\omega):=\sum_{t=0}^{n-1}I\{\hat{F}_n(X_t)\leq\tau\}e^{-i\omega t}\text{ and }
	\hat{F}_n(x):=\frac{1}{n}\sum_{t=0}^{n-1}I\{X_t\leq x\}.
}
As shown in \cite{DetteEtAl2016}, the vector $(\copper_{n,R}^{\tau_1,\tau_2}(\omega_1),\dots,\copper_{n,R}^{\tau_1,\tau_2}(\omega_K))$ is approximately multivariate complex normal with expected values $\mathfrak{f}(\omega_1; \tau_1,\tau_2),\dots,\mathfrak{f}(\omega_K; \tau_1,\tau_2)$ and independent entries; see Proposition 3.4 in there for a formal statement. This motivates, for the copula spectral distribution function,  the  estimator 
\als{\label{CRcopulaperiodogram}
	\widehat{\specdis}\phantom{F\!\!\!\!\!}_{n,R}(\lambda ;\tau_1,\tau_2):={}&\frac{2\pi}{n}\sum_{s=1}^{n-1}I\big\{0\leq \frac{2\pi s}{n}\leq\lambda\big\}\copper_{n,R}^{\tau_1,\tau_2}\big(\frac{2\pi s}{n}\big),\qquad\lambda\in [0,\pi].
}
Observe that, in contrast to the copula spectral density estimators considered in~\cite{DetteEtAl2016}, no smoothing parameter is required. In addition, as we shall show in Section~\ref{sec:th}, this estimator converges as a process in all three arguments when properly centered and scaled. This makes it a very attractive choice for testing various hypotheses about distributional dynamics of the underlying time series.

\section{Asymptotic theory}\label{sec:th}


This section is devoted to proving process convergence of the estimator $\widehat{\specdis}\phantom{F\!\!\!\!\!}_{n,R}$ after proper centering and scaling. We begin by stating the main technical conditions which are needed to establish this result. 

\begin{ass}\label{assumptionsspectraldist}
\rm{
\bi
\item[(S)] The real-valued process $\proc$ is strictly stationary;  the marginal distribution  $F$ of $X_0$ is  continuous. 
\item[(C)] There exist constants $\rho\in(0,1)$ and $K<\infty$ such that, for arbitrary intervals $A_1,\dots,A_p$ of $\R$ and arbitrary $t_1,\dots,t_p\in\Z$, 
\begin{equation}\label{assC}
\Big\vert\cum\left(I\{X_{t_1}\in A_1\},\dots,I\{X_{t_p}\in A_p\}\right)\Big\vert\leq K\rho^{\max_{i,j}|t_i-t_j|}.
\end{equation}
\item[(D)] The partial derivatives of the function 
\begin{equation}\label{specdiswithoutzero}
 (\tau_1,\tau_2)\mapsto 
\specdisred(\lambda ;\tau_1,\tau_2):=\frac{1}{2\pi}\sum_{k\in\Z \backslash \{0\}}\gamma_k^U(\tau_1,\tau_2)\frac{i}{k}\Big(e^{-ik\lambda}-1\Big)
\end{equation}
  exist and are continuous for $(\lambda ;\tau_1,\tau_2)\in[0,\pi]\times(0,1)\times (0,1)$. 
\ei
}
\end{ass}

\begin{rem}[Discussion of assumptions] \rm{
Assumption (C) places restrictions on the strength of time dependence in $(X_t)_{t\in \Z}$. This assumption also appears in the asymptotic analysis by \cite{DetteEtAl2016}  of copula spectral densities. In particular, \cite{DetteEtAl2016} show that \eqref{assC}  is implied by several standard assumptions such as exponential $\alpha$- and $\beta$-mixing. The same reference also shows that processes satisfying some geometric moment contraction properties defined in \cite{wu04} fulfill Assumption (C).  
 
Condition (D) is needed to quantify the effect of estimating the marginal cdf $F$ by its empirical version $\hat F_n$. The derivatives of $\specdisred$ also appear in the covariance kernel of the limiting process.}
\end{rem}

In the following Lemma we show that Assumption (D) is satisfied for  strictly stationary centered Gaussian processes $\proc$ with absolutely summable pairwise copula cumulants. The details of the proof are deferred to Section~\ref{sec::proofsintspec}.

\begin{lemma}\label{exampleGaussianprocess}
Let $\proc$ be a stationary centered Gaussian process with auto--covariances $\rho_k$ where $\rho_k \in (-1,1)$ for $k \neq 0$ and $\sum_{k \geq 1}{|\rho_k|}/{k}<\infty$. Then the partial derivatives of the function $(\tau_1,\tau_2)\mapsto \specdisred(\lambda ;\tau_1,\tau_2)$   exist and are continuous on the set~$\{(\lambda ;\tau_1,\tau_2)\in[0,\pi]\times[\eta,1-\eta]\times [\eta,1-\eta]\}$.
\end{lemma}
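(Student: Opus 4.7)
The plan is to verify existence and continuity of the partial derivatives by establishing uniform convergence of the formally differentiated series defining $\specdisred$. Because the copula of $(X_0,X_k)$ for a centered stationary Gaussian process with unit variance (which we may assume without loss of generality, since both sides of the claim are invariant under strictly increasing marginal transformations) is the Gaussian copula with correlation $\rho_k$, one has
\[
\gamma_k^U(\tau_1,\tau_2) \;=\; \Phi_{\rho_k}(q_{\tau_1},q_{\tau_2}) - \tau_1\tau_2,
\]
where $\Phi_\rho$ denotes the bivariate standard normal CDF with correlation $\rho$ and $q_\tau := \Phi^{-1}(\tau)$. Using $\partial_x \Phi_\rho(x,y)=\phi(x)\,\Phi((y-\rho x)/\sqrt{1-\rho^2})$ together with the chain rule (note $q_\tau' = 1/\phi(q_\tau)$) yields the closed form
\[
\partial_{\tau_1} \gamma_k^U(\tau_1,\tau_2) \;=\; \Phi\!\left(\frac{q_{\tau_2}-\rho_k q_{\tau_1}}{\sqrt{1-\rho_k^2}}\right) - \Phi(q_{\tau_2}) \;=:\; h_k(\tau_1,\tau_2),
\]
which is jointly continuous on $[\eta,1-\eta]^2$ for each $k$ (since $|\rho_k|<1$) and vanishes when $\rho_k=0$.

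The technical heart of the argument is the uniform bound $|h_k(\tau_1,\tau_2)|\leq C_\eta |\rho_k|$ on $[\eta,1-\eta]^2$. Viewing $h_k$ as $h(\rho_k)$ with $h(0)=0$ and differentiating in $\rho$ gives
\[
h'(\rho) \;=\; \phi\!\left(\frac{q_{\tau_2}-\rho q_{\tau_1}}{\sqrt{1-\rho^2}}\right) \cdot \frac{\rho q_{\tau_2}-q_{\tau_1}}{(1-\rho^2)^{3/2}},
\]
which blows up as $|\rho|\to 1$, so a naive global Taylor estimate at $\rho=0$ does not suffice. I would split according to the size of $|\rho_k|$: for $|\rho|\leq 1/2$ the factor $(1-\rho^2)^{-3/2}$ is bounded by $(3/4)^{-3/2}$ and, since $|q_{\tau_i}|\leq M_\eta:=|\Phi^{-1}(1-\eta)|$ on $[\eta,1-\eta]$, one obtains $|h'(\rho)|\leq C_\eta$, so the mean-value theorem gives $|h(\rho)|\leq C_\eta|\rho|$; for $|\rho|>1/2$ the trivial bound $|h(\rho)|\leq 1\leq 2|\rho|$ suffices. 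Combining the two regimes yields the claimed estimate with a constant depending only on $\eta$.

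Given this bound and $|\tfrac{i}{k}(e^{-ik\lambda}-1)|\leq 2/|k|$, the series $\sum_{k\in\Z\setminus\{0\}} h_k(\tau_1,\tau_2)\,\tfrac{i}{k}(e^{-ik\lambda}-1)$ is dominated on $[0,\pi]\times [\eta,1-\eta]^2$ by $4C_\eta\sum_{k\geq 1}|\rho_k|/k<\infty$ and thus converges uniformly; since each summand is jointly continuous in $(\lambda,\tau_1,\tau_2)$, the limit is continuous. An analogous estimate $|\gamma_k^U(\tau_1,\tau_2)|\leq C|\rho_k|$, derived from $\gamma_k^U(\tau_1,\tau_2)=\int_0^{\rho_k}\phi_s(q_{\tau_1},q_{\tau_2})\,\dd s$ together with $\phi_s\leq (2\pi\sqrt{1-s^2})^{-1}$, gives uniform convergence of the series defining $\specdisred$ itself. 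The standard theorem on differentiation of uniformly convergent series then yields
\[
\partial_{\tau_1}\specdisred(\lambda;\tau_1,\tau_2) \;=\; \frac{1}{2\pi}\sum_{k\in\Z\setminus\{0\}} h_k(\tau_1,\tau_2)\,\frac{i}{k}(e^{-ik\lambda}-1),
\]
continuous on $[0,\pi]\times[\eta,1-\eta]^2$. The case of $\partial_{\tau_2}\specdisred$ is identical by the symmetry $\Phi_\rho(x,y)=\Phi_\rho(y,x)$. The main obstacle is the uniform bound on $h_k$: the case split on $|\rho_k|$ is what makes the argument robust to the possibility that $\sup_k|\rho_k|$ equals $1$, even though each individual $|\rho_k|$ is strictly less than $1$.
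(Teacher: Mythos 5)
Your proposal is correct and follows essentially the same route as the paper: both derive the closed form $\partial_{\tau_1}\gamma_k^U(\tau_1,\tau_2)=\Phi\bigl((q_{\tau_2}-\rho_k q_{\tau_1})/\sqrt{1-\rho_k^2}\bigr)-\Phi(q_{\tau_2})$ via the Gaussian copula, bound it by $C_\eta|\rho_k|$ through a mean-value argument in $\rho$, and invoke term-by-term differentiation of the uniformly convergent series using $\sum_{k\ge1}|\rho_k|/k<\infty$. Your explicit case split between $|\rho_k|\le 1/2$ (mean-value bound) and $|\rho_k|>1/2$ (trivial bound $|h|\le 1\le 2|\rho|$) is a worthwhile refinement: the paper applies the mean-value bound only for $\rho$ in a compact $[-1+\epsilon,1-\epsilon]$, and your split makes the argument cleanly robust to the case $\sup_k|\rho_k|=1$, which the hypothesis $\rho_k\in(-1,1)$ does not exclude.
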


In order to state our main result we need some additional notation. 
Define the \textit{copula spectral density of order $K$} as
\al{
	\spec(\omega_1,\dots,\omega_{K-1}; \tau_1,\dots,\tau_K):=(2\pi)^{-K+1}\!\!\!\sum_{k_1,\dots,k_{K-1}=-\infty}^{\infty}\!\!\gamma_{k_1,\dots,k_{K-1}}^{U}(\tau_1,\dots,\tau_K)e^{-i\sum_{j=1}^{K-1}k_j\omega_j}
}
with the copula cumulant function of order $K$ 
\al{
	\gamma_{k_1,\dots,k_{K-1}}^{U}(\tau_1,\dots,\tau_K):=\cum(I\{U_{k_1}\leq \tau_1\},\dots,I\{U_{k_{K-1}}\leq \tau_{K-1}\},I\{U_{0}\leq\tau_k\})
}
for $k_1,\dots,k_{K-1}\in\Z$. We are now ready to state our main result---process convergence of the properly centered and scaled estimator $\widehat{\specdis}\phantom{F\!\!\!\!\!}_{n,R}$. Applications of this result to inference will be discussed in the following sections.

\begin{theorem}\label{weakconvintegspectrum}
Let Assumptions \ref{assumptionsspectraldist} hold. Then, for any $0<\eta<\frac{1}{2}$, the process\label{sym:specdistprocrank}
\als{\label{mainprocess}
\mathbb{G}_{n,R}(\lambda ;\tau_1,\tau_2):=\sqrt{n}\Big(\widehat{\specdis}\phantom{F\!\!\!\!\!}_{n,R}(\lambda ;\tau_1,\tau_2)-\specdis(\lambda ;\tau_1,\tau_2)\Big)_{(\lambda ;\tau_1,\tau_2)\in [0,\pi]\times[\eta,1-\eta]^2}
}
converges weakly  to the centered Gaussian process $(\mathbb{G}(\lambda ;\tau_1,\tau_2))_{(\lambda ;\tau_1,\tau_2)\in \big([0,\pi]\times[\eta,1-\eta]\times[\eta,1-\eta]\big)}$  with covariance structure 
\als{\label{covasympintspec}
&\Cov\Big(\mathbb{G}(\lambda_1 ;\tau_1,\tau_2),\mathbb{G}(\lambda_2;\kappa_1,\kappa_2)\Big) \notag
\\
=~&2\pi\int_{0}^{\lambda_1\wedge\lambda_2}\spec(\alpha; \tau_1,\kappa_1)\spec(-\alpha; \tau_2,\kappa_2)\dd\alpha\notag
+2\pi\int_0^{\lambda_1}\int_0^{\lambda_2}\spec(\alpha,-\alpha,-\beta; \tau_1, \tau_2, \kappa_1, \kappa_2)         \dd\alpha \dd\beta\notag\\
&+\sum_{j=1}^2\frac{\partial \specdisred}{\partial \kappa_j}(\lambda_2;\kappa_1,\kappa_2)2\pi\int_0^{\lambda_1}\spec(\alpha,-\alpha ;  \tau_1,\tau_2,\kappa_j)\dd\alpha\notag
\\
&+\frac{\partial \specdisred}{\partial \tau_j}(\lambda_1 ;\tau_1,\tau_2)2\pi\int_0^{\lambda_2}\spec(\alpha,-\alpha ;  \kappa_1,\kappa_2,\tau_j)\dd\alpha\notag\\
&+\sum_{j=1}^2\sum_{k=1}^2\frac{\partial \specdisred}{\partial \tau_j}(\lambda_1 ;\tau_1,\tau_2)\frac{\partial \specdisred}{\partial \tau_k}(\lambda_1 ;\tau_1,\tau_2)2\pi\spec(0; \tau_j, \tau_k),
}
that is, $(\mathbb{G}_{n,R}(\cdot,\cdot,\cdot))\rightsquigarrow (\mathbb{G}(\cdot,\cdot,\cdot))
$ 
where $\rightsquigarrow$ denotes  weak convergence, as $n\to\infty$,  with respect to the uniform metric in the space $\ell^{\infty}_{\C}\big([0,\pi]\times[\eta,1-\eta]\times[\eta,1-\eta]\big)$. 
Moreover, the paths of the process $\mathbb{G}_{n,R}$ are asymptotically uniformly equicontinuous with respect to any norm on $\R^3$.
\end{theorem}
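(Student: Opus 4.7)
The plan is to establish the stated weak convergence in $\ell^\infty_{\C}([0,\pi]\times[\eta,1-\eta]^2)$ along the classical two-step route: convergence of finite-dimensional distributions plus asymptotic tightness. Tightness will automatically deliver the asymptotic uniform equicontinuity claimed in the ``moreover'' part. A useful first reduction is to split the rank-based process into its \emph{oracle} counterpart $\mathbb{G}_n$, obtained by replacing $\hat F_n$ in \eqref{dnRtaudef} by the true marginal $F$, plus a rank-correction term $R_n$:
\begin{align*}
\mathbb{G}_{n,R}(\lambda;\tau_1,\tau_2)=\mathbb{G}_n(\lambda;\tau_1,\tau_2)+R_n(\lambda;\tau_1,\tau_2).
\end{align*}
The first two summands in \eqref{covasympintspec} will arise from $\mathbb{G}_n$ alone, while the three summands involving partial derivatives of $\specdisred$ will come from the interplay between $\mathbb{G}_n$ and $R_n$.

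For the oracle process I would establish fidi convergence by the method of cumulants. The integrated oracle periodogram is a Riemann sum over Fourier frequencies of the ordinary (non-rank) copula periodogram, so its joint cumulants of order $K\geq 2$ decompose, in the standard way, into sums over indecomposable partitions whose summands are built from copula cumulants $\gamma^U_{k_1,\dots,k_{K-1}}$ of indicators of the $U_t$. Assumption (C) delivers absolute summability of these cumulants at every order, so the arguments of \cite{brillinger69}, as adapted for copula periodograms in \cite{DetteEtAl2016}, apply: the second-order cumulant converges to the sum of the first two terms of \eqref{covasympintspec} (the product-of-spectra contribution together with the trispectrum contribution), and all cumulants of order $\geq 3$ vanish after the $\sqrt n$-scaling, yielding asymptotic complex normality of the fidis.

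Tightness of $\mathbb{G}_n$ will be proved by a chaining argument on the compact index set $[0,\pi]\times[\eta,1-\eta]^2$. The decisive ingredient is a moment bound of the form
\begin{align*}
\E\bigl|\mathbb{G}_n(\lambda_1;\tau_1,\tau_2)-\mathbb{G}_n(\lambda_2;\kappa_1,\kappa_2)\bigr|^{2p}\leq C\,\bigl(|\lambda_1-\lambda_2|+|\tau_1-\kappa_1|+|\tau_2-\kappa_2|\bigr)^{\delta}
\end{align*}
for $p$ sufficiently large and some $\delta>3$, derived again from (C)-based cumulant bounds. Frequency increments reduce to partial sums of CR-periodograms over a window of length proportional to $|\lambda_1-\lambda_2|$, while quantile increments exploit that differences $I\{U_t\leq\tau\}-I\{U_t\leq\kappa\}$ contribute variance of order $|\tau-\kappa|$ with higher-order cumulants scaling accordingly. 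Combined with finite metric entropy of the index set (half-line indicators form a VC class), this yields asymptotic tightness and the ``moreover'' equicontinuity statement.

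The main obstacle is the rank correction $R_n$. Writing $I\{\hat F_n(X_t)\leq\tau\}=I\{U_t\leq F(\hat F_n^{-1}(\tau))\}$, the rank DFT equals the oracle DFT evaluated at the random level $\hat\tau:=F(\hat F_n^{-1}(\tau))$ (up to negligible tie-edge terms). Assumption (D) guarantees continuity of the partial derivatives of $(\tau_1,\tau_2)\mapsto\specdisred(\lambda;\tau_1,\tau_2)$ on $[\eta,1-\eta]^2$ uniformly in $\lambda$, and the empirical quantile process satisfies $\sup_\tau|\hat\tau-\tau|=O_P(n^{-1/2})$ under (C); a uniform first-order Taylor expansion then gives
\begin{align*}
R_n(\lambda;\tau_1,\tau_2)=\sum_{j=1}^{2}\frac{\partial\specdisred}{\partial\tau_j}(\lambda;\tau_1,\tau_2)\,\sqrt n\bigl(\hat F_n(q_{\tau_j})-\tau_j\bigr)+o_P(1),
\end{align*}
uniformly on $[0,\pi]\times[\eta,1-\eta]^2$. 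Substituting the linear representation $\sqrt n(\hat F_n(q_\tau)-\tau)=n^{-1/2}\sum_t(I\{U_t\leq\tau\}-\tau)$ and computing the joint limiting covariance with $\mathbb{G}_n$ by the same cumulant calculus produces exactly the three derivative terms of \eqref{covasympintspec}. The delicate points will be uniform control of the Taylor remainder and joint process convergence of $\bigl(\mathbb{G}_n,\sqrt n(\hat F_n\circ F^{-1}-\mathrm{Id})\bigr)$, both of which reduce to joint cumulant bounds permitted by (C). Once these are in place, combining with the oracle limit yields the claimed weak limit $\mathbb{G}$.
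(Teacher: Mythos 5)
Your overall route coincides with the paper's: reduce $\mathbb{G}_{n,R}$ to the oracle process $\mathbb{G}_{n,U}$ plus a rank correction, identify the rank correction with the oracle estimator evaluated at the random levels $\hat F_{n,U}^{-1}(\tau_j)$, linearize via Assumption~(D) and a Vervaat/Bahadur-type representation of the empirical quantile process to obtain $\sqrt{n}\sum_j(\tau_j-\hat F_{n,U}(\tau_j))\,\partial\specdisred/\partial\tau_j$, prove fidi convergence of the joint vector by the cumulant method (second-order cumulants giving \eqref{covasympintspec}, higher-order cumulants of order $l$ being $O(n^{-l/2+1})$), and obtain tightness by chaining with cumulant-based moment bounds. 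The identification of which covariance terms come from the oracle part and which from the marginal-normalization part is also correct (modulo the sign of the linearization, and noting that the last term of \eqref{covasympintspec} is the pure covariance of the correction with itself rather than a cross term).

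There is, however, one genuine gap in your tightness step. The increment bound you posit, $\E\bigl|\mathbb{G}_n(a)-\mathbb{G}_n(b)\bigr|^{2p}\leq C\,\|a-b\|^{\delta}$ with $\delta>3$ uniformly over all pairs $a,b$, is not attainable for this process. The relevant $2L$-th moment bound (the paper's Lemma~\ref{lemSixthMomIncrementHn}) has the form $\sum_{l=0}^{L-1} g^{L-l}(\|a-b\|_1)\,(nb_\beta)^{-l}$ with $g(\eps)=\eps(1+|\log\eps|)^{d}$; the term $l=L-1$ decays only like the \emph{first} power of the index separation, so for separations smaller than a negative power of $n$ no Kolmogorov--Chentsov exponent exceeding the dimension of the index set is available, no matter how large you take $p$. (This is the usual obstruction for indicator-based empirical processes: the $2p$-th moment of an increment is not comparable to the $p$-th power of its variance in the pre-asymptotic regime.) The paper circumvents this by a two-regime chaining argument: the Orlicz-norm increment bound \eqref{assumpA.1Tobiequation} is established only for pairs with $d(a,b)\geq\bar\eta_n/2$ (i.e.\ $\|a-b\|_1\gtrsim n^{-1/\gamma}$), and the supremum over closer pairs is controlled separately (Lemma~\ref{LemmaA.7neu}) by exploiting the finite number of Fourier frequencies in a window of length $|\lambda-\lambda'|$ and a discretization of the empirical copula onto the grid $\{i/(n-|k|)\}$. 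Without this truncated chaining plus residual-block argument, your tightness proof does not close; with it, the rest of your plan goes through as in the paper.
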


Let us briefly compare this result with related results in the literature. Similarly to estimators of $L^2$ spectral distribution functions, we obtain process convergence in $\lambda$ with a $n^{-1/2}$ convergence rate. However, in contrast to the results in that literature, we have two additional parameters~$(\tau_1,\tau_2)$ and we also obtain process convergence in these, which calls for completely different  proofs. 

Spectral distribution functions without marginal normalization are considered in \cite{hong2000}. The latter author establishes process convergence in $\lambda$ and two parameters which play a similar role as our quantile levels assuming that the time series is a collection of i.i.d. data. This considerably simplifies the entire analysis and the proof technique used there does not extend to the case of general serial dependence. In addition, our analysis differs since we consider marginal normalization by estimating the marginal distribution function, something which is not covered by the results of \cite{hong2000}, even in the special case of i.i.d.\ observations.

Finally, we provide a comparison with  corresponding results for the estimation of copula spectral densities as discussed in \cite{DetteEtAl2016}. 
 There are several key differences in the form of the final result and  the resulting theoretical analysis. First, observe that Theorem~\ref{weakconvintegspectrum} provides process convergence of the integrated copula spectral densities in the quantile levels $\tau_1,\tau_2$ as well as the frequencies $\lambda$. This is in contrast to the copula spectral densities (\ref{copulaspecdensintroduction}) considered in \cite{DetteEtAl2016} where only process convergence in the quantile levels is obtained. This  is the case also for  autocovariance-based spectral densities---due to the fact that the limiting processes,  for distinct frequencies,  
  are mutually  independent, so that no tight element with the right finite-dimensional distributions exists in $\ell^{\infty}_{\C}\big([0,\pi]\times[0,1]^{2}\big)$ [see Remark 3.5 in \cite{DetteEtAl2016}]. 
Second, we obtain an $n^{-1/2}$ convergence rate,  which is strictly faster than the rates obtained in \cite{DetteEtAl2016} for any permissible bandwidth choice. This is due to the need for local smoothing when estimating copula spectral densities, and similar phenomena also occur in the context of $L^2$ spectra and  ``classical'' kernel density estimation. Third, as discussed in more detail in  Remark~\ref{Rem34}, the limiting covariance in Theorem~\ref{weakconvintegspectrum} has several terms that  are due to empirical normalization of the margins. Such terms do not appear in the limiting process when estimating copula spectral densities because  the effect of marginal standardization there is negligible relative to the convergence rate of the estimator with known margins. The fact that we need to account for such terms in our limit considerably complicates our asymptotic analysis compared to the developments in \cite{DetteEtAl2016}.

\begin{rem}[A sketch of the proof]\label{Rem34}
{\rm 
The proof of Theorem~\ref{weakconvintegspectrum} is long and technical;  deferring details to the online supplement,  we only outline here the main  steps. 

 (a) A key ingredient is the weak convergence of  the process 
\begin{multline*}
\Big(\mathbb{G}_{n,U}(\lambda ;\tau_1,\tau_2)\Big)_{(\lambda ;\tau_1,\tau_2)\in [0,\pi]\times[\eta,1-\eta]^2}
\\
:=\sqrt{n}\Big(\widehat{\specdis}\phantom{F\!\!\!\!\!}_{n,U}(\lambda ;\tau_1,\tau_2)-\specdis(\lambda ;\tau_1,\tau_2)\Big)_{(\lambda ;\tau_1,\tau_2)\in [0,\pi]\times[\eta,1-\eta]^2}
\end{multline*}
where $\widehat{\specdis}_{n,U}$ denotes the (infeasible) oracle estimator where the empirical distribution function $\hat F_n$ in $\widehat{\specdis}_{n,R}$ is replaced by $F$. We show that this process converges, in 
$\ell^{\infty}_{\C}\big([0,\pi]\times[\eta,1-\eta]\times[\eta,1-\eta]\big)$, to a centered Gaussian process~$\big(\mathbb{G}_U(\lambda ;\tau_1,\tau_2)\big)_{(\lambda ;\tau_1,\tau_2)\in [0,\pi]\times[\eta,1-\eta]^2}$ with covariance structure
\al{
\Cov\Big(\mathbb{G}_U(\lambda_1 ;\tau_1,\tau_2),\mathbb{G}_U(\lambda_2;\kappa_1,\kappa_2)\Big)={}&2\pi\int_{0}^{\lambda_1\wedge\lambda_2}\spec(\alpha; \tau_1,\kappa_1)\spec(-\alpha; \tau_2,\kappa_2)\dd\alpha\notag\\
&\hspace{0cm}+2\pi\int_0^{\lambda_1}\int_0^{\lambda_2}\spec(\alpha,-\alpha,-\beta; \tau_1, \tau_2, \kappa_1, \kappa_2) \dd\alpha \dd\beta.
}

 (b) Utilizing uniform asymptotic equicontinuity in probability of $(\tau_1,\tau_2)\mapsto \mathbb{G}_{n,U}(\lambda ;\tau_1,\tau_2)$   along with a Taylor expansion of the spectral distribution function ${\specdis}$, we obtain the stochastic representation
\[
\mathbb{G}_{n,R}(\lambda ;\tau_1,\tau_2) = \mathbb{G}_{n,U}(\lambda ;\tau_1,\tau_2) + \sqrt{n} \sum_{j=1}^2 (\tau_j - \hat F_{n}(F^{-1}(\tau_j)))\frac{\partial \specdisred}{\partial \tau_j}(\lambda ;\tau_1,\tau_2) + o_P(1) 
\]   
as $n\to\infty$, where the remainder is uniform in $(\lambda, \tau_1, \tau_2)$. 

  (c) The remaining part of the  proof is devoted to establishing process convergence of the leading term in this representation. The sum~$\sqrt{n} \sum_{j=1}^2 (\tau_j - \hat F_{n}(F^{-1}(\tau_j)))\frac{\partial \specdisred}{\partial \tau_j}(\lambda ;\tau_1,\tau_2)$ captures the impact of estimating the marginal distribution function $F$ by its empirical counterpart. This expression also explains the additional terms in the covariance function of $\mathbb{G}$ when compared to that of $\mathbb{G}_{U}$. Such additional terms also appear in the limiting distribution of empirical copula processes [see, for instance, \cite{fermanian04} or \cite{segers12}]. However, they do not appear in the estimation of copula spectra in \cite{DetteEtAl2016} because   the convergence speed of the estimator  there is strictly slower than $n^{-1/2}$.
}
\end{rem}

\section{Subsampling-based inference } \label{sec:inf}


Theorem~\ref{weakconvintegspectrum} is a very powerful instrument  allowing us to perform copula spectral analysis in a broad range of practical problems. Deriving valid procedures for inference, however,  crucially depends on the limit process $\mathbb{G}$ in Theorem~\ref{weakconvintegspectrum}---that is, on  the covariance kernel defined in~\eqref{covasympintspec}. This covariance kernel in turn depends on second-, third-, and fourth-order copula spectra and  some partial derivatives of the function~$\specdisred$ defined in~\eqref{specdiswithoutzero}. While for some testing problems (e.g., under the null hypothesis of serial independence: cf. \cite{hong2000}) these quantities simplify substantially, they are quite difficult to estimate in general. In this section, we demonstrate how subsampling methods  \citep{PolitisEtAl1999} yield  feasible and asymptotically valid confidence bands and tests for time-reversibility [Example~\ref{ex:tr}] and asymmetry of tail dynamics [Example~\ref{ex:sym}]. 

A key quantity in all subsampling procedures described in this section is  the   estimator 
\begin{equation}\label{eq:hatFnb}
\widehat{\specdis}\phantom{F\!\!\!\!\!}_{n,b,t,R}(\lambda ;\tau_1,\tau_2)
:= \frac{2\pi}{b} \sum_{j=1}^{b-1} I\big\{0\leq \frac{2\pi j}{b}\leq\lambda\big\} \copper_{n,b,t,R}^{\tau_1,\tau_2}\Big( \frac{2\pi j}{b} \Big),
\end{equation}
 of $\specdis$ computed from the subsample $X_t, \ldots, X_{t+b-1}$, where
\als{\label{copperintro2}
	\copper_{n,b,t,R}^{\tau_1,\tau_2}(\omega):=\frac{1}{2\pi b}d_{n,b,t,R}^{\tau_1}(\omega)d_{n,b,t,R}^{\tau_2}(-\omega),\qquad \omega\in\R,(\tau_1,\tau_2)\in[0,1]^2
}
with
\als{\label{dnRtaudef2}
	d_{n,b,t,R}^{\tau}(\omega):=\sum_{j=0}^{b-1}I\{\hat{F}_{n,b,t}(X_{t+j})\leq\tau\}e^{-i\omega j} \text{ and }
	\widehat F_{n,b,t}(x) := \frac{1}{b} \sum_{i=t}^{t+b-1} I\{X_{i} \leq x\}.
}
The block length $b$ is an integer between 1 and $n$; for our asymptotic results to hold, we will choose it such that $b \rightarrow \infty$ and $b = o(n)$ as $n \rightarrow \infty$. 

\subsection{Constructing uniform confidence bands}\label{sec:subsampl}

We now describe how asymptotically valid confidence bands can be obtained via subsampling. We will consider two types of confidence bands: (a) bands that are uniform in $\lambda$ for fixed quantile levels $\tau_1,\tau_2$ and (b) bands that are uniform in all three arguments $\lambda, \tau_1, \tau_2$.

By Theorem~\ref{weakconvintegspectrum} and the Continuous Mapping Theorem, 
\begin{equation*}
\sqrt{n} D_{n}(\tau_1, \tau_2) := \sqrt{n} \max_{\lambda \in [0,\pi]} \Big| \Re \widehat{\specdis}\phantom{F\!\!\!\!\!}_{n,R}(\lambda ;\tau_1,\tau_2) - \Re \specdis(\lambda ;\tau_1,\tau_2)  \Big| \\
\rightsquigarrow \max_{\lambda \in [0,\pi]} \Big| \Re \mathbb{G}(\lambda ;\tau_1,\tau_2) \Big|,
\end{equation*}
in $\ell^{\infty}([0,1]^2)$, as $n \rightarrow \infty$. Further, for any 
 continuous weight function $s: [\eta, 1-\eta]^2 \rightarrow \IR_+$ that is bounded away from $0$, we have
\begin{equation*}
\sqrt{n} E_{n} := \sqrt{n} \max_{(\tau_1,\tau_2) \in [\eta,1-\eta]^2} \frac{D_{n}(\tau_1, \tau_2)}{s(\tau_1, \tau_2)} \rightsquigarrow \max_{(\lambda ;\tau_1,\tau_2)\in [0,\pi]\times[\eta,1-\eta]^2} \Big| \frac{\Re \mathbb{G}(\lambda ;\tau_1,\tau_2)}{s(\tau_1, \tau_2)} \Big|,
\end{equation*}
in distribution, as $n \rightarrow \infty$.

\medskip

For the construction of an asymptotically valid $(1 - \alpha)$-confidence band for $\Re \specdis(\lambda ;\tau_1,\tau_2)$, it is sensible to proceed as follows.
We require 
\[I_{\alpha} := [\Re\widehat{\specdis}\phantom{F\!\!\!\!\!}_{n,R}(\lambda ;\tau_1,\tau_2) - \Delta(\lambda ;\tau_1,\tau_2), \Re\widehat{\specdis}\phantom{F\!\!\!\!\!}_{n,R}(\lambda ;\tau_1,\tau_2) + \Delta(\lambda ;\tau_1,\tau_2)]\]
to satisfy
\begin{equation*}
\begin{split}
\liminf_n \IP\big( \Re \specdis(\lambda ;\tau_1,\tau_2) \in I_{\alpha} \big) \geq 1 - \alpha.
\end{split}
\end{equation*}

For a uniform-in-$\lambda$ confidence band    for fixed $(\tau_1, \tau_2)$, choose $\Delta(\lambda ;\tau_1,\tau_2) \equiv C_D$ and, for a uniform-in-$(\lambda, \tau_1, \tau_2)$ confidence band, choose $\Delta(\lambda ;\tau_1,\tau_2) \equiv C_E \cdot s(\tau_1, \tau_2)$. The use of the weighting function $s$ improves the uniform confidence intervals by allowing the width to depend on $(\tau_1,\tau_2)$; cf.\  \cite{NeumannPaparoditis2008}.
These confidence bands are (asymptotically) valid if $C_D$ and $C_E$ are the $(1-\alpha)$ quantiles of the (limit) distributions of $D_n(\tau_1, \tau_2)$ and $E_n$, respectively. 
In practice, neither these distributions nor their limits are analytically tractable and we therefore propose the following subsampling-based intervals.
	
The $(1 - \alpha)$-confidence band that is uniform in $\lambda$ for fixed $(\tau_1,\tau_2)$ is defined by
	\begin{equation}\label{def:I_D_Re}
	\widehat I_{\alpha,{\rm Re}}^D(\lambda,\tau_1,\tau_2) := \left[\Re\widehat{\specdis}\phantom{F\!\!\!\!\!}_{n,R}(\lambda,\tau_1,\tau_2) - C_{D,\alpha}(\tau_1,\tau_2) , \Re\widehat{\specdis}\phantom{F\!\!\!\!\!}_{n,R}(\lambda,\tau_1,\tau_2) + C_{D,\alpha}(\tau_1,\tau_2) \right],
	\end{equation}
	where
	\begin{equation*}
	C_{D,\alpha}(\tau_1,\tau_2) := (1/n)^{1/2} \inf\{x : L_{n,b}^D(x,\tau_1,\tau_2) \geq 1-\alpha\}
	\end{equation*}
	with
	\begin{align}\label{eqfpc}
	L^D_{n,b}(x) :=& \frac{1}{n-b+1} \sum_{t=1}^{n-b+1} I\{ \sqrt b \tilde D_{n,b,t}(\tau_1, \tau_2) \leq x\}\text{\ and \ }
	\\
	\tilde D_{n,b,t}(\tau_1, \tau_2) :=& (1 - b/n)^{-1/2} \max_{\ell = 0,1,\ldots,\lfloor d/2 \rfloor} \left| \Re \widehat{\specdis}\phantom{F\!\!\!\!\!}_{n,b,t,R}\Big(\frac{2 \pi \ell}{d},\tau_1,\tau_2 \Big) - \Re \widehat{\specdis}\phantom{F\!\!\!\!\!}_{n,R} \Big(\frac{2 \pi \ell}{d},\tau_1,\tau_2 \Big)  \right|. \label{eqfpc}
	\end{align}
	
	Note that $C_{D,\alpha}(\tau_1,\tau_2)$ is the empirical $(1-\alpha)$-quantile of 
	$$\{\tilde D_{n,b,t}(\tau_1, \tau_2), t=1,\ldots, n-b+1\},$$ scaled by a factor  $(b/n)^{1/2}$.
	Intuitively, the proposed interval will be asymptotically valid, be\-cause the distributions of~$\sqrt n D_n(\tau_1, \tau_2)$ and $\sqrt b \tilde D_{n,b,t}(\tau_1, \tau_2)$ converge to the same limit and the distribution of~$\sqrt b \tilde D_{n,b,t}(\tau_1, \tau_2)$ is well approximated by the empirical distribution~$L^D_{n,b}$. 
	
	The factor $(1 - b/n)^{-1/2}$ in \eqref{eqfpc} is an optional finite-population correction and 
	can be replaced by any sequence  converging to one. Such correction is recommended by \citet
	{PolitisEtAl1999};
	 our simulations in Section~\ref{sec:sim} below indicate that it is indeed quite advisable in this context. 
	As for $d$,   a positive integer, it is typically chosen such that
	$$\{0, 1/d, \ldots, \lfloor d/2 \rfloor/d\} \subseteq \{0, 1/b, \ldots, \lfloor b/2 \rfloor/b\},$$
	 which facilitates the evaluation of the estimates.
	
	\medskip
	
	Similarly define the uniform-in-$(\lambda,\tau_1,\tau_2)$ $(1 - \alpha)$-confidence band   as 	\begin{equation}\label{def:I_E_Re}
	\widehat I_{\alpha,{\rm Re}}^E(\lambda,\tau_1,\tau_2) := [\Re\widehat{\specdis}\phantom{F\!\!\!\!\!}_{n,R}(\lambda,\tau_1,\tau_2) - C_{E,\alpha}s(\tau_1, \tau_2) , \Re\widehat{\specdis}\phantom{F\!\!\!\!\!}_{n,R}(\lambda,\tau_1,\tau_2) + C_{E,\alpha} s(\tau_1, \tau_2)],
	\end{equation}
	where
	\begin{equation*}
	C_{E,\alpha} := (1/n)^{1/2} \inf\{x : L_{n,b}^E(x) \geq 1-\alpha\}
	\end{equation*}
	with
	\begin{align*}
	L^E_{n,b}(x) :=& \frac{1}{n-b+1} \sum_{t=1}^{n-b+1} I\{ \sqrt b\tilde E_{n,b,t} \leq x\}
	\text{ and }
	\tilde E_{n,b,t} := \max_{(\tau_1,\tau_2) \in S_n } \frac{\tilde D_{n,b,t}(\tau_1, \tau_2)}{s(\tau_1, \tau_2)}
	\end{align*}
	where $S_n$, the role of which will be made clear in the sequel, is a sequence of finite subsets of the interval~$[\eta,1-\eta]^2$.

	Uniform confidence intervals $\widehat I_{\alpha,{\rm Im}}^D(\lambda,\tau_1,\tau_2)$ and $\widehat I_{\alpha,{\rm Im}}^E(\lambda,\tau_1,\tau_2)$ for the imaginary parts~$\Im \specdis(\lambda ;\tau_1,\tau_2)$ are defined in the same way,  with real parts replaced by imaginary parts.

We now state a result that ensures correct asymptotic coverage for the subsampling-based confidence bands just defined.

\begin{theorem}\label{thm:CB:subs}
Let the assumptions of Theorem~\ref{weakconvintegspectrum} hold and assume moreover that~$(X_t)_{t \in \Z}$ is $\alpha$-mixing such that $\alpha(n) \rightarrow 0$ as $n \rightarrow \infty$. Assume that $b \rightarrow \infty$ and $b = o(n)$ as $n \rightarrow \infty$.
Then, for the confidence band defined in~\eqref{def:I_D_Re},  
\begin{equation*}
\IP\Big( \Re \specdis(\lambda ;\tau_1,\tau_2) \in \widehat I_{\alpha,{\rm Re}}^D(\lambda,\tau_1,\tau_2), \quad \forall \lambda \in [0,\pi]  \Big) \rightarrow 1-\alpha,
\end{equation*}
as $n,d \rightarrow \infty$. Further, assuming that 
\begin{equation} \label{eq:SetConv}
d(S_n,S) := \sup_{y \in S} \inf_{x \in S_n} \|x-y\| \to 0
\end{equation}
for some  $S \subset [\eta,1-\eta]^2$, we have, for the confidence band defined in~\eqref{def:I_E_Re},
\begin{equation*}
\IP\Big( \Re \specdis(\lambda ;\tau_1,\tau_2) \in \widehat I_{\alpha,{\rm Re}}^E(\lambda,\tau_1,\tau_2), \quad \forall (\tau_1, \tau_2) \in S, \lambda \in [0,\pi]  \Big) \rightarrow 1-\alpha.
\end{equation*}
 as $n,d \rightarrow \infty$. The same results hold for the bands for imaginary parts $\Im \specdis(\lambda ;\tau_1,\tau_2)$.
\end{theorem}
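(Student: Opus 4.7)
The plan is to follow the standard subsampling validity recipe (\`a la \cite{PolitisEtAl1999}) by combining three ingredients: (i) the limit distribution of $\sqrt{n}D_n(\tau_1,\tau_2)$ supplied by Theorem~\ref{weakconvintegspectrum}, (ii) the fact that the blocks $X_t,\dots,X_{t+b-1}$ are themselves stationary samples of length $b$, so that $\sqrt{b}(\widehat{\specdis}\phantom{F\!\!\!\!\!}_{n,b,t,R}-\specdis)$ converges to the same Gaussian limit $\mathbb{G}$ as $\sqrt{n}(\widehat{\specdis}\phantom{F\!\!\!\!\!}_{n,R}-\specdis)$ does, and (iii) convergence in probability of the empirical distribution of the overlapping subsample statistics to the correct limit CDF, guaranteed by $\alpha$-mixing.

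\textbf{First part (uniform-in-$\lambda$).} The continuous mapping theorem applied to Theorem~\ref{weakconvintegspectrum} gives $\sqrt{n}D_n(\tau_1,\tau_2)\rightsquigarrow J_D(\tau_1,\tau_2):=\sup_{\lambda\in[0,\pi]}|\Re\mathbb{G}(\lambda;\tau_1,\tau_2)|$, whose CDF $K_D(\cdot;\tau_1,\tau_2)$ is continuous because the supremum of the absolute real part of a non-degenerate centered Gaussian process with a.s.\ continuous sample paths has no atoms (Tsirelson's theorem). I would next show that $\sqrt{b}\tilde D_{n,b,t}(\tau_1,\tau_2)$ shares the same limit law. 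Two negligibilities are needed: (a) since $\sqrt{n}(\widehat{\specdis}\phantom{F\!\!\!\!\!}_{n,R}-\specdis)=O_P(1)$ uniformly by Theorem~\ref{weakconvintegspectrum}, replacing the centering $\widehat{\specdis}\phantom{F\!\!\!\!\!}_{n,R}$ by $\specdis$ in $\tilde D_{n,b,t}$ costs $O_P(\sqrt{b/n})=o_P(1)$; (b) as $d\to\infty$, the discretized maximum over $\{2\pi\ell/d\}$ recovers the continuous supremum by the asymptotic uniform equicontinuity asserted at the end of Theorem~\ref{weakconvintegspectrum}. The standard $\alpha$-mixing subsampling lemma (e.g.\ Theorem~3.2.1 of \cite{PolitisEtAl1999}) then yields $L^D_{n,b}(x;\tau_1,\tau_2)\to K_D(x;\tau_1,\tau_2)$ in probability at every continuity point of $K_D$: the mean of $L^D_{n,b}$ is the CDF of one subsample statistic (by stationarity) and hence converges to $K_D$, while its variance vanishes by mixing. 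Continuity of $K_D$ upgrades this pointwise convergence to convergence of the $(1-\alpha)$-quantile, so that $\sqrt{n}\,C_{D,\alpha}(\tau_1,\tau_2)\to K_D^{-1}(1-\alpha)$ in probability. The coverage statement then follows from
\[
\IP\bigl(\Re\specdis(\lambda;\tau_1,\tau_2)\in\widehat I_{\alpha,{\rm Re}}^D(\lambda,\tau_1,\tau_2)\ \forall\lambda\bigr)=\IP\bigl(\sqrt{n}D_n(\tau_1,\tau_2)\le\sqrt{n}\,C_{D,\alpha}(\tau_1,\tau_2)\bigr)\to 1-\alpha.
\]

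\textbf{Uniform-in-$(\lambda,\tau_1,\tau_2)$ band.} The second half repeats the same four steps with $J_E:=\sup_{(\tau_1,\tau_2)\in S}\sup_{\lambda\in[0,\pi]}|\Re\mathbb{G}(\lambda;\tau_1,\tau_2)|/s(\tau_1,\tau_2)$ in place of $J_D$. Two discretizations now come into play---one in $\lambda$ (via $d$) and one in $(\tau_1,\tau_2)$ (via $S_n$): assumption~\eqref{eq:SetConv}, asymptotic uniform equicontinuity of $\mathbb{G}_{n,R}$ in $(\tau_1,\tau_2)$ from Theorem~\ref{weakconvintegspectrum}, and continuity of $s$ together imply that $\tilde E_{n,b,t}$ evaluated on $S_n$ shares the asymptotic distribution of its continuous-in-$S$ counterpart. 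The remaining steps are identical. The bands for the imaginary parts are obtained by replacing $\Re$ with $\Im$ throughout; the argument transfers verbatim because $\Im\mathbb{G}$ is again a centered Gaussian process with continuous sample paths.

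\textbf{Main obstacle.} The technically delicate point is the simultaneous control of the three scales $n$, $b$, $d$ (and, in the second part, of the grid $S_n$) while bridging the three centerings $\specdis\leftrightarrow\widehat{\specdis}\phantom{F\!\!\!\!\!}_{n,R}\leftrightarrow\widehat{\specdis}\phantom{F\!\!\!\!\!}_{n,b,t,R}$. All negligibility results lean heavily on the asymptotic uniform equicontinuity of $\mathbb{G}_{n,R}$ given by Theorem~\ref{weakconvintegspectrum}; without it, discretization errors and centering gaps would not be controllable uniformly in $(\lambda,\tau_1,\tau_2)$. Continuity of the limiting CDFs $K_D$ and $K_E$---needed for the quantile convergence step---is the other subtle ingredient, and follows from Tsirelson's theorem for suprema of non-degenerate Gaussian processes.
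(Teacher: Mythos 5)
Your proposal is correct and follows essentially the same route as the paper: weak convergence of the full-sample sup-statistic via Theorem~\ref{weakconvintegspectrum}, an $O_P(\sqrt{b/n})$ bound for swapping the centering $\widehat{\specdis}\phantom{F\!\!\!\!\!}_{n,R}\leftrightarrow\specdis$ in the subsample statistics, consistency of the subsampling empirical CDF (the paper cites Proposition~7.3.1 of Politis et al., noting it extends to statistics depending on the full sample size $n$, which is the content of your mean/variance sketch), and continuity of the limiting CDF to pass to quantiles (the paper cites Gaenssler's Corollary~1.3 where you invoke Tsirelson's theorem, but these serve the identical purpose). No gaps of substance.
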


\subsection{Testing for time-reversibility}\label{sec:timerev}
An important feature that cannot be captured by second-order moments, hence escapes traditional spectral analysis, is time-(ir)reversibility. Time-irreversibility in time series is the rule rather than the exception (see e.g.\ \cite{hlp88}); it is ubiquitous in some applications such as financial econometrics. Yet, due to the fact that $\text{Cov}(X_t, X_{t-k})= \text{Cov}(X_{t-k}, X_t)$, most classical time-series models generate time-reversible processes while 
 classical spectral analysis, being second-order-based,  is unable to detect time-irreversibility. Copula-based spectral methods can.

Let the stochastic process $(X_t)_{t \in \Z}$   satisfy Assumption~\ref{assumptionsspectraldist}; denote by  $\mathfrak{f}^X$ its copula spectral density,  by  $F_k(x,y) := {\rm P}(X_{k} \leq x, X_0 \leq y)$, $k\in\mathbb Z$, $(x,y)\in\mathbb R^2$ its marginal bivariate distributions. We say that the process $(X_t)_{t \in \Z}$ is {\it pairwise time-reversible} if, for all $k\in\mathbb Z$,  the   distributions of $(X_t,X_{t+k})$ and $(X_t,X_{t-k})$ coincide, i.e., $F_k=F_{-k}$ for all $k\in\mathbb{N}$. The following characterization has been established by \cite{DetteEtAl2013}.

\begin{prop}\label{proptrv}
The process $(X_t)_{t \in \Z}$ is pairwise time-reversible 
 if and only if
$$\Im\mathfrak{f}^X(\lambda;  \tau_1,\tau_2)=0 \quad\text{for all $(\lambda,\tau_1, \tau_2)\in[0,\pi]\times(0,1)^2$}.$$
\end{prop}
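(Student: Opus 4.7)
The plan is to reduce the distributional identity $F_k=F_{-k}$ to a symmetry of the copula $C_k$ in its two arguments, then re-express that symmetry as a symmetry of $\gamma_k^U$ under $k\mapsto -k$, and finally read off the imaginary part of $\mathfrak{f}^X$ as the Fourier series of the antisymmetric part of $k\mapsto\gamma_k^U$.

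First I would observe that, by stationarity, $(X_0,X_{-k})\stackrel{d}{=}(X_k,X_0)$, so pairwise time-reversibility is equivalent to $(X_0,X_k)\stackrel{d}{=}(X_k,X_0)$ for every $k\in\mathbb{N}$. Because $F$ is continuous (Assumption~\ref{assumptionsspectraldist}(S)), Sklar's theorem yields a unique copula and this in turn is equivalent to the symmetry $C_k(\tau_1,\tau_2)=C_k(\tau_2,\tau_1)$ for all $(\tau_1,\tau_2)\in(0,1)^2$ and all $k\in\mathbb{N}$. Subtracting $\tau_1\tau_2$, this is exactly $\gamma_k^U(\tau_1,\tau_2)=\gamma_k^U(\tau_2,\tau_1)$. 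A second application of stationarity gives $\gamma_{-k}^U(\tau_1,\tau_2)=\mathrm{Cov}(I\{U_{-k}\le\tau_1\},I\{U_0\le\tau_2\})=\mathrm{Cov}(I\{U_0\le\tau_1\},I\{U_k\le\tau_2\})=\gamma_k^U(\tau_2,\tau_1)$. Chaining these two identities, pairwise time-reversibility is equivalent to
\begin{equation*}
\gamma_k^U(\tau_1,\tau_2)=\gamma_{-k}^U(\tau_1,\tau_2)\qquad \forall\,k\in\mathbb{Z},\ (\tau_1,\tau_2)\in(0,1)^2.
\end{equation*}

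Next I would plug this into the Fourier series defining $\mathfrak{f}^X$. Since $\gamma_k^U$ is real and absolutely summable, grouping the $k$ and $-k$ terms gives
\begin{equation*}
\mathfrak{f}^X(\lambda;\tau_1,\tau_2)=\frac{1}{2\pi}\Big[\gamma_0^U(\tau_1,\tau_2)+\sum_{k\ge 1}\bigl(\gamma_k^U+\gamma_{-k}^U\bigr)\cos(\lambda k)-i\sum_{k\ge 1}\bigl(\gamma_k^U-\gamma_{-k}^U\bigr)\sin(\lambda k)\Big],
\end{equation*}
so $\Im\mathfrak{f}^X(\lambda;\tau_1,\tau_2)=-\tfrac{1}{2\pi}\sum_{k\ge 1}(\gamma_k^U(\tau_1,\tau_2)-\gamma_{-k}^U(\tau_1,\tau_2))\sin(\lambda k)$. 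The forward implication is now immediate: if $\gamma_k^U=\gamma_{-k}^U$ for every $k$, the sum vanishes identically in $\lambda$.

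For the converse I would fix $(\tau_1,\tau_2)$ and use the $L^2([0,\pi])$-orthogonality of $\{\sin(\lambda k)\}_{k\ge 1}$: if $\Im\mathfrak{f}^X(\lambda;\tau_1,\tau_2)=0$ on $[0,\pi]$, then absolute summability permits termwise integration and multiplication by $\sin(\lambda \ell)$, yielding $\gamma_\ell^U(\tau_1,\tau_2)-\gamma_{-\ell}^U(\tau_1,\tau_2)=0$ for every $\ell\ge 1$, which by the equivalence established above is pairwise time-reversibility. The main conceptual step is the equality $\gamma_{-k}^U(\tau_1,\tau_2)=\gamma_k^U(\tau_2,\tau_1)$, which is what makes symmetry in $(\tau_1,\tau_2)$ and symmetry in $k$ genuinely interchangeable; nothing more delicate than absolute summability is needed, and no sharp obstacle is anticipated.
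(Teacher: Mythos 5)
Your argument is correct: the reduction of $F_k=F_{-k}$ to exchangeability of $(X_0,X_k)$, the identity $\gamma_{-k}^U(\tau_1,\tau_2)=\gamma_k^U(\tau_2,\tau_1)$, and the sine-series representation of $\Im\mathfrak{f}^X$ together with $L^2([0,\pi])$-orthogonality (justified by the standing absolute-summability assumption) give both implications cleanly. The paper itself does not prove this proposition but cites \cite{DetteEtAl2013} for it; your derivation is the standard self-contained argument one would expect there, and no gap is present.
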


A test for (pairwise) time-reversibility thus is a test of the null hypothesis 
\begin{eqnarray}\label{TRtest0}
H_0: F_k(x,y) = F_{-k}(x,y)\quad\text{ for all } (k,x,y)\in\mathbb Z\times\mathbb R^2,
\end{eqnarray}
with alternative  
$$
H_1: F_k(x,y) \neq F_{-k}(x,y)\quad  \text{ for some } (k,x,y)\in\mathbb Z\times\mathbb R^2.
$$
It follows from Proposition~\ref{proptrv} that $H_0$ in~\eqref{TRtest0} also can be written as
\begin{eqnarray}\label{TRtest}
H_0: \sup_{(\lambda,\tau_1, \tau_2) \in  [0,\pi]\times {[\eta,1-\eta]^2}} 
\Big| 
\frac{\Im{\specdis} (\lambda, \tau_1, \tau_2)}{s(\tau_1, \tau_2)}
\Big| = 0,
\end{eqnarray}
 for arbitrarily small $\eta\in(0,1/2),$ where $s: [0,1]^2 \rightarrow [\varepsilon, \infty)$  for some $\varepsilon > 0$.
The function~$s$ is essential to construct the critical region uniformly in $(\lambda ;\tau_1,\tau_2)$ (see the discussion in Section~\ref{sec:subsampl}).
Consider the test statistic (for testing $H_0$ against $H_1$) 
\begin{equation}\label{TTR}
\widetilde{T}_{\rm TR}^{(n)} :=  \sqrt n\sup_{(\lambda,\tau_1, \tau_2) \in  [0,\pi]\times{[\eta,1-\eta]^2}}  \Big| \frac{\Im \widehat {\specdis}_{n,R}(\lambda, \tau_1, \tau_2)}{s(\tau_1, \tau_2)}\Big|.
\end{equation}

The next result is an immediate consequence of Theorem \ref{weakconvintegspectrum}.
\begin{prop}
Let  $(X_t)_{t \in \Z}$   satisfy Assumption \ref{assumptionsspectraldist}. Then, under~$H_0$ defined in  (\ref{TRtest}), as~$n\to\infty$,
$\sqrt n \widetilde{T}_{\rm TR}^{(n)}$ 
converges in distribution  to
\begin{equation}\label{TTRasymdist}
\sup_{(\lambda,\tau_1, \tau_2) \in  [0,\pi]\times[\eta,1-\eta]^2} 
\Big| 
\frac{\Im\mathbb{G} (\lambda, \tau_1, \tau_2)}{s(\tau_1, \tau_2)}
\Big|,
\end{equation}
where $\mathbb{G}(\lambda ;\tau_1,\tau_2)$ is a centered Gaussian process with covariance structure \eqref{covasympintspec}.
\end{prop}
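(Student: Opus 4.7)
The plan is to observe that the result follows essentially by combining Proposition~\ref{proptrv} with Theorem~\ref{weakconvintegspectrum} and the continuous mapping theorem. First I would note that, under the null hypothesis $H_0$ as expressed in \eqref{TRtest}, Proposition~\ref{proptrv} gives $\Im \spec(\lambda;\tau_1,\tau_2)=0$ for all $(\lambda,\tau_1,\tau_2)\in[0,\pi]\times(0,1)^2$, so that by the definition \eqref{specdisestimatorintro} of the integrated copula spectrum we also have $\Im \specdis(\lambda;\tau_1,\tau_2)=0$ on this set. Consequently, the centering in $\mathbb{G}_{n,R}$ vanishes and one obtains the identity
\[
\sqrt{n}\, \Im \widehat{\specdis}\phantom{F\!\!\!\!\!}_{n,R}(\lambda,\tau_1,\tau_2) \;=\; \Im \mathbb{G}_{n,R}(\lambda,\tau_1,\tau_2)
\]
uniformly on $[0,\pi]\times[\eta,1-\eta]^2$.

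Next I would introduce the functional
\[
\Psi : \ell^{\infty}_{\C}\bigl([0,\pi]\times[\eta,1-\eta]^2\bigr) \longrightarrow \R,
\qquad
\Psi(g) \;:=\; \sup_{(\lambda,\tau_1,\tau_2)\in[0,\pi]\times[\eta,1-\eta]^2} \Big|\frac{\Im g(\lambda,\tau_1,\tau_2)}{s(\tau_1,\tau_2)}\Big|,
\]
so that $\widetilde{T}_{\rm TR}^{(n)} = \Psi(\mathbb{G}_{n,R})$. Since $s$ is bounded from below by $\varepsilon>0$ on $[\eta,1-\eta]^2$, the map $\Psi$ is $(1/\varepsilon)$-Lipschitz with respect to the uniform metric; in particular it is continuous on the whole ambient space.

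By Theorem~\ref{weakconvintegspectrum}, the process $\mathbb{G}_{n,R}$ converges weakly to the centered Gaussian process $\mathbb{G}$ in $\ell^{\infty}_{\C}([0,\pi]\times[\eta,1-\eta]^2)$ equipped with the uniform metric. An application of the continuous mapping theorem to $\Psi$ then yields
\[
\widetilde{T}_{\rm TR}^{(n)} \;=\; \Psi(\mathbb{G}_{n,R}) \;\weak\; \Psi(\mathbb{G}) \;=\; \sup_{(\lambda,\tau_1,\tau_2)\in[0,\pi]\times[\eta,1-\eta]^2} \Big|\frac{\Im \mathbb{G}(\lambda,\tau_1,\tau_2)}{s(\tau_1,\tau_2)}\Big|,
\]
which is exactly the claimed limiting distribution \eqref{TTRasymdist}.

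There is really no substantial obstacle here: the only technical point worth verifying is the continuity (indeed Lipschitz property) of $\Psi$, which relies solely on $s \geq \varepsilon > 0$ and on the fact that $g\mapsto \Im g$ is a bounded linear operator on $\ell^{\infty}_{\C}$. Everything else has been done in Theorem~\ref{weakconvintegspectrum}, so the proof essentially reduces to a one-line application of the continuous mapping theorem after the $H_0$-induced simplification of the centering term.
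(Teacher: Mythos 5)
Your proof is correct and follows exactly the route the paper intends: the paper presents this proposition as an immediate consequence of Theorem~\ref{weakconvintegspectrum} via the Continuous Mapping Theorem, which is precisely your argument (the Lipschitz continuity of the weighted sup functional, using $s\geq\varepsilon>0$, plus the vanishing of the centering term under $H_0$). The only cosmetic remark is that the detour through Proposition~\ref{proptrv} is unnecessary — the hypothesis \eqref{TRtest} already states $\Im\specdis=0$ on $[0,\pi]\times[\eta,1-\eta]^2$ directly, which is all you use.
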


In actual calculations, $\widetilde T_{\rm TR}^{(n)}$ needs to be discretized, and we compute it as 
\begin{equation}\label{Tmax}
T_{\rm TR}^{(n)} := \sqrt n\max_{(\lambda,\tau_1, \tau_2) \in S_n}
\Big| \frac{\Im  \widehat{\specdis}\phantom{F\!\!\!\!\!}_{n,R}(\lambda, \tau_1, \tau_2)}{s(\tau_1, \tau_2)}\Big|,
\end{equation}
where   $S_n$ denotes a sequence of discrete sets the exact choice of which will be discussed in more detail in Section~\ref{sec:sim}. In our theoretical analysis,   we will assume that there exists a  sub-\linebreak set~$S\subseteq[0,\pi]\times[\eta,1-\eta]^2$ such that 
\begin{equation}\label{eq:setconv}
\sup_{x \in S} \inf_{y \in S_n} \|x-y\| \to 0\quad\text{ as $n\to\infty$.}
\end{equation}
Asymptotic $p$--values for this test can be determined based on subsampling:  let
\begin{equation*}
p_{{\rm TR}}:= \frac{1}{n-b+1} \sum_{t=0}^{n-b}I\big\{T_{{\rm TR}1}^{(n,b,t)}> T_{\rm TR}^{(n)}\big\}, 
\end{equation*}
where
\begin{align*}
T_{\rm TR1}^{(n,b,t)}&:=\sqrt{b}\max_{ (\lambda, \tau_1, \tau_2) \in S_n}\Big| \frac{\Im \widehat{\specdis}\phantom{F\!\!\!\!\!}_{n,b,t,R}(\lambda, \tau_1, \tau_2)}{s(\tau_1, \tau_2)}\Big|
\end{align*}
with $\widehat{\specdis}\phantom{F\!\!\!\!\!}_{n,b,t,R}(\lambda, \tau_1, \tau_2)$ defined in~\eqref{eq:hatFnb} denoting the subsampled version of $T_{\rm TR}^{(n)}$ on the\linebreak block~$X_t,\dots,X_{t+b-1}$ of length $b$. The validity of this subsampling procedure is discussed in the next theorem.
\begin{theorem}\label{thm:substr}
Let the assumptions of Theorem~\ref{weakconvintegspectrum} hold and assume moreover that~$(X_t)_{t \in \Z}$ is $\alpha$-mixing such that $\alpha(n) \rightarrow 0$ as $n \rightarrow \infty$. Assume further that~\eqref{eq:setconv} holds and that the weight function $s$ is continuous. Then
\begin{enumerate}
\item[(i)] the test   rejecting $H_0$ in~\eqref{TRtest0} whenever $p_{\rm TR} < \alpha$ has asymptotic level $\alpha$;
\item[(ii)]  the power of this test  converges to one whenever $|\Im \specdis(\lambda, \tau_1, \tau_2)| \neq 0$ for some $(\lambda,\tau_1,\tau_2) \in~\!S$. 
\end{enumerate}	     	
\end{theorem}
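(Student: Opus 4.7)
The plan is to exploit the functional central limit theorem of Theorem~\ref{weakconvintegspectrum} together with the standard subsampling paradigm of \citet{PolitisEtAl1999}, following the same scheme as the proof of Theorem~\ref{thm:CB:subs}. The strategy is to show that both the full-sample statistic $T_{\rm TR}^{(n)}$ and the subsample statistics $T_{\rm TR1}^{(n,b,t)}$ share the same limit distribution under $H_0$, so that the empirical distribution of the latter consistently estimates the sampling distribution of the former; consistency under $H_1$ then follows from a rate argument.

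For part (i), work under $H_0$ so that $\Im\specdis\equiv 0$. Combining Theorem~\ref{weakconvintegspectrum} with the continuous mapping theorem, the set-approximation condition~\eqref{eq:setconv}, and the asymptotic uniform equicontinuity of $\mathbb{G}_{n,R}$ from that theorem, first establish that
\[
T_{\rm TR}^{(n)} \weak T := \sup_{(\lambda,\tau_1,\tau_2)\in S}\Big|\frac{\Im\mathbb{G}(\lambda,\tau_1,\tau_2)}{s(\tau_1,\tau_2)}\Big|,
\]
and denote by $J$ the CDF of $T$, which is continuous at its $(1-\alpha)$-quantile by standard non-degeneracy arguments for suprema of non-degenerate Gaussian processes. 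Since every subsample $X_t,\dots,X_{t+b-1}$ inherits strict stationarity, the cumulant bound (C) and the derivative condition (D) from $\proc$, Theorem~\ref{weakconvintegspectrum} applies to the subsampled estimator $\widehat\specdis_{n,b,t,R}$ as well, yielding $T_{\rm TR1}^{(n,b,t)}\weak T$ as $b\to\infty$. Invoking the $\alpha$-mixing assumption together with Theorem~3.2.1 of \citet{PolitisEtAl1999}, conclude that $L_{n,b}^{\rm TR}(x):=(n-b+1)^{-1}\sum_{t=0}^{n-b}I\{T_{\rm TR1}^{(n,b,t)}\leq x\}$ converges in probability to $J(x)$ at every continuity point. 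Then $p_{\rm TR}\weak 1-J(T)$, which is uniformly distributed on $[0,1]$, so that $\Prob(p_{\rm TR}<\alpha)\to\alpha$.

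For part (ii), fix $(\lambda_0,\tau_{1,0},\tau_{2,0})\in S$ with $|\Im\specdis(\lambda_0,\tau_{1,0},\tau_{2,0})|>0$. By \eqref{eq:setconv} and continuity of $\Im\specdis$ (immediate in $\lambda$, and in $(\tau_1,\tau_2)$ by Assumption (D)), one has $\max_{S_n}|\Im\specdis/s|\to\sup_S|\Im\specdis/s|>0$; combined with the uniform bound $\sup|\widehat\specdis_{n,R}-\specdis|=O_P(n^{-1/2})$ from Theorem~\ref{weakconvintegspectrum}, this yields $T_{\rm TR}^{(n)}/\sqrt n\to \sup_S|\Im\specdis/s|>0$ in probability, so $T_{\rm TR}^{(n)}$ diverges at the rate $\sqrt n$. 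Meanwhile, each subsample statistic satisfies $T_{\rm TR1}^{(n,b,t)}\leq \sqrt b\,\|\Im\specdis/s\|_\infty + \sqrt b\,\sup|\Im(\widehat\specdis_{n,b,t,R}-\specdis)/s|=O_P(\sqrt b)$, uniformly in $t$ after applying Markov's inequality to the mean over $t$ of the subsample remainder (a bound familiar from the proof of Theorem~\ref{thm:CB:subs}). Since $b=o(n)$, the empirical $(1-\alpha)$-quantile of the subsample statistics is $O_P(\sqrt b)=o_P(\sqrt n)$ and is therefore strictly smaller than $T_{\rm TR}^{(n)}$ with probability tending to one; hence $p_{\rm TR}\to 0$ in probability and the test rejects with probability approaching one.

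The main technical obstacle is part (i), specifically the convergence of $L_{n,b}^{\rm TR}$ to $J$ in the presence of serial dependence: the $\alpha$-mixing assumption and the mixing-based variance bound from \citet{PolitisEtAl1999} are essential here, and they must be coupled with the subsample CLT, which in turn relies on the fact that Theorem~\ref{weakconvintegspectrum} is applicable block-wise. A secondary subtlety is the transition from the discretized maximum over $S_n$ to the supremum over $S$, which requires the asymptotic uniform equicontinuity stated at the end of Theorem~\ref{weakconvintegspectrum}, both for the full-sample statistic and, uniformly in $t$, for the subsample statistics.
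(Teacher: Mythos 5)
Your proposal is correct and follows essentially the same route as the paper's proof: weak convergence of $T_{\rm TR}^{(n)}$ via Theorem~\ref{weakconvintegspectrum}, the continuous mapping theorem and the uniform asymptotic equicontinuity needed to pass from $S_n$ to $S$; consistency of the subsample empirical distribution via the Politis--Romano--Wolf machinery (the paper invokes Proposition~7.3.1, noting that its proof survives the $n$-dependence of the statistic through $S_n$, a point you should make explicit) together with continuity of the limit law; and, under the alternative, the rate comparison $T_{\rm TR}^{(n)}\asymp\sqrt{n}$ versus subsample statistics of order $O_P(\sqrt{b})=o_P(\sqrt{n})$. Your Markov-inequality control of the empirical quantile in part~(ii) is a slightly more elementary packaging of the paper's decomposition into centered subsample statistics plus the $\sqrt{b}\,\Vert\Im\specdis\Vert_{S_n}$ drift, but the argument is the same in substance.
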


\begin{rem}
We also considered the subsampled statistic 
\[
T_{\rm TR2}^{(n,b,t)}:=\sqrt{b}\max_{(\lambda, \tau_1, \tau_2) \in S_n}\Big| \frac{\Im \widehat{\specdis}\phantom{F\!\!\!\!\!}_{n,b,t,R}(\lambda, \tau_1, \tau_2)	-\Im \widehat{\specdis}\phantom{F\!\!\!\!\!}_{n,R}(\lambda, \tau_1, \tau_2)}{s(\tau_1, \tau_2)}\Big|,
\] 
but this did not yield  better results in simulations. 
\end{rem}

\subsection{Assessing asymmetry in tail dynamics} \label{sec:symmetry}

Assessing asymmetry in tail dynamics is of critical importance for, e.g., risk management and investment strategy. Value at risk (VaR) and expected shortfall (ES) are popular risk measures in finance that are related to quantiles.
According to \cite{jr2003}, investors  suspect  that the left tail of stock returns is heavier than the right one. And  \cite{li2021} pointed out asymmetry between lower quantiles and upper quantiles for the S\&P500 index. 
As for copula-based modeling, asymmetry between upper and lower quantiles excludes families of (radially) symmetric copulas such as Gaussian  and $t$-copulas. Misspecified copulas lead to  false conclusions and involve  grave risks (\cite{rj2013, mangold2017}). Hence, the investigation of tail behavior is important. Further discussions can be found in \cite{sc2014} and \cite{kj2019}.

Denote  by $C_k$ the lag--$k$ copula of $(X_0,X_k)$ for some lag $k$. We are interested in the case where 
$$ C_k(\tau_1,\tau_2) -\tau_1\tau_2\neq C_k(1-\tau_1,1-\tau_2) -(1-\tau_1)(1-\tau_2)$$
 for some $(\tau_1,\tau_2)\in(0,\psi)^2$:  the copula~$C_k$ then is called {\it tail asymmetric at a level} $\psi$.
This is not the case when $ C_k(\tau_1,\tau_2) -\tau_1\tau_2= C_k(1-\tau_1,1-\tau_2) -(1-\tau_1)(1-\tau_2)$ for all $k \in \IZ$ and all~$(\tau_1, \tau_2) \in (0,\psi)^2$, where $\psi\in(0,1/2]$: then we say that the copula~$C_k$ is pairwise tail-symmetric at  level $\psi$. 
Note that tail symmetry boils down to radial symmetry when it holds  that 
$$ C_k(\tau_1,\tau_2) -\tau_1\tau_2= C_k(1-\tau_1,1-\tau_2) -(1-\tau_1)(1-\tau_2)$$
for all $\tau_1, \tau_2\in(0,1)$,  see e.g.\  \citet[p.36-p.38]{nelsen06}. 
We call a process $(X_t)_{t \in \Z}$ pairwise tail-symmetric at  level $\psi$ if the copula $C_k$ of $(X_{t+k}, X_t)$ is tail-symmetric at a level $\psi$  for all~$k\in\mathbb Z$.

A test for (pairwise) tail symmetry of $(X_t)_{t \in \Z}$ at given level $\psi\in(0,1/2]$ is a test of the null hypothesis 
\begin{equation}\label{EQtest0}
H_0: C_k(\tau_1,\tau_2) -\tau_1\tau_2= C_k(1-\tau_1,1-\tau_2) -(1-\tau_1)(1-\tau_2) ~~ \forall (k,\tau_1,\tau_2) \in\mathbb Z \times\mathbb (0,\psi)^2
\end{equation}
against the  alternative  
$$
H_1:  C_k(\tau_1,\tau_2) -\tau_1\tau_2 \neq C_k(1-\tau_1,1-\tau_2) -(1-\tau_1)(1-\tau_2) \   \text{ for some } (k,\tau_1,\tau_2)\in\mathbb Z\times(0,\psi)^2\!.
$$
 The null hypothesis $H_0$ can be rewritten  as 
$$\mathfrak{f}(\lambda;\tau_1,\tau_2)=\mathfrak{f}(\lambda;1-\tau_1,1-\tau_2) \ \text{ for all $(\lambda,\tau_1, \tau_2)\in[0,\pi]\times(0,\psi)^2\!$.}$$
 Hence, the following proposition holds true.

\begin{prop}\label{proptEQ}
	The process $(X_t)_{t \in \Z}$ is pairwise tail-symmetric at  level $\psi\in(0,1/2)$
	if and only~if
	$$\specdis(\lambda ;\tau_1,\tau_2) = \specdis(\lambda ;1-\tau_1,1-\tau_2) \quad\text{for all $(\lambda,\tau_1, \tau_2)\in[0,\pi]\times(0,\psi)^2$}.$$
\end{prop}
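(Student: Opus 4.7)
The plan is to pass between equality of copulas at each lag and equality of integrated copula spectra by way of the Fourier--series identity
\[
\spec(\omega;\tau_1,\tau_2) = \frac{1}{2\pi}\sum_{k\in\Z}\gamma_k^U(\tau_1,\tau_2)\,e^{-ik\omega}, \qquad \gamma_k^U(\tau_1,\tau_2) = C_k(\tau_1,\tau_2) - \tau_1\tau_2.
\]
Pairwise tail--symmetry at level $\psi$ is precisely the identity $\gamma_k^U(\tau_1,\tau_2) = \gamma_k^U(1-\tau_1,1-\tau_2)$ for every $k\in\Z$ and $(\tau_1,\tau_2)\in(0,\psi)^2$, so the proposition reduces to showing that this pointwise--in--$k$ identity is equivalent to the integrated identity $\specdis(\lambda;\tau_1,\tau_2) = \specdis(\lambda;1-\tau_1,1-\tau_2)$ on $[0,\pi]\times(0,\psi)^2$.

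The forward direction is immediate: substituting the $\gamma_k^U$--identity into the series defining $\spec$ gives $\spec(\omega;\tau_1,\tau_2) = \spec(\omega;1-\tau_1,1-\tau_2)$ for every $\omega\in\R$, and integrating on $[0,\lambda]$ yields the required equality of $\specdis$. For the converse, fix $(\tau_1,\tau_2)\in(0,\psi)^2$ and set $\Delta(\omega) := \spec(\omega;\tau_1,\tau_2) - \spec(\omega;1-\tau_1,1-\tau_2)$. The standing absolute summability of $\{\gamma_k^U\}_{k\in\Z}$ makes $\Delta$ continuous. The hypothesis then reads $\int_0^\lambda \Delta(\omega)\,\dd\omega = 0$ for every $\lambda\in[0,\pi]$, so by the fundamental theorem of calculus $\Delta\equiv 0$ on $[0,\pi]$. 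Because each $\gamma_k^U$ is real (being a covariance), one has $\overline{\spec(\omega;\tau_1,\tau_2)} = \spec(-\omega;\tau_1,\tau_2)$, whence $\Delta(-\omega) = \overline{\Delta(\omega)}$; this extends $\Delta\equiv 0$ to $[-\pi,0]$, and $2\pi$--periodicity extends it to all of $\R$. Uniqueness of Fourier coefficients for the absolutely convergent series defining $\Delta$ then forces $\gamma_k^U(\tau_1,\tau_2) = \gamma_k^U(1-\tau_1,1-\tau_2)$ for every $k\in\Z$, which is the claimed tail--symmetry at level $\psi$.

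The only mildly delicate step is passing from equality of $\spec$ on $[0,\pi]$ to equality on the full period $[-\pi,\pi]$; this is handled entirely by the Hermitian symmetry $\overline{\spec(\omega;\tau_1,\tau_2)} = \spec(-\omega;\tau_1,\tau_2)$ inherited from the $\gamma_k^U$ being real. The remaining ingredients, namely differentiation of an absolutely convergent series and Fourier uniqueness, are standard and require no further hypotheses beyond what is assumed throughout Section~\ref{sec:defest}.
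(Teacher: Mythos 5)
Your proof is correct and follows the same route the paper takes implicitly: the paper simply observes that tail-symmetry is equivalent to $\mathfrak{f}(\lambda;\tau_1,\tau_2)=\mathfrak{f}(\lambda;1-\tau_1,1-\tau_2)$ and declares the proposition to follow, offering no further argument. You supply exactly the details the paper omits in the converse direction — differentiating the integrated identity to recover $\spec$ on $[0,\pi]$, using the Hermitian symmetry $\overline{\spec(\omega;\tau_1,\tau_2)}=\spec(-\omega;\tau_1,\tau_2)$ (valid since the $\gamma_k^U$ are real) to extend to the full period, and invoking Fourier uniqueness to recover the lag-wise identity $\gamma_k^U(\tau_1,\tau_2)=\gamma_k^U(1-\tau_1,1-\tau_2)$ — and each of these steps is sound under the standing absolute-summability assumption.
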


In view of  Proposition~\ref{proptEQ}, we also consider the following hypothesis, which is slightly weaker  than \eqref{EQtest0}:  
for arbitrary small $\eta\in(0,1/2]$ such that $\eta\leq\psi$,
\begin{eqnarray}\label{EQtest}
H_0: \sup_{(\lambda,\tau_1, \tau_2) \in  [0,\pi]\times {[\eta,\psi]^2}} 
\Big| 
\frac{\specdis (\lambda, \tau_1, \tau_2) - \specdis (\lambda, 1-\tau_1, 1-\tau_2)}{s(\tau_1, \tau_2)}
\Big| = 0,
\end{eqnarray}
where $s: [0,1]^2 \rightarrow [\varepsilon, \infty)$  for some $\varepsilon > 0$. 
For testing $H_0$  against $H_1$, define 
\begin{equation}\label{TEQ}
\widetilde{T}_{\rm EQ}^{(n)} :=  \sqrt n\sup_{(\lambda,\tau_1, \tau_2) \in  [0,\pi]\times{[\eta,\psi]^2}}  \Big| \frac{\widehat {\specdis}_{n,R}(\lambda, \tau_1, \tau_2)- \widehat {\specdis}_{n,R}(\lambda,1- \tau_1, 1-\tau_2)}{s(\tau_1, \tau_2)}\Big|.
\end{equation}
The next result then  is an immediate consequence of Theorem \ref{weakconvintegspectrum}.
\begin{prop}\label{TEQasymdist}
Let  $(X_t)_{t \in \Z}$   satisfy Assumption \ref{assumptionsspectraldist}. Then, under~$H_0$ defined in  (\ref{EQtest}), as $n\to\infty$, 
$\sqrt n \widetilde{T}_{\rm TR}^{(n)}$ 
converges in distribution to
\begin{equation*}
\sup_{(\lambda,\tau_1, \tau_2) \in  [0,\pi]\times{[\eta,\psi]^2}} 
\Big| 
\frac{\mathbb{G} (\lambda, \tau_1, \tau_2)-\mathbb{G} (\lambda,1- \tau_1, 1-\tau_2)}{s(\tau_1, \tau_2)}
\Big|,
\end{equation*}
where $\mathbb{G}(\lambda ;\tau_1,\tau_2)$ is a centered Gaussian process with covariance structure \eqref{covasympintspec}.
\end{prop}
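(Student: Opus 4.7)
The plan is to derive Proposition \ref{TEQasymdist} as a direct consequence of Theorem \ref{weakconvintegspectrum} via the continuous mapping theorem; essentially all of the technical work has already been done in the functional CLT of that theorem, so the proposition reduces to applying a continuous functional to the weakly convergent process. First I would check that the domain over which the supremum in $\widetilde T_{\rm EQ}^{(n)}$ is taken, namely $[0,\pi]\times[\eta,\psi]^2$, together with its image under the reflection $(\tau_1,\tau_2)\mapsto(1-\tau_1,1-\tau_2)$, lies inside the domain $[0,\pi]\times[\eta,1-\eta]^2$ on which Theorem \ref{weakconvintegspectrum} provides process convergence. Since $\eta\leq\psi\leq 1/2$, both $[\eta,\psi]$ and $[1-\psi,1-\eta]$ are contained in $[\eta,1-\eta]$, so the restriction of $\mathbb{G}_{n,R}$ and $\mathbb{G}$ to both the ``direct'' and ``reflected'' arguments is well-defined and the joint weak convergence applies.

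Next I would introduce the functional
\begin{equation*}
\Phi(g):=\sup_{(\lambda,\tau_1,\tau_2)\in[0,\pi]\times[\eta,\psi]^2}\left|\frac{g(\lambda,\tau_1,\tau_2)-g(\lambda,1-\tau_1,1-\tau_2)}{s(\tau_1,\tau_2)}\right|
\end{equation*}
on $\ell^\infty_{\mathbb{C}}\bigl([0,\pi]\times[\eta,1-\eta]^2\bigr)$. Because $s\ge \varepsilon>0$ by assumption, the reverse triangle inequality for suprema yields $|\Phi(g_1)-\Phi(g_2)|\leq (2/\varepsilon)\|g_1-g_2\|_\infty$, so $\Phi$ is Lipschitz (hence continuous) on the whole space, not merely almost surely continuous with respect to the law of $\mathbb{G}$. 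Under $H_0$ as formulated in \eqref{EQtest}, Proposition \ref{proptEQ} gives $\specdis(\lambda,\tau_1,\tau_2)=\specdis(\lambda,1-\tau_1,1-\tau_2)$ for every $(\lambda,\tau_1,\tau_2)\in[0,\pi]\times[\eta,\psi]^2$, so the deterministic centering cancels in the difference and one obtains the clean identity $\widetilde T_{\rm EQ}^{(n)}=\Phi(\mathbb{G}_{n,R})$. Applying the continuous mapping theorem together with $\mathbb{G}_{n,R}\rightsquigarrow\mathbb{G}$ from Theorem \ref{weakconvintegspectrum} then delivers $\Phi(\mathbb{G}_{n,R})\rightsquigarrow\Phi(\mathbb{G})$, which is precisely the limit stated in the proposition.

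Strictly speaking, there is no remaining hard step in this proposition: all the technical work---tightness jointly in $(\lambda,\tau_1,\tau_2)$, and the tracking of the quantile-estimation terms that produce the extra contributions in the covariance kernel \eqref{covasympintspec}---is absorbed into the proof of Theorem \ref{weakconvintegspectrum}. The only bookkeeping point worth emphasizing is the cancellation of the centering, which rests on the symmetry characterization of Proposition \ref{proptEQ}; once that identity is in hand, the continuity of the difference-and-weighted-supremum functional $\Phi$ is immediate, and the conclusion follows.
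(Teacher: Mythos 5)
Your proposal is correct and matches the paper's intended argument: the paper states Proposition~\ref{TEQasymdist} as an immediate consequence of Theorem~\ref{weakconvintegspectrum}, and your write-up simply fills in the routine details (domain containment, Lipschitz continuity of the weighted difference-supremum functional, and cancellation of the centering under $H_0$) of that continuous-mapping argument. Note only that the displayed statement contains a typo ($\sqrt{n}\widetilde{T}_{\rm TR}^{(n)}$ should read $\widetilde{T}_{\rm EQ}^{(n)}$, which already carries the factor $\sqrt{n}$), which you have correctly interpreted.
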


In practice, a  discretisation   
\begin{equation}\label{teststattails}
T_{\rm EQ}^{(n)} := \sqrt n\max_{(\lambda,\tau_1, \tau_2) \in S_n}
\Big| \frac{\widehat {\specdis}_{n,R}(\lambda, \tau_1, \tau_2)- \widehat {\specdis}_{n,R}(\lambda,1- \tau_1, 1-\tau_2)}{s(\tau_1, \tau_2)}\Big|,
\end{equation}
of $\widetilde{T}_{\rm EQ}^{(n)}$ is required, where the sequence $S_n$ is such that 
\begin{equation}\label{eq:setconv_EQ}
\sup_{x \in S} \inf_{y \in S_n} \|x-y\| \to 0\quad \text{for some $S \subseteq [0,\pi]\times{[\eta,\psi]^2}$}.
\end{equation}

\medskip

The $p$-value of the resulting test  for (pairwise) tail symmetry is  
\begin{equation*}
p_{\rm EQ}:= \frac{1}{n-b+1}\sum_{t=0}^{n-b}I\big\{T_{{\rm EQ}}^{(n,b,t)}> T_{\rm EQ}^{(n)}\big\},
\end{equation*}
where
\begin{align*}
&T_{\rm EQ}^{(n,b,t)}
:=\sqrt{{b}} 
\max_{ (\lambda, \tau_1, \tau_2) \in S_n}
\Big|\frac{\widehat{\specdis}\phantom{F\!\!\!\!\!}_{n,b,t,R}^X(\lambda,\tau_1,\tau_2)-\widehat{\specdis}\phantom{F\!\!\!\!\!}_{n,b,t,R}^{X}(\lambda,1-\tau_1,1-\tau_2)}{s(\tau_1, \tau_2)}\Big|
\end{align*}
with $\widehat{\specdis}\phantom{F\!\!\!\!\!}_{n,b,t,R}(\lambda, \tau_1, \tau_2)$ defined in~\eqref{eq:hatFnb}.
The next theorem establishes the  properties of the   testing procedure based on $T_{\rm EQ}^{(n,b,t)}
$.
\begin{theorem}\label{thm:subseq}
Let the assumptions of Theorem~\ref{weakconvintegspectrum} hold and assume moreover that~$(X_t)_{t \in \Z}$ is $\alpha$-mixing such that $\alpha(n) \rightarrow 0$ as $n \rightarrow \infty$. Assume further that~\eqref{eq:setconv_EQ} holds and that the weight function $s$ is continuous. Then
\begin{enumerate}
\item[(i)] the test  rejecting $H_0$ in~\eqref{EQtest} whenever $p_{\rm EQ} < \alpha$ has asymptotic level $\alpha$;
\item[(ii)] the power of that test   converges to one whenever $|\mathfrak{f}^X(\lambda;  \tau_1,\tau_2)-\mathfrak{f}^X(\lambda,1- \tau_1,1-~\!\tau_2)| \neq~\!0$ for some $(\lambda,\tau_1,\tau_2) \in S$. 
\end{enumerate}	     	
\end{theorem}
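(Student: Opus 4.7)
The plan is to follow the standard subsampling template for $\alpha$-mixing sequences, using Theorem~\ref{weakconvintegspectrum} as the invariance principle that drives everything. The argument closely mirrors the proof of Theorem~\ref{thm:substr}; only the functional applied to the estimated integrated copula spectrum changes. First I would note that the map $\Psi \colon G \mapsto \sup_{(\lambda,\tau_1,\tau_2)\in S} |G(\lambda,\tau_1,\tau_2) - G(\lambda,1-\tau_1,1-\tau_2)|/s(\tau_1,\tau_2)$ from $\ell^{\infty}_{\C}([0,\pi]\times[\eta,1-\eta]^2)$ to $\R$ is continuous in the uniform metric, because $s$ is continuous and bounded below on the compact set $[\eta,1-\eta]^2$. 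Under $H_0$ the centering $\specdis(\lambda,\tau_1,\tau_2)-\specdis(\lambda,1-\tau_1,1-\tau_2)$ vanishes on $S$, so Theorem~\ref{weakconvintegspectrum}, the asymptotic equicontinuity of $\mathbb{G}_{n,R}$, the set convergence in \eqref{eq:setconv_EQ}, and the continuous mapping theorem jointly give $T_{\rm EQ}^{(n)} \weak T := \Psi(\mathbb{G})$, which is the statement of Proposition~\ref{TEQasymdist} after the discretization step.

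Next I would verify that the subsampling distribution $L_{n,b}^{\rm EQ}(x) := (n-b+1)^{-1} \sum_{t=0}^{n-b} I\{T_{\rm EQ}^{(n,b,t)}\leq x\}$ consistently estimates the CDF $J(x):=\Prob(T\leq x)$ of the weak limit. Since $\proc$ is strictly stationary, each $T_{\rm EQ}^{(n,b,t)}$ has the same marginal law as the statistic built from a length-$b$ sample and therefore converges weakly to $T$ by applying the previous step with $n$ replaced by $b$. Combining this with $\alpha$-mixing, $b\to\infty$, and $b=o(n)$, the standard subsampling theorem for stationary $\alpha$-mixing sequences (Theorem 3.2.1 of \citet{PolitisEtAl1999}) yields $\sup_x |L_{n,b}^{\rm EQ}(x)-J(x)| = o_P(1)$ at continuity points of $J$. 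Since $T$ is the supremum of a continuous seminorm of a non-degenerate centered Gaussian process, $J$ is continuous on $(0,\infty)$. As $p_{\rm EQ} = 1 - L_{n,b}^{\rm EQ}(T_{\rm EQ}^{(n)})$ and $T_{\rm EQ}^{(n)} \weak T$, the $p$-value is asymptotically uniform on $(0,1)$, so $\Prob(p_{\rm EQ}<\alpha) \to \alpha$, giving~(i).

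For part~(ii) the strategy is a direct comparison of divergence rates. Under the alternative, set $M := \sup_{(\lambda,\tau_1,\tau_2)\in S} |\specdis(\lambda,\tau_1,\tau_2)-\specdis(\lambda,1-\tau_1,1-\tau_2)|/s(\tau_1,\tau_2) > 0$. By the uniform consistency of $\widehat{\specdis}_{n,R}$ implied by Theorem~\ref{weakconvintegspectrum}, $T_{\rm EQ}^{(n)}/\sqrt{n}\to M$ in probability, so the test statistic grows at the $\sqrt{n}$ rate. The same theorem applied at block scale, combined with a maximal inequality over the $(n-b+1)$ blocks (standard under the cumulant control of Assumption~\ref{assumptionsspectraldist}(C) and $\alpha$-mixing), yields $\max_t T_{\rm EQ}^{(n,b,t)} = O_P(\sqrt{b}\,\log n)$. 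Since $b=o(n)$, this bound is $o_P(\sqrt{n})$, so $T_{\rm EQ}^{(n)}$ strictly exceeds every $T_{\rm EQ}^{(n,b,t)}$ with probability tending to one, and hence $p_{\rm EQ}\to 0$ in probability.

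The main obstacle is this block-maximum bound. One must establish a $\sqrt{b}$-scale maximal inequality for $\max_t T_{\rm EQ}^{(n,b,t)}$ that is compatible with $\alpha$-mixing and uniform over $S_n$; while standard in spirit, the fact that each $T_{\rm EQ}^{(n,b,t)}$ is itself a supremum of a process in $(\lambda,\tau_1,\tau_2)$ forces one to combine the usual subsampling maximal inequality with a chaining argument that piggybacks on the equicontinuity delivered by Theorem~\ref{weakconvintegspectrum}. A secondary technical point is the verification of continuity of $J$ at its $(1-\alpha)$-quantile, which reduces to non-degeneracy of the covariance kernel~\eqref{covasympintspec} restricted to the antisymmetric functional that appears inside $\Psi$; this is benign under the mild assumption that the underlying series is not itself radially symmetric, which is guaranteed at the $(1-\alpha)$ quantile of a Gaussian supremum.
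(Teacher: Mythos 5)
Part (i) of your proposal is essentially the paper's argument: continuous mapping applied to Theorem~\ref{weakconvintegspectrum} plus set convergence gives the null limit of $T_{\rm EQ}^{(n)}$, the Politis--Romano--Wolf subsampling consistency result (the paper uses Proposition~7.3.1 rather than Theorem~3.2.1, but that is immaterial) gives uniform convergence of the empirical subsampling cdf to the continuous limit cdf, and the $p$-value argument closes the loop. Your remark on continuity of $J$ is handled in the paper by citing Gaenssler et al.\ rather than a non-degeneracy discussion, but this does not affect correctness.

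Part (ii) is where you diverge, and there is a genuine gap. You try to prove the stronger claim that $T_{\rm EQ}^{(n)}$ exceeds \emph{every} block statistic with probability tending to one, via a maximal inequality $\max_t T_{\rm EQ}^{(n,b,t)} = O_P(\sqrt{b}\,\log n)$ over the $n-b+1$ overlapping (hence strongly dependent) blocks. You correctly identify this as the main obstacle, but you do not establish it, and even if you did, the bound does not suffice as stated: $\sqrt{b}\,\log n = o_P(\sqrt{n})$ requires $b = o(n/\log^2 n)$, which is \emph{not} implied by the assumption $b = o(n)$ (take $b = n/\log n$). The paper avoids the maximal inequality entirely. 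It introduces the recentered block statistics $\bar T_{\rm EQ}^{(n,b,t)}$ (centered at the true difference $\specdis(\lambda,\tau_1,\tau_2)-\specdis(\lambda,1-\tau_1,1-\tau_2)$, which is nonzero under $H_1$) and their empirical cdf $\bar H_{n,b}^{\rm EQ}$, observes that
\[
p_{\rm EQ} \;\leq\; 1 - \bar H_{n,b}^{\rm EQ}\Big( (\sqrt{n}-\sqrt{b})\,\sup_{(\lambda,\tau_1,\tau_2)\in S_n}\Big|\tfrac{\specdis(\lambda,\tau_1,\tau_2)-\specdis(\lambda,1-\tau_1,1-\tau_2)}{s(\tau_1,\tau_2)}\Big| - \bar T_{\rm EQ}^{(n)}\Big),
\]
and then uses only the uniform convergence $\sup_x|\bar H_{n,b}^{\rm EQ}(x)-\bar G^{\rm EQ}(x)| = o_P(1)$ (the same subsampling consistency as in part (i)) together with $\bar T_{\rm EQ}^{(n)} = O_P(1)$ and the divergence of $(\sqrt{n}-\sqrt{b})\,c/(2s_{\max})$ to conclude $p_{\rm EQ} = o_P(1)$. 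The point is that one only needs the \emph{fraction} of blocks exceeding the threshold to vanish, not the maximum over blocks to stay below it, so no new probabilistic tool beyond what part (i) already uses is required. You should replace your rate-comparison argument by this empirical-cdf bound.
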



\section{Simulations} \label{sec:sim}

This section illustrates the finite-sample performance of the  methods proposed in Sections~\ref{sec:subsampl}--\ref{sec:symmetry}.
We consider a range  M0-M15 of fifteen models, which we  describe in detail in the Appendix. These models include linear and nonlinear ones, Gaussian and non-Gaussian ones, models with serial independence, weak serial dependence, and stronger serial dependence. Table~\ref{tb:models} lists  the main features   of these models. 
The R package quantspec  \citep{Kley2016} was used for all simulations.

\begin{table}[h!]
  \caption{Main features of models  M0-M15.  
A check mark indicates that a model violates the null hypothesis $H_0$. A cross mark indicates a null hypothesis $H_0$ we are not interested in for the model.\vspace{4mm}}
  \label{tb:models}
  \centering
  \begin{tabular}{c|c|c|c}\hline
model&$H_0:$ time&$H_0:$ tail &  short  \\ 
&-reversibility&symmetry& description   \\\hline
M0&&&i.i.d.~Gaussian\\
M1&\checkmark&\checkmark&QAR(1)   \citep{Koenker2006}\\
M2& &&AR(2)   \citep{Li2012}\\
M3&\checkmark&&ARCH(1)   \citep{LeeRao2012}.\\
M4&\checkmark&&GARCH(1,1)   \citep{BirrEtAl2019}\\
M5&\checkmark&\checkmark&EGARCH(1,1,1)  \citep{BirrEtAl2019}\\
M6a--c& &&AR(1) with  Gaussian innovation\\
M7a--c&\checkmark&&AR(1) with  Cauchy innovation\\\hline
M8a--g&\checkmark&$\times$&
\begin{tabular}{c}
time series based on an asymmetric Gumbel \\ copula   \citep{bs2014} 
\end{tabular}\\
M9a--g&\checkmark&$\times$& 
\begin{tabular}{c}
time series based on a zero total circulation\\  copula  \citep{bs2014}
\end{tabular}\\
M10a--g&\checkmark&$\times$& the modified models  M8a--g\\
M11a--g&\checkmark&$\times$& the modified models  M9a--g\\\hline
M12a--c&$\times$&\checkmark&
\begin{tabular}{c}
time series based on a Gumbel copula\\   \citep{lg13}
\end{tabular}\\
M13a--c&$\times$&\checkmark&
\begin{tabular}{c}
time series based on a Clayton copula\\   \citep{lg13}\\
\end{tabular}\\
M14&$\times$&\checkmark& 
\begin{tabular}{c}
time series based on   copula 3\\ of  \cite{nelsen93}
\end{tabular}\\
M15&$\times$&\checkmark& 
\begin{tabular}{c}
time series based on   copula 6\\ of \cite{nelsen93}
\end{tabular}\\\hline
  \end{tabular}
\end{table}

 \begin{figure}[ht!]
	\begin{center}
\includegraphics[width = \linewidth]{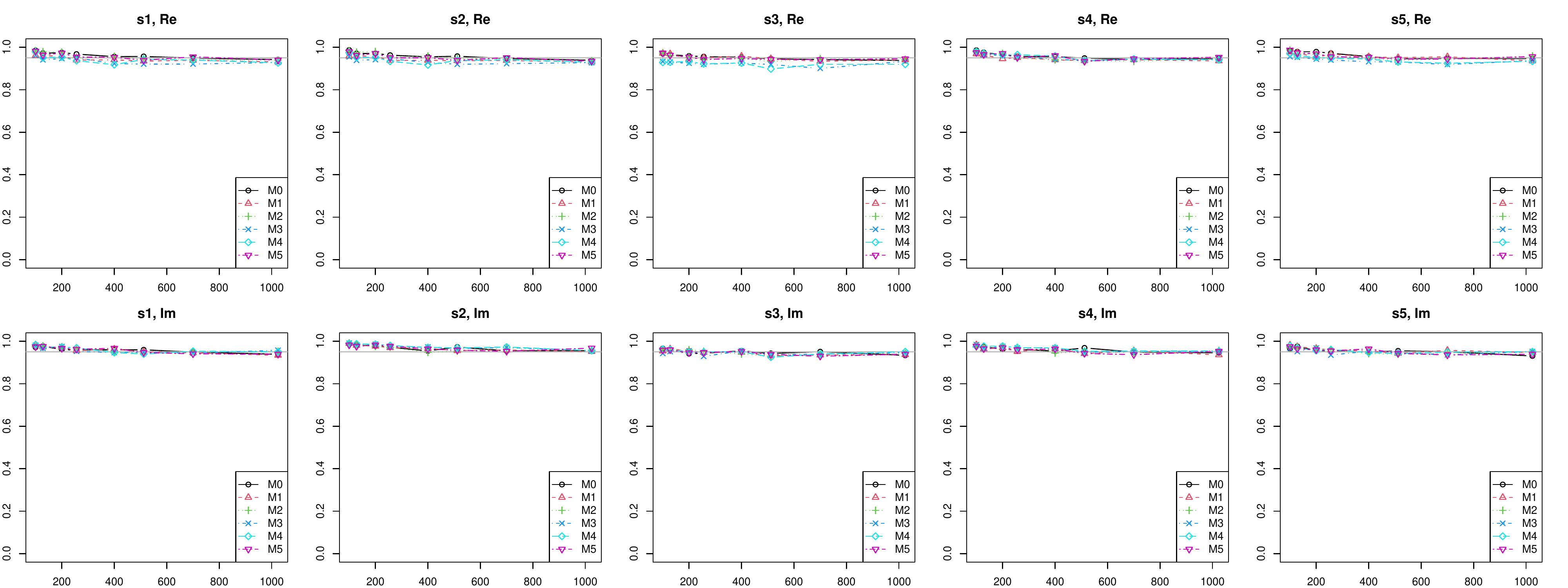}
\includegraphics[width = \linewidth]{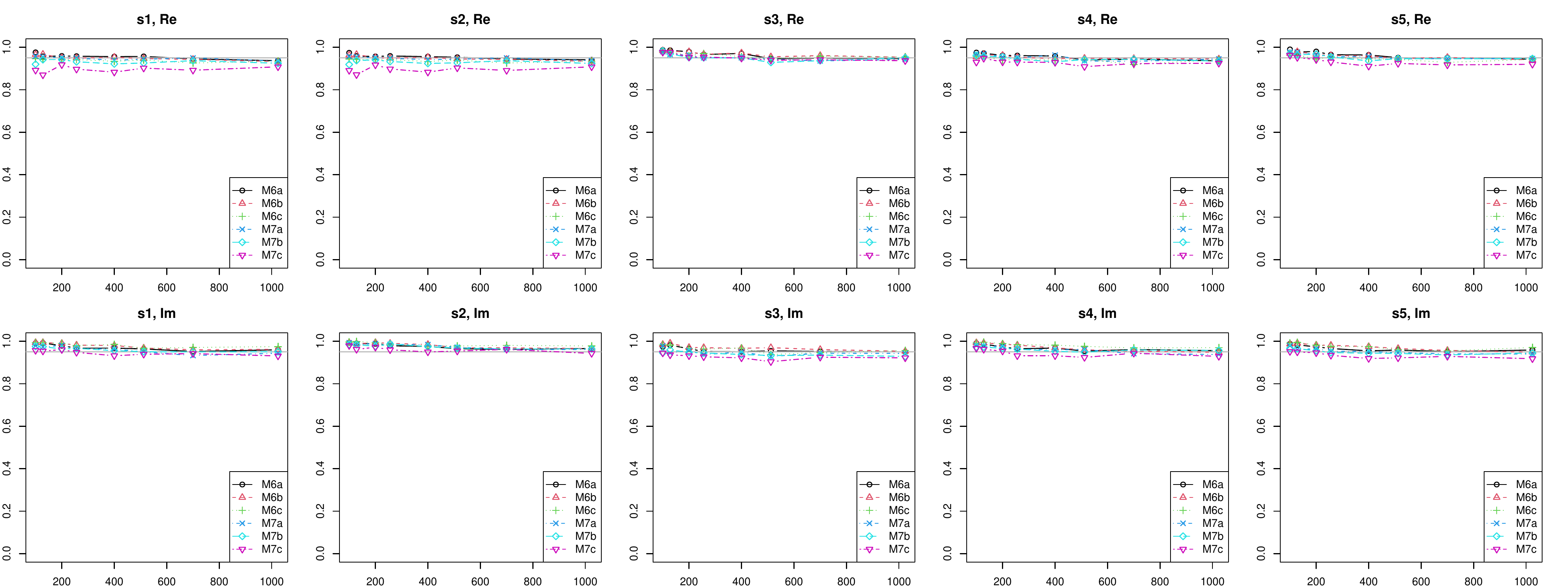}
	\end{center}
	\vspace{-0.4cm}
		\caption{\small Uniform in $(\lambda, \tau_1,\tau_2)$ confidence bands, models M0-M7.  Coverage probabilities with finite-population correction and weight functions $s_1,\ldots,s_5$  as a function of $n$.  Column $i$ corresponds to the weight function $s_i$,  the first and third rows   to the real parts, the 	second and fourth rows   to the imaginary parts, of the integrated copula spectra.\vspace{-3mm} }		
	\label{fig:CB_unif_fpc}
\end{figure}

 \begin{figure}[ht!]
	\begin{center}
\includegraphics[width = \linewidth]{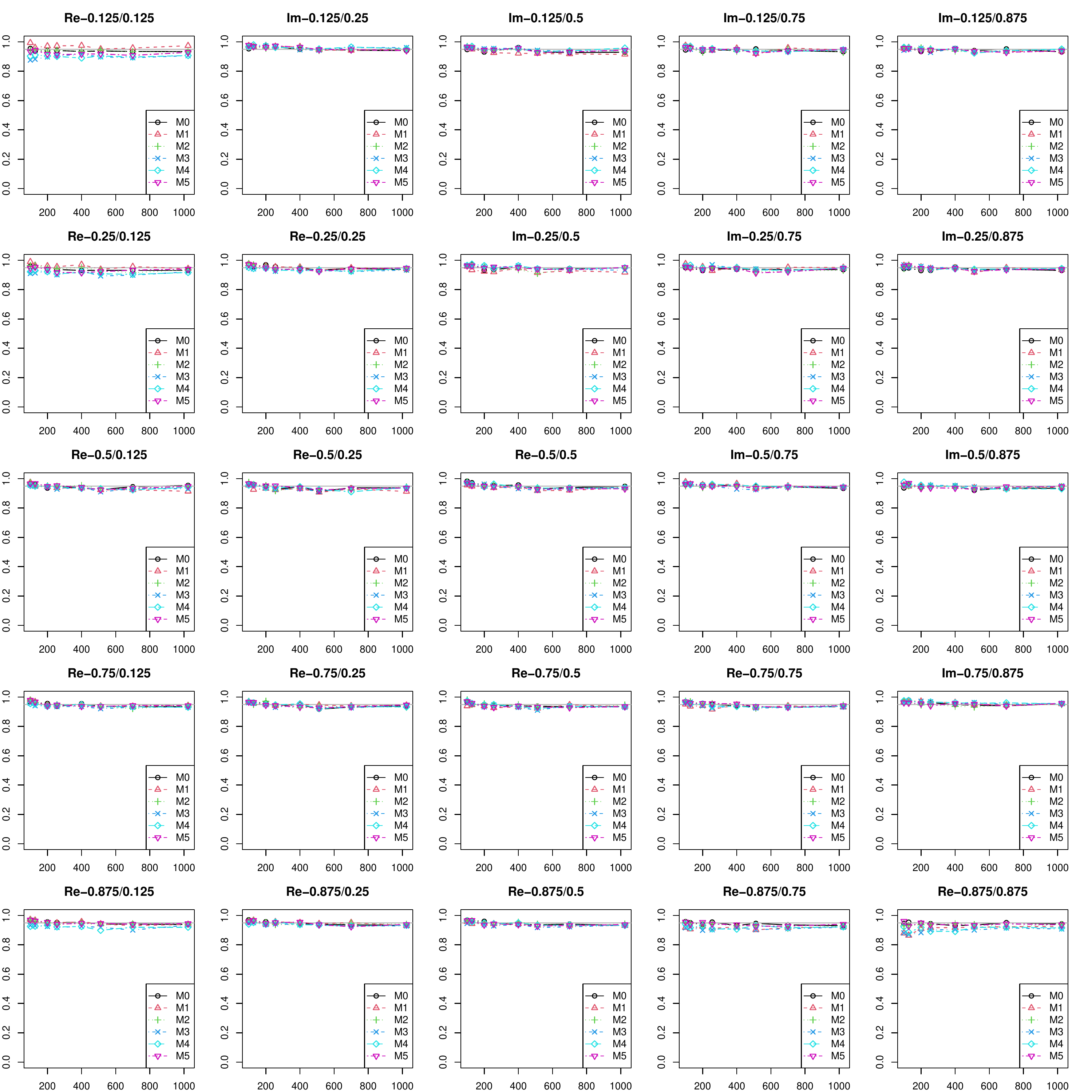}
	\end{center}
	\vspace{-0.4cm}
	    \caption{\small Uniform in $\lambda$ (pointwise in  $(\tau_1,\tau_2)$) confidence bands, models M0-M5.   Coverage probabilities with finite-population correction.    Each subplot has a label  indicating whether it is dealing with the real or imaginary part of the integrated spectrum, and which quantile levels  $(\tau_1,\tau_2)$ were considered: e.g., the subplot with  label Re-0.25/0.125 is about the real part  of the integrated spectrum with $\tau_1 = 0.25$ and $\tau_2 = 0.125$.}
	\label{fig:CB_pw_fpc_a}
\end{figure}
\begin{figure}[ht]
	\begin{center}
\includegraphics[width = \linewidth]{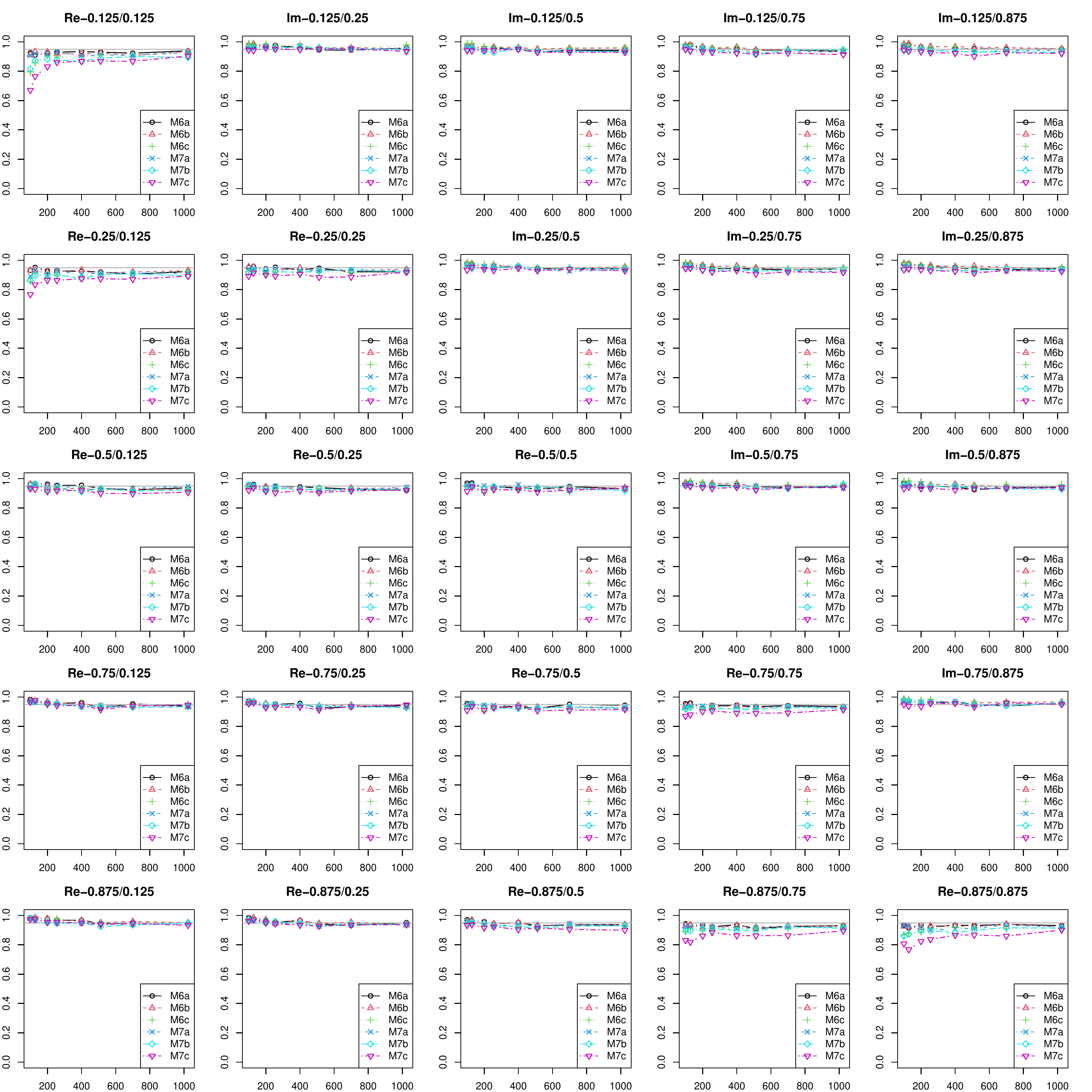}
	\end{center}
	\vspace{-0.4cm}
	    \caption{\small Uniform in $\lambda$ (pointwise in  $(\tau_1,\tau_2)$) confidence bands, models M6-M7.   Coverage probabilities with finite-population correction.    Each subplot has a label  indicating whether it is dealing with the real or imaginary part of the integrated spectrum, and which quantile levels  $(\tau_1,\tau_2)$ were considered: e.g., the subplot with  label Re-0.25/0.125 is about the real part  of the integrated spectrum with $\tau_1 = 0.25$ and $\tau_2 = 0.125$.}
	\label{fig:CB_pw_fpc_b}
\end{figure}

\subsection{Confidence bands}\label{confbandsec}
In this subsection,   models  M0-M7 from Table~\ref{tb:models} are used to study the empirical coverage\footnote{Throughout, with a slight abuse of language, we  write ``coverage probability'' instead of ``coverage frequency'' in order to avoid confusion with $\lambda$.} of the confidence bands described in Section~\ref{sec:subsampl}. 
We consi\-der~$n\!\in~\!\!\{100, 128, 200, 256, 400, 512, 700, 1024\}$ and, for each   $n$ (choosing powers of 2 for $b$ allows for quick computation of the CR periodograms), $b \in B(n) := \{2^4, 2^5, \ldots, \}\cap [0, n/2]$; as a rule of thumb, we selected 
\begin{equation}\label{eqn:rt_bw}
b^{\rm rt}_n := \max\{ 2^j :  2^j \leq 2n^{2/3}, \ j=4,\ldots,8 \},
\end{equation}
yielding $b = 32,  32,  64,  64,  64, 128, 128, 128$ for $n = 100, 128, 200, 256, 400, 512, 700, 1024$, respectively.
As for the Fourier frequencies in \eqref{eqfpc},  
 we put $d=32$.  \phantom{Figure~\ref{fig:CB_unif_fpc}, Figure~\ref{fig:CB_pw_fpc_a}} 
 
 We simulated $R=1000$ independent  series for each configuration. For each of them, we computed  the confidence band as explained in Section~\ref{sec:subsampl}. To obtain their empirical coverage, we compare them  with the actual value of the integrated copula spectral density. The latter    can be computed precisely for~M0; else, it was obtained from $500,000$ simulated CR periodograms.
 
The finite-population correction in~\eqref{eqfpc} 
  was applied; without it, the results
 (not shown here) are significantly worse:   the correction, thus, is essential in numerical applications. 



{We throughout used $\alpha = 0.05$. We simulated pointwise in $(\tau_1, \tau_2)$ coverage for all $\tau_1, \tau_2$ in~$\{1/16,\ldots,15/16\}$. For the uniform procedures, maxima with respect to all 15 quantile levels were used [see Appendix~\ref{app:covprob} for a detailed description of how coverage is computed]. For pointwise coverage, we only display results for $\tau_1, \tau_2 \in \{0.125, 0.25, 0.5, 0.75, 0.875\}$. }

Figure~\ref{fig:CB_unif_fpc} reports, for models M0-M7 and   the $(\lambda, \tau_1, \tau_2)$-uniform  procedure with finite-population correction~\eqref{eqfpc}, the coverage probabilities as functions of the sample size. For weighting, we have used the weights~$s_1,\ldots,s_5$ defined in the Appendix. All results are very close to the nominal~0.95 level; the equal weights function $s_4$  yields the best results. Figures~\ref{fig:CB_pw_fpc_a} (for models M0-M5) and \ref{fig:CB_pw_fpc_b} (for models M6-M7) report the coverage probabilities 
of the $\lambda$-uniform, $(\tau_1, \tau_2)$-pointwise procedure, still    with finite-population correction. Here and in subsequent tables reporting  $(\tau_1, \tau_2)$-pointwise results, we have followed the convention to show the results for real parts on and below the diagonal and the results for imaginary parts above the diagonal. Overall, the method (with finite-population correction) works well. As expected, large sample sizes are required to obtain reasonable coverage probability for extreme quantiles, for example, $\tau_1=\tau_2=0.125, 0.875$. Especially, the construction of confidence bands for extreme quantiles in models  M3, M4, and M7c is  challenging.
For $\tau_1\neq\tau_2$, the results for imaginary parts are better than  for real parts.

\subsection{Time-reversibility}
\label{sim_TR}

In this subsection, we evaluate, based on models M0-M7 and M8-M11,  the finite-sample performance of the  tests  for time-reversibility introduced in Section~\ref{sec:timerev} and compare it to that of their main competitors.  The simulation procedure is essentially the same as in Section~\ref{confbandsec}:  for each value of the sample   size~$n$ \linebreak 
in~$\{100, 128, 150, 200, 256, 400, 512, 700, 1024\}$, a  subsampling block  size $b(n)$
 is chosen via the rule of thumb~\eqref{eqn:rt_bw}.  The maxima  in the test statistic \eqref{Tmax}  are taken over the frequency range~$\{2\pi \ell/32; \ell = 0,1,\ldots,16\}$ and the quantiles   $\{\tau_1,\tau_2=k/8; k = 1,\ldots,7\}$, with the weight functions~$s_1,\ldots,s_5$ defined in the Appendix. The significance level throughout  is~$\alpha=0.05$.  

 For each case, $R=1000$ replications  were generated.   For each replication, two tests were performed, based on $T_{\rm TR1}^{(n,b,t)}$ (no finite-population correction) and $T_{\rm TR1\_fpc}^{(n,b,t)}:=
 \frac{T_{\rm TR1}^{(n,b,t)}}{ (1 - b/n)^{1/2}}$ (finite-population correction), respectively. The resulting  rejection frequencies with weight function $s_4 \equiv 1$ (empirical sizes for M0, M2, M6,  empirical powers for M1, M3, M4, M5, and~M7) are shown in Figure \ref{fig:TR_nofpc} for $T_{\rm TR1}^{(n,b,t)}$  and Figure \ref{fig:TR_fpc} for $T_{\rm TR1\_fpc}^{(n,b,t)}$, respectively.
  \begin{figure}[ht!]
\begin{center}
\includegraphics[width = \linewidth]{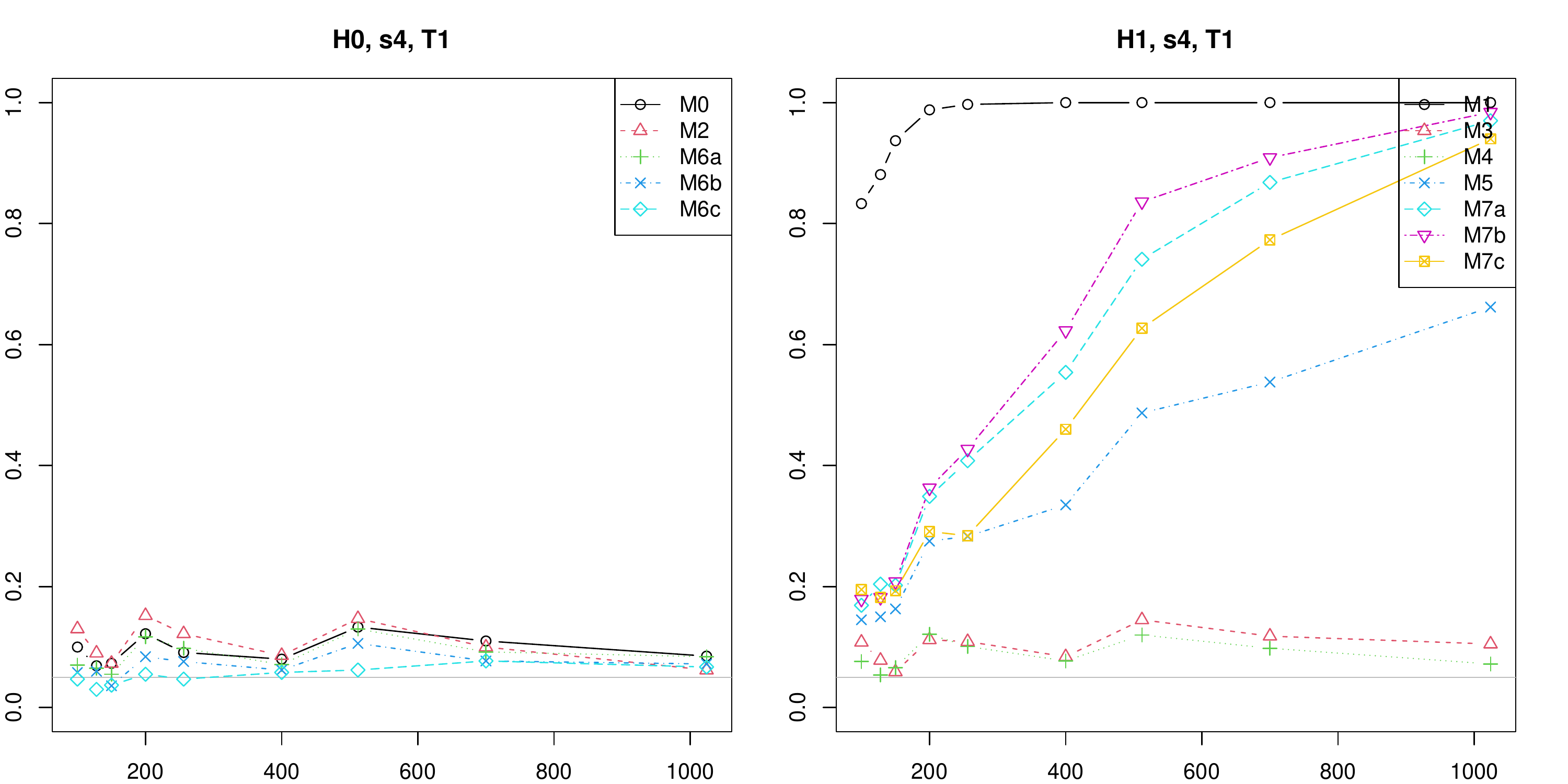}
	\end{center}
	\vspace{-0.3cm}
	\caption{\small Empirical sizes (left, time-reversible models M0, M2, and M6a-c) and powers (right, time-irreversible models M1, M3, M4, M5, and M7a-c) as functions of $n$, of the tests for time-reversibility based on $T_{\rm TR1}^{(n,b,t)}$ 
	(no  finite-population correction). 
}
	\label{fig:TR_nofpc}
\end{figure}

The test based on $T_{\rm TR1}^{(n,b,t)}$ suffers of size distortion (over--rejection) while  the size control, for  the  test  based on $T_{\rm TR1\_fpc}^{(n,b,t)}$, is good. The finite-population correction, thus, is highly re\-commended. We can see that the power of our tests is high for large sample sizes except for~M3-M5. Results for other weight functions are provided in the online supplement. 

\begin{figure}[ht]
	\begin{center}
\includegraphics[width = \linewidth]{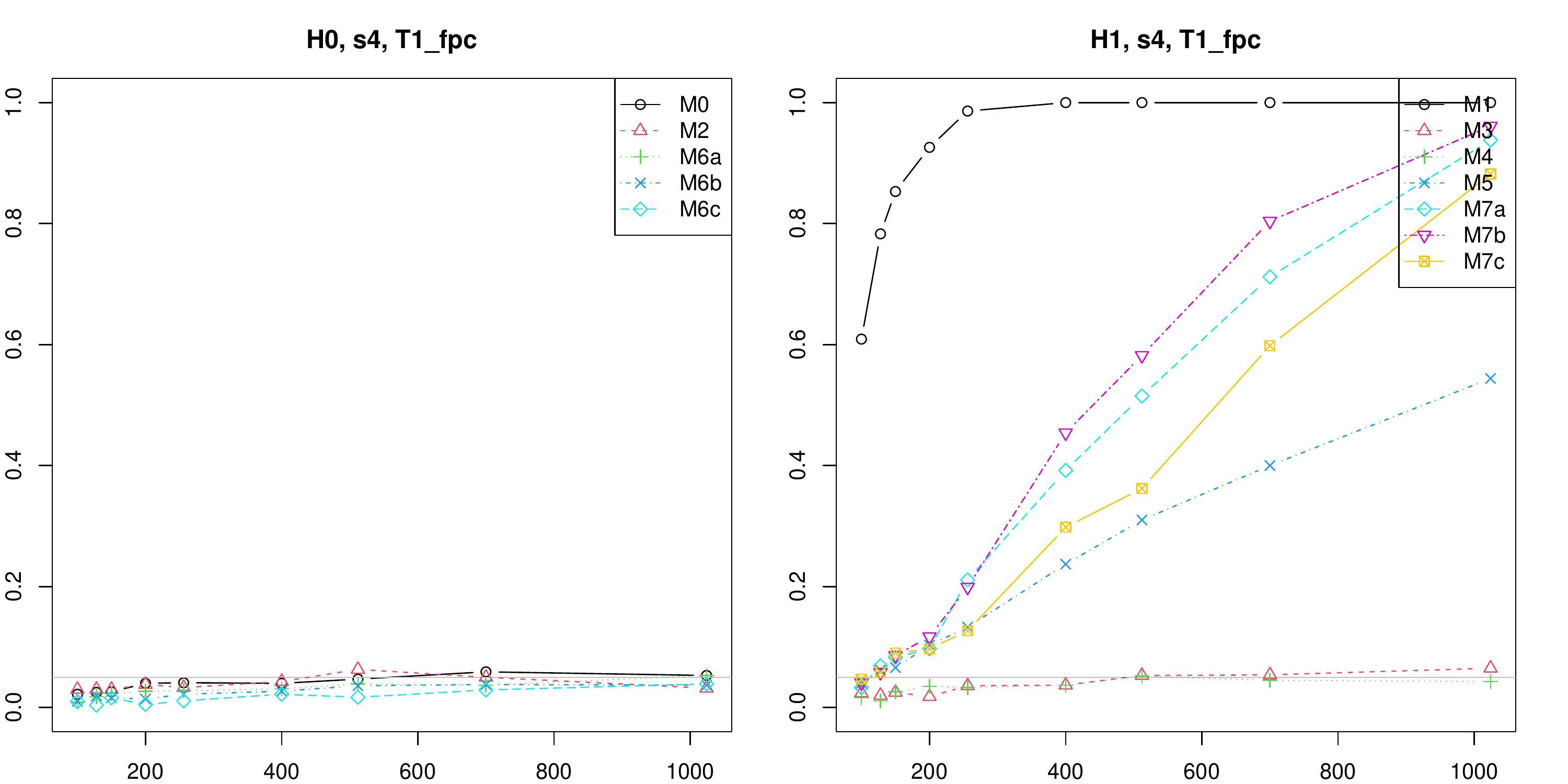}
\end{center}
\caption{\small Empirical sizes (left, time-reversible models M0, M2, and M6a-c) and powers (right, time-irreversible models M1, M3, M4, M5, and M7a-c) as functions of $n$, of the tests for time-reversibility based on $T_{\rm TR1\_fpc}^{(n,b,t)}$ (with  finite-population correction).}
\label{fig:TR_fpc}
\end{figure}

Next, we compare our   tests with the few existing ones, namely,  the tests proposed by 
\cite{rr96}, \cite{cck2000}, \cite{pp2002}, and \cite{bs2014}, based on the test statistics 
\begin{equation}
\begin{array}{rlrcl}
T_{\rm RR}
&:=\dfrac{1}{n-1}\sum_{t=0}^{n-2}(X_{t+1}^2X_{t}-X_{t+1}X_{t}^2),
&T_{\rm CCK}
&:=
&\dfrac{1}{n-1}\sum_{t=0}^{n-2}\dfrac{X_{t+1}-X_t}{1+(X_{t+1}-X_t)^2}
\\
T_{\rm PP}
&:=\dfrac{1}{n-1}\sum_{t=0}^{n-2}I\{X_{t+1}>X_t\}-\dfrac{1}{2},\ \text{and }
&T_{\rm BS}
&:=
&\sup_{(x,y)\in\mathbb R^2}\left|\hat F_n(x,y)-\hat F_n(y,x)\right|,
\end{array}\label{compeq}
\end{equation}
respectively, where $\hat F_n(x,y):=\sum_{t=0}^{n-2}I\{X_{t}\leq x, X_{t+1}\leq y\}/(n-1)$. The critical values of these tests are calculated via   local bootstrap (see Sections 3.2 and 3.3 in~\cite{bs2014}).  The intuition behind   $T_{\rm CCK}$ and $T_{\rm PP}$ is   that time-reversibility of the process $X_t$ implies the  symmetry of~$(X_t-X_{t-1})$  about the origin, while $T_{\rm RR}$ is motivated by the fact that~${\rm E}X_t^2X_{t-1}={\rm E}X_tX_{t-1}^2$ under time-reversibility if $X_t$ has finite third moments. These facts, however, are just necessary conditions for time-reversibility. As for $T_{\rm BS}$, it is based on a property of  Markov processes, which are time-reversible at lag one if and only if  the copula of $(X_0,X_1)$ is. 
\phantom{Figure \ref{fig:TR_M8_M9}}

\begin{figure}[hb!]
	\begin{center}
\includegraphics[width =.6 \linewidth]{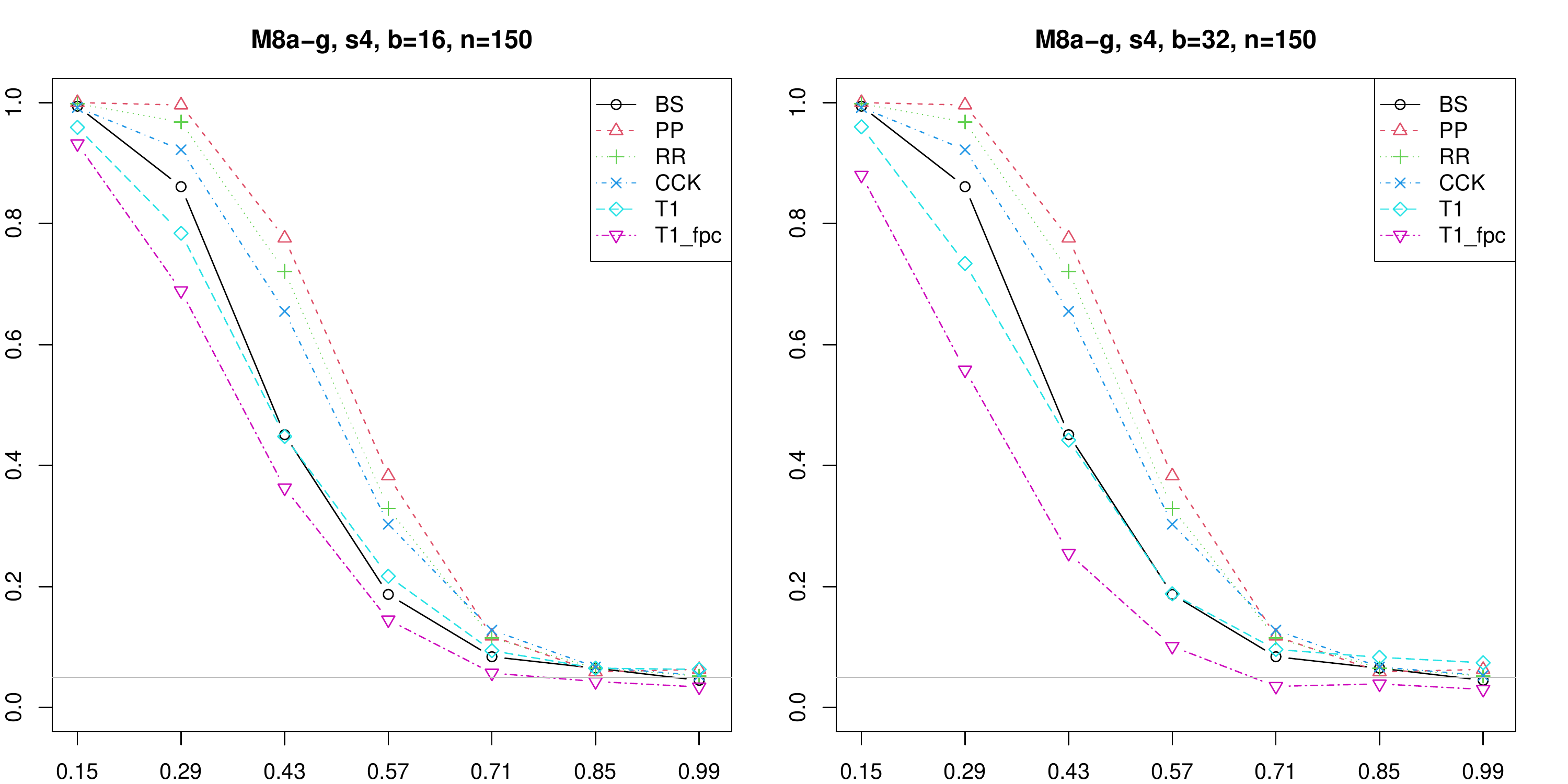}
\includegraphics[width = .6\linewidth]{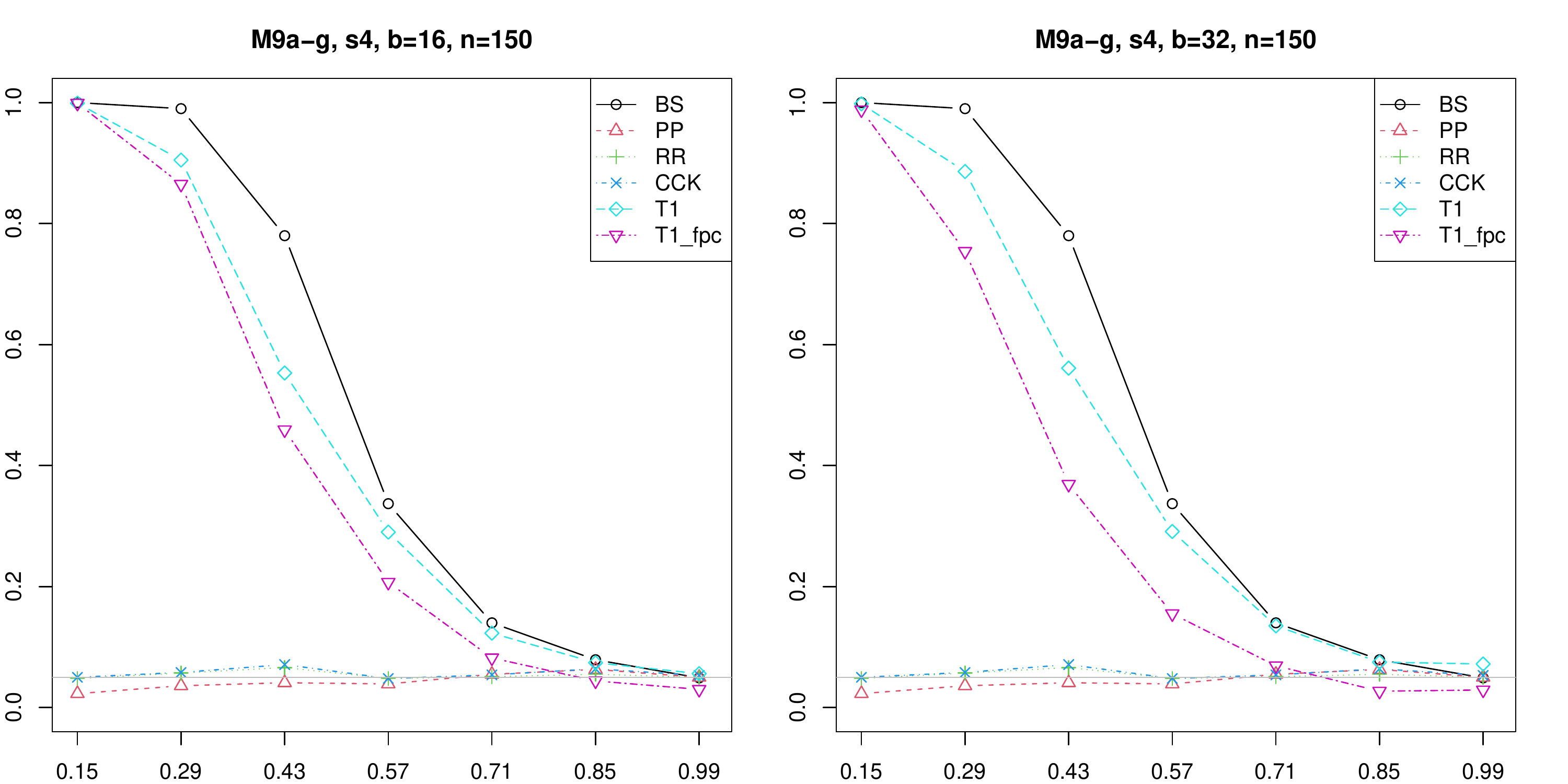}
\end{center}
	\caption{\small
Empirical power of the tests for time-reversibility described in Section \ref{sec:timerev} for $n=150$.
The upper  and lower plots correspond to M8$a$--$g$ and M9$a$--$g$,  the left   and right ones  to   subsampling block sizes $b=16$ and $b=32$, respectively.
}
	\label{fig:TR_M8_M9}
\end{figure}

Our comparison is based on simulations of models M8-M9 with sample size $n=150$   (Figure~\ref{fig:TR_M8_M9}), of  models M10-M11 with sample size $n=512$ (Figure \ref{fig:TR_M10_M11}), with subsampling block sizes  $b=16$ and $b=32$ and   weight function  $s_4 \equiv 1$. Other settings and simulations have been performed, and yield similar results. Empirical power plots are provided in Figures~\ref{fig:TR_M8_M9} and \ref{fig:TR_M10_M11}, with increasing degree of  time-reversibility (measured by the parameters $\lambda$ and $\gamma^{-1}$, respectively, with value one  corresponding to the null hypothesis of time-reversibility) on the horizontal  axis. Model M9 is such  that, among the competitors \eqref{compeq},  only $T_{\rm BS}$ can detect time-irreversibility; models  M10 and M11 are such that none of these competitors can  detect time-irreversibility. Our tests were implemented with and without finite population correction.

Figure \ref{fig:TR_M8_M9} shows the expected result that 
the power of all tests increases with the degree of time-irreversibility  for M8;  the same holds true for M9, but only for our tests and the test based on $T_{\rm BS}$, while $T_{\rm PP}$, $T_{\rm RR}$, and $T_{\rm CCK}$ (which are best under M8) are totally powerless. Our tests behave quite well in all cases, although  outperformed by the test based on $T_{\rm BS}$. Figure \ref{fig:TR_M10_M11}, however,  
\begin{figure}[ht!]
	\begin{center}
\includegraphics[width =.6\linewidth]{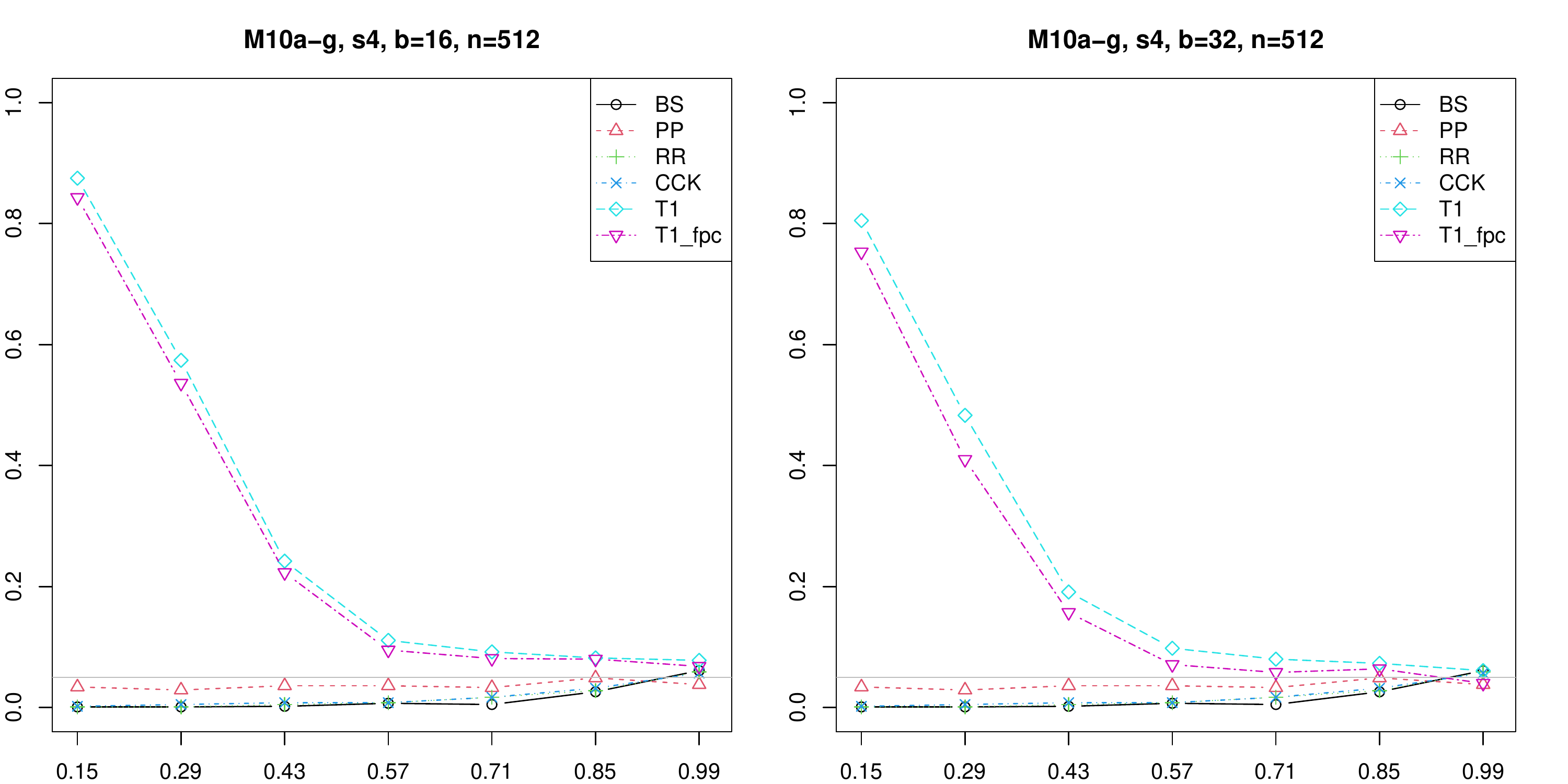}
\includegraphics[width = .6\linewidth]{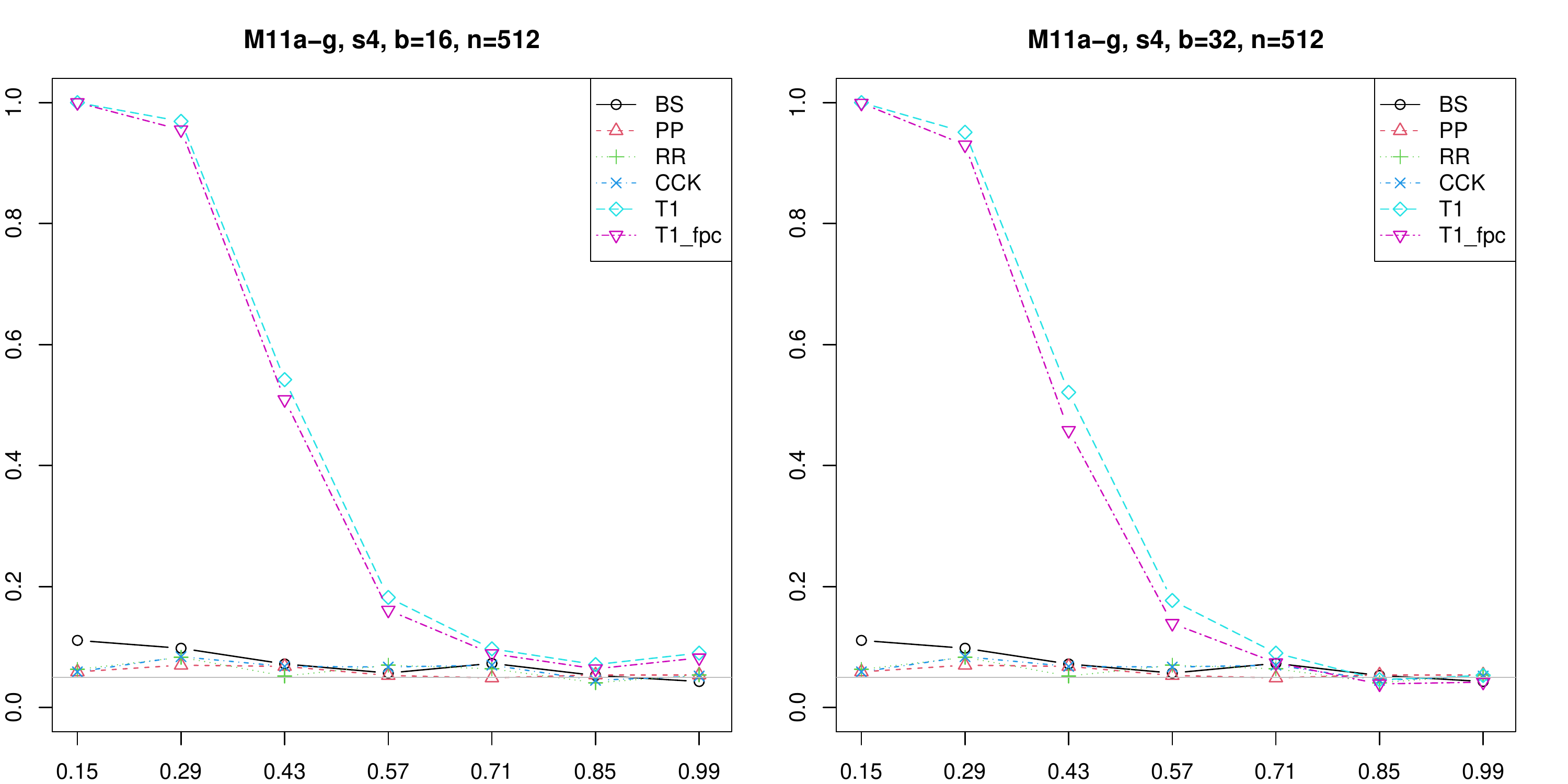}
	\end{center}
	\vspace{-0.4cm}
	\caption{\small Empirical power of the tests for time-reversibility described in Section \ref{sec:timerev}  for $n=512$.
The upper plots and lower plots correspond to M10$a$--$g$ and M11$a$--$g$,  the left   and right ones  to   subsampling block sizes $b=16$ and $b=32$, respectively.
}
	\label{fig:TR_M10_M11}
\end{figure}
establishes that  in models  M10 and M11 with  moderate degree of time-irreversibility, our tests very efficiently do reject time-reversibility while all their competitors, including  the $T_{\rm BS}$-based one, fall short from detecting anything.  The finite population correction and the choice of the subsampling block size apparently have   little impact, irrespective of the model and the sample size. 
Additional simulations can be found in the online Supplement.

\subsection{Asymmetry in tail dynamics}

In order to study the empirical size and power of  the test for quantile symmetry introduced in Section \ref{sec:symmetry}, 
we simulated observations from models M0--M7c and  M12a--M15. For each sample size  $n\in\{100, 128, 200, 256, 400, 512, 700, 1024\}$,   a subsampling block size  $b(n)$ is chosen  via  the rule of thumb   \eqref{eqn:rt_bw}. As in Section~\ref{sim_TR}, the maxima in statistic \eqref{teststattails} were taken over the frequency range   $\{2\pi \ell/32; \ell = 0,1,\ldots,16\}$ and the quantiles   $\{\tau_1,\tau_2=k/16; k = 2,3,4\}$, with weight functions $s_4 \equiv 1$. Significance level throughout  is~$\alpha=0.05$. \phantom{Figure \ref{fig:TR_nofpc}}

 For each case, $R=1000$ replications  were generated.   For each replication, two tests were performed, based on the test statistics $T_{\rm EQ}^{(n,b,t)}$ (as defined in \eqref{teststattails}; no finite-population correction) and~$T_{\rm EQ\_fpc}^{(n,b,t)}:=(1 - b/n)^{-1/2}T_{\rm EQ}^{(n,b,t)}\vspace{1mm}$ (with finite-population correction), respectively. 
  \begin{figure}[hb!]
	\begin{center}
\includegraphics[width = \linewidth]{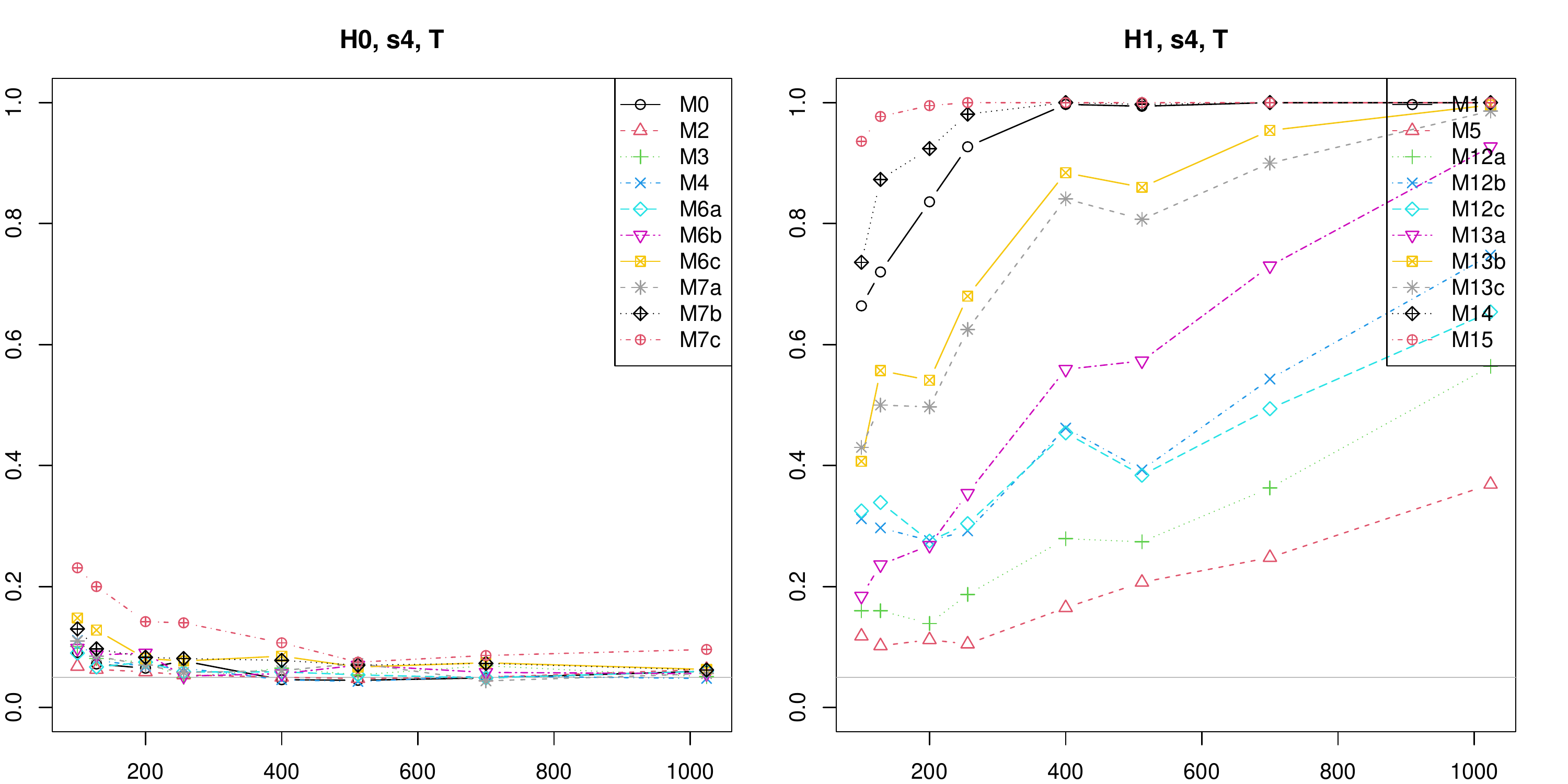}
	\end{center}
	\vspace{-0.4cm}
	\caption{\small 
Empirical sizes (left) and powers (right), as functions of $n$, of the tests for tail symmetry based on $T_{\rm EQ}^{(n,b,t)}$ under various models.  
\vspace{-4mm}}
	\label{fig:EQ_nofpc}
\end{figure}
 
  The resulting  rejection frequencies (empirical sizes for M0, M2, M3, M4, M6a-c, and~M7a-~c, empirical powers for M1, M5, M12a-c, M13a-c, M14, and M15)    are displayed in Figure \ref{fig:EQ_nofpc} for~$T_{\rm EQ}^{(n,b,t)}$  and Figure \ref{fig:EQ_fpc} for $T_{\rm EQ\_fpc}$.

The test based on  $T_{\rm EQ}^{(n,b,t)}\vspace{-.5mm}$ (Figure \ref{fig:EQ_nofpc}) exhibits significant  size distortions for small sample sizes---particularly so under  models  M7b--c and M6c. 
The test based on the corrected statistic~$\vspace{.7mm}T_{\rm EQ\_fpc}^{(n,b,t)}$ provides much better results in that respect, although overrejection is still present under M7b--c. The finite population correction, thus, is still recommended. 
As for   empirical powers, they all increase with the sample size; detecting tail asymmetry in M5 and, to a lesser extent, in M12a remains difficult. Simulation results for additional weight functions are provided in the online Supplement.  


\begin{figure}[h]
\begin{center}
\includegraphics[width = \linewidth]{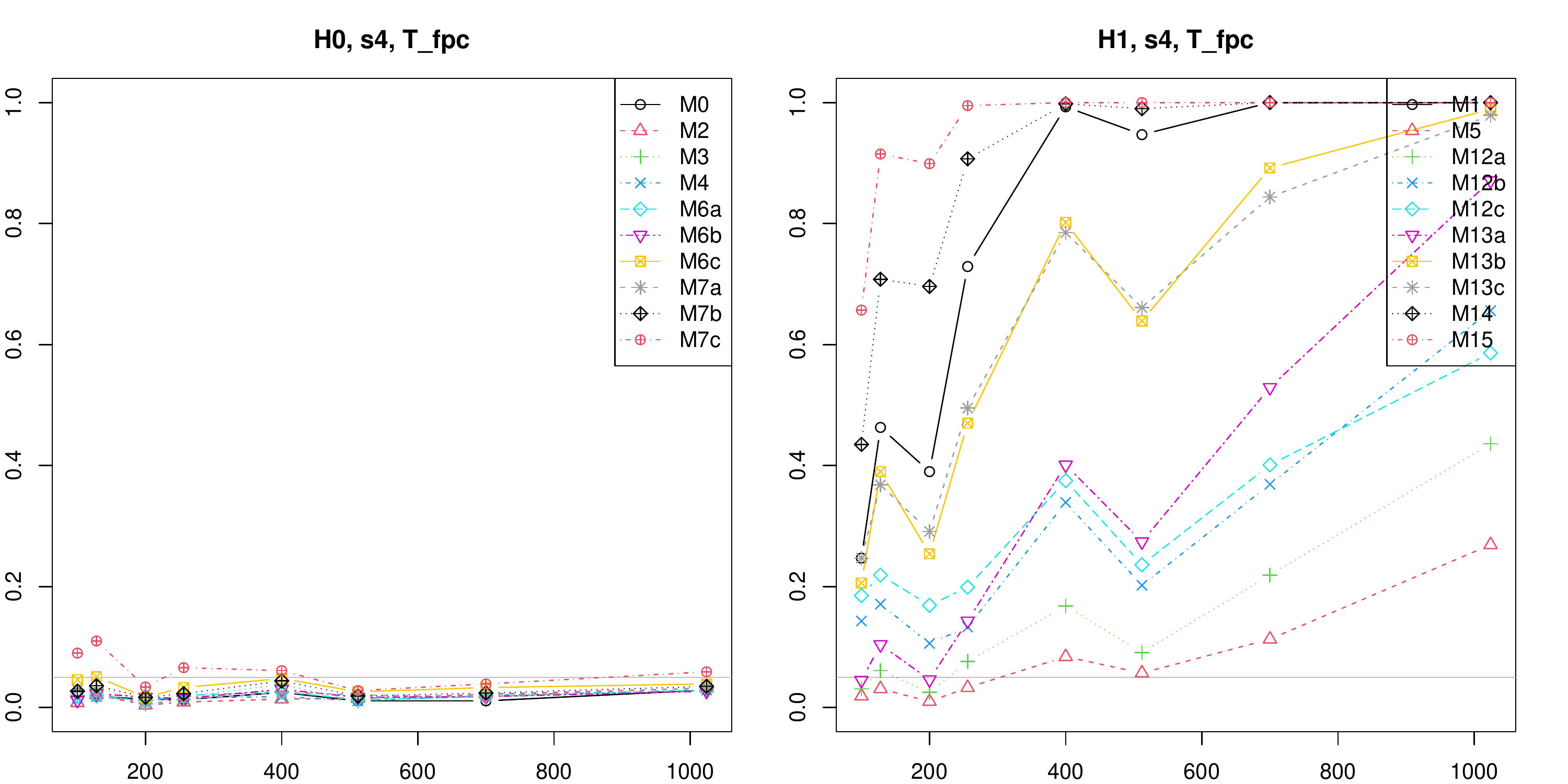}
\end{center}
\vspace{-0.4cm}
\caption{
\small 
Empirical sizes (left) and powers (right), as functions of $n$, of the tests for tail symmetry based on  $T_{\rm TR1\_fpc}$ described in Section \ref{sec:symmetry} under various models. 
The models in the left plots and right plots belong to the null and alternative, respectively. 
\vspace{-4mm}}
	\label{fig:EQ_fpc}
\end{figure}

\begin{appendix}
\section*{Additional details on simulations}\label{appn}

\subsection{Computation of coverage frequencies}\label{app:covprob}
The coverage probability of the procedure that is uniform with respect to $\lambda$ and pointwise with respect to $(\tau_1, \tau_2)$ for a real part
is defined by the empirical probability (with respect to the iterations) of the event, for fixed $\tau_1$ and $\tau_2$,
\begin{align*}
\Big\{\Re{\specdis}\phantom{F\!\!\!\!}\Big(\frac{2\pi \ell}{d},\tau_1,\tau_2\Big) \in \widehat I_{\alpha,{\rm Re}}^D\Big(\frac{2\pi \ell}{d},\tau_1,\tau_2\Big)
\text{ for all $\ell=1,\ldots,d$}\Big\},
\end{align*}
where $\Re{\specdis}$ is the true spectrum derived by the direct calculation for (M0) and the true spectrum simulated by quantspec for the other cases.
In case of the simulated spectra, they are available at the Fourier frequencies $2 \pi k / N$, with $N = 2^{11}$ and we round down to the next available such frequency
$2 \pi \lfloor N \ell /x \rfloor / N$, where $\ell \in 0, \ldots, \lfloor d / 2 \rfloor$.


The coverage probability of the procedure that is uniform with respect to $(\lambda, \tau_1, \tau_2)$ for a real part is defined by the empirical probability (with respect to the iterations) of the event
\begin{align*}
\Big\{\Re{\specdis}\phantom{F\!\!\!\!}\Big(\frac{2\pi \ell}{d},\tau_1,\tau_2\Big)\in
\widehat I_{\alpha,{\rm Re}}^E\Big(\frac{2\pi \ell}{d},\tau_1,\tau_2\Big)
\text{ for all $\ell=1,\ldots,d$ and all $(\tau_1,\tau_2)$ in the range}\Big\}.
\end{align*}

The coverage probabilities of the procedure that is uniform with respect to $\lambda$ and pointwise with respect to $(\tau_1, \tau_2)$ and 
of the procedure that is uniform with respect to $(\lambda, \tau_1, \tau_2)$ for imaginary parts are defined in the same way.

\subsection{Weight functions}
The weight functions $s_1-s_5$ are defined as 
\begin{align*}
s_1(\tau_1, \tau_2):=&\sqrt{\tau_1(1-\tau_1)\tau_2(1-\tau_2)},\\
s_2(\tau_1,\tau_2):=&\max\{\tau_1,\tau_2\}-\tau_1\tau_2,\\
s_3(\tau_1,\tau_2):=&\min\{\tau_1,\tau_2\}-\tau_1\tau_2,\\
s_4(\tau_1,\tau_2):=&1,\\
s_5(\tau_1,\tau_2):=&\sqrt{s_3(\tau_1,\tau_2)}.
\end{align*}

\subsection{Detailed definitions of the models used in simulations} 
Models M0--M15 are defined, for $j=a,b,c$ and $i=a,\ldots,g$, as
\begin{align}
\label{eqnmodel0} 
&X_t \sim \mathcal{N}(0,1) \text{ i.\,i.\,d.}, \tag{M0} 
\\
\tag{M1}\label{eqnmodel1}
&X_t = 0.1 \Phi^{-1}(U_t) + 1.9(U_t-0.5) X_{t-1}, 
\\
\tag{M2}\label{eqnmodel2}
&X_t = - 0.36 X_{t-2} + \varepsilon_t, 
\\
\tag{M3}\label{eqnmodel3}
&X_t = \bigl( 1/1.9 + 0.9 X_{t-1}^2\bigr)^{1/2} \varepsilon_t, \\
\tag{M4}\label{eqnmodel4}
&X_t = \sigma_t \varepsilon_t, \textrm{ where } \sigma_t^2 = 0.01 + 0.4X_{t-1}^2 + 0.5\sigma_{t-1}^2, \\
\tag{M5}\label{eqnmodel5}
&X_t = \sigma_t \varepsilon_t , ~ \ln(\sigma_t^2) = 0.1 + 0.21(|X_{t-1}| - \E|X_{t-1}|) - 0.2X_{t-1} + 0.8\ln(\sigma_{t-1}^2),\\
\tag{M6$j$}\label{M6j}
&X_t = \phi_jX_{t-1}+\varepsilon_t ,\\
\tag{M7$j$}\label{M7j}
&X_t = \phi_jX_{t-1}+\nu_t ,\\
\tag{M8$i$}\label{M8i}
&X_t = C_1^{-1}(U_{t}|X_{t-1})\quad\text{with $\gamma_i$}, \\
\tag{M9$i$}\label{M9i}
&X_t = C_2^{-1}(U_{t}|X_{t-1})\quad\text{with $\lambda_i$}, \\
\tag{M10$i$}\label{M10i}
&X_{2t-1} = Y_{t},\ X_{2t} = Y_{t}^\prime \quad\text{with $Y_t$ and $Y_t^\prime \sim$ (M8$i$),\quad$Y_t\mathop{\perp\!\!\!\!\perp}Y_t^\prime$},\\
\tag{M11$i$}\label{M11i}
&X_{2t-1} = Y_{t},\ X_{2t} = Y_{t}^\prime \quad\text{with $Y_t$ and $Y_t^\prime \sim$ (M9$i$),\quad$Y_t\mathop{\perp\!\!\!\!\perp}Y_t^\prime$},\\
\tag{M12$j$}\label{M12j}
&X_t = C_3^{-1}(U_{t}|X_{t-1})\quad\text{with $\tau_{3j}$},\\
\tag{M13$j$}\label{M13j}
&X_t = C_4^{-1}(U_{t}|X_{t-1})\quad\text{with $\tau_{4j}$},\\
\tag{M14}\label{M14}
&X_t = C_5^{-1}(U_{t}|X_{t-1}),\\
\tag{M15}\label{M15}
&X_t = C_6^{-1}(U_{t}|X_{t-1}).
\end{align}

In~\eqref{eqnmodel1}, $(U_t)$ denotes a sequence of i.i.d. standard uniform random variables, and~$\Phi$ denotes the cdf of $\mathcal{N}(0,1)$. This model is from the class of QAR(1)~processes, which was introduced by \cite{Koenker2006}.
In~\eqref{eqnmodel2}, $(\varepsilon_t)$ denotes a sequence of standard normal white noise. This AR(2)~process was previously considered by \cite{Li2012}.
\eqref{eqnmodel3} is ARCH(1)~process previously considered by \cite{LeeRao2012}.
\eqref{eqnmodel4} and \eqref{eqnmodel5} are GARCH(1,1) and EGARCH(1,1,1) models, respectively, previously considered by \cite{BirrEtAl2019}. 
\eqref{M6j} is AR(1) model with a Gaussian innovation. The AR coefficient of this model is defined as $\phi_j:=0.3,0.5,0.7$ for $j=a,b,c$ in order. 
In \eqref{M7j}, $(\nu_t)$ denotes a sequence of i.i.d. standard Cauchy distribution. This is AR(1) model with a Cauchy innovation. 
The ordinary spectral density of \eqref{M7j} does not exist.
In \eqref{M8i} and \eqref{M9i}, $U_t$ denotes a sequence of i.i.d. standard uniform distribution. The conditional distribution function $C_j^{-1}(u|v)$ is defined, for $(U,V)$ whose joint distribution follows $C_j$, as $C_j^{-1}(u|v):={\rm P}(U\leq u|V=v)$ for~$j=1,2$. The function $C_1(u,v)$ is the asymmetric Gumbel copula, which is defined as
\begin{align*}
C_1(u,v):=u^{1-\alpha}v^{1-\beta}
\exp
\left[-
\left\{
(-\alpha\log u)^\gamma+(-\beta\log v)^\gamma
\right\}^{1/\gamma}
\right],
\end{align*}
where $(\alpha,\beta)=(1,0.5)$ and $\gamma\geq1$.
The function $C_2(u,v)$ is the zero total circulation copula, which is defined, for $\lambda\in[0,1]$, as
\begin{align*}
C_2(u,v):=\int_0^u\int_0^v\lambda+(1-\lambda)c_0(s,t){\rm d}s{\rm d}t,
\end{align*}
where
\begin{align*}
c_{2}(u,v):=
\begin{cases}
1& 
\{0\leq v<1/4,1/4\leq u< 1/2\}
\cup
\{1/4\leq v<1/2,3/4\leq u\leq 1\}
\\
&\cup
\{1/2\leq v<3/4,0\leq u< 1/4\},
\cup
\{3/4\leq v\leq1,1/2\leq u< 3/4\}\\
0& \text{otherwise},
\end{cases}
\end{align*}
and the generalized inverse $C_j^{-1}$ is calculated via a grid of 1000 points equispaced over~$[0,1]$. 
Let $\gamma_i^{-1}$ and $\lambda_i$ take values $0.15, 0.29, 0.43, 0.57, 0.71, 0.85, 0.99$ for $i=a,\ldots,g$, respectively. These models were considered by \cite{bs2014} in their simulation. When~$\gamma_i=~\!1$ and $\lambda_i=1$,  both models reduce to the product copula. Therefore, \eqref{M8i} with~$\gamma_i=1$ and \eqref{M9i} with $\lambda_i=1$ are time-reversible.  
The models \eqref{M10i} and \eqref{M11i} are designed that any time-reversibility tests based on a first-order Markov process cannot detect time-irreversibility.
In \eqref{M12j}, the function $C_3(u,v)$ is the Gumbel copula, which is defined as
\begin{align*}
C_3(u,v):=
\exp
\left[-
\left\{
(-\log u)^\gamma+(-\log v)^\gamma
\right\}^{1/\gamma}
\right],
\end{align*}
where $\gamma=1/(1-\tau_{3j})\geq1$ with Kendall's tau $\tau_{3j}$ for $C_3$. In  \eqref{M13j}, the function $C_4(u,v)$ is the Clayton copula, which is defined as
\begin{align*}
C_4(u,v):=
(u^{-\gamma}+v^{-\gamma}-1)^{-1/\gamma},
\end{align*}
where $\gamma=2\tau_{4j}/(1-\tau_{4j})>0$ with Kendall's tau $\tau_{4j}$ for $C_4$.
The parameters $\tau_{3j}$ and $\tau_{4j}$ are defined as $\tau_{3j},\tau_{4j}:=0.25, 0.5, 0.75$ for $j=a,b,c$, 
 respectively.
These models are considered by \cite{lg13} in their simulation.
In \eqref{M14},
the function  $C_5(u,v)$ is the copula 3 of \citet[Figure 1]{nelsen93}, which is  defined as
\begin{align*}
C_5(u,v):=\int_0^u\int_0^vc_5(s,t){\rm d}s{\rm d}t,
\end{align*}
where
\begin{align*}
c_{5}(u,v):=
\begin{cases}
1& 
\{0\leq v<1/4,0\leq u< 1/4\}
\cup
\{0\leq v<1/4,3/4\leq u< 1\}
\\
&\cup\{1/4\leq v<1/2,1/2\leq u\leq 1\}
\cup
\{1/2\leq v<3/4,1/4\leq u\leq 3/4\}
\\
&\cup
\{3/4\leq v<1,0\leq u< 1/2\},\\
0& \text{otherwise}.
\end{cases}
\end{align*}
In \eqref{M15},
the function $C_6(u,v)$ is the copula 6  of  \citet[Figure 1]{nelsen93}, which is defined~as
\begin{align*}
C_6(u,v):=\int_0^u\int_0^vc_6(s,t){\rm d}s{\rm d}t,
\end{align*}
where
\begin{align*}
c_{6}(u,v):=
\begin{cases}
1& 
\{0\leq v<1/2,1/4\leq u< 3/4\}\cup
\{1/2\leq v\leq 1,0\leq u< 1/4\}\\
&\cup
\{1/2\leq v\leq 1,3/4\leq u\leq 1\}
,\\
0& \text{otherwise}.
\end{cases}
\end{align*} The models M12j--M15 are not radially symmetric.

\end{appendix}

\begin{acks}[Acknowledgments]
The first three authors contributed equally to the paper and are listed alphabetically. The authors are grateful to Brendan K. Beare and Juwon Seo for kindly sharing their Matlab codes for Section \ref{sim_TR}.
\end{acks}

\begin{funding}
This work has been supported in part by the Collaborative Research Center
“Statistical modeling of nonlinear dynamic processes” (SFB 823, Teilprojekt A1,C1) of the German Research Foundation (DFG). Yuichi Goto was supported by JSPS Grant-in-Aid for Research Activity Start-up under Grant Number JP21K20338. Stanislav Volgushev was partially supported by a discovery grant from NSERC of Canada,
\end{funding}

\bibliographystyle{apalike}
\bibliography{SpectralDistribution}

\newpage

\begin{center}
\huge{Online Supplement}
\end{center}

\section{Proofs}\label{sec::proofsintspec}

\subsection{Proof of Lemma \ref{exampleGaussianprocess}}

\begin{proof}
{Throughout the proof, let $\Phi_k$ and $\Phi$ denote the cumulative distribution functions~of 
\al{
(X_k,X_0)^T\sim\mathcal{N}_2\Big(\Big(\begin{array}{c} 0\\0\end{array}\Big), \Big(\begin{array}{cc} 1 & \rho_k \\ \rho_k & 1 \end{array}\Big)\Big)
}
and $X_0\sim\mathcal{N}\big(0,1\big)$, respectively.}

Note that 
\al{
\specdisred(\lambda ;\tau_1,\tau_2)={}&\frac{1}{2\pi}\sum_{k\in\Z\backslash\{0\}}\gamma_k^U(\tau_1,\tau_2)\frac{i}{k}(e^{-ik\lambda}-1)\\
={}&\frac{1}{2\pi}\sum_{k\in\Z\backslash\{0\}}\big(C_k(\tau_1,\tau_2)-\tau_1\tau_2\big)\frac{i}{k}(e^{-ik\lambda}-1).
}
Furthermore, as $\proc$ is Gaussian, by Sklar's theorem [see \cite{sklar59}], 
\al{
C_k(\tau_1,\tau_2):=C\big(\tau_1,\tau_2,\rho(k)\big)=\Phi_k\big(\Phi^{-1}(\tau_1),\Phi^{-1}(\tau_2)\big).
}

{We first provide a bound on $\gamma_k^U(\tau_1,\tau_2) - \tau_1\tau_2$. Observe the following representation:
\begin{align*}
& \gamma_k^U(\tau_1,\tau_2) - \tau_1\tau_2  = \Phi_k\big(\Phi^{-1}(\tau_1),\Phi^{-1}(\tau_2)\big) - \tau_1\tau_2 
\\
& = \int_{-\infty}^{\Phi^{-1}(u)}\int_{-\infty}^{\Phi^{-1}(v)}\frac{1}{2\pi\sqrt{1-\rho_k^{2}}}\exp\Big(-\frac{x^2-2\rho_k x y+y^2}{2(1-\rho_k^2)}\Big) - \frac{1}{2\pi} \exp\Big(-\frac{x^2+y^2}{2}\Big)\dd x\dd y 
\\
& = \int_{-\infty}^{\Phi^{-1}(u)}\int_{-\infty}^{\Phi^{-1}(v)} h(x,y,\rho_k) - h(x,y,0) \dd x\dd y
\end{align*}
where 
\[
h(x,y,\rho) := \frac{1}{2\pi\sqrt{1-\rho^{2}}}\exp\Big(-\frac{x^2-2\rho x y+y^2}{2(1-\rho^2)}\Big).
\]
Now, from a Taylor expansion, we find 
\[
\Big| h(x,y,\rho) - h(x,y,0)\Big| \leq |\rho| \Big| \frac{\partial h(x,y,\rho) }{\partial\rho}\big|_{\rho = \kappa(x,y)} \Big|
\]
where $\kappa(x,y)$ is a value between $0$ and $\rho$. In particular, $|\kappa(x,y)| \leq |\rho|$ for any $x,y$. A straightforward calculation shows that there exists a function $H: \R^2 \to [0,\infty)$, independent of $\kappa(x,y)$ such that for all $\kappa(x,y) \in [-1/2,1/2]$  
\[
\Big| \frac{\partial h(x,y,\rho) }{\partial\rho}\big|_{\rho = \kappa(x,y)} \Big| \leq H(x,y)
\]
for all $x,y \in R^2$ and such that
\[
K := \int_\R\int_\R H(x,y) \dd x\dd y < \infty.
\]
Summarizing, we have shown that for any $\rho_k \in [-1/2,1/2]$ and any $\tau_1,\tau_2 \in (0,1)$
\[
|\gamma_k^U(\tau_1,\tau_2) - \tau_1\tau_2| \leq K |\rho_k|.
\]
Since by assumption $\sum_k |\rho_k|/|k| < \infty$, we can have $|\rho_k| > 1/2$ for at most a finite set of $k$. Thus, 
\begin{align}
&\frac{1}{2\pi}\sum_{k\in\Z \backslash \{0\}}|\gamma_k^U(\tau_1,\tau_2)\frac{i}{k}(e^{-ik\lambda}-1)|\leq \frac{1}{\pi}\sum_{k\in\Z \backslash \{0\}}|\gamma_k^U(\tau_1,\tau_2)|/|k| \notag
\\
\leq &\sum_{k: |\rho_k| \geq 1/2} |\gamma_k^U(\tau_1,\tau_2)|  + K \sum_{k: |\rho_k| < 1/2} \frac{|\rho_k|}{|k|} < \infty . \label{eq:sumpaircum}
\end{align} 
}
Next note that, employing Leibniz's integral rule, we have
\al{
&\frac{\partial \big(C(u,v;\rho)-uv\big)}{\partial u}\\
={}&\frac{\partial}{\partial u}\int_{-\infty}^{\Phi^{-1}(u)}\int_{-\infty}^{\Phi^{-1}(v)}\frac{1}{2\pi\sqrt{1-\rho^{2}}}\exp\Big(-\frac{x^2-2\rho x y+y^2}{2(1-\rho^2)}\Big)\dd x\dd y-v\\
={}&\frac{d \Phi^{-1}(u)}{du}\int_{-\infty}^{\Phi^{-1}(v)}\frac{1}{2\pi\sqrt{1-\rho^{2}}}\exp\Big(-\frac{y^2-2\rho\Phi^{-1}(u)y+(\Phi^{-1}(u))^2}{2(1-\rho^2)}\Big)\dd y-\Phi(\Phi^{-1}(v)).
}
Observe that
\al{
\frac{d \Phi^{-1}(u)}{du}=\frac{1}{\Phi^{\prime}(\Phi^{-1}(u))}=\frac{1}{\frac{1}{\sqrt{2\pi}}\exp\big(-\frac{(\Phi^{-1}(u))^2}{2}\big)}=\sqrt{2\pi}\exp\Big(\frac{(\Phi^{-1}(u))^2}{2}\Big)
}
and, by adding a square, 
\al{
\exp\Big(-\frac{y^2-2\rho\Phi^{-1}(u)y+(\Phi^{-1}(u))^2}{2(1-\rho^2)}\Big)={}&\exp\Big(-\frac{[y-\rho\Phi^{-1}(u)]^2+(1-\rho^2)(\Phi^{-1}(u))^2}{2(1-\rho^2)}\Big)\\
={}&\exp\Big(-\frac{[y-\rho\Phi^{-1}(u)]^2}{2(1-\rho^2)}\Big)\exp\Big(-\frac{(\Phi^{-1}(u))^2}{2}\Big).
}
Thus, altogether,
\al{
\frac{\partial \big(C(u,v;\rho)-uv\big)}{\partial u}={}&\int_{-\infty}^{\Phi^{-1}(v)}\frac{1}{\sqrt{2\pi(1-\rho^{2})}}\exp\Big(-\frac{[y-\rho\Phi^{-1}(u)]^2}{2(1-\rho^2)}\Big)\dd y-\Phi(\Phi^{-1}(v))\\
={}&\Phi\Big(\frac{\Phi^{-1}(v)-\rho\Phi^{-1}(u)}{\sqrt{1-\rho^2}}\Big)-\Phi\big(\Phi^{-1}(v)\big).
}

Next, let $g(u,v;\rho):=\frac{\Phi^{-1}(v)-\Phi^{-1}(u)\rho}{\sqrt{1-\rho^2}}$ and observe that $g(u,v;0)=\Phi^{-1}(v)$. The function~$\rho\mapsto \Phi(g(u,v;\rho))$ is continuous and differentiable on $(-1,1)$. Thus, by the mean value theorem, for any $\rho \in [-1+\epsilon,1-\epsilon]$ with~$0<\epsilon<1$  there exists $\rho_0$ with $|\rho_0| \leq |\rho|$  such that 
\al{
\Phi\big(g(u,v;\rho)\big)-\Phi\big(g(u,v;0)\big)= \frac{\partial \Phi\big(g(u,v;\rho)\big)}{\partial \rho}\Big|_{\rho=\rho_0}\cdot\rho.
}
Since
\al{
&\frac{\partial \Phi\big(g(u,v;\rho)\big)}{\partial \rho}\Big|_{\rho=\rho_0}=\frac{\partial g(u,v;\rho)}{\partial \rho}\Big|_{\rho=\rho_0}\frac{d \Phi(x)}{dx}\Big|_{x=g(u,v,\rho_0)}\\
={}&\frac{-\Phi^{-1}(u)\sqrt{1-\rho_0^2}+\big(\Phi^{-1}(v)-\rho_0\Phi^{-1}(u)\big)\rho_0(1-\rho_0^2)^{-1/2}}{1-\rho_0^2}\frac{1}{\sqrt{2\pi}}e^{-g^2(u,v;\rho_0)/2}
}
and, hence,
\al{
\sup_{u,v\in[\eta,1-\eta],\rho_0\in[-1+\epsilon,1-\epsilon]}\Big|\frac{\partial \Phi\big(g(u,v;\rho)\big)}{\partial \rho}\Big|_{\rho=\rho_0}\leq{}&C_{\eta},
}
we obtain
\al{
\Big|\Phi\big(g(u,v;\rho)\big)-\Phi\big(g(u,v;0)\big)\Big|\leq{}& |\rho|\Big|\frac{\partial \Phi\big(g(u,v;\rho)\big)}{\partial \rho}\Big|_{\rho=\rho_0}\leq C_{\eta}\cdot|\rho|.
}
Therefore, 
\al{
\frac{1}{2\pi}\sum_{k\in\Z\backslash\{0\}}&\sup_{\tau_1,\tau_2\in[\eta,1-\eta]}\Big|\frac{\partial \big(C(u,v;\rho(k))-uv\big)}{\partial u}\Big|_{(u,v)=(\tau_1,\tau_2)}\big|\frac{i}{k}(e^{ik\lambda}-1)\big|\\
\leq{}&\frac{1}{2\pi}\sum_{k\in\Z\backslash\{0\}}\big(C_{\eta}|\rho(k)|\big)\frac{2}{|k|}<\infty,
}
where we have used that, by assumption, $\sum_{k\in\Z\backslash\{0\}}\frac{|\rho(k)|}{|k|}<\infty$. {Combining this with~\eqref{eq:sumpaircum} }
we can apply Theorem 7.17 in \cite{rudin64} to conclude that the partial derivatives 
\[
\frac{\partial \specdisred(\lambda,u,v)}{\partial u}\Big|_{(u,v)=(\tau_1,\tau_2)}
\]
exist and are continuous on $\{(\lambda ;\tau_1,\tau_2)\in[0,\pi]\times[\eta,1-\eta]\times [\eta,1-\eta]\}$.
\end{proof}

\subsection{Proof of Theorem \ref{weakconvintegspectrum}}

We begin by deriving an alternative representation for the copula-based spectral distribution function defined in (\ref{specdisestimatorintro}) and introduce some additional notation. 

Observe that from definitions (\ref{copperintro}) and (\ref{dnRtaudef}) we can derive the following representation of the copula rank periodogram:
\als{\label{exactrepresentationofcopper}
\copper_{n,R}^{\tau_1,\tau_2}\big(\frac{2\pi s}{n}\big)={}&\frac{1}{2\pi n}d_{n,R}^{\tau_1}\big(\frac{2\pi s}{n}\big)d_{n,R}^{\tau_2}\big(-\frac{2\pi s}{n}\big)\notag\\
={}&\frac{1}{2\pi n}\sum_{t_1=0}^{n-1}I\{\hat{F}_{n}(X_{t_1})\leq \tau_1\}e^{-it_1\frac{2\pi s}{n}}\sum_{t_2=0}^{n-1}I\{\hat{F}_{n}(X_{t_2})\leq \tau_2\}e^{it_2\frac{2\pi s}{n}}.
}  
Since $\sum_{t=0}^{n-1}e^{-it2\pi s/n}=0$ for $s\not\in n\Z$, we have, for $\tau\in[0,1]$,
\als{\label{additionalterm}
\sum_{t=0}^{n-1}I\{\hat{F}_{n}(X_{t})\leq \tau\}e^{-it\frac{2\pi s}{n}}={}&\sum_{t=0}^{n-1}\big(I\{\hat{F}_{n}(X_{t})\leq \tau\}-a\big)e^{-it\frac{2\pi s}{n}},
}
where $a\in\R$ can be chosen arbitrarily. Using property (\ref{additionalterm}) in (\ref{exactrepresentationofcopper}), after rearranging sums, we obtain
\als{\label{copperdifferent}
\copper_{n,R}^{\tau_1,\tau_2}\big(\frac{2\pi s}{n}\big)={}&\frac{1}{2\pi n}\sum_{|k|\leq n-1}\sum_{t\in\mathcal{T}_k}\big(I\{\hat{F}_{n}(X_{t+k})\leq \tau_1\}-a\big)\big(I\{\hat{F}_{n}(X_{t})\leq \tau_2\}-b\big)e^{-ik\frac{2\pi s}{n}},
}
with arbitrary $a,b\in\R$,
\al{
\mathcal{T}_k:=\{t\in\{0,\dots,n-1\}|t,t+k\in\{0,\dots,n-1\}\}
}
and $k\in\{-(n-1),\dots,n-1\}$. Next, using (\ref{copperdifferent}) in the definition of the estimator of the spectral distribution function (\ref{specdisestimatorintro}) and rearranging sums  yields
\al{
\widehat{\specdis}\phantom{F\!\!\!\!\!}_{n,R}(\lambda ;\tau_1,\tau_2)
={}&\frac{1}{2\pi}\sum_{|k|\leq n-1}\frac{2\pi}{n}\sum_{s=1}^{n-1}I\big\{0\leq \frac{2\pi s}{n}\leq\lambda\big\}e^{-ik\frac{2\pi s}{n}}\frac{n-|k|}{n}\\
&\cdot\frac{1}{n-|k|}\sum_{t\in\mathcal{T}_k}\big(I\{\hat{F}_{n}(X_{t+k})\leq \tau_1\}-a\big)\big(I\{\hat{F}_{n}(X_{t})\leq \tau_2\}-b\big).
}
Define the weights
\als{\label{defweightsquantspec}
w_{n,\lambda}(k):=\frac{2\pi}{n}\sum_{s=1}^{n-1}I\big\{0\leq \frac{2\pi s}{n}\leq\lambda\big\}e^{-ik\frac{2\pi s}{n}}
}
and the \textit{rank-based copula cumulant function of order $k$}
\als{\label{gammahatranksdef}
\hat{\gamma}_k^R(\tau_1,\tau_2)={}&\frac{1}{n-|k|}\sum_{t\in\mathcal{T}_k}\Big(I\{\hat{F}_{n}(X_{t+k})\leq \tau_1\}-a\Big)\Big(I\{\hat{F}_{n}(X_{t})\leq \tau_2\}-b\Big).
}
Then, we obtain
\als{\label{lagrepresspecdisemp}
\widehat{\specdis}\phantom{F\!\!\!\!\!}_{n,R}(\lambda ;\tau_1,\tau_2):={}&\frac{1}{2\pi}\sum_{|k|\leq n-1}w_{n,\lambda}(k)\frac{n-|k|}{n}\hat{\gamma}_k^R(\tau_1,\tau_2)\notag\\
={}&\frac{1}{2\pi}\sum_{0<|k|\leq n-1}w_{n,\lambda}(k)\frac{n-|k|}{n}\hat{\gamma}_k^R(\tau_1,\tau_2)+\frac{1}{2\pi}w_{n,\lambda}(0)\hat{\gamma}_0^R(\tau_1,\tau_2)
}
as an alternative representation of the estimator of the copula spectral distribution function.
Similarly, the copula spectral distribution function has the alternative representation
\al{
\specdis(\lambda ;\tau_1,\tau_2)={}&\frac{1}{2\pi}\sum_{k\in\Z \backslash \{0\}}\gamma_k^U(\tau_1,\tau_2)\frac{i}{k}(e^{-ik\lambda}-1)+\frac{\lambda}{2\pi}(\tau_1\wedge\tau_2-\tau_1\tau_2).
}

In the subsequent analysis, we  sometimes will  consider versions of $\widehat{\specdis}\phantom{F\!\!\!\!\!}_{n,R}(\lambda ;\tau_1,\tau_2)$ and~$\specdis(\lambda ;\tau_1,\tau_2)$, where the terms corresponding to lag $0$ are removed, that is, 
\al{
\hat{\specdisred}_{n,R}(\lambda ;\tau_1,\tau_2):=\frac{1}{2\pi}\sum_{0<|k|\leq n-1}\frac{2\pi}{n}\sum_{s=1}^{n-1}I\big\{0\leq \frac{2\pi s}{n}\leq\lambda\big\}e^{-ik\frac{2\pi s}{n}}\frac{n-|k|}{n}\hat{\gamma}_k^R(\tau_1,\tau_2)
}
and $\specdisred(\lambda ;\tau_1,\tau_2)$ as defined in~\eqref{specdiswithoutzero}. Also, in the analysis of the asymptotic properties, instead of the process $$\{\widehat{\specdis}\phantom{F\!\!\!\!\!}_{n,R}(\lambda ;\tau_1,\tau_2)\}_{(\lambda ;\tau_1,\tau_2)\in [0,\pi]\times[0,1]^2},$$ we   often prove intermediate results for the process $$\{\widehat{\specdis}\phantom{F\!\!\!\!\!}_{n,U}(\lambda ;\tau_1,\tau_2)\}_{(\lambda ;\tau_1,\tau_2)\in [0,\pi]\times[0,1]^2},\label{sym:estcopspecdisU}$$ where $\widehat{\specdis}\phantom{F\!\!\!\!\!}_{n,U}(\lambda ;\tau_1,\tau_2)$ is defined exactly as $\widehat{\specdis}\phantom{F\!\!\!\!\!}_{n,R}(\lambda ;\tau_1,\tau_2)$ but with the actual distributions function $F$ replacing the 
empirical one  $\hat{F}_{n}$. More precisely, in order to prove the weak convergence of the process $\{\widehat{\specdis}\phantom{F\!\!\!\!\!}_{n,R}(\lambda ;\tau_1,\tau_2)\}_{(\lambda ;\tau_1,\tau_2)\in [0,\pi]\times[\eta,1-\eta]^2}$ for $0<\eta<1/2$, we derive, according to Lemma 2.2.2 in \cite{vandervaart96}, the stochastic equicontinuity for the process $\{\widehat{\specdis}\phantom{F\!\!\!\!\!}_{n,U}(\lambda ;\tau_1,\tau_2)\}_{(\lambda ;\tau_1,\tau_2)\in [0,\pi]\times[\eta,1-\eta]^2}$. The impact of replacing the true distribution functions $F$ by the empirical versions $\hat{F}_n$ in $\{\widehat{\specdis}\phantom{F\!\!\!\!\!}_{n,R}(\lambda ;\tau_1,\tau_2)\}_{(\lambda ;\tau_1,\tau_2)\in [0,\pi]\times[\eta,1-\eta]^2}$ is then seen in the derivation of the covariance structure of the  limiting process.\\

We are now ready to start with the main proof. We first prove several intermediate results for the process
\al{
\mathbb{G}_{n,U}(\lambda ;\tau_1,\tau_2):=\sqrt{n}\Big(\widehat{\specdis}\phantom{F\!\!\!\!\!}_{n,U}(\lambda ;\tau_1,\tau_2)-\specdis(\lambda ;\tau_1,\tau_2)\Big)
}
indexed by $(\lambda ;\tau_1,\tau_2)\in [0,\pi]\times[\eta,1-\eta]^2$, where
\al{
\widehat{\specdis}\phantom{F\!\!\!\!\!}_{n,U}(\lambda ;\tau_1,\tau_2):={}&\frac{2\pi}{n}\sum_{s=1}^{n-1}I\big\{0\leq \frac{2\pi s}{n}\leq\lambda\big\}\copper_{n,U}^{\tau_1,\tau_2}\big(\frac{2\pi s}{n}\big),
}
with $U_t:=F(X_t)$ and
\al{
\copper_{n,U}^{\tau_1,\tau_2}\left(\omega\right)=(2\pi n)^{-1}d_{n,U}^{\tau_1}(\omega)d_{n,U}^{\tau_2}(-\omega),\qquad d_{n,U}^{\tau}(\omega):=\sum_{t=0}^{n-1}I\{U_t\leq \tau\}e^{-i\omega t}.
}
As for $\widehat{\specdis}\phantom{F\!\!\!\!\!}_{n,R}(\lambda ;\tau_1,\tau_2)$, we have
\al{
\widehat{\specdis}\phantom{F\!\!\!\!\!}_{n,U}(\lambda ;\tau_1,\tau_2)={}&\frac{1}{2\pi}\sum_{0<|k|\leq n-1}w_{n,\lambda}(k)\frac{n-|k|}{n}\hat{\gamma}_k^U(\tau_1,\tau_2)+\frac{1}{2\pi}w_{n,\lambda}(0)\hat{\gamma}_0^U(\tau_1,\tau_2),
}
where $w_{n,\lambda}(k)$ is defined in (\ref{defweightsquantspec}),
\als{\label{empautocopU}
\hat{\gamma}_k^U(\tau_1,\tau_2)={}&\frac{1}{n-|k|}\sum_{t\in\mathcal{T}_k}\Big(I\{U_{t+k}\leq \tau_1\}-a\Big)\Big(I\{U_t\leq \tau_2\}-b\Big)
}
with  $\mathcal{T}_k:=\{t\in\{0,\dots,n-1\}|t,t+k\in\{0,\dots,n-1\}\}$, $k\in\{-(n-1),\dots,n-1\}$,  where~$a,b\in\R$ can be chosen arbitrarily since $\sum_{t=0}^{n-1}e^{-it2\pi s/n} =0$ for $s\not\in n\Z$.

Finally, as for the rank-based versions, we define
\al{
\hat{\specdisred}_{n,U}(\lambda ;\tau_1,\tau_2):=\frac{1}{2\pi}\sum_{0<|k|\leq n-1}\frac{2\pi}{n}\sum_{s=1}^{n-1}I\big\{0\leq \frac{2\pi s}{n}\leq\lambda\big\}e^{-ik\frac{2\pi s}{n}}\frac{n-|k|}{n}\hat{\gamma}_k^U(\tau_1,\tau_2)
}
and
\al{
\specdisred(\lambda ;\tau_1,\tau_2):=\frac{1}{2\pi}\sum_{k\in\Z \backslash \{0\}}\gamma_k^U(\tau_1,\tau_2)\frac{i}{k}(e^{-ik\lambda}-1),
}
where the terms corresponding to lag $k=0$ have been removed.

\subsubsection{Proof of Theorem \ref{weakconvintegspectrum} -- Main arguments}\label{proofweakconvintspecoverview}

The proof of Theorem \ref{weakconvintegspectrum} is rather technical and consists of a series of lemmas and intermediate results. To facilitate the reading we give an overview of the most important arguments of the proof. \\

For all $n\in\N$, consider the stochastic process
\als{
\mathbb{G}_{n,R}(\lambda ;\tau_1,\tau_2):=\sqrt{n}\Big(\widehat{\specdis}\phantom{F\!\!\!\!\!}_{n,R}(\lambda ;\tau_1,\tau_2)-\specdis(\lambda ;\tau_1,\tau_2)\Big)
}
indexed by $(\lambda ;\tau_1,\tau_2)\in [0,\pi]\times[\eta,1-\eta]^2$. Observe that since $F$ is assumed to be continuous, the ranks of $X_0,\dots,X_{n-1}$ are almost surely the same as the ranks of $U_0,\dots,U_{n-1}$, 
 i.e., without loss of generality,  we can assume the marginals to be uniformly distributed. In what follows, let $\hat{F}_{n,U}$ denote the empirical distribution function of $U_0,\dots,U_{n-1}$. With~$\hat{\tau}:=\hat{F}_{n,U}^{-1}(\tau)$, we have, by Lemma \ref{replacetaubytauhatlem} (the proof  of which is deferred to Section~\ref{sec::auxiliaryresultsquantspec}),
\als{\label{replacetaubytauhat}
\widehat{\specdis}\phantom{F\!\!\!\!\!}_{n,R}(\lambda ;\tau_1,\tau_2)=\widehat{\specdis}\phantom{F\!\!\!\!\!}_{n,U}(\lambda,\hat{\tau}_1,\hat{\tau}_2)+o_{\Prob}(n^{-1/2}).
} 
Furthermore, by Lemma \ref{withoutlagzero} (which is also proved in Section~\ref{sec::auxiliaryresultsquantspec}),
\al{
\mathbb{G}_{n,R}(\lambda ;\tau_1,\tau_2)={}&\sqrt{n}\Big(\hat{\specdisred}_{n,R}(\lambda ;\tau_1,\tau_2)-\specdisred (\lambda ;\tau_1,\tau_2)\Big)+o_{\Prob}(1),\\
\mathbb{G}_{n,U}(\lambda ;\tau_1,\tau_2)={}&\sqrt{n}\Big(\hat{\specdisred}_{n,U}(\lambda ;\tau_1,\tau_2)-\specdisred (\lambda ;\tau_1,\tau_2)\Big)+o_{\Prob}(1),
}
and, therefore, we have the decomposition
\als{\label{zerlegungGnR}
\mathbb{G}_{n,R}(\lambda ;\tau_1,\tau_2)={}&\mathbb{G}_{n,U}(\lambda,\hat{\tau}_1,\hat{\tau}_2)-\mathbb{G}_{n,U}(\lambda ;\tau_1,\tau_2)\notag\\
&+\mathbb{G}_{n,U}(\lambda ;\tau_1,\tau_2)+\sqrt{n}\Big(\specdisred(\lambda,\hat{\tau}_1,\hat{\tau}_2)-\specdisred(\lambda ;\tau_1,\tau_2)\Big)+o_{\Prob}(1)\notag\\
={}&\mathbb{G}_{n,U}(\lambda,\hat{\tau}_1,\hat{\tau}_2)-\mathbb{G}_{n,U}(\lambda ;\tau_1,\tau_2)+\mathbb{G}_{n,U}(\lambda ;\tau_1,\tau_2)\notag\\
&+\sqrt{n}\Big((\hat{\tau}_1-\tau_1)\frac{\partial \specdisred}{\partial \tau_1}(\lambda ;\tau_1,\tau_2)+(\hat{\tau}_2-\tau_2)\frac{\partial \specdisred}{\partial \tau_2}(\lambda ;\tau_1,\tau_2)\Big)\notag\\
&+o_{\Prob}(1)
}
where, by Assumption (D),
\al{
\sqrt{n}\Big(\specdisred(\lambda,\hat{\tau}_1,\hat{\tau}_2)-\specdisred(\lambda ;\tau_1,\tau_2)\Big)={}\sqrt{n}\sum_{j=1}^2(\hat{\tau}_j-\tau_j)\frac{\partial \specdisred}{\partial \tau_j}(\lambda ;\tau_1,\tau_2)+o_{\Prob}(1),
}
as , by Lemma A.5 in \cite{DetteEtAl2016},  
\[
\sup_{\tau\in[0,1]}|\hat{F}_{n,U}^{-1}(\tau)-\tau|=O_{\Prob}(n^{-1/2}).
\]
Moreover, noting that $\sqrt{n}\big(\hat{F}_{n,U}(\tau)-\tau\big)$ converges to a tight Gaussian limit with continuous sample paths [see the proof of Lemma A.5 in \cite{kleysupp16}], we obtain under the given assumptions by Vervaat's Lemma [see \cite{vervaat72}], 
\als{\label{bahadur}
\hat{\tau}_j-\tau_j=-\Big(\hat{F}_{n,U}(\tau_j)-\tau_j\Big)+o_{\Prob}(n^{-1/2}).
}

Substituting (\ref{bahadur}) into (\ref{zerlegungGnR}) yields the decomposition 
\als{\label{decompositionofGnR}
\mathbb{G}_{n,R}(\lambda ;\tau_1,\tau_2)={}&\mathbb{G}_{n,U}(\lambda,\hat{\tau}_1,\hat{\tau}_2)-\mathbb{G}_{n,U}(\lambda ;\tau_1,\tau_2)+\mathbb{G}_{n,U}(\lambda ;\tau_1,\tau_2)\notag\\
&+\sqrt{n}\sum_{j=1}^2(\tau_j-\hat{F}_{n,U}(\tau_j))G_j(\lambda ;\tau_1,\tau_2)+o_{\Prob}(1),
}

where $G_j(\lambda ;\tau_1,\tau_2):=\frac{\partial \specdisred}{\partial \tau_j}(\lambda ;\tau_1,\tau_2);\,j=1,2$. \\


As a second step, to prove the weak convergence of $\mathbb{G}_{n,R}$, it suffices, by Lemmas 1.5.4 and~1.5.7 in \cite{vandervaart96}, to show that the finite-dimensional distributions converge in distribution and to prove stochastic equicontinuity. That is, we need to establish
\bi
\item[(i)] the convergence of the finite-dimensional distributions of the process (\ref{mainprocess}), i.e.
\als{\label{fidi}
\Big(\mathbb{G}_{n,R}(\lambda_j,\tau_1^{(j)},\tau_2^{(j)})\Big)_{j=1,\dots,L}\cid \Big(\mathbb{G}(\lambda_j,\tau_1^{(j)},\tau_2^{(j)})\Big)_{j=1,\dots,L}
}
for any $(\lambda_j,\tau_1^{(j)},\tau_2^{(j)})\in[0,\pi]\times[\eta,1-\eta]^2,\,j=1,\dots,L$ and $L\in\N$ and 
\item[(ii)] stochastic equicontinuity, i.e., for all $x>0$, 
\als{\label{asympequicont}
\lim_{\delta\downarrow 0}\limsup_{n\rightarrow \infty}\Prob\Big(\sup_{\substack{(\lambda ;\tau_1,\tau_2),(\lambda^{\prime},\tau_1^{\prime},\tau_2^{\prime})\in [0,\pi]\times[\eta,1-\eta]^2\\ \llVert(\lambda ;\tau_1,\tau_2)-(\lambda^{\prime},\tau_1^{\prime},\tau_2^{\prime})\rrVert_1\leq \delta}} |\mathbb{G}_{n,R}(\lambda ;\tau_1,\tau_2)-\mathbb{G}_{n,R}(\lambda^{\prime},\tau_1^{\prime},\tau_2^{\prime})|>x\Big)=0.
}
\ei

We start by proving the stochastic equicontinuity (\ref{asympequicont}). In regard of equation (\ref{decompositionofGnR}), our proof consists of three steps:
\begin{itemize}
\item establish the stochastic equicontinuity of $\big(\mathbb{G}_{n,U}(\lambda ;\tau_1,\tau_2)\big)_{(\lambda ;\tau_1,\tau_2)\in [0,\pi]\times[\eta,1-\eta]^2}$;
\item   establish the stochastic equicontinuity of $\sqrt{n}(\hat{F}_{n,U}(\tau)-\tau)_{\tau\in[0,1]}$;
\item   show that 
\als{\label{diffargandargprime}
\sup_{(\lambda ;\tau_1,\tau_2)\in [0,\pi]\times[\eta,1-\eta]^2}\big|\mathbb{G}_{n,U}(\lambda,\hat{\tau}_1,\hat{\tau}_2)-\mathbb{G}_{n,U}(\lambda ;\tau_1,\tau_2)\big|=o_{\Prob}(1) .
}
\end{itemize}

The   assertion in the second step  has been established in \cite{DetteEtAl2016} and  the third step follows from the first  one   (see Section \ref{vanishingpartsec}). For simplicity of notation, introduce~$a:=(\lambda ;\tau_1,\tau_2)$ and $b:=(\lambda^{\prime},\tau_1^{\prime},\tau_2^{\prime})\in[0,\pi]\times[\eta,1-\eta]^2$. The main part in the proof of  the stochastic equicontinuity of $\mathbb{G}_{n,U}(\lambda ;\tau_1,\tau_2)$
 is the establishment of a uniform bound on the increments of the process $\mathbb{G}_{n,U}$. The derivation of this bound relies on two intermediate bounds. First, we need a general bound on the moments of $\mathbb{G}_{n,U}(a)-\mathbb{G}_{n,U}(b)$ which is obtained in Lemma \ref{lemSixthMomIncrementHn}. Second, we provide in Lemma \ref{LemmaA.7neu} a sharper bound on the same   increments 
  when $a$ and $b$ are ``close.''\\

We now turn to the proof of the weak convergence of the finite-dimensional distributions~(\ref{fidi}). From 
 (\ref{diffargandargprime}), we have
\al{
\mathbb{G}_{n,R}(\lambda ;\tau_1,\tau_2)={}&\mathbb{G}_{n,U}(\lambda ;\tau_1,\tau_2)+\sqrt{n}\sum_{j=1}^2(\tau_j-\hat{F}_{n,U}(\tau_j))G_j(\lambda ;\tau_1,\tau_2)+o_{\Prob}(1)
} 
and hence, it suffices to show the convergence of the finite-dimensional distributions of the process
\al{
\mathbb{K}_n(\lambda ;\tau_1,\tau_2):=\mathbb{G}_{n,U}(\lambda ;\tau_1,\tau_2)+\sqrt{n}\sum_{j=1}^2(\tau_j-\hat{F}_{n,U}(\tau_j))G_j(\lambda ;\tau_1,\tau_2)
}
indexed by $(\lambda ;\tau_1,\tau_2)\in[0,\pi]\times[\eta,1-\eta]^2$. By Lemma P4.5 of \cite{brillinger75}, it suffices to prove that for any $\lambda_1,\dots,\lambda_J\in[0,\pi]$, $J\in\N$ and any $\tau_1^{(1)},\dots,\tau_1^{(J)},\tau_2^{(1)},\dots,\tau_2^{(J)}\in[\eta,1-\eta]$, the cumulants of the vector
\al{
\Big(\mathbb{K}_n(\lambda_1,\tau_1^{(1)},\tau_2^{(1)}),\overline{\mathbb{K}_n(\lambda_1,\tau_1^{(1)},\tau_2^{(1)})},\dots,\mathbb{K}_n(\lambda_L,\tau_1^{(J)},\tau_2^{(J)}),\overline{\mathbb{K}_n(\lambda_J,\tau_1^{(J)},\tau_2^{(J)})}\Big)
}
converge to the corresponding cumulants of the vector
\al{
\Big(\mathbb{G}(\lambda_1,\tau_1^{(1)}\tau_2^{(1)}),\overline{\mathbb{G}(\lambda_1,\tau_1^{(1)}\tau_2^{(1)})},\dots,\mathbb{G}(\lambda_J,\tau_1^{(J)}\tau_2^{(J)}),\overline{\mathbb{G}(\lambda_J,\tau_1^{(J)}\tau_2^{(J)})}\Big).
}

To this end, we proceed again in three steps:
\begin{itemize}
\item show that the first-order moments of $\mathbb{K}_n(\lambda_1 ;\tau_1,\tau_2)$ vanish;
\item show that the second-order moments yield the asymptotic covariance structure (\ref{covasympintspec});
\item show that the moments of order greater than two vanish.
\end{itemize}
The first assertion is proved in Lemma \ref{expF-F};    detailed  proofs of the second and third ones can be found in Section \ref{sec::detailsfidi}.

In the remaining part of this section, we present the proofs of (\ref{fidi}) and (\ref{asympequicont}), where technical details are deferred to Section \ref{technicaldetailsquantspec}.

\subsubsection{Proof of (\ref{asympequicont}) -- stochastic equicontinuity}\label{proofequicont}
%
Assertion (\ref{diffargandargprime}) mainly follows by the stochastic equicontinuity of $\big(\mathbb{G}_{n,U}(\lambda ;\tau_1,\tau_2)\big)_{(\lambda ;\tau_1,\tau_2)\in[0,\pi]\times[\eta,1-\eta]^2}$ which will be proved in the rest of this section. Details of the proof of (\ref{diffargandargprime}) can be found in Section~\ref{vanishingpartsec}.\\

We now prove the stochastic equicontinuity of $\big(\mathbb{G}_{n,U}(\lambda ;\tau_1,\tau_2)\big)_{(\lambda ;\tau_1,\tau_2)\in[0,\pi]\times[\eta,1-\eta]^2}$. By Lemma~\ref{expF-F}, it suffices to consider the process 
\als{\label{auxiliaryprocessdef}
\Big(\overline{\mathbb{G}}_{n,U}(\lambda ;\tau_1,\tau_2)\Big)_{(\lambda ;\tau_1,\tau_2)\in[0,\pi]\times[\eta,1-\eta]^2}:={}&\Big(\sqrt{n}(\widehat{\specdis}\phantom{F\!\!\!\!\!}_{n,U}(\lambda ;\tau_1,\tau_2)\notag\\
&\hspace{1.4cm}-\E[\widehat{\specdis}\phantom{F\!\!\!\!\!}_{n,U}(\lambda ;\tau_1,\tau_2)])\Big)_{(\lambda ;\tau_1,\tau_2)\in[0,\pi]\times[\eta,1-\eta]^2}
}
and we need to prove that ,for all $x>0$,
\al{
\lim_{\delta\downarrow 0}\limsup_{n\rightarrow \infty}\Prob\Big(\sup_{\substack{(\lambda ;\tau_1,\tau_2),(\lambda^{\prime},\tau_1^{\prime},\tau_2^{\prime})\in [0,\pi]\times[\eta,1-\eta]^2\\ \llVert(\lambda ;\tau_1,\tau_2)-(\lambda^{\prime},\tau_1^{\prime},\tau_2^{\prime})\rrVert_1\leq \delta}} |\overline{\mathbb{G}}_{n,U}(\lambda ;\tau_1,\tau_2)-\overline{\mathbb{G}}_{n,U}(\lambda^{\prime},\tau_1^{\prime},\tau_2^{\prime})|>x\Big)=0.
}

This will be achieved by applying Lemma A.1 from \cite{DetteEtAl2016} to the process  (\ref{auxiliaryprocessdef}). Therefore, we will prove in Section \ref{assumpA.1Tobi} that the assumptions for that lemma are fulfilled with the metric 
\[
d\big((\lambda ;\tau_1,\tau_2),(\lambda^{\prime},\tau_1^{\prime},\tau_2^{\prime})\big) := \llVert(\lambda ;\tau_1,\tau_2)-(\lambda^{\prime},\tau_1^{\prime},\tau_2^{\prime}) \rrVert_1^{\gamma/2}
\] 
for a $\gamma > 0$ that will be specified in the proof of \eqref{assumpA.1Tobiequation}. More precisely, for all $(\lambda ;\tau_1,\tau_2),(\lambda^{\prime},\tau_1^{\prime},\tau_2^{\prime})$ in~$[0,\pi]\times[\eta,1-\eta]^2$ with $d\big((\lambda ;\tau_1,\tau_2),(\lambda^{\prime},\tau_1^{\prime},\tau_2^{\prime})\big)\geq \bar{\eta}/2\geq 0$, we have 
\als{\label{assumpA.1Tobiequation}
\llVert \overline{\mathbb{G}}_{n,U}(\lambda ;\tau_1,\tau_2)-\overline{\mathbb{G}}_{n,U}(\lambda^{\prime},\tau_1^{\prime},\tau_2^{\prime})\rrVert_{\Psi}
\leq{}&Kd\big((\lambda ;\tau_1,\tau_2),(\lambda^{\prime},\tau_1^{\prime},\tau_2^{\prime})\big) 
}
where $\Psi$ denotes the Orlicz norm $\llVert X\rrVert_{\Psi}:=\inf \{C>0:\E[\Psi(|X|/C)]\leq 1\}\label{sym:orlicz}$.\\

In particular, (\ref{assumpA.1Tobiequation}) holds for $\Psi(x):=x^{8}$, i.e.\  $L=4$. Denoting by $D(\epsilon,d)$ the packing number of $T:=([0,\pi]\times [\eta,1-\eta]^2,d)$ [cf. \cite{vandervaart96}, page  98], we have $D(\epsilon,d)\asymp \epsilon^{-6/\gamma}$. Therefore, by Lemma A.1 in \cite{DetteEtAl2016}, for all~$x,\delta>0$ and all~$\tilde\eta\geq \bar{\eta}_n$, there exists a random variable $S_1$ and a constant $K<\infty$ such that, \linebreak for~$s:=(\lambda ;\tau_1,\tau_2)$ and~$t:=(\lambda^{\prime},\tau_1^{\prime},\tau_2^{\prime})$,
\al{
\sup_{d(s,t)\leq \delta}|\overline{\mathbb{G}}_{n,U}(s)-\overline{\mathbb{G}}_{n,U}(t)|\leq{}&S_1+2\sup_{d(s,t)\leq \bar{\eta}_n,t\in\tilde{T}}|\overline{\mathbb{G}}_{n,U}(s)-\overline{\mathbb{G}}_{n,U}(t)|
}
with
\al{
\llVert S_1\rrVert_{\Psi}\leq K\Big[\int_{\bar{\eta}_n/2}^{\tilde\eta}\Psi^{-1}\Big(D(\epsilon,d)\Big)\dd\epsilon+(\delta+2\bar{\eta}_n)\Psi^{-1}\Big(D^2(\tilde\eta,d)\Big)\Big]
}
where the set $\tilde{T}$ contains at most $D(\bar{\eta}_n,d)$ points. In particular, by Markov's inequality [cf. \cite{vandervaart96}, page 96], 
\al{
\Prob(|S_1|>x)\leq \Big(\Psi\Big(x[8K\Big(\int_{\bar{\eta}_n/2}^{\tilde\eta}\Psi^{-1}\Big(D(\epsilon,d)\Big)\dd\epsilon+(\delta+2\bar{\eta}_n)\Psi^{-1}\Big(D^2(\tilde\eta,d)\Big)\Big)\Big]^{-1}\Big)\Big)^{-1}.
}

Hence,
\al{
&\Prob\Bigg(\sup_{\substack{(\lambda ;\tau_1,\tau_2),(\lambda^{\prime},\tau_1^{\prime},\tau_2^{\prime})\in [0,\pi]\times[\eta,1-\eta]^2\\ \llVert(\lambda ;\tau_1,\tau_2)-(\lambda^{\prime},\tau_1^{\prime},\tau_2^{\prime})\rrVert_1\leq \delta^{2/\gamma}}} |\overline{\mathbb{G}}_{n,U}(\lambda ;\tau_1,\tau_2)-\overline{\mathbb{G}}_{n,U}(\lambda^{\prime},\tau_1^{\prime},\tau_2^{\prime})|>x\Bigg)\\
={}&\Prob\Bigg(\sup_{\substack{(\lambda ;\tau_1,\tau_2),(\lambda^{\prime},\tau_1^{\prime},\tau_2^{\prime})\in [0,\pi]\times[\eta,1-\eta]^2\\ d((\lambda ;\tau_1,\tau_2),(\lambda^{\prime},\tau_1^{\prime},\tau_2^{\prime}))\leq \delta}} |\overline{\mathbb{G}}_{n,U}(\lambda ;\tau_1,\tau_2)-\overline{\mathbb{G}}_{n,U}(\lambda^{\prime},\tau_1^{\prime},\tau_2^{\prime})|>x\Bigg)\\
\leq{}&\Prob\Bigg(S_1+2\sup_{\substack{(\lambda ;\tau_1,\tau_2),(\lambda^{\prime},\tau_1^{\prime},\tau_2^{\prime})\in [0,\pi]\times[\eta,1-\eta]^2\\ d((\lambda ;\tau_1,\tau_2),(\lambda^{\prime},\tau_1^{\prime},\tau_2^{\prime}))\leq \bar{\eta}_n}} |\overline{\mathbb{G}}_{n,U}(\lambda ;\tau_1,\tau_2)-\overline{\mathbb{G}}_{n,U}(\lambda^{\prime},\tau_1^{\prime},\tau_2^{\prime})|>x\Bigg)\\
\leq{}&\Prob(|S_1|>x/2)+\Prob\Bigg(\sup_{\substack{(\lambda ;\tau_1,\tau_2),(\lambda^{\prime},\tau_1^{\prime},\tau_2^{\prime})\in [0,\pi]\times[\eta,1-\eta]^2\\ d((\lambda ;\tau_1,\tau_2),(\lambda^{\prime},\tau_1^{\prime},\tau_2^{\prime}))\leq \bar{\eta}_n}} |\overline{\mathbb{G}}_{n,U}(\lambda ;\tau_1,\tau_2)-\overline{\mathbb{G}}_{n,U}(\lambda^{\prime},\tau_1^{\prime},\tau_2^{\prime})|>x/4\Bigg)\\
\leq{}&\Big(\Big(\frac{x}{2}[8K\Big(\int_{\bar{\eta}_n/2}^{\tilde\eta}(C_1\epsilon^{-6/\gamma})^{1/8}\dd\epsilon+(\delta+2\bar{\eta}_n)(C_2\tilde\eta^{-12/\gamma})^{1/8}\Big)\Big]^{-1}\Big)^8\Big)^{-1}\\
&+\Prob\Bigg(\sup_{\substack{(\lambda ;\tau_1,\tau_2),(\lambda^{\prime},\tau_1^{\prime},\tau_2^{\prime})\in [0,\pi]\times[\eta,1-\eta]^2\\ d((\lambda ;\tau_1,\tau_2),(\lambda^{\prime},\tau_1^{\prime},\tau_2^{\prime}))\leq \bar{\eta}_n}} |\overline{\mathbb{G}}_{n,U}(\lambda ;\tau_1,\tau_2)-\overline{\mathbb{G}}_{n,U}(\lambda^{\prime},\tau_1^{\prime},\tau_2^{\prime})|>x/4\Bigg)\\
\leq{}&\Big[\frac{8\bar{K}}{x/2}\Big(\int_{\bar{\eta}_n/2}^{\tilde\eta}\epsilon^{-3/(4\gamma)}\dd\epsilon+(\delta+2\bar{\eta}_n)\tilde\eta^{-3/(2\gamma)}\Big)\Big]^8\\
&+\Prob\Bigg(\sup_{\substack{(\lambda ;\tau_1,\tau_2),(\lambda^{\prime},\tau_1^{\prime},\tau_2^{\prime})\in [0,\pi]\times[\eta,1-\eta]^2\\ d((\lambda ;\tau_1,\tau_2),(\lambda^{\prime},\tau_1^{\prime},\tau_2^{\prime}))\leq \bar{\eta}_n}} |\overline{\mathbb{G}}_{n,U}(\lambda ;\tau_1,\tau_2)-\overline{\mathbb{G}}_{n,U}(\lambda^{\prime},\tau_1^{\prime},\tau_2^{\prime})|>x/4\Bigg).
}
Now choose $1>\gamma>3/4$. Letting $n$ tend to infinity, the second term  equals 
\al{
&\Prob\Bigg(\sup_{\substack{(\lambda ;\tau_1,\tau_2),(\lambda^{\prime},\tau_1^{\prime},\tau_2^{\prime})\in [0,\pi]\times[\eta,1-\eta]^2\\ \llVert(\lambda ;\tau_1,\tau_2)-(\lambda^{\prime},\tau_1^{\prime},\tau_2^{\prime})\rrVert_1\leq 2^{2/\gamma}n^{-1/\gamma}}} |\overline{\mathbb{G}}_{n,U}(\lambda ;\tau_1,\tau_2)-\overline{\mathbb{G}}_{n,U}(\lambda^{\prime},\tau_1^{\prime},\tau_2^{\prime})|>x/4\Bigg)
}
and converges to $0$ by Lemma \ref{LemmaA.7neu}. Hence,
\al{
&\lim_{\delta\downarrow 0}\limsup_{n\rightarrow \infty}\Prob\Bigg(\sup_{\substack{(\lambda ;\tau_1,\tau_2),(\lambda^{\prime},\tau_1^{\prime},\tau_2^{\prime})\in [0,\pi]\times[\eta,1-\eta]^2\\ \llVert(\lambda ;\tau_1,\tau_2)-(\lambda^{\prime},\tau_1^{\prime},\tau_2^{\prime})\rrVert_1\leq \delta^{2/\gamma}}} |\overline{\mathbb{G}}_{n,U}(\lambda ;\tau_1,\tau_2)-\overline{\mathbb{G}}_{n,U}(\lambda^{\prime},\tau_1^{\prime},\tau_2^{\prime})|>x\Bigg)\\
\leq{}&\lim_{\delta\downarrow 0}\Big[\frac{8\bar{K}}{x}\Big(\int_{0}^{\tilde\eta}\epsilon^{-3/(4\gamma)}\dd\epsilon+\delta\tilde\eta^{-3/(2\gamma)}\Big)\Big]^8\\
\leq{}&\Big[\frac{8\bar{K}}{x}\int_{0}^{\tilde\eta}\epsilon^{-3/(4\gamma)}\dd\epsilon\Big]^8
}
for every $x,\tilde\eta>0$. Since, $\tilde\eta$ can be chosen arbitrarily small, the integral can be made arbitrarily small and (\ref{asympequicont}) follows.

\subsubsection{Proof of (\ref{fidi}) -- convergence of the finite-dimensional distributions}
In view of (\ref{decompositionofGnR}) and (\ref{diffargandargprime}), it suffices to prove that the finite-dimensional distributions of 
\al{
\mathbb{K}_n(\lambda ;\tau_1,\tau_2):=\mathbb{G}_{n,U}(\lambda ;\tau_1,\tau_2)+\sqrt{n}\sum_{j=1}^2(\tau_j-\hat{F}_{n,U}(\tau_j))G_j(\lambda ;\tau_1,\tau_2)
}
converge, i.e.\  that
\al{
\Big(\mathbb{K}_n(\lambda_j,\tau_1^{(j)},\tau_2^{(j)})\Big)_{j=1,\dots,J}\cid \Big(\mathbb{G}(\lambda_j,\tau_1^{(j)},\tau_2^{(j)})\Big)_{j=1,\dots,J}.
}
for any $(\lambda_j,\tau_1^{(j)},\tau_2^{(j)})\in[0,\pi]\times[\eta,1-\eta]^2,\,j=1,\dots,J$ and $J\in\N$, where the process $\mathbb{G}$ is defined in Theorem \ref{weakconvintegspectrum}. For this purpose, we apply Lemma P4.5 of \cite{brillinger75}, that is we prove that for any $\lambda_1,\dots,\lambda_J\in[0,\pi]$, $J\in\N$ and any $\tau_1^{(1)},\dots,\tau_1^{(J)},\tau_2^{(1)},\dots,\tau_2^{(J)}$ in~$[\eta,1-\eta]$, the cumulants of the vector
\al{
\Big(\mathbb{K}_n(\lambda_1,\tau_1^{(1)},\tau_2^{(1)}),\overline{\mathbb{K}_n(\lambda_1,\tau_1^{(1)},\tau_2^{(1)})},\dots,\mathbb{K}_n(\lambda_J,\tau_1^{(J)},\tau_2^{(J)}),\overline{\mathbb{K}_n(\lambda_J,\tau_1^{(J)},\tau_2^{(J)})}\Big)
}
converge to the corresponding cumulants of the vector
\al{
\Big(\mathbb{G}(\lambda_1,\tau_1^{(1)}\tau_2^{(1)}),\overline{\mathbb{G}(\lambda_1,\tau_1^{(1)}\tau_2^{(1)})},\dots,\mathbb{G}(\lambda_J,\tau_1^{(J)}\tau_2^{(J)}),\overline{\mathbb{G}(\lambda_J,\tau_1^{(J)}\tau_2^{(J)})}\Big).
}

It can easily be shown that $\overline{\mathbb{K}_n(\lambda_j,\tau_1^{(j)},\tau_2^{(j)})}=\mathbb{K}_n(\lambda_j,\tau_2^{(j)},\tau_1^{(j)})$. Hence, it is equivalent to show the convergence of the cumulants of the vector 
\al{
\Big(\mathbb{K}_n(\lambda_1,\tau_1^{(1)},\tau_2^{(1)}),\mathbb{K}_n(\lambda_1,\tau_2^{(1)},\tau_1^{(1)}),\dots,\mathbb{K}_n(\lambda_J,\tau_1^{(J)},\tau_2^{(J)}),\mathbb{K}_n(\lambda_J,\tau_2^{(J)},\tau_1^{(J)})\Big).
}

 It follows from Lemma \ref{expF-F}  that the first-order cumulants vanish as
\al{
|\E[\mathbb{K}_n(\lambda ;\tau_1,\tau_2)]|={}&|\E[\mathbb{G}_{n,U}(\lambda ;\tau_1,\tau_2)+\sqrt{n}\sum_{j=1}^2(\tau_j-\hat{F}_{n,U}(\tau_j))G_j(\lambda ;\tau_1,\tau_2)]|\\
={}&\sqrt{n}|\E[\widehat{\specdis}\phantom{F\!\!\!\!\!}_{n,U}(\lambda ;\tau_1,\tau_2)]-\specdis(\lambda ;\tau_1,\tau_2)|\\
={}&O(n^{-1/2})
}
for any $\lambda\in[0,\pi]$ and $\tau_1,\tau_2\in[\eta,1-\eta]$. Furthermore, for the second-order cumulants we obtain
\als{\label{secondordercumquantspec}
\cum\Big(&\mathbb{K}_n(\lambda ;\tau_1,\tau_2),\mathbb{K}_n(\mu,\xi_1,\xi_2)\Big)={}2\pi\int_{0}^{\lambda}\int_{0}^{\mu}\spec\big(\alpha,-\alpha,\beta; \tau_1,\tau_2,\xi_1,\xi_2 \big)\dd\alpha \dd\beta\notag\\
&+2\pi\int_0^{\lambda\wedge \mu}\spec\big(\alpha ;  \tau_1, \xi_2 \big)\spec\big(-\alpha ;  \tau_2, \xi_1 \big)\dd\alpha\notag\\
&\qquad-\sum_{j=1}^2G_j(\mu,\xi_1,\xi_2)2\pi \int_{0}^{\lambda}\spec\big(\alpha,-\alpha;  \tau_1, \tau_2, \xi_j\big)d\alpha\notag\\
&\qquad\qquad-\sum_{j=1}^2G_j(\lambda ;\tau_1,\tau_2)2\pi \int_0^{\lambda}\spec \big(\alpha,-\alpha ; \xi_1, \xi_2, \tau_j \big)d\alpha\notag\\
&\qquad\qquad\qquad+\sum_{j_1=1}^2\sum_{j_2=1}^2G_{j_1}(\lambda ;\tau_1,\tau_2)G_{j_2}(\mu,\xi_1,\xi_2)2\pi\spec\big(0;  \tau_j, \xi_j \big)+O(n^{-1}).
}
The details of the derivation of (\ref{secondordercumquantspec}) are given in Section \ref{sec::secondmomentsfidiquantspec}.

It remains to show that all cumulants of order $2<l\leq 2J$ vanish. For this purpose we prove in Section \ref{sec::vaninishinghighermomentsfidi} that
\als{\label{vanishinghighermomentsfidi}
\Big|\cum\Big(\mathbb{K}_n(\lambda_1,\tau_1^{(1)},\tau_2^{(1)}),\dots,\mathbb{K}_n(\lambda_l,\tau_1^{(l)},\tau_2^{(l)})\Big)\Big|={}O(n^{-l/2+1}),
}
i.e.\ all cumulants of order greater than $2$ tend to zero. This proves that the limiting process $\mathbb{G}$ is Gaussian and concludes the proof of (\ref{fidi}). \hfill $\Box$


\subsection{Proof of Theorem~\ref{thm:CB:subs}}
We only prove the second part;  the proof of the first part  indeed is similar but simpler, and we only focus on confidence bands for the real part of $\specdis$. Define 
\[
\tilde S_n := \Big\{\frac{2 \pi \ell}{d}, \ell = 0,1,\ldots,\lfloor d/2 \rfloor\Big\}\times S_n, \quad \tilde S := [0,\pi] \times S.
\]
Observe that
\begin{align}
& \IP\Big( \Re \specdis(\lambda ;\tau_1,\tau_2) \in \Big[ \Re \widehat{\specdis}\phantom{F\!\!\!\!\!}_{n,R}(\lambda ;\tau_1,\tau_2) - C_{E,\alpha} s(\tau_1, \tau_2), \nonumber\\
& \hspace{3.5cm}
\Re \widehat{\specdis}\phantom{F\!\!\!\!\!}_{n,R}(\lambda ;\tau_1,\tau_2) + C_{E,\alpha} s(\tau_1, \tau_2) \Big], \quad \forall (\lambda, \tau_1, \tau_2) \in \tilde S_n \Big) \nonumber \\
& = \IP\Big( \sup_{(\lambda, \tau_1, \tau_2) \in \tilde S_n} \frac{|\Re \widehat{\specdis}\phantom{F\!\!\!\!\!}_{n,R}(\lambda ;\tau_1,\tau_2) - \Re \specdis(\lambda ;\tau_1,\tau_2)| }{s(\tau_1, \tau_2)} \leq C_{E,\alpha} \Big) \nonumber \\
& = \IP\big( Y_n \leq G_n^{-1}(1-\alpha) \big), \label{thm34:coverage}
\end{align}
where
\begin{align}
Y_n & := \sqrt{n} \sup_{(\lambda, \tau_1, \tau_2) \in \tilde S_n} \frac{|\Re \widehat{\specdis}\phantom{F\!\!\!\!\!}_{n,R}(\lambda ;\tau_1,\tau_2) - \Re \specdis(\lambda ;\tau_1,\tau_2)| }{s(\tau_1, \tau_2)}, \label{eq:defYn}
\\
G_n(x) & := \frac{1}{n-b+1} \sum_{t=1}^{n-b+1} I\{\sqrt{b} \tilde E_{n,b,t} \leq x\}, \nonumber
\\
G_n^{-1}(1-\alpha) & := \inf\{x : G_n(x) \geq 1-\alpha\}, \quad \alpha \in (0,1). \nonumber
\end{align}

\noindent By Corollary 1.3 and Remark 4.1 in~\cite{gaenssler2007}, the distribution function $G$ of the random variable 
\[
Y := \sup_{(\lambda, \tau_1, \tau_2) \in \tilde S} \frac{| \Re \mathbb{G}(\lambda, \tau_1, \tau_2)|}{s(\tau_1, \tau_2)} = \sup_{(\lambda, \tau_1, \tau_2) \in \tilde S} \max\Big\{\frac{-\Re \mathbb{G}(\lambda, \tau_1, \tau_2)}{s(\tau_1, \tau_2)}, \frac{ \Re \mathbb{G}(\lambda, \tau_1, \tau_2)}{s(\tau_1, \tau_2)} \Big\}
\]
is continuous and strictly increasing on $(0,\infty)$. Let us show that $Y_n$ converges in distribution to $Y$. Defining 
\[
	\widehat Y_n := \sqrt{n} \sup_{(\lambda, \tau_1, \tau_2) \in \tilde  S} \frac{|\Re \widehat{\specdis}\phantom{F\!\!\!\!\!}_{n,R}(\lambda ;\tau_1,\tau_2) - \Re \specdis(\lambda ;\tau_1,\tau_2)| }{s(\tau_1, \tau_2)},
\]
  note that $\widehat Y_n$ converges in distribution to $Y$ by Theorem~\ref{weakconvintegspectrum} and the continuity of the map 
  $$f \mapsto \sup_{(\lambda, \tau_1, \tau_2) \in S} |f(\lambda ;\tau_1,\tau_2)/s(\tau_1, \tau_2)|.$$
   By Slutzky, it suffices to show that $\widehat Y_n - Y_n = o_P(1)$. Note that, for any bounded function $f$ on $\tilde S$ and any $\tilde S_n \subset S$, we have, by the triangle inequality, 
\begin{align*}
	\sup_{x \in \tilde S} |f(x)| &\leq \sup_{x \in \tilde S} \inf_{y \in S_n} \Big(|f(x) - f(y)| + |f(y)|\Big) 
	\\
	&\leq \sup_{x \in \tilde S} \inf_{y \in \tilde S_n} |f(x) - f(y)| + \sup_{z \in \tilde S_n} |f(z)|
\end{align*}
which yields
\[
	0 \leq \sup_{x \in \tilde S} |f(x)| - \sup_{x \in \tilde S_n} |f(x)| \leq \sup_{x \in \tilde S} \inf_{y \in \tilde S_n} |f(x) - f(y)| \leq \sup_{x,y \in S: \|x-y\| \leq d(\tilde S_n,\tilde S)} |f(x) - f(y)|.
\]
Define
\[
	g_n(\lambda,\tau_1,\tau_2) := \sqrt{n} \frac{|\Re \widehat{\specdis}\phantom{F\!\!\!\!\!}_{n,R}(\lambda ;\tau_1,\tau_2) - \Re \specdis(\lambda ;\tau_1,\tau_2)| }{s(\tau_1, \tau_2)}
\]
and apply the above inequality with $x = (\lambda,\tau_1,\tau_2)$ and $f(x) = g_n(\lambda,\tau_1,\tau_2)$ to obtain
\begin{align*}
	0 \leq \widehat Y_n - Y_n \leq  \sup_{x,y \in \tilde S: \|x-y\| \leq d(\tilde S_n,\tilde S)}|g_n(x)-g_n(y)|.
\end{align*}
By a simple calculation and Theorem \ref{weakconvintegspectrum}, the paths of $g_n$ are uniformly asymptotically equicontinuous, whence the right-hand side of the last display is $o_P(1)$. Indeed, for any fixed~$\delta >0$ and $\eps >0 $, we have
\[
		\limsup_{n \to \infty}  P\Big(\sup_{x,y \in S: \|x-y\| \leq d(\tilde S_n,\tilde S) } |g_n(x)-g_n(y)| \geq \eps\Big) \leq \limsup_{n \to \infty}  P\Big(\sup_{x,y \in \tilde S: \|x-y\| \leq \delta} |g_n(x)-g_n(y)| \geq \eps \Big).
\]
Since the left-hand side above does not depend on $\delta$ we can take  the limit $\lim_{\delta \downarrow 0}$ on both sides to obtain
\begin{multline*}
		\lim_{\delta \downarrow 0} \limsup_{n \to \infty}  P\Big(\sup_{x,y \in \tilde S: \|x-y\| \leq d(\tilde S_n,\tilde S) } |g_n(x)-g_n(y)| \geq \eps\Big)
		\\ \leq \lim_{\delta \downarrow 0} \limsup_{n \to \infty} 
		P\Big(\sup_{x,y \in \tilde S: \|x-y\| \leq \delta} |g_n(x)-g_n(y)| \geq \eps \Big) = 0
\end{multline*}
where the last equality follows from uniform asymptotic equicontinuity. Since $\eps > 0$ was arbitrary this implies $\sup_{x,y \in \tilde S: \|x-y\| \leq \delta} |g_n(x)-g_n(y)| = o_P(1)\vspace{1mm}$. 
Thus, by continuity of the distribution of $Y$, we have, for all $x \in \R$, 
\begin{equation}\label{Yn_conv}
\IP( Y_n \leq x ) \rightarrow 
\IP(Y \leq x) = G(x).
\end{equation}

Denote by  $\rho_L$ the bounded Lipschitz metric   on the space of distribution functions on $\R$: the following result will be established later in the proof
\begin{equation}\label{Gn_conv}
\rho_L(G_n, G) \xrightarrow{\IP^*} 0.
\end{equation}

From~\eqref{Gn_conv} and the continuity of $G$, we obtain the following two convergences (note that both suprema are measurable since their value does not change if $\sup_{x \in \R}$ is replaced by~$\sup_{x \in \Q}$ and the latter   is taken over a countable set):
\begin{equation}\label{Gn_conv_a}
\sup_{x \in \R} |G_n(x) - G(x) | \xrightarrow{\IP} 0,
\end{equation}
and
\begin{equation}\label{Gn_conv_b}
\sup_{x \in \R} |G_n(x) - G_n(x-) | \xrightarrow{\IP} 0.
\end{equation}
Here~\eqref{Gn_conv_b} follows from~\eqref{Gn_conv_a} by continuity of $G$. Indeed, any continuous distribution function is also uniformly continuous, and we have, for any $\eps > 0$, 
\begin{align*}
\sup_{x} |G_n(x) - G_n(x-)| &\leq \sup_{x} |G_n(x) - G_n(x-\eps)|
\\
& \leq 2\sup_{x} |G_n(x) - G(x)| + \sup_{x} |G(x) - G(x-\eps)|. 
\end{align*}
Letting $\eps \downarrow 0$, we  obtain, from the uniform continuity of $G$, 
\[
\sup_{x} |G_n(x) - G_n(x-)| \leq 2\sup_{x} |G_n(x) - G(x)|.
\]

To establish~\eqref{Gn_conv_a}, note that, by Problem~23.1 in \cite{vandervaart2000}, \eqref{Gn_conv_a} is equivalent to~$G_n(x) = G(x) + o_\IP(1)$ for every $x$, which  can be established by a standard approximation of indicator functions through Lipschitz continuous functions.  

Then, the assertion of the theorem follows from~\eqref{thm34:coverage}, \eqref{Yn_conv}, the continuity of~$G$, \eqref{Gn_conv_a}, and~\eqref{Gn_conv_b}. The coverage probability in \eqref{thm34:coverage} indeed is bounded from above by 
\begin{equation}\label{hd1}
\begin{split}
  \IP\big( G_n(Y_n) \leq G_n\big(G_n^{-1}(1-\alpha)\big) \big) 
& = \IP\big( G(Y_n) + r_{1,n} + r_{2,n} \leq 1-\alpha\big), \\
& \rightarrow \IP\big( G(Y) \leq 1-\alpha \big) = 1-\alpha,
\end{split}
\end{equation}
where the first equality follows from the fact that  $G_n$ is monotone increasing;  for the second equality,    letting  $r_{1,n} := G_n(Y_n) - G(Y_n)$ and $r_{2,n} := 1-\alpha - G_n\big(G_n^{-1}(1-\alpha)\big)$, note that~$r_{1,n} = o_{\IP}(1)$ and $r_{2,n} = o_{\IP}(1)$ since 
\begin{equation*}
|r_{1,n}| = |G_n(Y_n) - G(Y_n)| \leq \sup_{x \in \R} |G_n(x) - G(x) | \xrightarrow{\IP} 0
\end{equation*}
and, in view of Lemma 21.1 (ii), (iii) in~\cite{vandervaart2000}, 
\begin{equation*}
|r_{2,n}| = |1-\alpha - G_n\big(G_n^{-1}(1-\alpha)\big)| \leq \sup_{x \in \R} |G_n(x) - G_n(x-) | \xrightarrow{\IP} 0.
\end{equation*}
Finally, as $G$ is continuous,  it follows from the Continuous Mapping Theorem and Slutzky's lemma, that 
\[G(Y_n) + o_{\IP}(1) \xrightarrow{\mathcal{D}} G(Y);\] this  completes the proof of \eqref{hd1}. 
Now, the same coverage probability in \eqref{thm34:coverage} is bounded from below  by
\begin{equation*}
\begin{split}
  \IP\big( G_n(Y_n) < G_n\big(G_n^{-1}(1-\alpha)\big) \big) 
& = \IP\big( G(Y_n) + r_{1,n} + r_{2,n} < 1-\alpha\big) \\
& \rightarrow \IP\big( G(Y) < 1-\alpha \big) = 1-\alpha,
\end{split}
\end{equation*}
since $G_n$ is non-decreasing\footnote{Indeed, by contraposition,  $G_n(t) < G_n(s)$ implies $t < s$, so $G_n(Y_n) < G_n(G_n^{-1}(1-\alpha))$ yields $Y_n \leq G_n^{-1}(1-\alpha)$} and since the continuity of $G$  implies that $G(Y) \sim U[0,1]$. Theorem~\ref{thm:CB:subs} follows from combining this with~\eqref{hd1}.

\medskip 

\textbf{Proof of~\eqref{Gn_conv}} The proof of~\eqref{Gn_conv} follows along similar arguments as  in Section~7.3 of \cite{PolitisEtAl1999}. Similar to the notation there, let $\theta(P) = \Re \specdis(\cdot ; \cdot, \cdot)$ and
\begin{equation*}
\begin{split}
R_n\big(X_1, \ldots, X_n; \theta(P)\big) & := Y_n = \sqrt{n} \sup_{(\lambda, \tau_1, \tau_2) \in \tilde S_n} \frac{|\Re \widehat{\specdis}\phantom{F\!\!\!\!\!}_{n,R}(\lambda ;\tau_1,\tau_2) - \Re \specdis(\lambda ;\tau_1,\tau_2)| }{s(\tau_1, \tau_2)}, \\
R_{b,n}(X_t, \ldots, X_{t+b-1}, \theta(P)\big) & := A_t = \sqrt{b} \sup_{(\lambda, \tau_1, \tau_2) \in \tilde S_n} \frac{|\Re \widehat{\specdis}\phantom{F\!\!\!\!\!}_{n,b,t,R}(\lambda ;\tau_1,\tau_2) - \Re \specdis(\lambda ;\tau_1,\tau_2)| }{s(\tau_1, \tau_2)}, \\
R_{b,n}(X_t, \ldots, X_{t+b-1}, \hat\theta_n \big)
& := B_t = \sqrt{b} \tilde E_{n,b,t} \\
& = \sqrt{b} \sup_{(\lambda, \tau_1, \tau_2) \in \tilde S_n} \frac{|\Re \widehat{\specdis}\phantom{F\!\!\!\!\!}_{n,b,t,R}(\lambda ;\tau_1,\tau_2) - \Re \widehat{\specdis}\phantom{F\!\!\!\!\!}_{n,R}(\lambda ;\tau_1,\tau_2)| }{s(\tau_1, \tau_2)}. \\	
\end{split}
\end{equation*}
Denoting  by $J_n$ the cdf of $R_n\big(X_1, \ldots, X_n; \theta(P)\big) = Y_n$  (recall that $Y_n$ was defined in~\eqref{eq:defYn}),  
let $H_{n,b}$ be the empirical cdf of $\{R_{b,n}(X_t, \ldots, X_{t+b-1}, \theta(P)\big): t = 1,\ldots,n-b+1\}$ (recall that $G_n$ denotes the empirical cdf of $\{R_{b,n}(X_t, \ldots, X_{t+b-1}, \hat\theta_n \big) : t = 1,\ldots,n-b+1 \}$). A close look at the proof of Proposition~7.3.1 from \cite{PolitisEtAl1999} reveals that this result continues to hold if $R_b$ in there is replaced by $R_{b,n}$ as in our setting.\footnote{Note that we have an additional dependence on the full sample size $n$ which is not present in \cite{PolitisEtAl1999}.}  It follows that
\[
\rho_L(H_{n,b}, J_n) \xrightarrow{\IP} 0.
\]
By the reverse triangle inequality and some elementary computations, we have
\begin{multline*}
\sup_{t = 1,...,n-b+1}| R_b(X_t, \ldots, X_{t+b-1}, \hat\theta_n \big) - R_b(X_t, \ldots, X_{t+b-1}, \theta(P)\big) | 
\\
\leq \sqrt{b/n} R_n\big(X_1, \ldots, X_n; \theta(P)\big) = O_\IP(\sqrt{b/n}) = o_\IP(1).
\end{multline*}
Let 
\[
BL_1 := \Big\{f: \R \to \R:~|f(x) - f(y)| \leq |x - y|,~\sup_x |f(x)| \leq 1 \Big\}
\]
denote the set of bounded Lipschitz functions from $\R$ to $\R$: we have
\begin{align*}
& \sup_{f \in BL_1} \Big|\int_\R f(x) H_{n,b}(dx) - \int_\R f(x) G_n(dx)\Big|
\\
=~& \sup_{f \in BL_1} \Big| \frac{1}{n-b+1} \sum_{t=1}^{n-b+1} f(A_t) - \frac{1}{n-b+1} \sum_{t=1}^{n-b+1} f(B_t) \Big| 
\\
\leq~& \sup_{t}| R_b(X_t, \ldots, X_{t+b-1}, \hat\theta_n \big) - R_b(X_t, \ldots, X_{t+b-1}, \theta(P)\big) |
\\
=~& o_\IP(1).
\end{align*}
Thus, we have shown that $\rho_L( H_{n,b}, G_n ) = o_\IP(1)$. Note that  \eqref{Yn_conv}   also entails  $\rho_L(J_n,G) =~\!o(1)$. Together with $\rho_L(H_{n,b}, J_n) = o_\IP(1)$ and the triangle inequality, this yields~\eqref{Gn_conv}. \hfill $\Box$

\subsection{Proof of Theorem~\ref{thm:substr}}

We begin with Part 1 of the theorem.
Let us show  that, under the null,
\begin{align*}
T_{\rm TR}^{(n)}\Rightarrow&
T_{\rm TR}:=\sup_{(\lambda, \tau_1, \tau_2) \in S}
\Big|\frac{\Im\mathbb{G}(\lambda, \tau_1, \tau_2)}{s(\tau_1, \tau_2)}
\Big|\quad \text{as $n\to\infty$}.
\end{align*}
More precisely, by employing Theorem~\ref{weakconvintegspectrum} and the Continuous Mapping Theorem, it holds that, under the null,
\[
\sqrt{n}\max_{(\lambda, \tau_1, \tau_2) \in S}
\Big|\frac{\Im \widehat{\specdis}\phantom{F\!\!\!\!\!}_{n,R}(\lambda, \tau_1, \tau_2)}{s(\tau_1, \tau_2)}\Big|\Rightarrow
T_{\rm TR}\quad \text{as $n\to\infty$}.
\]
Further, 
\begin{align*}
0\leq T_{\rm TR}^{(n)}-\max_{(\lambda, \tau_1, \tau_2) \in S}
\Big|\frac{\Im \widehat{\specdis}\phantom{F\!\!\!\!\!}_{n,R}(\lambda, \tau_1, \tau_2)}{s(\tau_1, \tau_2)}\Big|
\leq
\sup_{x,y \in S: \|x-y\| \leq d(S_n,S)}|g_n(x)-g_n(y)|
\end{align*}
where $x = (\lambda,\tau_1,\tau_2)$ and $g_n(x) := \sqrt{n}{|\Im \widehat{\specdis}\phantom{F\!\!\!\!\!}_{n,R}(\lambda ;\tau_1,\tau_2)| }/{s(\tau_1, \tau_2)}$.
Uniform asymptotic equicontinuity of $g_n(x)$ (which follows from Theorem~\ref{weakconvintegspectrum} after a simple computation) implies that
$\sup_{x,y \in S: \|x-y\| \leq d(S_n,S)}|g_n(x)-g_n(y)|\xrightarrow{\IP}0\quad \text{as $n\to\infty$.}$
	
Proposition 7.3.1 in \cite{PolitisEtAl1999} then implies that $\rho_L(H_{n,b}^{\rm TR}, G^{\rm TR}) \xrightarrow{\IP^*} 0$ as~$n\to\infty$,
where $G^{\rm TR}$ is the cdf of $T_{\rm TR}$ and 
\begin{align*}
H_{n,b}^{\rm TR}(x) :=& \frac{1}{n-b+1} \sum_{t=1}^{n-b+1} I\{T_{{\rm TR}1}^{(n,b,t)} \leq x\}.
\end{align*}
Next note that the function $G^{\rm TR}$ is continuous; this can be established similarly to the continuity of $G$ in the proof of Theorem~\ref{weakconvintegspectrum}. Now we obtain, as in  the proof of
\eqref{Gn_conv_a}, that
\[
\sup_{x \in \R}\Big|H_{n,b}^{\rm TR}(x) - G^{\rm TR}(x)\Big| = o_P(1),
\]
which in turn yields
\begin{align*}
{H_{n,b}^{\rm TR}}(T_{\rm TR}^{(n)})
={G^{\rm TR}}(T_{\rm TR}^{(n)})+o_p(1).
\end{align*}
Consequently, it holds that, for $\alpha \in (0,1)$,
\begin{align*}
{\rm P}\bigg(
p_{{\rm TR}}\leq \alpha\bigg) =&{\rm P}\bigg(1-\alpha\leq{H_{n,b}^{\rm TR}}(T_{\rm TR}^{(n)})\bigg)\\
=
&{\rm P}\bigg(1-\alpha\leq{G^{\rm TR}}(T_{\rm TR}^{(n)})+o_p(1)\bigg)\\
\to&
{\rm P}\bigg(1-\alpha\leq{G^{\rm TR}}(T_{\rm TR})\bigg) = \alpha \quad\text{as $n\to \infty$},
\end{align*}
in view of the continuity of $G^{\rm TR}$ which, by the Continuous Mapping Theorem and Slutzky's Lemma, implies ${G^{\rm TR}}(T_{\rm TR}^{(n)})+o_p(1) \Rightarrow {G^{\rm TR}}(T_{\rm TR}) \sim U[0,1] $. This establishes Part 1 of the theorem.


\medskip

We now turn to Part 2 of the same  theorem. Note that it suffices to show that $p_{\rm TR} = o_P(1)$, since then ${\rm P}(p_{\rm TR} \leq \alpha) = 1 - {\rm P}(p_{\rm TR} > \alpha) \rightarrow 1$ for all $\alpha > 0$. Next, since all copulas are continuous and since Assumption~\ref{assumptionsspectraldist}(C) implies uniform convergence of the series defining~$\spec(\omega; \tau_1,\tau_2)$ in \eqref{copulaspecdensintroduction}, we have that $\spec(\omega; \tau_1,\tau_2)$ is continuous as a function of $(\tau_1, \tau_2)$. Now recall the definition in~\eqref{specdisestimatorintro}: $\specdis(\lambda ;\tau_1,\tau_2)=\int_0^{\lambda}\spec(\omega; \tau_1,\tau_2)d\omega$. Thus, $\Im \specdis$ is continuous.
Now, by assumption there exists $(\lambda,\tau_1,\tau_2) \in S$ such that $|\Im \specdis(\lambda, \tau_1, \tau_2)| =: c > 0$. The continuity of $\Im \specdis$ together with~\eqref{eq:SetConv} implies that there exist $n_0$ such that
\begin{equation}\label{eqn:norm_Im_F_lrg_c}
\sup_{(\lambda, \tau_1, \tau_2) \in S_n}
\Big|\frac{\Im \specdis(\lambda, \tau_1, \tau_2)}{s(\tau_1, \tau_2)}\Big| \geq c/(2s_{\max}), \text{ for all $n \geq n_0$,}
\end{equation}
where $s_{\max}:=\sup_{(\tau_1,\tau_2)\in[\eta,1-\eta]^2}s(\tau_1, \tau_2)\vspace{1mm}$.

Let
\[
\bar T_{\rm TR}^{(n)} := \sqrt{n} \max_{(\lambda,\tau_1, \tau_2) \in S_n}
\Big| \frac{\Im  \widehat{\specdis}\phantom{F\!\!\!\!\!}_{n,R}(\lambda, \tau_1, \tau_2) - \Im  \specdis \phantom{F\!\!\!\!}(\lambda, \tau_1, \tau_2)}{s(\tau_1, \tau_2)}\Big|
\]
and
\[
\bar T_{\rm TR1}^{(n,b,t)} := \sqrt{b}\max_{ (\lambda, \tau_1, \tau_2) \in S_n}\Big| \frac{\Im \widehat{\specdis}\phantom{F\!\!\!\!\!}_{n,b,t,R}(\lambda, \tau_1, \tau_2)  - \Im  \specdis \phantom{F\!\!\!\!}(\lambda, \tau_1, \tau_2) }{s(\tau_1, \tau_2)}\Big|. 
\]
We have, under $H_1$, that
\begin{equation}
\label{eqn:wconv_Tnbar}
	\bar T_{\rm TR}^{(n)} \rightsquigarrow T_{\rm TR}:=\max_{(\lambda, \tau_1, \tau_2) \in S}
\Big|\frac{\Im\mathbb{G}(\lambda, \tau_1, \tau_2)}{s(\tau_1, \tau_2)}
\Big|\quad \text{as $n\to\infty$.}
\end{equation}
Denoting by $\bar G^{\rm TR}$ the cdf of $\bar T_{\rm TR}^{(n)}$ and defining 
\[ \bar H_{n,b}^{\rm TR}(x) := \frac{1}{n-b+1} \sum_{t=0}^{n-b}I\big\{\bar T_{{\rm TR}1}^{(n,b,t)} \leq x \big\},
\]
we have, by the   subsampling arguments used in the proof of Part 1, that
\[\sup_{x \in \R} |\bar H_{n,b}^{\rm TR}(x) - \bar G^{\rm TR} (x) | \xrightarrow{\IP} 0.
\]
Finally, letting $\| f\|_{S_n} := \max_{(\lambda, \tau_1, \tau_2) \in S_n} | \frac{f(\lambda, \tau_1, \tau_2)}{s(\tau_1, \tau_2)}|$, we have
\begin{equation*}
\begin{split}
p_{{\rm TR}} & = \frac{1}{n-b+1} \sum_{t=0}^{n-b}I\big\{T_{{\rm TR}1}^{(n,b,t)} > T_{\rm TR}^{(n)}\big\} \\
& = \frac{1}{n-b+1} \sum_{t=0}^{n-b} I\big\{\sqrt{b} \| \Im \widehat{\specdis}\phantom{F\!\!\!\!\!}_{n,b,t,R} - \Im \specdis + \Im \specdis \|_{S_n} >  \sqrt{n} \| \Im \widehat{\specdis}\phantom{F\!\!\!\!\!}_{n,R} - \Im \specdis + \Im \specdis \|_{S_n} \big\} \\
& \leq \frac{1}{n-b+1} \sum_{t=0}^{n-b}I\big\{\bar T_{{\rm TR}1}^{(n,b,t)} + \sqrt{b} \| \Im \specdis \|_{S_n} > \sqrt{n} \| \Im \specdis \|_{S_n} - \bar T_{\rm TR}^{(n)} \big\} \\
& = 1 - \bar H_{n,b}^{\rm TR}\big( (\sqrt{n} - \sqrt{b}) \| \Im \specdis \|_{S_n} - \bar T_{\rm TR}^{(n)} \big)
= 1 - \bar G^{\rm TR}\big( (\sqrt{n} - \sqrt{b}) \| \Im \specdis \|_{S_n} - \bar T_{\rm TR}^{(n)} \big) + o_P(1). 
\end{split}
\end{equation*}
Let us show that this implies $p_{\rm TR} = o_P(1)$. 
From~\eqref{eqn:wconv_Tnbar} we have that $\bar T_{\rm TR}^{(n)} = O_P(1)$; i.\,e., for every $\varepsilon > 0$, there exists $M$ and $n_0$ such that ${\rm P}(\bar T_{\rm TR}^{(n)} > M) < \varepsilon$ for all $n \geq n_0$. 
Hence,
\begin{equation*}
\begin{split}
& \limsup_{n \to \infty} {\rm P}\Big( 1 - \bar G^{\rm TR}\big( (\sqrt{n} - \sqrt{b}) \| \Im \specdis \|_{S_n} - \bar T_{\rm TR}^{(n)} \big) > \kappa \Big) \\
& \leq \limsup_{n \to \infty} {\rm P}\Big( 1 - \bar G^{\rm TR}\big( (\sqrt{n} - \sqrt{b}) c/(2s_{\max}) - M \big) > \kappa \Big)
+ \limsup_{n \to \infty} {\rm P}(\bar T_{\rm TR}^{(n)} > M)
\\
& \leq \varepsilon, 
\end{split}
\end{equation*}
Here we used the fact that $(\sqrt{n} - \sqrt{b}) c/(2s_{\max}) - M \rightarrow \infty$, which in turn implies that~$\bar G^{\rm TR}\big( (\sqrt{n} - \sqrt{b}) c/(2s_{\max}) - M \big) \to 1$ since $\bar G^{\rm TR}$ is a cdf.   
Since $\varepsilon > 0$ is arbitrary, it follows  that $p_{\rm TR} = o_P(1)$, which completes the proof of Part 2.
\hfill $\Box$

\medskip

\subsection{Proof of Theorem~\ref{thm:subseq}}
First, we show that  the proposed test based on $T_{\rm EQ}^{(n)}$ hs asymptotic size $\alpha$.
By the uniform asymptotic equicontinuity of $\mathbb{G}_{n,R}$ 
and Theorem \ref{weakconvintegspectrum},  a simple calculation shows that under the null $H_0$,
\begin{align*}
T_{\rm EQ}^{(n)}\Rightarrow&T_{\rm EQ}:=
\sup_{(\lambda, \tau_1, \tau_2) \in S}
\Big|\frac{\mathbb{G}(\lambda, \tau_1, \tau_2)-\mathbb{G}(\lambda, 1-\tau_1, 1-\tau_2)}{s(\tau_1, \tau_2)}\Big|.
\end{align*} 
Proposition~7.3.1 of \cite{PolitisEtAl1999} entails  
$\rho_L(H_{n,b}^{\rm EQ}, G^{\rm EQ}) \xrightarrow{\IP^*} 0$,
where
\begin{align*}
H_{n,b}^{\rm EQ}(x) :=& \frac{1}{n-b+1} \sum_{t=1}^{n-b+1} I\{ T_{\rm EQ}^{(n,b,t)}\leq x\}
\end{align*} 
 is the empirical distribution  function of $T_{\rm EQ}^{(n)}$  
and $G^{\rm EQ}$ is the distribution function of $T_{\rm EQ}$. The continuity of $G^{\rm EQ}$ follows from the same  arguments as used for the continuity of $G$ in the proof of Theorem~\ref{thm:CB:subs}. This, combined with the arguments used in the proof of~\eqref{Gn_conv_a}, yields 
\[
\sup_{x \in \R} \Big|H_{n,b}^{\rm EQ}(x) - H^{\rm EQ}(x) \Big| = o_P(1).
\]
Therefore, it holds that, under the null $H_0$,
\begin{align*}
{\rm P}\bigg(
p_{{\rm EQ}}\leq \alpha\bigg) =&{\rm P}\bigg(1-\alpha\leq{H_{n,b}^{\rm EQ}}(T_{\rm EQ}^{(n)})\bigg)
\\
=
&{\rm P}\bigg(1-\alpha\leq{G^{\rm EQ}}(T_{\rm EQ}^{(n)}) + o_p(1) \bigg)
\\
\to &
{\rm P}\bigg(1-\alpha\leq{G^{\rm EQ}}(T_{\rm EQ})\bigg) = \alpha \quad\text{as $n\to \infty$},
\end{align*}
where the last line follows from the fact that   the continuity of $G^{\rm EQ}$ implies that
$${G^{\rm EQ}}(T_{\rm EQ}^{(n)}) + o_p(1) \Rightarrow {G^{\rm EQ}}(T_{\rm EQ}) \sim U[0,1].$$ This shows that the proposed test has asymptotic level $\alpha$ and completes the proof of the first part of Theorem~\ref{thm:subseq}.

Next, we show that the test is consistent against fixed alternatives. To this end, let us show that 
${\rm P}(p_{\rm EQ} \leq \alpha) = 1 - {\rm P}(p_{\rm EQ} > \alpha) \rightarrow 1$ for all $\alpha > 0$ follows from the fact that~$p_{\rm TR} =~\!o_P(1)$. By assumption, there exists some $(\lambda,\tau_1,\tau_2) \in S$ such that 
$$| \specdis(\lambda, \tau_1, \tau_2)- \specdis(\lambda, 1-\tau_1, 1-\tau_2)| =: c > 0.$$
From 
  \eqref{eq:SetConv} and the continuity of $\specdis$ with respect to $(\lambda, \tau_1, \tau_2)$, there exists $n_0$ such that
\begin{equation}\label{eqn:norm_F_lrg_c_eq}
\sup_{(\lambda, \tau_1, \tau_2) \in S_n}
\Big|\frac{( \specdis \phantom{F\!\!\!\!}(\lambda, \tau_1, \tau_2)-  \specdis \phantom{F\!\!\!\!}(\lambda, 1- \tau_1, 1- \tau_2))}{s(\tau_1, \tau_2)}\Big| \geq c/(2s_{\max}) \text{ for all $n \geq n_0$}
\end{equation} 
where $s_{\max}:=\sup_{(\tau_1,\tau_2)\in[\eta,1-\eta]^2}s(\tau_1, \tau_2) < \infty$ by continuity of $s$ on a compact set. Defining  
\begin{align*}
&\bar T_{\rm EQ}^{(n)}:=\sqrt{{n}} 
\max_{ (\lambda, \tau_1, \tau_2) \in S_n}
\Big|\frac{\widehat{\specdis}\phantom{F\!\!\!\!\!}_{n,R}^X(\lambda, \tau_1, \tau_2)-\widehat{\specdis}\phantom{F\!\!\!\!\!}_{n,R}^{X}(\lambda, 1-\tau_1, 1-\tau_2)
-( \specdis \phantom{F\!\!\!\!}(\lambda, \tau_1, \tau_2)+  \specdis \phantom{F\!\!\!\!}(\lambda, 1- \tau_1, 1- \tau_2))
}{s(\tau_1, \tau_2)}\Big|,\\
&\text{and }\bar H_{n,b}^{\rm EQ}(x):= \frac{1}{n-b+1}\sum_{t=0}^{n-b}I\big\{\bar T_{{\rm EQ}}^{(n,b,t)}\leq x\big\}
\end{align*}
with 
\begin{align*}
&\bar T_{\rm EQ}^{(n,b,t)}\\
&:=\sqrt{{b}} 
\max_{ (\lambda, \tau_1, \tau_2) \in S_n}
\Big|\frac{\widehat{\specdis}\phantom{F\!\!\!\!\!}_{n,b,t,R}^X(\lambda,\tau_1,\tau_2)-\widehat{\specdis}\phantom{F\!\!\!\!\!}_{n,b,t,R}^{X}(\lambda,1-\tau_1,1-\tau_2)
-( \specdis \phantom{F\!\!\!\!}(\lambda, \tau_1, \tau_2)-  \specdis \phantom{F\!\!\!\!}(\lambda, 1- \tau_1, 1- \tau_2))
}{s(\tau_1, \tau_2)}\Big|,
\end{align*}
 observe that, under the alternative $H_1$, 
\begin{equation}
\label{eqn:wconv_Tnbar_eq}
	\bar T_{\rm EQ}^{(n)} \stackrel{\mathcal{D}}{\longrightarrow} T_{\rm EQ}:=\sup_{(\lambda, \tau_1, \tau_2) \in S}
\Big|\frac{\mathbb{G}(\lambda, \tau_1, \tau_2)-\mathbb{G}(\lambda, 1-\tau_1, 1-\tau_2)}{s(\tau_1, \tau_2)}
\Big|\quad \text{as $n\to\infty$.}
\end{equation}
By similar arguments as in the proof of the first part, it follows that
\begin{equation}\label{Hnb_conv_eq}
\sup_{x \in \R} |\bar H_{n,b}^{\rm EQ}(x) - \bar G^{\rm EQ} (x) | \xrightarrow{\IP} 0,
\end{equation}
where $\bar G^{\rm EQ}$ the cdf of $\bar T_{\rm EQ}$. 
By \eqref{eqn:norm_F_lrg_c_eq}, \eqref{eqn:wconv_Tnbar_eq} and \eqref{Hnb_conv_eq}, it holds that
\begin{equation*}
\begin{split}
p_{{\rm EQ}} & = \frac{1}{n-b+1} \sum_{t=0}^{n-b}I\big\{T_{{\rm EQ}}^{(n,b,t)} > T_{\rm EQ}^{(n)}\big\} \\
& \leq 1 - \bar H_{n,b}^{\rm EQ}\Bigg( (\sqrt{n} - \sqrt{b}) \sup_{(\lambda, \tau_1, \tau_2) \in S_n}
\Big|\frac{( \specdis \phantom{F\!\!\!\!}(\lambda, \tau_1, \tau_2)-  \specdis \phantom{F\!\!\!\!}(\lambda, 1- \tau_1, 1- \tau_2))}{s(\tau_1, \tau_2)}\Big| - \bar T_{\rm EQ}^{(n)} \Bigg)\\
& = 1 - \bar G^{\rm EQ}\Bigg( (\sqrt{n} - \sqrt{b}) \sup_{(\lambda, \tau_1, \tau_2) \in S_n}
\Big|\frac{( \specdis \phantom{F\!\!\!\!}(\lambda, \tau_1, \tau_2)-  \specdis \phantom{F\!\!\!\!}(\lambda, 1- \tau_1, 1- \tau_2))}{s(\tau_1, \tau_2)}\Big| - \bar T_{\rm EQ}^{(n)} \Bigg) + o_P(1),
\end{split}
\end{equation*}
where the first inequality follows by the same arguments as in the proof of Theorem~\ref{thm:substr} and the last line is a consequence of~\eqref{Hnb_conv_eq}. Since $\bar T_{\rm EQ}^{(n)}  = O_P(1)$ and since \[
(\sqrt{n} - \sqrt{b}) \sup_{(\lambda, \tau_1, \tau_2) \in S_n}
\Big|\frac{( \specdis \phantom{F\!\!\!\!}(\lambda, \tau_1, \tau_2)-  \specdis \phantom{F\!\!\!\!}(\lambda, 1- \tau_1, 1- \tau_2))}{s(\tau_1, \tau_2)}\Big| \to \infty ,
\] 
we obtain the desired result that  that $p_{{\rm EQ}} = o_P(1)$ as $n \to \infty$. \hfill $\Box$


\section{Technical details}\label{technicaldetailsquantspec}

\subsection{Details for the proof of (\ref{asympequicont})}

\subsubsection{Proof of (\ref{diffargandargprime})}\label{vanishingpartsec}

Observe that, for any $x>0$ and $\delta_n$ with
$
n^{-1/2}\ll \delta_n=o(1)
$, 
we have 
\al{
&\Prob\Big(\sup_{\substack{\lambda\in[0,\pi]\\\tau_1,\tau_2\in [\eta,1-\eta]}}|\mathbb{G}_{n,U}(\lambda,\hat{\tau}_1,\hat{\tau}_2)-\mathbb{G}_{n,U}(\lambda ;\tau_1,\tau_2)|>x\Big)\\
\leq{}&\Prob\Big(\sup_{\substack{\lambda\in[0,\pi]\\\tau_1,\tau_2\in [\eta,1-\eta]}}\sup_{\substack{\llVert(u,v)-(\tau_1,\tau_2)\rrVert_{\infty}\\ \leq \sup_{\tau\in[0,1]}|\hat{F}_{n,U}^{-1}(\tau)-\tau|}}|\mathbb{G}_{n,U}(\lambda,u,v)-\mathbb{G}_{n,U}(\lambda ;\tau_1,\tau_2)|>x\Big)\\
\leq{}&\Prob\Big(\sup_{\substack{\lambda\in[0,\pi]\\\tau_1,\tau_2\in [0,1]}}\sup_{\substack{|u-\tau_1|\leq \delta_n\\ |v-\tau_2|\leq \delta_n}}|\mathbb{G}_{n,U}(\lambda,u,v)-\mathbb{G}_{n,U}(\lambda ;\tau_1,\tau_2)|>x,\sup_{\tau\in[0,1]}|\hat{F}_{n,U}^{-1}(\tau)-\tau|\leq \delta_n\Big)\\
&+\Prob\Big(\sup_{\tau\in[0,1]}|\hat{F}_{n,U}^{-1}(\tau)-\tau|>\delta_n\Big)\\
=:{}&P_{1,n}+P_{2,n}, \text{ say.}
}
It follows from Lemma A.5 in the online appendix of \cite{DetteEtAl2016} that 
$$\sup_{\tau\in[0,1]}|\hat{F}_{n,U}^{-1}(\tau)-\tau|=O_{\Prob}(n^{-1/2});$$
 since  $n^{-1/2}\ll\delta_n$,  this implies $P_{2,n}=o(1)$. As for $P_{1,n}$ we have
\al{
P_{1,n}\leq{}&\Prob\Bigg(\sup_{\substack{\lambda\in[0,\pi]\\\tau_1,\tau_2\in [\eta,1-\eta]}}\sup_{\llVert(u,v)-(\tau_1,\tau_2)\rrVert_1\leq 2\delta_n}|\mathbb{G}_{n,U}(\lambda,u,v)-\mathbb{G}_{n,U}(\lambda ;\tau_1,\tau_2)|>x\Bigg)\\
\leq{}&\Prob\Bigg(\sup_{\substack{(\lambda,u,v),(\lambda ;\tau_1,\tau_2)\in [0,\pi]\times[\eta,1-\eta]^2\\ \llVert(\lambda,u,v)-(\lambda ;\tau_1,\tau_2)\rrVert_1\leq 2\delta_n}}|\mathbb{G}_{n,U}(\lambda,u,v)-\mathbb{G}_{n,U}(\lambda ;\tau_1,\tau_2)|>x\Bigg)
}
which vanishes asymptotically  for $n^{-1/2}\ll \delta_n=o(1)$ by the stochastic equicontinuity
\al{
\lim_{\delta\downarrow 0}\limsup_{n\rightarrow \infty}\Prob\Bigg(\sup_{\substack{(\lambda ;\tau_1,\tau_2),(\lambda^{\prime},\tau_1^{\prime},\tau_2^{\prime})\in [0,\pi]\times[\eta,1-\eta]^2\\ \llVert(\lambda ;\tau_1,\tau_2)-(\lambda^{\prime},\tau_1^{\prime},\tau_2^{\prime})\rrVert_1\leq \delta}} |\mathbb{G}_{n,U}(\lambda ;\tau_1,\tau_2)-\mathbb{G}_{n,U}(\lambda^{\prime},\tau_1^{\prime},\tau_2^{\prime})|>x\Bigg)=0
}
of the process $\Big(\mathbb{G}_{n,U}(\lambda ;\tau_1,\tau_2)\Big)_{(\lambda ;\tau_1,\tau_2)\in[0,\pi]\times[\eta,1-\eta]^2}$  proved in Section \ref{proofequicont}.

\subsubsection{Proof of \eqref{assumpA.1Tobiequation} -- convergence of higher order cumulants}\label{assumpA.1Tobi}

Let $\Psi(x):=x^{2L}$, $L\in\N$. In this case, the Orlicz norm coincides with the $L_{2L}$-norm $\llVert X\rrVert_{2L}=(\E[|X|^{2L}])^{1/(2L)}$ so that 
%
\als{\label{psitotal}
\llVert \overline{\mathbb{G}}_{n,U}(\lambda ;\tau_1,\tau_2)-&\overline{\mathbb{G}}_{n,U}(\lambda^{\prime},\tau_1^{\prime},\tau_2^{\prime})\rrVert_{\Psi}\notag\\
\leq{}&2^{(2L-1)/(2L)}\Big(\E[|\overline{\mathbb{G}}_{n,U}(\lambda ;\tau_1,\tau_2)-\overline{\mathbb{G}}_{n,U}(\lambda^{\prime},\tau_1,\tau_2)|^{2L}]\notag\\
&+\E[|\overline{\mathbb{G}}_{n,U}(\lambda^{\prime},\tau_1,\tau_2)-\overline{\mathbb{G}}_{n,U}(\lambda^{\prime},\tau_1^{\prime},\tau_2^{\prime})|^{2L}]\Big)^{1/(2L)}\notag\\
=:{}&2^{(2L-1)/(2L)}\Big(R_n^{(1)}+R_{n}^{(2)}\Big)^{1/(2L)}, \text{ say.}
}
%

 In order to   bound for $R_n^{(2)}$, observe that $\overline{\mathbb{G}}_{n,U}(\lambda^{\prime},\tau_1,\tau_2)-\overline{\mathbb{G}}_{n,U}(\lambda^{\prime},\tau_1^{\prime},\tau_2^{\prime})$ can be written~as
\al{
\overline{\mathbb{G}}_{n,U}(\lambda^{\prime},\tau_1,\tau_2)-\overline{\mathbb{G}}_{n,U}(\lambda^{\prime},\tau_1^{\prime},\tau_2^{\prime})
={}&\begin{cases}C_{\lambda^{\prime}}\overline{\mathbb{H}}_n^U(\tau,\tau^{\prime};\lambda^{\prime}),&\text{ if } \lambda^{\prime}\in(0,\pi],\\
0,&\text{ if } \lambda^{\prime}=0,
\end{cases}
} 
where $\tau=(\tau_1,\tau_2)$, $\tau^{\prime}=(\tau_1^{\prime},\tau_2^{\prime})$ and
\al{
\overline{\mathbb{H}}_n^U(\tau,\tau^{\prime};\lambda^{\prime}):=\sqrt{nb_{\lambda^{\prime}}}(\tilde{\mathbb{H}}_n^U(\tau,\tau^{\prime};\lambda^{\prime})-\E[\tilde{\mathbb{H}}_n^U(\tau,\tau^{\prime};\lambda^{\prime})])
}
with 
\al{
&\tilde{\mathbb{H}}_n^U(\tau,\tau^{\prime};\lambda^{\prime})=\frac{2\pi}{n}\sum_{s=1}^{n-1}W_{n,\lambda^{\prime}}(\frac{\lambda^{\prime}}{2}-2\pi s/n)\Big\{I_{n,U}^{\tau_1,\tau_2}(2\pi s/n)-I_{n,U}^{\tau_1^{\prime},\tau_2^{\prime}}(2\pi s/n)\Big\},\\
&W_{n,\lambda^{\prime}}(u)=\sum_{j=-\infty}^{\infty}b_{\lambda^{\prime}}^{-1}W(b_{\lambda^{\prime}}^{-1}(u+2\pi j)), \ \text{ and}\\
&W(\cdot)=\frac{1}{2\pi}I\{-\pi\leq\cdot\leq\pi\}
}
for $C_{\lambda^{\prime}}=\sqrt{2\pi \lambda^{\prime}}$ and $b_{\lambda^{\prime}}=\frac{\lambda^{\prime}}{2\pi}$. 
Furthermore, by Lemma A.4 in \cite{DetteEtAl2016}, there exist constants $K$ and $d$, independent of $\omega
_1,\ldots,\omega_p \in\R, n$ and $A_1,\ldots,A_p$, such that
\al{
\Big| \cum\Big(d_n^{A_1}(
\omega_1), \ldots, d_n^{A_p}(
\omega_p)\Big) \Big| \leq K \Big( \Big|
\Delta_n \Big(\sum_{i=1}^p
\omega_i \Big) \Big| + 1 \Big) \varepsilon(|\log \varepsilon|+1)^d
}
for any Borel sets $A_1, \ldots, A_p$ with $\min_j \Prob(X_0 \in A_j)\leq\varepsilon$.\\

Lemma \ref{lemSixthMomIncrementHn} in Section \ref{sec::auxiliaryresultsquantspec} below yields
\al{
\E[|\overline{\mathbb{G}}_{n,U}(\lambda^{\prime},\tau_1,\tau_2)-\overline{\mathbb{G}}_{n,U}(\lambda^{\prime},\tau_1^{\prime},\tau_2^{\prime})|^{2L}]\leq K_1 \llVert W\rrVert_{\infty}^{2L}C_{\lambda^{\prime}}^{2L}\sum_{l=0}^{L-1}
\frac{g^{L-l}(\llVert \tau-\tau^{\prime}\rrVert_1)}{(nb_{\lambda^{\prime}})^{l}}
}
for $\llVert \tau-\tau^{\prime}\rrVert_1>0$ sufficiently small and $g(\varepsilon)=\varepsilon(|\log \varepsilon|+1)^d$. Observing that for $\varepsilon$ sufficiently small, $g(\varepsilon)=\varepsilon(|\log \varepsilon|+1)^d< \varepsilon^{\kappa}$ for any $\kappa\in(0,1)$, we obtain
\als{\label{(II)}
\E[|\overline{\mathbb{G}}_{n,U}(\lambda^{\prime},\tau_1,\tau_2)-\overline{\mathbb{G}}_{n,U}(\lambda^{\prime},\tau_1^{\prime},\tau_2^{\prime})|^{2L}]
\leq{}&
\tilde{K}_1\sum_{l=0}^{L-1}\frac{\llVert \tau-\tau^{\prime}\rrVert_1^{(L-l)\kappa}}{n^{l}}.
}

%
%
%
%

Similarly, for $R_n^{(1)}$,
\als{\label{zuwachslambda}
\overline{\mathbb{G}}_{n,U}(\lambda ;\tau_1,\tau_2)-\overline{\mathbb{G}}_{n,U}(\lambda^{\prime},\tau_1,\tau_2)
={}&\begin{cases}C_{|\lambda-\lambda^{\prime}|}\overline{\mathbb{H}}_n^U(\tau,\tau^{\prime};|\lambda-\lambda^{\prime}|),&\text{ if } |\lambda-\lambda^{\prime}|\in(0,\pi],\\
0,&\text{ if } |\lambda-\lambda^{\prime}|=0,
\end{cases}
} 

where $\tau=(\tau_1,\tau_2)=\tau^{\prime}$ and
\al{
\overline{\mathbb{H}}_n^U(\tau,\tau^{\prime};|\lambda-\lambda^{\prime}|):=\sqrt{nb_{|\lambda-\lambda^{\prime}|}}(\tilde{\mathbb{H}}_n^U(\tau,\tau^{\prime};|\lambda-\lambda^{\prime}|)-\E[\tilde{\mathbb{H}}_n^U(\tau,\tau^{\prime};|\lambda-\lambda^{\prime}|)])
}
with 
\al{
&\tilde{\mathbb{H}}_n^U(\tau,\tau^{\prime};|\lambda-\lambda^{\prime}|)=\frac{2\pi}{n}\sum_{s=1}^{n-1}W_{n,|\lambda-\lambda^{\prime}|}\left(\frac{\lambda+\lambda^{\prime}}{2}-2\pi s/n\right)\Big\{I_{n,U}^{\tau_1,\tau_2}(2\pi s/n)\\
&\hspace{10cm}-I_{n,U}^{\tau_1^{\prime},\tau_2^{\prime}}(2\pi s/n)I\{\tau\neq \tau^{\prime}\}\Big\},\\
&W_{n,|\lambda-\lambda^{\prime}|}(u)=\sum_{j=-\infty}^{\infty}b_{|\lambda-\lambda^{\prime}|}^{-1}W(b_{|\lambda-\lambda^{\prime}|}^{-1}(u+2\pi j)), \ \text{ and}\\
&W(\cdot)=\frac{1}{2\pi}I\{-\pi\leq\cdot\leq\pi\}
}
for $C_{|\lambda-\lambda^{\prime}|}=\sqrt{2\pi |\lambda-\lambda^{\prime}|}$ and $b_{|\lambda-\lambda^{\prime}|}=\frac{|\lambda-\lambda^{\prime}|}{2\pi}\vspace{1mm}$.

Similar arguments imply 
\al{
\E[|\overline{\mathbb{G}}_{n,U}(\lambda ;\tau_1,\tau_2)-\overline{\mathbb{G}}_{n,U}(\lambda^{\prime},\tau_1,\tau_2)|^{2L}]\leq K_1 \llVert W\rrVert_{\infty}^{2L}C_{|\lambda-\lambda^{\prime}|}^{2L}\sum_{l=0}^{L-1}
\frac{K_2}{(nb_{|\lambda-\lambda^{\prime}|})^{l}},
}
and hence,
\als{\label{(I)}
\E[|\overline{\mathbb{G}}_{n,U}(\lambda ;\tau_1,\tau_2)-\overline{\mathbb{G}}_{n,U}(\lambda^{\prime},\tau_1,\tau_2)|^{2L}]\leq{}& \bar{K}_1\sum_{l=0}^{L-1}\frac{|\lambda-\lambda^{\prime}|^{L-l}}{n^l}.
}

Plugging (\ref{(I)}) and (\ref{(II)}) into (\ref{psitotal}) yields
\al{
&\llVert \overline{\mathbb{G}}_{n,U}(\lambda ;\tau_1,\tau_2)-\overline{\mathbb{G}}_{n,U}(\lambda^{\prime},\tau_1^{\prime},\tau_2^{\prime})\rrVert_{\Psi}\\
\leq{}&2^{(2L-1)/(2L)}\Big(K_1\sum_{l=0}^{L-1}\frac{|\lambda-\lambda^{\prime}|^{L-l}}{n^l}+K_2\sum_{l=0}^{L-1}\frac{\llVert \tau-\tau^{\prime}\rrVert_1^{(L-l)\kappa}}{n^{l}}\Big)^{1/(2L)}.
}

Furthermore, if $|\lambda-\lambda^{\prime}|<1$ then $|\lambda-\lambda^{\prime}|^{q}\leq |\lambda-\lambda^{\prime}|^{q\kappa}$ for all $q>0,\,\kappa\in(0,1)$ so that 
\al{
&2^{(2L-1)/(2L)}\Big(K_1\sum_{l=0}^{L-1}\frac{|\lambda-\lambda^{\prime}|^{L-l}}{n^l}+K_2\sum_{l=0}^{L-1}\frac{\llVert \tau-\tau^{\prime}\rrVert_1^{(L-l)\kappa}}{n^{l}}\Big)^{1/(2L)}\\
\leq{}&2^{(2L-1)/(2L)}\Big(K_1\sum_{l=0}^{L-1}\frac{|\lambda-\lambda^{\prime}|^{(L-l)\kappa}}{n^l}+K_2\sum_{l=0}^{L-1}\frac{\llVert \tau-\tau^{\prime}\rrVert_1^{(L-l)\kappa}}{n^{l}}\Big)^{1/(2L)}\\
\leq{}&K_3\Big(\sum_{l=0}^{L-1}\frac{|\lambda-\lambda^{\prime}|^{(L-l)\kappa}+\llVert \tau-\tau^{\prime}\rrVert_1^{(L-l)\kappa}}{n^l}\Big)^{1/(2L)}\\
\leq{}&K_3\Big(\sum_{l=0}^{L-1}\frac{2(|\lambda-\lambda^{\prime}|\vee\llVert \tau-\tau^{\prime}\rrVert_1)^{(L-l)\kappa}}{n^l}\Big)^{1/(2L)}\\
\leq{}&2^{1/(2L)}K_3\Big(\sum_{l=0}^{L-1}\frac{(|\lambda-\lambda^{\prime}|+\llVert \tau-\tau^{\prime}\rrVert_1)^{(L-l)\kappa}}{n^l}\Big)^{1/(2L)}\\
={}&2^{1/(2L)}K_3\Big(\sum_{l=0}^{L-1}\frac{\llVert(\lambda ;\tau_1,\tau_2)-(\lambda^{\prime},\tau_1^{\prime},\tau_2^{\prime}) \rrVert_1^{(L-l)\kappa}}{n^l}\Big)^{1/(2L)}.\\
}

It follows that, for all $(\lambda ;\tau_1,\tau_2),(\lambda^{\prime},\tau_1^{\prime},\tau_2^{\prime})$ with $\llVert(\lambda ;\tau_1,\tau_2)-(\lambda^{\prime},\tau_1^{\prime},\tau_2^{\prime}) \rrVert_1$ sufficiently small and   $\llVert(\lambda ;\tau_1,\tau_2)-(\lambda^{\prime},\tau_1^{\prime},\tau_2^{\prime}) \rrVert_1\geq n^{-1/\gamma}$ for all $\gamma\in(0,1)$ such that $\gamma<\kappa$,
\al{
\llVert \overline{\mathbb{G}}_{n,U}(\lambda ;\tau_1,\tau_2)-\overline{\mathbb{G}}_{n,U}(\lambda^{\prime},\tau_1^{\prime},\tau_2^{\prime})\rrVert_{\Psi}\leq{}&K_4\Big(\llVert(\lambda ;\tau_1,\tau_2)-(\lambda^{\prime},\tau_1^{\prime},\tau_2^{\prime}) \rrVert_1^{L\kappa}\\
&+\llVert(\lambda ;\tau_1,\tau_2)-(\lambda^{\prime},\tau_1^{\prime},\tau_2^{\prime}) \rrVert_1^{(L-1)\kappa+\gamma}\\
&+\dots+\llVert(\lambda ;\tau_1,\tau_2)-(\lambda^{\prime},\tau_1^{\prime},\tau_2^{\prime}) \rrVert_1^{\kappa+(L-1)\gamma}\Big)^{1/(2L)}\\
\leq{}&K_5\llVert(\lambda ;\tau_1,\tau_2)-(\lambda^{\prime},\tau_1^{\prime},\tau_2^{\prime}) \rrVert_1^{\gamma/2}.
}
Observing that $\llVert(\lambda ;\tau_1,\tau_2)-(\lambda^{\prime},\tau_1^{\prime},\tau_2^{\prime}) \rrVert_1\geq n^{-1/\gamma}$ if and only if 
\[
d((\lambda ;\tau_1,\tau_2),(\lambda^{\prime},\tau_1^{\prime},\tau_2^{\prime})) =\llVert(\lambda ;\tau_1,\tau_2)-(\lambda^{\prime},\tau_1^{\prime},\tau_2^{\prime}) \rrVert_1^{\gamma/2}\geq n^{-1/2} =:\bar{\eta}_n/2,
\]
we have  
\al{
\llVert \overline{\mathbb{G}}_{n,U}(\lambda ;\tau_1,\tau_2)-\overline{\mathbb{G}}_{n,U}(\lambda^{\prime},\tau_1^{\prime},\tau_2^{\prime})\rrVert_{\Psi}\leq{}&Kd((\lambda ;\tau_1,\tau_2),(\lambda^{\prime},\tau_1^{\prime},\tau_2^{\prime}))
}
for all $(\lambda ;\tau_1,\tau_2),(\lambda^{\prime},\tau_1^{\prime},\tau_2^{\prime})$ with $d((\lambda ;\tau_1,\tau_2),(\lambda^{\prime},\tau_1^{\prime},\tau_2^{\prime}))\geq \bar{\eta}_n/2$. This establishes (\ref{assumpA.1Tobiequation}). \hfill $\Box$

\subsection{Details for the proof of (\ref{fidi})}\label{sec::detailsfidi}

All results in this section rely on the assumption

\begin{itemize}
\item[(CS)] Assume that assumption (S) holds and that, for given $p\geq 2,\, l\geq 0$, a constant~$K<~\!\infty$ exists such that the  summability condition 
\al{
\sum_{k_1,\dots,k_{p-1}\in\Z}(1+|k_j|^l)|\cum(I\{X_{k_1}\in A_1\},\dots,I\{X_{k_{p-1}}\in A_{p-1}\},I\{X_{0}\in A_p\})|< K
}
holds for arbitrary intervals $A_1,\dots, A_p\subset \R$ and all $j=1,\dots, p-1$.
\end{itemize}
This condition is a consequence of Assumption (C), but is slightly weaker and, therefore, mentioned seperately.

\subsubsection{Proof of (\ref{secondordercumquantspec}) }\label{sec::secondmomentsfidiquantspec}

Note that
\al{
&\cum\Big(\mathbb{K}_n(\lambda ;\tau_1,\tau_2),\mathbb{K}_n(\mu,\xi_1,\xi_2)\Big)\\
={}&\cum\Big(\mathbb{G}_{n,U}(\lambda ;\tau_1,\tau_2)+\sqrt{n}\sum_{j=1}^2(\tau_j-\hat{F}_{n,U}(\tau_j))G_j(\lambda ;\tau_1,\tau_2),\\
&\hspace{2cm}\mathbb{G}_{n,U}(\mu,\xi_1,\xi_2)+\sqrt{n}\sum_{j=1}^2(\xi_j-\hat{F}_{n,U}(\xi_j))G_j(\mu,\xi_1,\xi_2)\Big)\\
=:{}&U_n^{(1)}-U_n^{(2)}-U_n^{(3)}+U_n^{(4)}
}
where 
\al{
&U_n^{(1)}=\cum\Big(\mathbb{G}_{n,U}(\lambda ;\tau_1,\tau_2),\mathbb{G}_{n,U}(\mu,\xi_1,\xi_2)\Big),\\
&U_n^{(2)}=\sqrt{n}\sum_{j=1}^2G_j(\mu,\xi_1,\xi_2)\cum\Big(\mathbb{G}_{n,U}(\lambda ;\tau_1,\tau_2),\hat{F}_{n,U}(\xi_j)\Big),\\
&U_n^{(3)}=\sqrt{n}\sum_{j=1}^2G_j(\lambda ;\tau_1,\tau_2)\cum\Big(\hat{F}_{n,U}(\tau_j)-\tau_j,\mathbb{G}_{n,U}(\mu,\xi_1,\xi_2)\Big), \ \text{ and}\\
&U_n^{(4)}=n\sum_{j_1=1}^2\sum_{j_2=1}^2G_{j_1}(\lambda ;\tau_1,\tau_2)G_{j_2}(\mu,\xi_1,\xi_2)\cum\Big(\hat{F}_{n,U}(\tau_j)-\tau_j,\hat{F}_{n,U}(\xi_j)-\xi_j\Big).
}

First consider $U_n^{(1)}$. We have
\al{
\cum\Big(\mathbb{G}_{n,U}&(\lambda ;\tau_1,\tau_2),\mathbb{G}_{n,U}(\mu,\xi_1,\xi_2)\Big)
={}n^{-3}\sum_{r=1}^{n-1}\sum_{s=1}^{n-1}I\big\{0\leq \frac{2\pi r}{n} \leq \lambda\big\}I\big\{0\leq \frac{2\pi s}{n}\leq \mu\big\}\\
&\times \cum\Big(d_{n,U}^{\tau_1}\big(\frac{2\pi r}{n}\big)d_{n,U}^{\tau_2}\big(-\frac{2\pi r}{n}\big),d_{n,U}^{\xi_1}\big(\frac{2\pi s}{n}\big)d_{n,U}^{\xi_2}\big(-\frac{2\pi s}{n}\big)\Big).
}
By Theorem 2.3.2 in \cite{brillinger75}, as $\E[d_{n,U}^{\tau}\left(2\pi r/n\right)]=0$ for any $r=1,\dots,n-1$,
\al{
&\cum\Big(d_{n,U}^{\tau_1}\left(2\pi r/n\right)d_{n,U}^{\tau_2}\left(-2\pi r/n\right),d_{n,U}^{\xi_1}\left(2\pi s/n\right)d_{n,U}^{\xi_2}\left(-2\pi s/n\right)\\
&\quad={}\cum\Big(d_{n,U}^{\tau_1}\left(2\pi r/n\right),d_{n,U}^{\tau_2}\left(-2\pi r/n\right),d_{n,U}^{\xi_1}\left(2\pi s/n\right),d_{n,U}^{\xi_2}\left(-2\pi s/n\right)\Big)\\
&\qquad+\cum\Big(d_{n,U}^{\tau_1}\left(2\pi r/n\right),d_{n,U}^{\xi_1}\left(2\pi s/n\right)\Big)\cum\Big(d_{n,U}^{\tau_2}\left(-2\pi r/n\right),d_{n,U}^{\xi_2}\left(-2\pi s/n\right)\Big)\\
&\qquad+\cum\Big(d_{n,U}^{\tau_1}\left(2\pi r/n\right),d_{n,U}^{\xi_2}\left(-2\pi s/n\right)\Big)\cum\Big(d_{n,U}^{\tau_2}\left(-2\pi r/n\right),d_{n,U}^{\xi_1}\left(2\pi s/n\right)\Big)
}
and, from Theorem 1.3 in the online appendix of \cite{DetteEtAl2016}, we know that under Assumption (CS) with $p=2,4$ and $l\geq 1$, for all $\tau_1,\dots,\tau_k\in[\eta,1-\eta]$ and $\lambda_1,\dots,\lambda_K\in\R$,
\als{\label{tobilemma1.1bis1.3}
\cum\Big(d_{n,U}^{\tau_1}(\lambda_1),\dots,d_{n,U}^{\tau_K}(\lambda_K)\Big)={}&(2\pi)^{K-1}\Delta_n\Big(\sum_{j=1}^K\lambda_j\Big)\spec_{q_{\tau_1},\dots,q_{\tau_K}}(\lambda_1,\dots,\lambda_{K-1})\notag\\
&+\varepsilon_n(\tau_1,\dots,\tau_k,\lambda_1,\dots,\lambda_K),
}
where $\Delta_n(\cdot):=\sum_{t=0}^{n-1}e^{-it\cdot}$ and 
\al{
\sup_{n}\sup_{\substack{\tau_1,\dots,\tau_K\in[0,1]\\\lambda_1,\dots,\lambda_K\in[0,\pi]}}|\varepsilon_n(\tau_1,\dots,\tau_k,\lambda_1,\dots,\lambda_K)|<\infty.
}
Observe that 
\al{
0\leq \Delta_n\left(\frac{2\pi}{n}s\right):=\begin{cases}
n,&\text{ if }s\in n\Z;\\
0,&\text{ if }s\notin n\Z.
\end{cases}
}
Hence, the functions $\Delta_n(\cdot)$ impose linear restrictions on the summation indices and we obtain
\al{
U_n^{(1)}={}&n^{-3}\sum_{r=1}^{n-1}\sum_{s=1}^{n-1}I\big\{0\leq \frac{2\pi r}{n}\leq\lambda\}I\big\{0\leq \frac{2\pi s}{n}\leq \mu\big\}\\
&\hspace{1cm}\times\Big\{(2\pi)^{3}\Delta_n\left(0\right)\spec\Big(\frac{2\pi r}{n},-\frac{2\pi r}{n},\frac{2\pi s}{n};  \tau_1,\tau_2,\xi_1,\xi_2\Big)+O(1)\\
&\hspace{1cm}+\Big((2\pi)\Delta_n\Big(\frac{2\pi (r+s)}{n}\Big)\spec\Big(\frac{2\pi r}{n}; \tau_1, \xi_1\Big)+O(1)\Big)\\
&\hspace{1.5cm}\times\Big((2\pi)\Delta_n\Big(-\frac{2\pi (r+s)}{n}\Big)\spec\Big(-\frac{2\pi r}{n}; \tau_2, \xi_2\Big)+O(1)\Big)\\
&\hspace{1cm}+\Big((2\pi)\Delta_n\Big(\frac{2\pi (r-s)}{n}\Big)\spec\Big(\frac{2\pi r}{n}; \tau_1, \xi_2\Big)+O(1)\Big)\\
&\hspace{1.5cm}\times\Big((2\pi)\Delta_n\Big(\frac{2\pi (s-r)}{n}\Big)\spec\Big(-\frac{2\pi r}{n}; \tau_2, \xi_1\Big)+O(1)\Big)\Big\}\\
={}&n^{-3}\sum_{r=1}^{n-1}\sum_{s=1}^{n-1}I\big\{0\leq \frac{2\pi r}{n}\leq\lambda\}I\big\{0\leq \frac{2\pi s}{n}\leq \mu\big\}\Big((2\pi)^{3}n\\
&\hspace{4cm}\times\spec\Big(\frac{2\pi r}{n},-\frac{2\pi r}{n},\frac{2\pi s}{n}+O(1);  \tau_1, \tau_2, \xi_1, \xi_2\Big)\Big)\\
&+n^{-3}\sum_{r=1}^{n-1}I\big\{0\leq\frac{2\pi r}{n} \leq\lambda\big\}I\big\{0\leq 2\pi -\frac{2\pi r}{n}\leq \mu\big\}\\
&\hspace{1.5cm}\times \Big(2\pi n\,\spec\Big(\frac{2\pi r}{n}; \tau_1, \xi_1\Big)+O(1)\Big) \Big(2\pi n\,\spec\Big(-\frac{2\pi r}{n}; \tau_2, \xi_2\Big)+O(1)\Big)\\
&+n^{-3}\sum_{r=1}^{n-1}I\big\{0\leq \frac{2\pi r}{n}\leq\lambda\big\}I\big\{0\leq\frac{2\pi r}{n}\leq\mu\big\}\\
&\hspace{1.5cm}\times \Big(2\pi n\,\spec\Big(\frac{2\pi r}{n}; \tau_1, \xi_2\Big)+O(1)\Big)\Big(2\pi n\,\spec\Big(-\frac{2\pi r}{n}; \tau_2, \xi_1\Big)+O(1)\Big).
} 
Similar arguments as   in the proof of Lemma \ref{expF-F} in Section \ref{sec::auxiliaryresultsquantspec} below yield
\al{
U_n^{(1)}={}&2\pi\int_{0}^{\lambda}\int_{0}^{\mu}\spec\big(\alpha,-\alpha,\beta; \tau_1,\tau_2,\xi_1,\xi_2 \big)d\alpha d\beta+O(n^{-1})\\
&+2\pi\int_0^{2\pi}I\{0\leq\alpha\leq\lambda\}I\{0\leq2\pi-\alpha\leq\mu\}\spec\big(\alpha ;  \tau_1, \xi_1 \big)\spec\big(-\alpha ;  \tau_2, \xi_2 \big)d\alpha+O(n^{-1})\\
&+2\pi\int_0^{\lambda\wedge \mu}\spec\big(\alpha ;  \tau_1, \xi_2 \big)\spec\big(-\alpha ;  \tau_2, \xi_1 \big)d\alpha+O(n^{-1})
} 
and, as 
$$\int_0^{2\pi}I\{0\leq\alpha\leq\lambda\}I\{0\leq2\pi-\alpha\leq\mu\}\spec\big(\alpha ;  \tau_1, \xi_1 \big)\spec\big(-\alpha ;  \tau_2, \xi_2 \big)d\alpha=0,$$
 because $\lambda,\mu\in[0,\pi]$,
\begin{align}
U_n^{(1)}={}&2\pi\int_{0}^{\lambda}\int_{0}^{\mu}\spec\big(\alpha,-\alpha,\beta; \tau_1,\tau_2,\xi_1,\xi_2 \big)d\alpha d\beta \notag\\
&+2\pi\int_0^{\lambda\wedge \mu}\spec\big(\alpha ;  \tau_1, \xi_2 \big)\spec\big(-\alpha ;  \tau_2, \xi_1 \big)d\alpha+O(n^{-1}). \label{eq:Un1-repr}
\end{align}

As for $U_n^{(2)}$, we have
\al{
U_n^{(2)}
={}&\sum_{j=1}^2G_j(\mu,\xi_1,\xi_2)n^{-2}\sum_{r=1}^{n-1}I\big\{0\leq \frac{2\pi r}{n} \leq \lambda\big\}\cum\Big(d_{n,U}^{\tau_1}\big(\frac{2\pi r}{n}\big)d_{n,U}^{\tau_2}\big(-\frac{2\pi r}{n}\big),d_{n,U}^{\xi_j}(0)\Big),
}
where, in view of Theorem 2.3.2 in \cite{brillinger75} and the fact that $\E\Big[d_{n,U}^{\tau_1}\left(2\pi r/n\right)\Big]=0$\linebreak  for~$r=1,\dots,n-1$,
\al{
\cum\Big(d_{n,U}^{\tau_1}\left(2\pi r/n\right)&d_{n,U}^{\tau_2}\left(-2\pi r/n\right),d_{n,U}^{\xi_j}(0)\Big)\\
={}&\cum\Big(d_{n,U}^{\tau_1}\left(2\pi r/n\right),d_{n,U}^{\tau_2}\left(-2\pi r/n\right),d_{n,U}^{\xi_j}(0)\Big).
}
Hence, with similar arguments as in the derivation of~\eqref{eq:Un1-repr}, we obtain
\al{
U_n^{(2)}={}&\sum_{j=1}^2G_j(\mu,\xi_1,\xi_2)n^{-2}\sum_{r=1}^{n-1}I\big\{0\leq \frac{2\pi r}{n} \leq \lambda\big\}\Big\{(2\pi)^2\Delta_n(0)\spec \Big(2\pi r/n,-2\pi r/n ;  \tau_1, \tau_2, \xi_j \Big)\\
&\hspace{5cm}+\varepsilon_n(\tau_1,\tau_2,\xi_j,2\pi r/n,-2\pi r/n,0)\Big\}\\
={}&\sum_{j=1}^2G_j(\mu,\xi_1,\xi_2)2\pi \int_{0}^{\lambda}\spec\big(\alpha,-\alpha;  \tau_1, \tau_2, \xi_j\big)d\alpha+O(n^{-1}).
}
Analogously, 
\al{
U_n^{(3)}={}&\sum_{j=1}^2G_j(\lambda ;\tau_1,\tau_2)2\pi \int_0^{\lambda}\spec \big(\alpha,-\alpha ; \xi_1, \xi_2, \tau_j \big)d\alpha+O(n^{-1})
}
and
\al{
U_n^{(4)}={}&\sum_{j_1=1}^2\sum_{j_2=1}^2G_{j_1}(\lambda ;\tau_1,\tau_2)G_{j_2}(\mu,\xi_1,\xi_2)2\pi\spec\big(0;  \tau_j, \xi_j \big)+O(n^{-1}).
}
\hfill $\Box$

\subsubsection{Proof of (\ref{vanishinghighermomentsfidi}) -- convergence of the second-order cumulants }\label{sec::vaninishinghighermomentsfidi}
Observe that
\al{
&\cum\Big(\mathbb{K}_n(\lambda_1,\tau_1^{(1)},\tau_2^{(1)}),\dots,\mathbb{K}_n(\lambda_l,\tau_1^{(l)},\tau_2^{(l)})\Big)\\
={}&\cum\Big(\mathbb{G}_{n,U}(\lambda_1,\tau_1^{(1)},\tau_2^{(1)})+\sqrt{n}\sum_{j=1}^2(\tau_{j}^{(1)}-\hat{F}_{n,U}(\tau_{j}^{(1)}))G_{j}(\lambda_1,\tau_1^{(1)},\tau_2^{(1)}),\dots,\\
&\hspace{2cm} \mathbb{G}_{n,U}(\lambda_l,\tau_1^{(l)},\tau_2^{(l)})+\sqrt{n}\sum_{j=1}^2(\tau_{j}^{(l)}-\hat{F}_{n,U}(\tau_{j}^{(l)}))G_{j}(\lambda_l,\tau_1^{(l)},\tau_2^{(l)})\Big).
}
Let $\cum\Big(A_s,B_t;s\in\mathcal{S},t\in\mathcal{T}\Big):=\cum\Big(A_{s_1},\dots,A_{s_{|\mathcal{S}|}},B_{t_1},\dots,B_{t_{|\mathcal{T}|}}\Big)$ for some finite sets~$\mathcal{S} = \{s_1,\dots, S_{|\mathcal{S}|}\},\mathcal{T} = \{t_1,\dots,t_{|\mathcal{T}|}\}$. Then, by Theorem 2.3.1 (ii) and (iv) in \cite{brillinger75}, with $\mathcal{S}^C:=\{1,\dots,l\}\backslash\mathcal{S}$, we have
\al{
&\cum\Big(\mathbb{K}_n(\lambda_1,\tau_1^{(1)},\tau_2^{(1)}),\dots,\mathbb{K}_n(\lambda_l,\tau_1^{(l)},\tau_2^{(l)})\Big)\\
={}&\sum_{\mathcal{S}\subseteq \{1,\dots,l\}} \cum\Big(\mathbb{G}_{n,U}(\lambda_p,\tau_1^{(p)},\tau_2^{(p)}),\sqrt{n}\sum_{j=1}^2(\tau_{j}^{(q)}-\hat{F}_{n,U}(\tau_{j}^{(q)}))\\
&\hspace{7cm}G_{j}(\lambda_q,\tau_1^{(q)},\tau_2^{(q)});p\in\mathcal{S},q\in\mathcal{S}^C\Big)\\
={}&\sum_{\mathcal{S}\subseteq \{1,\dots,l\}} \cum\Big(\sqrt{n}\frac{2\pi}{n}\sum_{s=1}^{n-1}I\{0\leq2\pi s/n\leq\lambda_p\}\frac{1}{2\pi n}d_{n,U}^{\tau_1^{(p)}}(2\pi s/n)d_{n,U}^{\tau_2^{(p)}}(-2\pi s/n),\\
&\sqrt{n}\sum_{j=1}^2(\tau_{j}^{(q)}-\hat{F}_{n,U}(\tau_{j}^{(q)}))G_{j}(\lambda_q,\tau_1^{(q)},\tau_2^{(q)});p\in\mathcal{S},q\in\mathcal{S}^C\Big)\\
={}&n^{l/2}\sum_{\substack{\mathcal{S}\subseteq \{1,\dots,l\}\\ \mathcal{S}:=\{\xi_1,\dots,\xi_m\}\\ \mathcal{S}^C:=\{\xi_{m+1},\dots,\xi_l\}}}(-1)^{l-m}n^{-2m}\sum_{s_{\xi_1},\dots,s_{\xi_m}=1}^{n-1}\Big(\prod_{p\in\mathcal{S}}I\{0\leq2\pi s_p/n\leq\lambda_p\}\prod_{q\in\mathcal{S}^C}n^{-(l-m)}\\
& \times \sum_{j_q=1}^2G_{j_q}(\lambda_q,\tau_1^{(q)},\tau_2^{(q)})\cum\Big(d_{n,U}^{\tau_1^{(p)}}(2\pi s_p/n)d_{n,U}^{\tau_2^{(p)}}(-2\pi s_p/n),d_{n,U}^{\tau_{j_q}^{(q)}}(0);p\in\mathcal{S},q\in\mathcal{S}^C\Big),\notag 
}

where we have used the convention that $\prod_{p\in\emptyset}a_p:=1$.\\

Hence, since  $\sup\limits_{j=1,2}\sup\limits_{\lambda\in[0,\pi],\tau_1,\tau_2\in[0,1]}|G_j(\lambda ;\tau_1,\tau_2)|<\infty$  by Assumption (D),
\als{\label{estimationcumhigherorder}
\Big|\cum\Big(\mathbb{K}_n(\lambda_1,\tau_1^{(1)},\tau_2^{(1)}),&\dots,\mathbb{K}_n(\lambda_l,\tau_1^{(l)},\tau_2^{(l)})\Big)\Big|\leq  Kn^{-l/2}\\
\sum_{\substack{\mathcal{S}\subseteq \{1,\dots,l\}\\ \mathcal{S}:=\{\xi_1,\dots,\xi_m\}\\ \mathcal{S}^C:=\{\xi_{m+1},\dots,\xi_l\}}}\hspace{-8mm}n^{-m}\sum_{s_{\xi_1},\dots,s_{\xi_m}=1}^{n-1} & \Big|\cum\Big(d_{n,U}^{\tau_1^{(p)}}(2\pi s_p/n)d_{n,U}^{\tau_2^{(p)}}(-2\pi s_p/n),d_{n,U}^{\tau_{j_q}^{(q)}}(0);p\in\mathcal{S},q\in\mathcal{S}^C\Big)\Big|,\notag
} 

for some constant $K$. Put 
\al{
\omega_{k,u}:=\begin{cases}
2\pi s_{u}/n&k=1,u\in\mathcal{S},\\
-2\pi s_{u}/n&k=2,u\in\mathcal{S},\\
0&u\in\mathcal{S}^C.
\end{cases}
}

Then, by Theorem 2.3.2 in \cite{brillinger75},
\als{\label{cumtabledec}
\cum\Big(d_{n,U}^{\tau_1^{(p)}}(2\pi s_p/n)d_{n,U}^{\tau_2^{(p)}}(-2\pi s_p/n)&,d_{n,U}^{\tau_{j_q}^{(q)}}(0);p\in\mathcal{S},q\in\mathcal{S}^C\Big)\notag\\
={}&\sum_{\{\nu_1,\dots,\nu_R\}}\prod_{r=1}^R\cum\Big(d_{n,U}^{\tau_k^{(u)}}(\omega_{k,u});(u,k)\in\nu_r\Big),
}

where the summation is over all indecomposable partitions of the table

\begin{center}
\begin{tabular}{cc}
$(\xi_1,1)$&$(\xi_1,2)$\\
\vdots&\vdots\\
$(\xi_{m},1)$&$(\xi_{m},2)$\\
$(\xi_{m+1},j_{\xi_{m+1}})$&\\
\vdots&\\
$(\xi_{l},j_{\xi_{l}})$&
\end{tabular}
\end{center}

However, all indecomposable partitions of the above table are obtained by adding in the various possible ways the elements $(\xi_{m+1},j_{\xi_{m+1}}),\dots,(\xi_{l},j_{\xi_{l}})$ to the indecomposable partitions of the table

\begin{center}
\begin{tabular}{cc}
$(\xi_1,1)$&$(\xi_1,2)$\\
\vdots&\vdots\\
$(\xi_{m},1)$&$(\xi_{m},2)$
\end{tabular}
\end{center}
Therefore, and since $\E[d_{n,U}^{\tau_k^{(u)}}(\omega)]=0$ for all $\omega\not\equiv 0\mod 2\pi$, the first-order cumulants in~(\ref{cumtabledec}) are zero. Furthermore, the maximum number of sets in an indecomposable decomposition of the above table is $m$.  Hence, neglecting, for notational convenience,  the indices of $\spec$, by (\ref{tobilemma1.1bis1.3}) we obtain, with the convention $\prod_{i\in\emptyset}a_i:=1$,
\als{
&\cum\Big(d_{n,U}^{\tau_1^{(p)}}(2\pi s_p/n)d_{n,U}^{\tau_2^{(p)}}(-2\pi s_p/n),d_{n,U}^{\tau_{j_q}^{(q)}}(0);p\in\mathcal{S},q\in\mathcal{S}^C\Big)\notag\\
={}&\sum_{\substack{\{\nu_1,\dots,\nu_R\}\\ |\nu_r|\geq 2;r=1,\dots,R}}\prod_{r=1}^R\cum\Big(d_{n,U}^{\tau_k^{(u)}}(\omega_{k,u});(u,k)\in\nu_r\Big)\notag\\
={}&\sum_{\substack{\{\nu_1,\dots,\nu_R\}\\ |\nu_r|\geq 2;r=1,\dots,R}}\prod_{r=1}^R\Big[(2\pi)^{|\nu_r|-1}\Delta_n\Big(\sum_{(u,k)\in\nu_r}\omega_{k,u}\Big)\spec\Big(\omega_{k,u};(u,k)\in\nu_r\Big)+O(1)\Big]\notag\\
={}&\sum_{\substack{\{\nu_1,\dots,\nu_R\}\\ |\nu_r|\geq 2;r=1,\dots,R}}\sum_{I\subseteq\{1,\dots,R\}}\prod_{j\in I}\Delta_n\Big(\sum_{(u,k)\in\nu_j}\omega_{k,u}\Big)\spec\Big(\omega_{k,u};(u,k)\in\nu_j\Big)O(1)\label{fidihelpqualcosa}
}


where 
\al{
\Delta_n\Big(\sum_{(u,k)\in\nu_j}\omega_{k,u}\Big)=\Delta_n\Big(\frac{2\pi}{n}\sum_{\substack{(u,k)\in\nu_j\\ u\in\{\xi_1,\dots,\xi_m\}}}(-1)^{k+1}s_{u}\Big)=\begin{cases}
0,&\sum\limits_{\substack{(u,k)\in\nu_j\\ u\in\{\xi_1,\dots,\xi_m\}}}(-1)^{k+1}s_{u}\not\in n\Z\\
n,&\sum\limits_{\substack{(u,k)\in\nu_j\\ u\in\{\xi_1,\dots,\xi_m\}}}(-1)^{k+1}s_{u}\in n\Z.
\end{cases}
}

That is, after substituting \eqref{fidihelpqualcosa} into (\ref{estimationcumhigherorder}), the functions $\Delta_n(\cdot)$ impose linear restrictions on the summation indices, whence
\al{
&\sum_{s_{\xi_1},\dots,s_{\xi_m}=1}^{n-1}\sum_{\substack{\{\nu_1,\dots,\nu_R\}\\ |\nu_r|\geq 2;r=1,\dots,R}}\sum_{I\subseteq\{1,\dots,R\}}\prod_{j\in I}\Delta_n\Big(\frac{2\pi}{n}\sum\limits_{\substack{(u,k)\in\nu_j\\ u\in\{\xi_1,\dots,\xi_m\}}}(-1)^{k+1}s_{u}\Big)O(1)\notag\\
={}&\sum_{\substack{\nu:=\{\nu_1,\dots,\nu_R\}\\ |\nu_r|\geq 2;r=1,\dots,R}}\sum_{I\subseteq\{1,\dots,R\}}\sum_{s_{\xi_1},\dots,s_{\xi_m}\in \mathcal{R}(\nu,I)}n^{|I|}O(1)
}
with
\al{
\mathcal{R}(\nu,I):=\Big\{(s_{\xi_1},\dots,s_{\xi_m})\in\{1,\dots,n-1\}^m\Big|\sum\limits_{\substack{(u,k)\in\nu_j\\ u\in\{\xi_1,\dots,\xi_m\}}}(-1)^{k+1}s_{u}\in n\Z,\forall \nu_j\in\nu,j\in I\Big\}.
}

Note that there are $|I|$ linear constraints on $s_{\xi_1},\dots,s_{\xi_m}$ if $|I|<R$ and $|I|-1$ linear constraints if $|I|=R$, i.e.\ there are $|I|-\floor{|I|/R}$ linear constraints. This follows similarly as in the proof of Lemma A.2 in \cite{DetteEtAl2016}. More precisely, if we define for every~$\nu_j\in\{\nu_1,\dots,\nu_R\}$ a vector $w^{(j)}=(w_1^{(j)},\dots,w_m^{(j)})$ with 
\al{
w_v^{(j)}:=I\{(v,1)\in\nu_j\}-I\{(v,2)\in\nu_j\}\in \{-1,0,1\}^m,
}
we can rewrite the condition $\sum\limits_{\substack{(u,k)\in\nu_j\\ u\in\{\xi_1,\dots,\xi_m\}}}(-1)^{k+1}s_{u}\in n\Z$ as $(s_{\xi_1},\dots,s_{\xi_m})w^{(j)}\in n\Z$. Note that  two at most of the vectors $w^{(1)},\dots,w^{(R)}$ have non-zero entries being one $-1$ and the other $1$ at each position $v=1,\dots,m$. Hence, the linear restrictions corresponding to~$\nu_{j_1},\dots,\nu_{j_k}$ are linearly dependent if and only if $\sum_{a=1}^{k}w^{(j_a)}=0$. However, in the case of indecomposable partitions, $\sum_{a=1}^{k}w^{(j_a)}=0$ if and only if $\{j_1,\dots,j_k\}=\{1,\dots,R\}$.

Therefore,
$\sum_{\substack{\{\nu_1,\dots,\nu_R\}\\ |\nu_r|\geq 2;r=1,\dots,R}}\sum_{I\subseteq\{1,\dots,R\}}\sum_{s_{\xi_1},\dots,s_{\xi_m}\in \mathcal{R}(\nu,I)}n^{|I|}$
is of order 
\al{
\max_{|I|\leq R\leq m}n^{m-(|I|-\floor{|I|/R}}n^{|I|}=\max_{|I|\leq R\leq m}n^{m+\floor{|I|/R}}=n^{m+1}.
}
Thus, in (\ref{estimationcumhigherorder}), we obtain that, for some constant $K^{\prime}$,
\al{
\Big|\cum\Big(\mathbb{K}_n(\lambda_1,\tau_1^{(1)},\tau_2^{(1)}),\dots,\mathbb{K}_n(\lambda_l,\tau_1^{(l)},\tau_2^{(l)})\Big)\Big|\leq{}&K^{\prime}n^{-l/2}\max_{m=1,\dots,l}n^{-m}n^{m+1}\\ 
=&O(n^{-l/2+1}).
}\vspace{-11mm}

\hfill $\Box$

\subsection{Auxiliary results}\label{sec::auxiliaryresultsquantspec}

\begin{lemma}\label{replacetaubytauhatlem}
Under the assumptions of Theorem \ref{weakconvintegspectrum},
\al{
\widehat{\specdis}\phantom{F\!\!\!\!\!}_{n,R}(\lambda ;\tau_1,\tau_2)=\widehat{\specdis}\phantom{F\!\!\!\!\!}_{n,U}(\lambda,\hat{\tau}_1,\hat{\tau}_2)+o_{\Prob}(n^{-1/2}).
} 
\end{lemma}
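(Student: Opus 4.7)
The plan is to establish something slightly stronger than the stated $o_\Prob(n^{-1/2})$: a deterministic bound of order $n^{-1}\log n$, uniform in $(\lambda,\tau_1,\tau_2)$. The starting observation is that continuity of $F$ ensures that the variables $U_t:=F(X_t)$ are pairwise distinct almost surely and that the ranks of $X_0,\dots,X_{n-1}$ coincide a.s.\ with those of $U_0,\dots,U_{n-1}$. Writing $\hat\tau:=\hat F_{n,U}^{-1}(\tau)=U_{(\ceil{n\tau})}$, both relevant indicators can be expressed in terms of ranks:
\al{
I\{\hat F_n(X_t)\le\tau\}=I\{\mathrm{rank}(U_t)\le\floor{n\tau}\},\qquad I\{U_t\le\hat\tau\}=I\{\mathrm{rank}(U_t)\le\ceil{n\tau}\}.
}
They agree except, when $n\tau\notin\Z$, for a single index $t^{*}(\tau)$ having rank $\ceil{n\tau}$.

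I would then introduce the one-index perturbation $\delta_\tau(\omega):=d_{n,R}^\tau(\omega)-d_{n,U}^{\hat\tau}(\omega)=-I\{n\tau\notin\Z\}e^{-\ii\omega t^{*}(\tau)}$, for which $|\delta_\tau(\omega)|\le 1$, and expand the periodogram difference as
\al{
\copper_{n,R}^{\tau_1,\tau_2}(\omega)-\copper_{n,U}^{\hat\tau_1,\hat\tau_2}(\omega)=\frac{1}{2\pi n}\Big[\delta_{\tau_1}(\omega)d_{n,U}^{\hat\tau_2}(-\omega)+d_{n,U}^{\hat\tau_1}(\omega)\delta_{\tau_2}(-\omega)+\delta_{\tau_1}(\omega)\delta_{\tau_2}(-\omega)\Big].
}
Summing against $(2\pi/n)I\{0\le 2\pi s/n\le\lambda\}$ decomposes $\widehat{\specdis}\phantom{F\!\!\!\!\!}_{n,R}(\lambda;\tau_1,\tau_2)-\widehat{\specdis}\phantom{F\!\!\!\!\!}_{n,U}(\lambda;\hat\tau_1,\hat\tau_2)$ into three contributions, each of which must be estimated.

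For the leading cross term, swapping the order of summation yields
\al{
\frac{1}{n^{2}}\sum_{s=1}^{n-1}I\{0\le 2\pi s/n\le\lambda\}\delta_{\tau_1}(\omega_s)d_{n,U}^{\hat\tau_2}(-\omega_s)=-\frac{1}{n^{2}}\sum_{r=0}^{n-1}I\{U_r\le\hat\tau_2\}D_n\big(r-t^{*}(\tau_1)\big),
}
with $D_n(k):=\sum_{s=1}^{\floor{n\lambda/(2\pi)}}e^{\ii 2\pi sk/n}$. The elementary partial-Fourier-sum estimate $|D_n(k)|\le\min\{n,|\sin(\pi k/n)|^{-1}\}$, combined with $\sum_{k=1}^{n-1}|\sin(\pi k/n)|^{-1}=O(n\log n)$, gives a deterministic bound of order $n^{-1}\log n$ for this term, uniformly in $(\lambda,\tau_1,\tau_2)$. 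The symmetric cross term is handled identically; the purely quadratic $\delta\delta$--term is bounded by the same argument with $d_{n,U}^{\hat\tau_2}$ replaced by a single complex exponential, and is therefore of order $n^{-1}$. Adding the three contributions produces the announced $O(n^{-1}\log n)=o(n^{-1/2})$ bound, almost surely and uniformly, which implies the lemma.

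The main obstacle is the careful bookkeeping of the off-by-one discrepancy between $\floor{n\tau}$ and $\ceil{n\tau}$, so that $\delta_\tau$ really is a rank-one perturbation, and ensuring that the Dirichlet estimate on $D_n$ is sharp enough to absorb the extra factor $n$ carried by the smoothed Fourier sums of $d_{n,U}$. Beyond this analytic bookkeeping the argument is entirely deterministic: no probabilistic input is required beyond the almost-sure distinctness of the $U_t$'s, and in particular Assumptions~(C) and~(D) are not used at this step.
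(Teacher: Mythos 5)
Your decomposition and the Dirichlet-kernel estimate are sound and in fact run parallel to the paper's own argument: the paper also reduces the discrepancy between $\widehat{\specdis}\phantom{F\!\!\!\!\!}_{n,R}(\lambda;\tau_1,\tau_2)$ and $\widehat{\specdis}\phantom{F\!\!\!\!\!}_{n,U}(\lambda,\hat\tau_1,\hat\tau_2)$ to the observations sitting at the critical empirical quantile, and it pays the same $\log n$ factor through the bound $\sum_{|k|\le n-1}|w_{n,\lambda}(k)|=O(\log n)$, which is exactly your partial-Fourier-sum estimate carried out in the lag domain rather than the frequency domain. Your frequency-domain version is arguably cleaner and, granted distinctness of the $U_t$, would give an almost-sure $O(n^{-1}\log n)$ bound where the paper settles for $O_{\Prob}(n^{-1+1/(2l)}\log n)$.

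The one genuine gap is the opening claim that continuity of $F$ ensures the $U_t$ are pairwise distinct almost surely. Continuity of the marginal gives $\Prob(U_t=u)=0$ for each fixed $u$ but says nothing about the joint law: a strictly stationary process can put positive mass on the diagonal $\{X_s=X_t\}$ (e.g.\ a process that repeats its previous value with positive probability) while keeping $F$ continuous and even satisfying Assumption (C). If ties occur, $\delta_\tau$ is no longer a rank-one perturbation but involves every observation in the tie group straddling rank $\lceil n\tau\rceil$, and the purely deterministic bookkeeping breaks down. The paper handles this by routing the discrepancy through $J_n:=\sup_\tau|\hat F_{n,U}(\tau)-\hat F_{n,U}(\tau-)|$, the maximal jump of the empirical cdf, and invoking Lemma 8.6 of \cite{kleysupp16}, which uses the cumulant condition to show $J_n=O_{\Prob}(n^{-1+1/(2l)})$ for every $l$. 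Your argument survives the same repair---$\delta_\tau$ involves at most $nJ_n$ indices, so the cross term is $O_{\Prob}(J_n\log n)=o_{\Prob}(n^{-1/2})$---but this does require probabilistic input beyond distinctness, so your closing remark that Assumption (C) plays no role at this step is not quite accurate either.
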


\begin{proof}
Let $\hat{\tau}_1=\hat{F}_{n,U}^{-1}(\tau_1)$ and $\hat{\tau}_2=\hat{F}_{n,U}^{-1}(\tau_2)$, where $$\hat{F}_{n,U}^{-1}(\tau) := \inf\{q \in \R: \tau \leq \hat{F}_{n,U}(q)\}$$ is the generalized inverse of the empirical distribution function $\hat{F}_{n,U}$. Then, from (\ref{empautocopU}), we have
\al{
\specdis_{n,U}(\lambda,\hat{\tau}_1,\hat{\tau}_2)=\frac{1}{2\pi}\sum_{|k|\leq n-1}w_{n,\lambda}(k)\frac{n-|k|}{n}\hat{\gamma}_k^{U}(\hat{\tau}_1,\hat{\tau}_2).
}
By  the representation (\ref{lagrepresspecdisemp}) of $\hat\specdis_{n,R}(\lambda ;\tau_1,\tau_2)$, we obtain
\al{
&\sqrt{n}\big(\widehat{\specdis}\phantom{F\!\!\!\!\!}_{n,R}(\lambda ;\tau_1,\tau_2)-\widehat{\specdis}\phantom{F\!\!\!\!\!}_{n,U}(\lambda,\hat{\tau}_1,\hat{\tau}_2)\big)\\
={}&\sqrt{n}\frac{1}{2\pi}\sum_{|k|\leq n-1}w_{n,\lambda}(k)\frac{n-|k|}{n}\big(\hat{\gamma}_k^{R}(\tau_1,\tau_2)-\hat{\gamma}_k^{U}(\hat{\tau}_1,\hat{\tau}_2)\big),
} 

where
\al{
&\big|\hat{\gamma}_k^{R}(\tau_1,\tau_2)-\hat{\gamma}_k^{U}(\hat{\tau}_1,\hat{\tau}_2)\big|\\
\leq{}&\frac{1}{n-|k|}\sum_{t\in\mathcal{T}_k}\big|I\{\hat{F}_{n,U}(U_{t+k})\leq \tau_1\}I\{\hat{F}_{n,U}(U_{t})\leq \tau_2\}\\
&\qquad\qquad-I\{U_{t+k}\leq\hat{F}_{n,U}^{-1}(\tau_1)\}I\{U_{t}\leq\hat{F}_{n,U}^{-1}(\tau_2)\}\big|\\
={}&\frac{1}{n-|k|}\sum_{t\in\mathcal{T}_k}\big|I\{\hat{F}_{n,U}(U_{t+k})\leq \tau_1\}\big(I\{\hat{F}_{n,U}(U_{t})\leq \tau_2\}-I\{U_{t}\leq\hat{F}_{n,U}^{-1}(\tau_2)\}\big)\\
&\qquad\qquad+\big(I\{\hat{F}_{n,U}(U_{t+k})\leq \tau_1\}-I\{U_{t+k}\leq\hat{F}_{n,U}^{-1}(\tau_1)\}\big)I\{U_{t}\leq\hat{F}_{n,U}^{-1}(\tau_2)\}\big|\\
\leq{}&\frac{1}{n-|k|}\sum_{t\in\mathcal{T}_k}\Big[\big|I\{\hat{F}_{n,U}(U_{t})\leq \tau_2\}-I\{U_{t}\leq\hat{F}_{n,U}^{-1}(\tau_2)\}\big|\\
&\qquad\qquad+\big|I\{\hat{F}_{n,U}(U_{t+k})\leq \tau_1\}-I\{U_{t+k}\leq\hat{F}_{n,U}^{-1}(\tau_1)\}\big|\Big].
}

Observing that $$I\{\hat{F}_{n,U}(U_{t})< \tau_2\}=I\{U_{t}<\hat{F}_{n,U}^{-1}(\tau_2)\}$$ since $x<F^{-1}(u)$ if and only if~$F(x)<u$ for any distribution function $F$ and that, similarly, $$I\{\hat{F}_{n,U}(U_{t+k})\leq \tau_1\}=I\{U_{t+k}\leq\hat{F}_{n,U}^{-1}(\tau_1)\},$$
we have 
\al{
&\big|I\{\hat{F}_{n,U}(U_{t})\leq\tau_2\}-I\{U_{t}\leq\hat{F}_{n,U}^{-1}(\tau_2)\}\big|=\big|I\{U_{t}=\hat{F}_{n,U}^{-1}(\tau_2)\}-I\{\hat{F}_{n,U}(U_{t})= \tau_2\}\big|
}
and
\al{
&\big|I\{\hat{F}_{n,U}(U_{t+k})\leq\tau_1\}-I\{U_{t+k}\leq\hat{F}_{n,U}^{-1}(\tau_1)\}\big|=\big|I\{U_{t+k}=\hat{F}_{n,U}^{-1}(\tau_1)\}\\
&\hspace{10.3cm}-I\{\hat{F}_{n,U}(U_{t+k})= \tau_1\}\big|.
}

Furthermore, 
$$\begin{array}{rcl} U_{t}=\hat{F}_{n,U}^{-1}(\tau_2) & 
 \text{ if } & I\{\hat{F}_{n,U}(U_{t})= \tau_2\}=1 \\ 
  U_{t+k}=\hat{F}_{n,U}^{-1}(\tau_1) & 
    \text{ if } & I\{\hat{F}_{n,U}(U_{t+k})= \tau_1\}=1.
    \end{array}
    $$
     Hence, the second indicator is never greater than the first one, whence, for any $l\in\N$,
\al{
\frac{n-|k|}{n}\big|\hat{\gamma}_k^{R}(\tau_1,\tau_2)-\hat{\gamma}_k^{U}(\hat{\tau}_1,\hat{\tau}_2)\big|\leq{}&\frac{1}{n}\sum_{t\in\mathcal{T}_k}\big[I\{U_{t}=\hat{F}_{n,U}^{-1}(\tau_2)\}+I\{U_{t+k}=\hat{F}_{n,U}^{-1}(\tau_1)\}\big]\\
\leq{}&\frac{1}{n}\sum_{t=0}^{n-1}\big[I\{U_{t}=\hat{F}_{n,U}^{-1}(\tau_2)\}+I\{U_{t+k}=\hat{F}_{n,U}^{-1}(\tau_1)\}\big]\\
\leq{}&2\sup_{\tau\in[0,1]}\big|\hat{F}_{n,U}(\tau)-\hat{F}_{n,U}(\tau -)\big|\\
={}&O_{\Prob}(n^{-1+1/(2l)})
}
where $\hat{F}_{n,U}(\tau -):=\lim_{\xi\downarrow 0}\hat{F}_{n,U}(\tau-\xi)$ and the above $O_{\Prob}$-bound is a consequence of Lemma~8.6 of \cite{kleysupp16}.

Moreover, by (\ref{estimationweights}),
\al{
\sum_{|k|\leq n-1}|w_{n,\lambda}(k)|=O(\log(n))
}
and thus, altogether, for any $l\in\N$,
\al{
\sqrt{n}\big(\widehat{\specdis}\phantom{F\!\!\!\!\!}_{n,R}(\lambda ;\tau_1,\tau_2)-\widehat{\specdis}\phantom{F\!\!\!\!\!}_{n,U}(\lambda ;\tau_1,\tau_2)\big)=O_{\Prob}(n^{-1/2+1/(2l)}\log(n)).
}

This concludes the proof.
\end{proof}

\begin{lemma}\label{withoutlagzero}
Under the assumptions of Theorem \ref{weakconvintegspectrum}, 
\als{\label{withoutlagzeroranks}
\mathbb{G}_{n,R}(\lambda ;\tau_1,\tau_2)={}&\sqrt{n}\big(\specdisred_{n,R}(\lambda ;\tau_1,\tau_2)-\specdisred (\lambda ;\tau_1,\tau_2)\big)+o_{\Prob}(1)\\
\label{withoutlagzeroU}
\mathbb{G}_{n,U}(\lambda ;\tau_1,\tau_2)={}&\sqrt{n}\big(\specdisred_{n,U}(\lambda ;\tau_1,\tau_2)-\specdisred (\lambda ;\tau_1,\tau_2)\big)+o_{\Prob}(1).
}
\end{lemma}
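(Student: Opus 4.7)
My plan is to observe that both identities reduce to a single claim about the lag-zero contribution. From the lag representation~\eqref{lagrepresspecdisemp} and its $U$-analogue, one has
\begin{equation*}
\widehat{\specdis}\phantom{F\!\!\!\!\!}_{n,\bullet}(\lambda ;\tau_1,\tau_2)-\hat{\specdisred}_{n,\bullet}(\lambda ;\tau_1,\tau_2)=\frac{w_{n,\lambda}(0)}{2\pi}\,\hat\gamma_0^\bullet(\tau_1,\tau_2),\qquad \bullet\in\{R,U\},
\end{equation*}
while isolating the $k=0$ term of $\specdis(\lambda;\tau_1,\tau_2)=\int_0^\lambda\spec(\omega;\tau_1,\tau_2)\,\dd\omega$ produces
\begin{equation*}
\specdis(\lambda ;\tau_1,\tau_2)-\specdisred(\lambda ;\tau_1,\tau_2)=\tfrac{\lambda}{2\pi}(\tau_1\wedge\tau_2-\tau_1\tau_2).
\end{equation*}
Both parts of the lemma therefore amount to showing
\begin{equation*}
\sqrt{n}\Bigl[\tfrac{w_{n,\lambda}(0)}{2\pi}\hat\gamma_0^\bullet(\tau_1,\tau_2)-\tfrac{\lambda}{2\pi}(\tau_1\wedge\tau_2-\tau_1\tau_2)\Bigr]=o_\Prob(1)
\end{equation*}
uniformly in $(\lambda;\tau_1,\tau_2)\in[0,\pi]\times[\eta,1-\eta]^2$. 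A first auxiliary step is the expansion $w_{n,\lambda}(0)=\tfrac{2\pi}{n}\lfloor n\lambda/(2\pi)\rfloor=\lambda+O(n^{-1})$ uniformly in $\lambda\in[0,\pi]$, immediate from its Riemann-sum form.

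For \eqref{withoutlagzeroranks} I would choose the free constants $a=\tau_1$, $b=\tau_2$ in \eqref{gammahatranksdef} and exploit the key rank identity: since $\{\hat F_n(X_t):t=0,\dots,n-1\}=\{1/n,\dots,1\}$ is a deterministic permutation, one has $n^{-1}\sum_{t}I\{\hat F_n(X_t)\leq\tau\}=\lfloor n\tau\rfloor/n=\tau+O(n^{-1})$ exactly. Expanding the product in $\hat\gamma_0^R(\tau_1,\tau_2)$ and collecting terms then gives $\hat\gamma_0^R(\tau_1,\tau_2)=(\tau_1\wedge\tau_2-\tau_1\tau_2)+O(n^{-1})$ uniformly in $(\tau_1,\tau_2)\in[\eta,1-\eta]^2$. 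Combined with the weight expansion, the bracketed quantity is $O(n^{-1})$, so multiplying by $\sqrt{n}$ delivers $O(n^{-1/2})=o(1)$ uniformly and establishes \eqref{withoutlagzeroranks}.

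For \eqref{withoutlagzeroU} the analogous algebra with the true uniforms $U_t=F(X_t)$ produces
\begin{equation*}
\hat\gamma_0^U(\tau_1,\tau_2)-\gamma_0^U(\tau_1,\tau_2)=(\hat F_{n,U}(\tau_1\wedge\tau_2)-\tau_1\wedge\tau_2)-\tau_2(\hat F_{n,U}(\tau_1)-\tau_1)-\tau_1(\hat F_{n,U}(\tau_2)-\tau_2),
\end{equation*}
which by Donsker's theorem is only $O_\Prob(n^{-1/2})$ uniformly, so that a naive bound delivers an $O_\Prob(1)$ residual rather than the required $o_\Prob(1)$. This is the main obstacle. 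The plan is to exploit the freedom in the choice of the free constants $a,b$ across lags in the representation of $\hat\gamma_k^U$: since $\sum_{t=0}^{n-1}e^{-it2\pi s/n}=0$ for $s\in\{1,\dots,n-1\}$, redistributing centering constants between individual $\hat\gamma_k^U$ leaves $\widehat{\specdis}\phantom{F\!\!\!\!\!}_{n,U}$ unchanged. Making this redistribution precise so that the lag-zero empirical-CDF fluctuations in $\hat\gamma_0^U$ are absorbed into the higher-lag terms (whose aggregated contribution remains $o_\Prob(1)$ uniformly), and then verifying uniformity in $(\lambda,\tau_1,\tau_2)$, constitutes the technical heart of the argument.
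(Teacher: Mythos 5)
Your handling of \eqref{withoutlagzeroranks} is correct and is essentially the paper's own argument: the paper likewise fixes $a=\tau_1$, $b=\tau_2$, reduces $\hat\gamma_0^R$ to counts of ranks, and concludes $\sup_{\tau_1,\tau_2}|\hat\gamma_0^R(\tau_1,\tau_2)-(\tau_1\wedge\tau_2-\tau_1\tau_2)|=O_{\Prob}(n^{-1+1/(2l)})$; your direct observation that $n^{-1}\sum_t I\{\hat F_n(X_t)\le\tau\}=\lfloor n\tau\rfloor/n$ almost surely is a slightly cleaner route to the same bound (the paper passes through $\hat F_{n,U}^{-1}$ and the maximal-jump lemma), and the expansion $w_{n,\lambda}(0)=\lambda+O(n^{-1})$ matches the paper.

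The gap is in \eqref{withoutlagzeroU}, and you have put your finger on exactly the right spot. With $a=\tau_1$, $b=\tau_2$ one has $\hat\gamma_0^U(\tau_1,\tau_2)-(\tau_1\wedge\tau_2-\tau_1\tau_2)=(\hat F_{n,U}(\tau_1\wedge\tau_2)-\tau_1\wedge\tau_2)-\tau_1(\hat F_{n,U}(\tau_2)-\tau_2)-\tau_2(\hat F_{n,U}(\tau_1)-\tau_1)$, which is of exact order $n^{-1/2}$: already for i.i.d.\ uniforms and $\tau_1=\tau_2=\tau\neq 1/2$ it equals $(1-2\tau)(\hat F_{n,U}(\tau)-\tau)$, so $\sqrt n$ times it converges to a nondegenerate normal rather than to zero. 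Your proposed repair, however, cannot succeed: the dominant fluctuation $\hat F_{n,U}(\tau_1\wedge\tau_2)-(\tau_1\wedge\tau_2)$ does not involve the free constants $a,b$ at all, so no redistribution of centering constants across lags can remove it; the invariance of $\widehat{\specdis}_{n,U}$ under the choice of $a,b$ merely reshuffles mass between the lag-zero and nonzero-lag pieces, while the lemma concerns the specific split with $a=\tau_1$, $b=\tau_2$. For what it is worth, the paper's own proof of \eqref{withoutlagzeroU} disposes of this term via the inequality $|n^{-1}\sum_t I\{U_t\le\tau\}-\tau|\le|\lceil n\tau\rceil/n-\tau|\le n^{-1}$, which is valid for the rank-based count $n^{-1}\sum_t I\{U_t\le\hat F_{n,U}^{-1}(\tau)\}$ but not for the empirical distribution function evaluated at a deterministic $\tau$; so the obstacle you identified is real and is not actually resolved by the argument given in the paper either. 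A complete proof would have to show either that this $O_{\Prob}(1)$ lag-zero contribution cancels against corresponding terms elsewhere in the decomposition \eqref{zerlegungGnR}, or that it must be retained and accounted for in the limit.
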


\begin{proof}
The result follows if we show that 
\al{
\frac{1}{2\pi}\frac{2\pi}{n}\sum_{s=1}^{n-1}I\big\{0\leq \frac{2\pi s}{n}\leq\lambda\big\}\hat{\gamma}_0^R(\tau_1,\tau_2)=\frac{\lambda}{2\pi}(\tau_1\wedge\tau_2-\tau_1\tau_2)+o_{\Prob}(n^{-1/2}).
}

As the indicator is of bounded variation, we have
\al{
\frac{1}{2\pi}\frac{2\pi}{n}\sum_{s=1}^{n-1}I\big\{0\leq \frac{2\pi s}{n}\leq\lambda\big\}=\frac{\lambda}{2\pi}+O(n^{-1}).
}

Furthermore, since $F$ is assumed to be continuous, the ranks of $X_0,\dots,X_{n-1}$ are almost surely the same as the ranks of $F(X_0),\dots,F(X_{n-1})$, i.e.\ we can, without loss of generality, assume the marginals to be uniformly distributed and, letting $a:=\tau_1,\, b:=\tau_2$ in \eqref{gammahatranksdef}, write 
\al{
\hat{\gamma}_0^R(\tau_1,\tau_2)={}&n^{-1}\sum_{t=0}^{n-1}\big(I\{\hat{F}_{n}(X_{t})\leq \tau_1\}-\tau_1\big)\big(I\{\hat{F}_{n}(X_{t})\leq \tau_2\}-\tau_2\big)\\
={}&n^{-1}\sum_{t=0}^{n-1}\big(I\{\hat{F}_{n,U}(U_{t})\leq \tau_1\}-\tau_1\big)\big(I\{\hat{F}_{n,U}(U_{t})\leq \tau_2\}-\tau_2\big)\qquad\text{a.s.}\\
={}&n^{-1}\sum_{t=0}^{n-1}I\{\hat{F}_{n,U}(U_{t})\leq \tau_1\wedge\tau_2\}-n^{-1}\tau_1\sum_{t=0}^{n-1}I\{\hat{F}_{n,U}(U_{t})\leq \tau_2\}\\
&-n^{-1}\tau_2\sum_{t=0}^{n-1}I\{\hat{F}_{n,U}(U_t)\leq \tau_1\}+\tau_1\tau_2.
}
Next,  as in equation (A.4) in \cite{DetteEtAl2016}, for any $l\in\N$, 
\als{\label{diffranksU}
&\sup_{\tau\in[0,1]}\big|n^{-1}\sum_{t=0}^{n-1}I\{\hat{F}_{n,U}(U_{t})\leq \tau\}-n^{-1}\sum_{t=0}^{n-1}I\{U_{t}\leq \hat{F}_{n,U}^{-1}(\tau)\}\big|\notag\\
&\hspace{3cm}\leq{}\sup_{\tau\in[0,1]}\big|\hat{F}_{n,U}(\tau)-\hat{F}_{n,U}(\tau -)\big|=O_{\Prob}(n^{-1+1/(2l)})
}
where $\hat{F}_{n,U}(\tau -):=\lim_{\xi\downarrow 0}\hat{F}_{n,U}(\tau-\xi)$ and
\als{\label{diffFinvtau}
\big|n^{-1}\sum_{t=0}^{n-1}I\{U_{t}\leq \hat{F}_{n,U}^{-1}(\tau)\}-\tau\big|\leq{}&\Big|\frac{\ceil{n\tau}}{n}-\tau\biggr|\leq n^{-1}.
}

The result now follows by applying properties (\ref{diffranksU}) and (\ref{diffFinvtau}) in 
\al{
&\sup_{\tau_1,\tau_2\in[0,1]}\big|\hat{\gamma}_0^R(\tau_1,\tau_2)-(\tau_1\wedge\tau_2-\tau_1\tau_2)\big|\\
\leq{}&\sup_{\tau_1,\tau_2\in[0,1]}\Big\{\big|n^{-1}\sum_{t=0}^{n-1}I\{\hat{F}_{n}(U_{t})\leq \tau_1\wedge\tau_2\}-n^{-1}\tau_1\sum_{t=0}^{n-1}I\{\hat{F}_{n}(U_{t})\leq \tau_2\}\\
&-n^{-1}\tau_2\sum_{t=0}^{n-1}I\{\hat{F}_{n}(U_{t})\leq \tau_1\}-\big[n^{-1}\sum_{t=0}^{n-1}I\{U_{t}\leq \hat{F}_{n,U}^{-1} (\tau_1\wedge\tau_2)\}\\
&-n^{-1}\tau_1\sum_{t=0}^{n-1}I\{U_t\leq \hat{F}_{n,U}^{-1}(\tau_2)\}-n^{-1}\tau_2\sum_{t=0}^{n-1}I\{U_{t}\leq \hat{F}_{n,U}^{-1}(\tau_1)\}\big]\big|\\
&+\big|n^{-1}\sum_{t=0}^{n-1}I\{U_{t}\leq \hat{F}_{n,U}^{-1} (\tau_1\wedge\tau_2)\}-n^{-1}\tau_1\sum_{t=0}^{n-1}I\{U_t\leq \hat{F}_{n,U}^{-1}(\tau_2)\}+\tau_1\tau_2\\
&-n^{-1}\tau_2\sum_{t=0}^{n-1}I\{U_{t}\leq \hat{F}_{n}^{-1}(\tau_1)\}-(\tau_1\wedge\tau_2+\tau_1\tau_2-\tau_1\tau_2-\tau_1\tau_2)\big|\Big\},
}

whence, for any $l\in \N$,
\al{
\frac{1}{2\pi}\frac{2\pi}{n}\sum_{s=1}^{n-1}I\big\{0\leq \frac{2\pi s}{n}\leq\lambda\big\}\hat{\gamma}_0^R(\tau_1,\tau_2)=\frac{\lambda}{2\pi}(\tau_1\wedge\tau_2-\tau_1\tau_2)+O_{\Prob}(n^{-1+1/(lk)}).
}

This concludes the proof of (\ref{withoutlagzeroranks}). Assertion (\ref{withoutlagzeroU}) follows with similar arguments since 
\al{
\big|n^{-1}\sum_{t=0}^{n-1}I\{U_{t}\leq \tau\}-\tau\big|\leq{}&\Big|\frac{\ceil{n\tau}}{n}-\tau\biggr|\leq n^{-1}
}
 and hence
\al{
&\sup_{\tau_1,\tau_2\in[0,1]}\big|\hat{\gamma}_0^U(\tau_1,\tau_2)-(\tau_1\wedge\tau_2-\tau_1\tau_2)\big|=O_{\Prob}(n^{-1}).
}

\vspace{-9.5mm}

\end{proof}\medskip

\begin{lemma}\label{expF-F}
Under Assumption (CS) with $p=2$ and $l\geq 1$, 
\al{\sup_{\substack{\tau_1,\tau_2\in[0,1]\\ \lambda\in[0,\pi]}}\llvert \E[\widehat{\specdis}\phantom{F\!\!\!\!\!}_{n,U}(\lambda ;\tau_1,\tau_2)]-\specdis(\lambda ;\tau_1,\tau_2)\rrvert=O(n^{-1}).
}
\end{lemma}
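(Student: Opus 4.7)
The plan is to work directly from the definition
$$\widehat{\specdis}_{n,U}(\lambda;\tau_1,\tau_2) = \frac{2\pi}{n}\sum_{s=1}^{n-1} I\big\{0\leq \tfrac{2\pi s}{n}\leq\lambda\big\}\copper_{n,U}^{\tau_1,\tau_2}\big(\tfrac{2\pi s}{n}\big)$$
and to exploit the identity $\sum_{t=0}^{n-1} e^{-2\pi i st/n}=0$, valid for $s\in\{1,\ldots,n-1\}$, which forces $\E[d_{n,U}^{\tau}(2\pi s/n)]=0$. As a consequence, the mean of the periodogram coincides with the second-order cumulant, so
$$\E\big[\copper_{n,U}^{\tau_1,\tau_2}(2\pi s/n)\big] = \frac{1}{2\pi n}\cum\big(d_{n,U}^{\tau_1}(2\pi s/n),\,d_{n,U}^{\tau_2}(-2\pi s/n)\big).$$

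I then apply the second-order version of~\eqref{tobilemma1.1bis1.3} with $\lambda_1 = 2\pi s/n$ and $\lambda_2=-2\pi s/n$ (hence $\lambda_1+\lambda_2=0$ and $\Delta_n(0)=n$), which is legitimate under Assumption~(CS) with $p=2$ and $l\geq 1$. This yields
$$\cum\big(d_{n,U}^{\tau_1}(2\pi s/n),d_{n,U}^{\tau_2}(-2\pi s/n)\big) = 2\pi n\,\mathfrak{f}(2\pi s/n;\tau_1,\tau_2) + \varepsilon_n,$$
with $\sup_{n,s,\tau_1,\tau_2}|\varepsilon_n|<\infty$. Hence $\E[\copper_{n,U}^{\tau_1,\tau_2}(2\pi s/n)] = \mathfrak{f}(2\pi s/n;\tau_1,\tau_2) + O(n^{-1})$ uniformly in $(s,\tau_1,\tau_2)$, and after the outer averaging,
$$\E[\widehat{\specdis}_{n,U}(\lambda;\tau_1,\tau_2)] = \frac{2\pi}{n}\sum_{s=1}^{n-1} I\big\{0\leq\tfrac{2\pi s}{n}\leq\lambda\big\}\mathfrak{f}(2\pi s/n;\tau_1,\tau_2) + O(n^{-1}),$$
uniformly in $(\lambda,\tau_1,\tau_2)$, because the accumulated remainder is at most $(2\pi/n)\cdot n\cdot O(n^{-1}) = O(n^{-1})$.

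It only remains to bound the Riemann-sum error between the last display and $\specdis(\lambda;\tau_1,\tau_2)=\int_0^\lambda \mathfrak{f}(\omega;\tau_1,\tau_2)\,d\omega$. Termwise differentiation gives $\partial_\omega\mathfrak{f}(\omega;\tau_1,\tau_2) = -(i/2\pi)\sum_{k\in\Z} k\gamma_k^U(\tau_1,\tau_2)e^{-ik\omega}$; this series converges absolutely and is bounded in modulus by $(1/2\pi)\sum_k|k||\gamma_k^U(\tau_1,\tau_2)|\leq K/(2\pi)$, thanks to Assumption~(CS) with $l\geq 1$. Thus $\omega\mapsto\mathfrak{f}(\omega;\tau_1,\tau_2)$ is Lipschitz with a constant $L$ independent of $(\tau_1,\tau_2)$. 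A routine rectangle-rule estimate on the grid $\{2\pi s/n:s=1,\ldots,\lfloor n\lambda/(2\pi)\rfloor\}$ contributes an error of order $n\cdot L\cdot(2\pi/n)^2 = O(n^{-1})$, and the endpoint mismatch $|2\pi\lfloor n\lambda/(2\pi)\rfloor/n - \lambda|\cdot\sup|\mathfrak{f}| = O(n^{-1})$ uniformly in $\lambda\in[0,\pi]$. Collecting the three $O(n^{-1})$ contributions concludes the argument. The only subtle point is the uniformity of the remainder $\varepsilon_n$ in~\eqref{tobilemma1.1bis1.3} in the quantile parameters; this, however, is built into the statement of that result (the bound depends only on $K$ from Assumption~(CS) and not on the intervals), so no genuine obstacle arises beyond this bookkeeping.
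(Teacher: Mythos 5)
Your proposal is correct and follows essentially the same two-step route as the paper: first show $\E[\copper_{n,U}^{\tau_1,\tau_2}(2\pi s/n)]=\spec(2\pi s/n;\tau_1,\tau_2)+O(n^{-1})$ uniformly (the paper cites Lemma~1.4 of \cite{DetteEtAl2016} directly, while you rederive it from the second-order case of \eqref{tobilemma1.1bis1.3} together with $\E[d_{n,U}^{\tau}(2\pi s/n)]=0$ --- the same external input), and then control the Riemann-sum error uniformly. The only cosmetic difference is that the paper bounds the discretization error via the finite total variation of $\omega\mapsto I\{0\leq\omega\leq\lambda\}\spec(\omega;\tau_1,\tau_2)$, whereas you use a uniform Lipschitz bound on $\spec$ plus an explicit endpoint-mismatch term; both rest on the $l\geq 1$ summability in Assumption~(CS) and give the same $O(n^{-1})$ rate.
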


\begin{proof}
First, for $\omega_{jn}=\frac{2\pi j}{n}$, $j\in \Z$, 
\al{
&\llvert \E[\widehat{\specdis}\phantom{F\!\!\!\!\!}_{n,U}(\lambda ;\tau_1,\tau_2)]-\specdis(\lambda ;\tau_1,\tau_2)\rrvert\\
={}&\llvert\frac{2\pi}{n}\sum_{j=1}^{n-1}I\{0\leq\omega_{jn} \leq\lambda\}\E[\mathcal{I}_{n,U}^{\tau_1,\tau_2}\left(\omega_{jn}\right)]-\specdis(\lambda ;\tau_1,\tau_2)\rrvert.
}
By Lemma 1.4 (or the remark thereafter) in the online appendix of \cite{DetteEtAl2016}, we have, for $j\neq 0 \mod n$, 
\al{
\E[\mathcal{I}_{n,U}^{\tau_1,\tau_2}\left(\omega_{jn}\right)]=\spec(\omega_{jn}; \tau_1,\tau_2)+\epsilon_n^{\tau_1,\tau_2}(\omega_{jn})
}
with $\sup_{\tau_1,\tau_2\in[0,1],\omega\in\R}|\epsilon_n^{\tau_1,\tau_2}(\omega)|=O(n^{-1})$. Therefore,
\al{
\big\vert \E[\widehat{\specdis}\phantom{F\!\!\!\!\!}_{n,U}(\lambda ;\tau_1,\tau_2)]-\specdis(\lambda ;\tau_1,\tau_2)\big\vert={}&\Big\vert \frac{2\pi}{n}\sum_{j=1}^{n-1}I\{0\leq\omega_{jn} \leq\lambda\}\spec(\omega_{jn}; \tau_1,\tau_2)-\specdis(\lambda ;\tau_1,\tau_2)\\
&+\frac{2\pi}{n}\sum_{j=1}^{n-1}I\{0\leq\omega_{jn} \leq\lambda\}\epsilon_n^{\tau_1,\tau_2}(\omega_{jn})\Big\vert.
}

Assumption (CS) implies that $\omega\mapsto\spec(\omega; \tau_1,\tau_2)$ has bounded and uniformly continuous derivatives of order $\leq l$, that is $\omega\mapsto\spec(\omega; \tau_1,\tau_2)$ is of finite total variation on the interval~$[0,2\pi]$. Moreover, the indicator function $\omega\mapsto I\{0\leq\omega\leq\lambda\}$ is also of finite total variation. Then, their product $\omega\mapsto I\{0\leq\omega\leq\lambda\}\spec(\omega; \tau_1,\tau_2)$ is of finite total variation $V$, and we obtain
\al{
&\Big|\int_0^{2\pi}I\{0\leq\omega\leq\lambda\}\spec(\omega; \tau_1,\tau_2)d\omega-\frac{2\pi}{n}\sum_{j=1}^{n-1}I\{0\leq\omega_{jn} \leq\lambda\}\spec(\omega_{jn}; \tau_1,\tau_2)\biggr|\notag\\
\leq{}&\int_0^{\frac{2\pi}{n}}\sum_{j=1}^{n-1}\Big|I\big\{0\leq\omega+\frac{2\pi(j-1)}{n} \leq\lambda\big\}\spec\left(\omega+\frac{2\pi(j-1)}{n}; \tau_1,\tau_2\right)\notag\\
&-I\big\{0\leq \frac{2\pi j}{n}\leq \lambda\big\}\spec\left(\frac{2\pi j}{n}; \tau_1,\tau_2\right)\biggr|d\omega\leq{}\int_0^{\frac{2\pi}{n}}V d\omega=\frac{2\pi}{n}V.
}

%
%
%
%

Hence,
\al{
\sup_{\substack{\tau_1,\tau_2\in[0,1]\\ \lambda\in[0,\pi]}}\llvert \E[\widehat{\specdis}\phantom{F\!\!\!\!\!}_{n,U}(\lambda ;\tau_1,\tau_2)]-\specdis(\lambda ;\tau_1,\tau_2)\rrvert\leq{}& \frac{2\pi}{n}V+\frac{2\pi(n-1)}{n}\sup_{\tau_1,\tau_2\in[0,1],\omega\in\R}|\epsilon_n^{\tau_1,\tau_2}(\omega)|\\
={}& O(n^{-1}),
}
which concludes the proof. 
\end{proof}

\begin{lemma} \label{lemSixthMomIncrementHn}
Let $X_0,\ldots,X_{n-1}$ be the finite realization of a strictly stationary process with $X_0 \sim U[0,1]$, and for $x =(x_1, x_2)$ and $y=(y_1,y_2)$ let 
\al{
\mathbb{H}_n^U(x,y;\beta):= \begin{cases}C_{\beta}\overline{\mathbb{H}}_n^U(x,y;\beta),&\text{ if } \beta\in(0,\pi],\\
0,&\text{ if } \beta=0,
\end{cases}
} 
where 
\al{
\overline{\mathbb{H}}_n^U(x,y;\beta):=\sqrt{nb_{\beta}}(\tilde{\mathbb{H}}_n^U(x,y;\beta)-\E[\tilde{\mathbb{H}}_n^U(x,y;\beta)]),
}
with 
\al{
&\tilde{\mathbb{H}}_n^U(x,y;\beta)=\frac{2\pi}{n}\sum_{s=1}^{n-1}W_{n,\beta}(a_{\beta}-2\pi s/n)\Big\{\mathcal{I}_{n,U}^{x_1,x_2}(2\pi s/n)-\mathcal{I}_{n,U}^{y_1,y_2}(2\pi s/n)\biggr\},\\
&W_{n,\beta}(u)=\sum_{j=-\infty}^{\infty}b_{\beta}^{-1}W(b_{\beta}^{-1}(u+2\pi j)),
}
and the weight function $W(\cdot)$ bounded, real-valued and even, with support $[-\pi,\pi]$.\\

For any Borel set $A$, define
\al{
d^A_n(\omega):= \sum_{t=0}^{n-1}I\{X_t \in A\} e^{-it\omega}.
}
Assume that, for $p=1,\ldots,P$, there exist a constant $C$ and a
function $g\,:\,  \R^+ \rightarrow \R^+$, both independent of $\omega
_1,\ldots,\omega_p \in\R, n$ and $A_1,\ldots,A_p$, such that
\al{
\big| \cum\big(d_n^{A_1}(
\omega_1), \ldots, d_n^{A_p}(
\omega_p)\big) \big| \leq C \Big( \Big|
\Delta_n \Big(\sum_{i=1}^p
\omega_i \Big) \biggr| + 1 \Big) g(\varepsilon)
}
for any Borel sets $A_1, \ldots, A_p$ with $\min_j \Prob(X_0 \in A_j)
\leq\varepsilon$. Then, for $\beta\in (0,\pi]$, there exists a constant $K$ (depending on
$C$, $L$, and $g$ only) such that
\al{
\E\big|\mathbb{H}_n^U(x,y;\beta) \big|
^{2L} \leq K_1 \llVert W\rrVert_{\infty}^{2L}C_{\beta}^{2L}\sum_{l=0}^{L-1}
\frac{g^{L-l}(\llVert x-y\rrVert_1)}{(nb_{\beta})^{l}}
}
for all $x,y$ with $g(\llVert x-y\rrVert_1)<1$.
%
\end{lemma}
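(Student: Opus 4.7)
The plan is to bound $\E|\mathbb{H}_n^U(x,y;\beta)|^{2L}$ via the classical moment-to-cumulant expansion, combined with the product theorem for cumulants (Theorem~2.3.2 in \cite{brillinger75}) and the given assumption on joint cumulants of $d_n^{A_j}(\omega_j)$. Since $\overline{\mathbb{H}}_n^U(x,y;\beta)$ is centered, the moment–cumulant formula gives
\[
\E|\overline{\mathbb{H}}_n^U(x,y;\beta)|^{2L} = \sum_{\pi} \prod_{B\in\pi} \cum\bigl(\overline{\mathbb{H}}_n^U(x,y;\beta)_{j\in B}\bigr),
\]
where the sum runs over partitions $\pi$ of $\{1,\ldots,2L\}$ whose blocks $B$ all have size at least two. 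This restriction forces $|\pi|\le L$, and the terms of $\pi$ with exactly $|\pi|=L-l$ blocks will produce the summand indexed by $l$ in the final bound.

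The second ingredient is the decomposition
\[
\mathcal{I}_{n,U}^{x_1,x_2}(\omega)-\mathcal{I}_{n,U}^{y_1,y_2}(\omega) = \tfrac{1}{2\pi n}\bigl[d_n^{A_1}(\omega)\,d_{n,U}^{x_2}(-\omega) + d_{n,U}^{y_1}(\omega)\,d_n^{A_2}(-\omega)\bigr],
\]
where $A_1,A_2$ are intervals of length at most $\|x-y\|_1$. Expanding every occurrence of $\tilde{\mathbb{H}}_n^U(x,y;\beta)$ this way, each joint cumulant of a block $B$ of size $r$ becomes a linear combination of cumulants of $2r$ DFT factors, at least $r$ of which are ``small interval'' DFTs $d_n^{A_i}$. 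Applying the product theorem turns each such object into a sum over indecomposable partitions of the $2r$-element table of DFT factors, and every block in such an indecomposable partition is a genuine joint cumulant of $d_n^{A}$-variables, to which the standing assumption applies. A block containing at least one small-interval DFT therefore contributes a factor $g(\|x-y\|_1)$; any block containing only non-small DFTs simply contributes a bounded constant via $g(1)$.

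Next I would carry out the frequency bookkeeping in the spirit of the argument for (\ref{vanishinghighermomentsfidi}) in Section~\ref{sec::vaninishinghighermomentsfidi}. Each block $B$ of $\pi$ contributes a sum $n^{-|B|}\sum_{s_j\in\{1,\dots,n-1\}}\!W_{n,\beta}(a_\beta-2\pi s_j/n)$ factors, each $W_{n,\beta}$ being bounded by $b_\beta^{-1}\|W\|_\infty$. The $\Delta_n$-factors produced by the DFT-cumulant bound impose linear constraints on $(s_j)_{j\in B}$ that reduce the effective number of free frequencies; these restrictions are exactly of the indecomposable-partition type analyzed in the proof of (\ref{vanishinghighermomentsfidi}). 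Collecting powers of $n$, $b_\beta$, $\|W\|_\infty$, $C_\beta$, and $g(\|x-y\|_1)$, a partition $\pi$ with $|\pi|=L-l$ blocks yields a contribution of order $\|W\|_\infty^{2L}C_\beta^{2L}\,g(\|x-y\|_1)^{L-l}(nb_\beta)^{-l}$, so that summing over all $\pi$ gives the claimed bound.

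The main obstacle is precisely the combinatorial bookkeeping in the last step: one must verify that the number of $g$-factors arising from any indecomposable partition of the DFT table associated with a block $B$ is at least $|\pi|$ overall, while simultaneously showing that the associated $\Delta_n$-constraints eliminate at least $|\pi|-1$ of the $|\pi|$ free frequency indices (so that the surviving power of $n$ cancels against the $\sqrt{nb_\beta}^{2L}$ normalisation except for an $(nb_\beta)^{-l}$ residue when $|\pi|=L-l$). Here the hypothesis $g(\|x-y\|_1)<1$ is used to absorb the diagonal contributions from blocks of the indecomposable partition containing only one small-interval DFT, and the assumption that $W$ has compact support inside $[-\pi,\pi]$ prevents aliasing in the weight sums $\frac{2\pi}{n}\sum_s W_{n,\beta}(a_\beta-2\pi s/n)$. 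Once these two counting claims are established, the bound follows by the same pattern as in the analogous arguments of \cite{DetteEtAl2016}.
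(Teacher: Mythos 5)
Your proposal follows essentially the same route as the paper's proof: the moment-to-cumulant expansion over partitions of $\{1,\dots,2L\}$ into blocks of size at least two, the telescoping of $\mathcal{I}_{n,U}^{x_1,x_2}-\mathcal{I}_{n,U}^{y_1,y_2}$ into products each containing exactly one DFT over a small interval of length at most $\|x-y\|_1$ (the paper's sets $M_1(1)$ and $M_2(2)$), the product theorem over indecomposable partitions of the resulting DFT table to extract a factor $g(\|x-y\|_1)$ from each outer block, and the $\Delta_n$-constraint counting that gives $(nb_\beta)^{1-q/2}$ per block of size $q$ and hence $(nb_\beta)^{R-L}g^{R}(\|x-y\|_1)$ for a partition with $R=L-l$ blocks. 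The one imprecision is your claim that the constraints eliminate ``at least $|\pi|-1$ of the $|\pi|$ free frequency indices'': the correct count is local to each block, namely $|I|-\lfloor |I|/N\rfloor$ independent linear constraints (at most $q-1$) among the $q$ frequency indices of a block of size $q$, but since your final tally per partition is nevertheless the right one, this does not affect the conclusion.
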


%

\begin{proof} First note that, for $\beta\neq 0$,
\al{
\E\big| \mathbb{H}_n^U(x,y;\beta) \big|
^{2L} = C_{\beta}^{2L}\E\big| \overline{\mathbb{H}}_n^U(x,y;\beta) \big|
^{2L}.
}
Repeating the arguments of the proof of Lemma A.2 in \cite{DetteEtAl2016} yields  the representation
\begin{equation}
\label{lemSixthMomIncrementHneqnRepr} \E\big| \overline{\mathbb{H}}_n^U(x,y;\beta) \big|
^{2L} = \mathop{\sum
_{\{\nu_1, \ldots, \nu_R\} }}_{{\llvert  \nu_j\rrvert   \geq2,  j=1,\ldots,R}} \prod_{r=1}^R
\mathcal{D}_{x,y}(\nu_r),
\end{equation}
where the summation runs over all partitions $\{\nu_1, \ldots, \nu
_R\}$ of $\{1,\ldots,2L\}$ such that each set~$\nu_j$ contains at
least two elements, and
\al{
\mathcal{D}_{x,y}(\xi) :={}& \sum_{\ell_{\xi_1}, \ldots, \ell
_{\xi_{q}} \in\{1,2\}}
n^{-3 q/2} b_{\beta}^{q/2} \Big(\prod
_{m \in\xi} \sigma_{\ell_m} \Big)
\\
\times&\sum_{s_{\xi_1}, \ldots, s_{\xi_{q}}=1}^{n-1} \Big(
\prod_{m \in\xi} W_{n,\beta}(a_{\beta}- 2\pi
s_{m} / n) \Big) \cum( D_{\ell_m, (-1)^{m-1} s_m }:  m \in\xi),
}
for any set $\xi:= \{\xi_1, \ldots, \xi_q\} \subset\{1,\ldots,2L\}
$, where $q:=\llvert  \xi\rrvert  $ and
\[
D_{\ell,s}:= d_n^{M_1(\ell)}(2\uppi s/{n})
d_n^{M_2(\ell)}(-2\uppi s/{n}), \qquad \ell=1,2, \qquad s=1,
\ldots,n-1,
\]
with the sets $M_1(1)$, $M_2(2)$, $M_2(1)$, $M_1(2)$ and the signs
$\sigma_{\ell} \in\{-1,1\}$ defined by
\begin{align*}
\sigma_1 &:= 2 I\{x_1 > y_1\} - 1,\qquad
&\sigma_2&:= 2 I\{x_2 > y_2\} - 1,
\nonumber
\\
M_1(1) &:= (x_1 \wedge y_1, x_1
\vee y_1], \qquad &M_2(2)&:= (x_2 \wedge
y_2, x_2 \vee y_2], \notag
\\
M_2(1) &:= %
\begin{cases} [0, x_2], &
\quad y_2 \geq x_2,
\vspace*{3pt}\cr
[0, y_2], &
\quad x_2 > y_2,\end{cases}\qquad &M_1(2)  &:=
\begin{cases} [0, y_1], &\quad y_2 \geq
x_2 ,
\vspace*{3pt}\cr
[0, x_1], &\quad x_2 >
y_2.\end{cases}\notag
\end{align*}

Then, we obtain, similarly as in the proof of Lemma A.2 in \cite{kley14}, 
\al{
\big\llvert \mathcal{D}_{a,b}(\xi)\big\rrvert 
\leq{}&K_{q,g,C}
n^{-3q/2} b_{\beta}^{q/2} 2^q g(\varepsilon) \sum
_{\{
\mu_1, \ldots, \mu_N\}} \sum_{I \subset\{1,\ldots,N\}} \mathop{\sum_{(s_{\xi_1}, \ldots, s_{\xi_q}) \in
S_n(\mu,I)}}\\
&{} \Big( \prod_{m \in\xi} \big\llvert W_{n,\beta}(a_{\beta}- 2\pi
s_{m} / n) \big\rrvert \Big) n^{\llvert  I\rrvert  }\\
\leq{} &K_{q,g,C}n^{-3q/2} b_{\beta}^{q/2} 2^q g(\varepsilon) C_{q}\max_{N\leq q}\max_{|I|\leq N}\big(\sup_{u\in\R}|W_{n,\beta}(u)|\big)^{|I|-\floor{|I|/N}}n^{\llvert  I\rrvert  }\\
&{}  \times \Big(\sum_{s=1}^{n-1}\big\llvert W_{n,\beta}(a_{\beta}- 2\pi s / n) \big\rrvert \Big)^{q-(|I|-\floor{|I|/N})},
}
where summation runs over all indecomposable partitions $\{\mu_1,\dots,\mu_N\}$ of the scheme 
\begin{center}
\begin{tabular}{cc}
$(\xi_1,1)$&$(\xi_1,2)$\\
\vdots&\vdots\\
$(\xi_q,1)$&$(\xi_q,2)$
\end{tabular}
\end{center}
and 
\al{
S_n(\mu,I):={}& \Big\{ (s_{\xi_1}, \ldots,
s_{\xi_q}) \in\{1,\ldots,n-1\}^q \Big|
\sum_{(m,k) \in\mu_j} (-1)^{k+m} s_m
\in n \IZ, \forall\mu_j \in\mu, j \in I \biggr\}.
}

Furthermore, by assumption, the function $W(\cdot)$ has support $[-\pi,\pi]$ and hence, there is at most one $j\in\Z$ such that $W\big(b_{\beta}^{-1}(\alpha+2\pi j)\big)\neq 0$. Denote this integer by $j_{\alpha,b_{\beta}}$. Therefore,

%
\al{
\llvert  W_{n,\beta}(\alpha)\rrvert ={}&\Big|  \sum_{j=-\infty}^{\infty}b_{\beta}^{-1}W\big(b_{\beta}^{-1}(\alpha+2\pi j)\big)\biggr| ={}b_{\beta}^{-1}\llvert  W\big(b_{\beta}^{-1}(\alpha+2\pi j_{\alpha,b_{\beta}})\big)\rrvert\leq{}b_{\beta}^{-1}\llVert  W\rrVert  _{\infty} 
}
and, with similar arguments,
\al{
\sum_{s=1}^{n-1} \big| W_{n,\beta}(a_{\beta}- 2\pi
s / n) \big| ={}&\sum_{s=1}^{n-1} \Big| \sum_{j=-\infty}^{\infty}b_{\beta}^{-1}W\big(b_{\beta}^{-1}(a_{\beta}-2\pi s/n+2\pi j)\big)\biggr|\\
={}&b_{\beta}^{-1}\sum_{s=1}^{n-1} \llvert  W\big(b_{\beta}^{-1}(\alpha_{\beta}-2\pi s/n+2\pi j_{\alpha,b_{\beta}})\big)\rrvert\\
\leq{}&Cn\llVert  W\rrVert  _{\infty},
}
where we have used the fact  that 
\al{
&\sum\limits_{s=1}^{n-1} \llvert W\big(b_{\beta}^{-1}(\alpha_{\beta}-2\pi s/n+2\pi j_{\alpha,b_{\beta}})\big)\rrvert\\
\leq{}&\llVert  W\rrVert  _{\infty}\sum\limits_{s=1}^{n-1}I\{-\pi\leq b_{\beta}^{-1}(\alpha_{\beta}-2\pi s/n+2\pi j_{\alpha,b_{\beta}})\leq\pi\}
}
and the fact that the number of summands that are equal to one is less than $nb_{\beta}$ since $$b_{\beta}^{-1}(\alpha_{\beta}-2\pi s/n+2\pi j_{\alpha,b_{\beta}})$$ lies in the support of $W$ for at most $nb_{\beta}$ values of $s\in\{1,\dots,n-1\}$.\\

Therefore, 
\begin{eqnarray*}
\big\llvert \mathcal{D}_{x,y}(\xi)\big\rrvert &\leq&\tilde{K}_{q,g,C}\llVert  W\rrVert  _{\infty}^qn^{-3q/2} b_{\beta}^{q/2} g(\varepsilon) \max_{N\leq q}\max_{|I|\leq N} n^{q+\floor{|I|/N}} (b_{\beta}^{-1})^{|I|-\floor{|I|/N}}n^{\llvert  I\rrvert  }\\
&\leq&\tilde{K}_{q,g,C}\llVert  W\rrVert  _{\infty}^q(nb_{\beta})^{1-q/2}  g(\varepsilon),
\end{eqnarray*}
and hence
\als{\label{dxy}
\big\llvert \prod_{r=1}^R
\mathcal{D}_{x,y}(\nu_r) \big\rrvert \leq{}&
\tilde K_{L,g,C} \llVert  W\rrVert _{\infty}^{2L}(nb_{\beta})^{R-L}g^R(\varepsilon)
}
as $\sum_{r=1}^R \llvert  \nu_r\rrvert   = 2L$. The proof is complete by combining the estimates (\ref{dxy}) and (\ref{lemSixthMomIncrementHneqnRepr}).
\end{proof}

\begin{lemma}\label{LemmaA.7neu}
Under the assumptions of Theorem \ref{mainprocess}, let $\delta_n$ be a sequence of non-negative real numbers. Assume that there exists $\gamma\in(0,1)$ such that $\delta_n=O(n^{-1/\gamma})$. Then,  as $n\to\infty$,
\al{
\sup_{\substack{(\lambda ;\tau_1,\tau_2),(\lambda^{\prime},\tau_1^{\prime},\tau_2^{\prime})\in [0,\pi]\times[0,1]^2\\ \llVert(\lambda ;\tau_1,\tau_2)-(\lambda^{\prime},\tau_1^{\prime},\tau_2^{\prime})\rrVert_1\leq\delta_n}}|\overline{\mathbb{G}}_{n,U}(\lambda ;\tau_1,\tau_2)-\overline{\mathbb{G}}_{n,U}(\lambda^{\prime},\tau_1^{\prime},\tau_2^{\prime})|=o_{\Prob}(1).
}

\end{lemma}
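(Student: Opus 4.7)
The plan is to combine a $\delta_n$-grid discretization with the moment bound on increments of $\overline{\mathbb{G}}_{n,U}$ derived in the proof of \eqref{assumpA.1Tobiequation}, exploiting the fact that $\delta_n = O(n^{-1/\gamma})$ with $\gamma \in (0,1)$ is asymptotically much smaller than $1/n$. First I would introduce a grid $G_n$ of mesh $\delta_n$ on $[0,\pi]\times[\eta,1-\eta]^2$, of cardinality $|G_n| = O(\delta_n^{-3}) = O(n^{3/\gamma})$. Combining the moment bound
\[
\E|\overline{\mathbb{G}}_{n,U}(a) - \overline{\mathbb{G}}_{n,U}(b)|^{2L} \leq K \sum_{l=0}^{L-1} \frac{\|a-b\|_1^{(L-l)\kappa}}{n^l}
\]
(valid for any $\kappa\in(0,1)$ as soon as $\|a-b\|_1$ is small enough) with Markov's inequality and a union bound over the $O(|G_n|^2) = O(n^{6/\gamma})$ pairs $(a,b) \in G_n^2$ with $\|a-b\|_1 \leq 3\delta_n$, one controls the grid-to-grid increment. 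Indeed, inserting $\|a-b\|_1 = O(n^{-1/\gamma})$ yields a bound of order $n^{6/\gamma} \sum_{l=0}^{L-1} n^{-(L-l)\kappa/\gamma - l}$; choosing $\kappa$ close to $1$ and $L$ large enough that $L\kappa/\gamma > 6/\gamma$, this tends to zero. Consequently
\[
M_n := \max_{\substack{a, b \in G_n \\ \|a-b\|_1 \leq 3\delta_n}} |\overline{\mathbb{G}}_{n,U}(a) - \overline{\mathbb{G}}_{n,U}(b)| = o_P(1).
\]

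The main obstacle is the within-cell oscillation. Letting $\pi(a)\in G_n$ denote a nearest grid point to $a$, so that $\|a-\pi(a)\|_1 \leq \delta_n$, I would have to show
\[
R_n := \sup_{a \in [0,\pi]\times[\eta,1-\eta]^2} |\overline{\mathbb{G}}_{n,U}(a) - \overline{\mathbb{G}}_{n,U}(\pi(a))| = o_P(1),
\]
which does \emph{not} follow from Step~1 because $\overline{\mathbb{G}}_{n,U}$ is not continuous. Writing
\[
\overline{\mathbb{G}}_{n,U}(a) - \overline{\mathbb{G}}_{n,U}(\pi(a)) = \sqrt n\bigl[\widehat\specdis_{n,U}(a) - \widehat\specdis_{n,U}(\pi(a))\bigr] - \sqrt n\bigl[\E\widehat\specdis_{n,U}(a) - \E\widehat\specdis_{n,U}(\pi(a))\bigr],
\]
the smooth expectation difference is $\sqrt n \cdot O(\delta_n) = O(n^{1/2-1/\gamma}) = o(1)$ by Lemma~\ref{expF-F} and smoothness of $\specdis$. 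For the random part, I would exploit that $\widehat\specdis_{n,U}(\lambda,\tau_1,\tau_2)$ is piecewise constant, changing only when $\lambda$ crosses a Fourier frequency $\omega_s = 2\pi s/n$ (spacing $2\pi/n$) or some $\tau_j$ crosses an order statistic $U_{(i)}$ (maximum spacing $O(\log n/n)$ with high probability). Since $\delta_n = o(1/n)$, a $\delta_n$-cell contains no Fourier frequency, and a union bound over the $O(\delta_n^{-2}) = O(n^{2/\gamma})$ cells in the $(\tau_1,\tau_2)$-plane together with an upper bound on the jump size of $\widehat\specdis_{n,U}$ at a single $U_{(i)}$-crossing, obtained by applying Lemma~\ref{lemSixthMomIncrementHn} to the corresponding periodogram-type increment, yields $R_n=o_P(1)$.

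Assembling the pieces through the triangle inequality
\[
|\overline{\mathbb{G}}_{n,U}(a) - \overline{\mathbb{G}}_{n,U}(b)| \leq 2 R_n + M_n
\]
(noting that $\|\pi(a)-\pi(b)\|_1 \leq 3\delta_n$ whenever $\|a-b\|_1 \leq \delta_n$) and taking the supremum over such pairs completes the argument. The crux is Step~2: the discrete nature of $\widehat\specdis_{n,U}$ is an asset at scale $\delta_n = o(1/n)$, but converting that asset into a uniform-in-$(\lambda,\tau_1,\tau_2)$ bound requires a careful enumeration of the natural-grid cells and tail control on the periodogram increments.
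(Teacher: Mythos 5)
Your overall architecture---project onto a mesh-$\delta_n$ grid, control grid-to-grid increments by a union bound with $2L$-th moment bounds, then control the within-cell oscillation by exploiting that $\widehat{\specdis}_{n,U}$ is piecewise constant---differs from the paper's, and your Step~1 is sound: Assumption (C) gives the cumulant condition of Lemma~\ref{lemSixthMomIncrementHn} for every order, so the moment bound holds for arbitrary $L$, and with $L$ large enough the union bound over the $O(n^{6/\gamma})$ grid pairs vanishes (note the binding term in the sum is $l=L-1$, not $l=0$, so the requirement is $L>1+(6-\kappa)/\gamma$ rather than $L\kappa>6$, but ``$L$ large enough'' covers it). The paper instead splits the supremum into a pure-$\lambda$ increment (at most $\lceil \delta_n n/(2\pi)\rceil$ Fourier frequencies enter the indicator window, each contributing $O_P(n^{-1/2+2/k})$ via $\sup_{\tau,\omega}|d_{n,U}^{\tau}(\omega)|=O_P(n^{1/2+1/k})$) and a pure-$\tau$ increment, which it rewrites in lag-window form with weights satisfying $\sum_k\sup_\lambda|w_{n,\lambda}(k)|=O(\log n)$ and reduces to a uniform bound on the oscillation of the lag-$k$ empirical copula $C_{n,k}$ at scale $\delta_n$.

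The genuine gap is in your Step~2, which you rightly call the crux. The jump of $\sqrt{n}\,\widehat{\specdis}_{n,U}(\lambda;\cdot,\tau_2)$ as $\tau_1$ crosses an order statistic $U_{(i)}$ cannot be obtained from Lemma~\ref{lemSixthMomIncrementHn}: that lemma bounds moments of increments over \emph{deterministic} quantile rectangles in terms of $g(\varepsilon)$, with $\varepsilon$ the probability content of the rectangle, and as the rectangle shrinks onto the (data-dependent) jump location this bound tends to zero while the jump does not---the moment bound is precisely blind to the discontinuity you are trying to control. A correct treatment needs a different device. The paper's is a monotonicity sandwich: since $C_{n,k}$ is nondecreasing in each argument and $C_k$ is Lipschitz, the oscillation of $C_{n,k}-C_k$ over any small rectangle is bounded by its increment between surrounding points of the natural grid of mesh $1/(n-|k|)$ plus a deterministic $O(1/(n-|k|))$ error; a maximal inequality over that grid with the moments of Lemma~\ref{uniformlyboundedmomentsCnk} then gives $O_P(n^{2/L}(\log n)^{d_L/2})$ per lag, and $L=5$ closes the argument. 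A direct route in your framework would be a Dirichlet-kernel computation showing each jump is deterministically $O(n^{-1/2}\log n)$, combined with a bound on the number of order statistics a $\delta_n$-cell can contain; but that is not what you wrote, and the maximum-spacing fact you invoke is both the wrong quantity (you need local clustering, not the largest gap) and unproven for dependent $U_t$. Finally, a $\delta_n$-cell in $\lambda$ can certainly contain a Fourier frequency---each $2\pi s/n$ lies in some cell; what saves you is that it contains at most one, and the corresponding jump $n^{-3/2}|d_{n,U}^{\tau_1}(2\pi s/n)\,d_{n,U}^{\tau_2}(-2\pi s/n)|=O_P(n^{-1/2+2/k})$ is uniformly negligible.
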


\begin{proof}
Note that 
\al{
\sup_{\substack{(\lambda ;\tau_1,\tau_2),(\lambda^{\prime},\tau_1^{\prime},\tau_2^{\prime})\in [0,\pi]\times[0,1]^2\\ \llVert(\lambda ;\tau_1,\tau_2)-(\lambda^{\prime},\tau_1^{\prime},\tau_2^{\prime})\rrVert_1\leq\delta_n}}|\overline{\mathbb{G}}_{n,U}(\lambda ;\tau_1,\tau_2)-\overline{\mathbb{G}}_{n,U}(\lambda^{\prime},\tau_1^{\prime},\tau_2^{\prime})| \leq{}&S_n^{(1)}+S_n^{(2)},
}
where 
\al{
&S_n^{(1)}=\sup_{\substack{\lambda,\lambda^{\prime}\in [0,\pi]\\ |\lambda-\lambda^{\prime}|\leq \delta_n}}\sup_{\tau_1,\tau_2\in[0,1]^2}|\overline{\mathbb{G}}_{n,U}(\lambda ;\tau_1,\tau_2)-\overline{\mathbb{G}}_{n,U}(\lambda^{\prime},\tau_1,\tau_2)| \ \text{ and}\\
&S_n^{(2)}=\sup_{\lambda\in [0,\pi]}\sup_{\substack{(\tau_1,\tau_2),(\tau_1^{\prime},\tau_2^{\prime})\in[0,1]^2\\ \llVert (\tau_1,\tau_2)-(\tau_1^{\prime},\tau_2^{\prime})\rrVert_1\leq \delta_n }}|\overline{\mathbb{G}}_{n,U}(\lambda ;\tau_1,\tau_2)-\overline{\mathbb{G}}_{n,U}(\lambda,\tau_1^{\prime},\tau_2^{\prime})|.
}

To bound $S_n^{(1)}$, use (\ref{zuwachslambda}) to obtain 
\al{
&\overline{\mathbb{G}}_{n,U}(\lambda ;\tau_1,\tau_2)-\overline{\mathbb{G}}_{n,U}(\lambda^{\prime},\tau_1,\tau_2)\\
={}&\sqrt{n}\Big(\frac{2\pi}{n}\sum_{s=1}^{n-1}\big(I\{0\leq 2\pi s/n\leq \lambda\}-I\{0\leq 2\pi s/n \leq\lambda^{\prime}\}\big)\big(\mathcal{I}_{n,U}^{\tau_1,\tau_2}(2\pi s/n)\\
&-\E[\mathcal{I}_{n,U}^{\tau_1,\tau_2}(2\pi s/n)]\big)\Big)\\
={}& n^{-3/2}\sum_{s=1}^{n-1}\textrm{sign}(\lambda-\lambda^{\prime})I\{\lambda\wedge \lambda^{\prime}< 2\pi s/n\leq\lambda\vee \lambda^{\prime}\}\big(d_{n,U}^{\tau_1}(2\pi s/n)d_{n,U}^{\tau_2}(-2\pi s/n)\\
&-\E[d_{n,U}^{\tau_1}(2\pi s/n)d_{n,U}^{\tau_2}(-2\pi s/n)]\big)\Big).
}


From Lemmas A.6 and  A.4  in \cite{DetteEtAl2016}, we know that, for any $k\in\N$,
\al{
\sup_{y\in[0,1]}\sup_{\omega\in\R}|d_{n,U}^y(\omega)|=O_{\Prob}(n^{1/2+1/k})
}
and  that,  for $\varepsilon:=\min\{\tau_1,\tau_2\}$ and some constants $C$ and $d$ that do not depend on $s,\tau_1,\tau_2$,  
\al{
|\E[d_{n,U}^{\tau_1}(2\pi s/n)d_{n,U}^{\tau_2}(-2\pi s/n)]|={}&|\cum(d_{n,U}^{\tau_1}(2\pi s/n),d_{n,U}^{\tau_2}(-2\pi s/n))|\\
\leq{}&C\Big(\big|\Delta_n(0)\big|+1\Big)\varepsilon(|\log \varepsilon|+1)^d\\
={}&C(n+1)\varepsilon(|\log \varepsilon|+1)^d
}
for $s=1,\dots,n-1$, i.e.\ 
\al{
\sup_{\tau_1,\tau_2\in[0,1]^2}\sup_{s=1,\dots,n-1}|\E[d_{n,U}^{\tau_1}(2\pi s/n)d_{n,U}^{\tau_2}(-2\pi s/n)]|={}&O(n).
}

Observing that the sum $$\sum_{s=1}^{n-1}I\{\lambda\wedge \lambda^{\prime}< 2\pi s/n\leq\lambda\vee \lambda^{\prime}\}$$ contains at most $\ceil{|\lambda-\lambda^{\prime}|n/(2\pi)}$ non-zero summands, we have, for any $k\in\N$,
\al{
\sup_{\substack{0<\lambda,\lambda^{\prime}\in [0,\pi]\\ |\lambda-\lambda^{\prime}|\leq \delta_n}}\sup_{\tau_1,\tau_2\in[0,1]^2}\Big|&n^{-3/2}\sum_{s=1}^{n-1}\textrm{sign}(\lambda-\lambda^{\prime})I\{\lambda\wedge \lambda^{\prime}< 2\pi s/n\leq\lambda\vee \lambda^{\prime}\}\\
&\times d_{n,U}^{\tau_1}(2\pi s/n)d_{n,U}^{\tau_2}(-2\pi s/n)\Big|={}O_{\Prob}(\delta_n n^{1/2+2/k})
}
and
\al{
\sup_{\substack{0<\lambda,\lambda^{\prime}\in [0,\pi]\\ |\lambda-\lambda^{\prime}|\leq \delta_n}}\sup_{\tau_1,\tau_2\in[0,1]^2}\Big|n^{-3/2}\sum_{s=1}^{n-1}&\textrm{sign}(\lambda-\lambda^{\prime})I\{\lambda\wedge \lambda^{\prime}< 2\pi s/n\leq\lambda\vee \lambda^{\prime}\}\\
&\E\Big[d_{n,U}^{\tau_1}(2\pi s/n)d_{n,U}^{\tau_2}(-2\pi s/n)\Big]\Big| ={}O(\delta_n n^{1/2}).
}

Hence, 
\als{\label{abschzuwachslambda}
\sup_{\substack{\lambda,\lambda^{\prime}\in [0,\pi]\\ |\lambda-\lambda^{\prime}|\leq \delta_n}}\sup_{\tau_1,\tau_2\in[0,1]^2}&|\overline{\mathbb{G}}_{n,U}(\lambda ;\tau_1,\tau_2)-\overline{\mathbb{G}}_{n,U}(\lambda^{\prime},\tau_1,\tau_2)|=O_{\Prob}(\delta_n n^{1/2+2/k})=o_{\Prob}(1)
}
for $k$ sufficiently large.\\

Turning to $S_n^{(2)}$, observe that for $\lambda\in[0,\pi]$, we have
\al{
\widehat{\specdis}\phantom{F\!\!\!\!\!}_{n,U}(\lambda ;\tau_1,\tau_2)={}&\frac{2\pi}{n}\sum_{s=1}^{n-1}I\big\{0\leq \frac{2\pi s}{n}\leq\lambda\big\}\mathcal{I}_{n,U}^{\tau_1,\tau_2}\big(\frac{2\pi s}{n}\big)\\
={}&\frac{2\pi}{n}\sum_{s=1}^{n-1}I\big\{0\leq \frac{2\pi s}{n}\leq\lambda\big\}\frac{1}{2\pi n}d_{n,U}^{\tau_1}\big(\frac{2\pi s}{n}\big)d_{n,U}^{\tau_2}\big(-\frac{2\pi s}{n}\big)\\
={}&\frac{1}{n^2}\sum_{s=1}^{n-1}I\big\{0\leq \frac{2\pi s}{n}\leq\lambda\big\}\sum_{t_1=0}^{n-1}I\{U_{t_1}\leq \tau_1\}e^{-it_1\frac{2\pi s}{n}}\sum_{t_2=0}^{n-1}I\{U_{t_2}\leq \tau_2\}e^{it_2\frac{2\pi s}{n}}\\
={}&\frac{1}{2\pi}\sum_{|k|\leq n-1}\frac{2\pi}{n}\sum_{s=1}^{n-1}I\big\{0\leq \frac{2\pi s}{n}\leq\lambda\big\}e^{-ik\frac{2\pi s}{n}}\frac{1}{n}\sum_{t\in\mathcal{T}_k}I\{U_{t+k}\leq \tau_1\}I\{U_{t}\leq \tau_2\}\\
=:{}&\frac{1}{2\pi}\sum_{|k|\leq n-1}w_{n,k}(\lambda)\frac{n-|k|}{n}C_{n,k}(\tau_1,\tau_2),
}
%
where $\mathcal{T}_k:=\{t\in\{0,\dots,n-1\}|t,t+k\in\{0,\dots,n-1\}\}$, $k\in\{-(n-1),\dots,n-1\}$, $w_{n,\lambda}(k)$ as defined in (\ref{defweightsquantspec}), and 
%

\al{
&C_{n,k}(\tau_1,\tau_2):=\frac{1}{n-|k|}\sum_{t\in\mathcal{T}_k}I\{U_{t+k}\leq \tau_1\}I\{U_{t}\leq \tau_2\}.
}
Note that  $C_{n,k}$ is equivalent to $\gamma_k^{U}$ defined in (\ref{gammahatranksdef}) with $a,b:=0$.

%

Furthermore,
\al{
w_{n,\lambda}(0)=\frac{2\pi}{n}\sum_{s=1}^{n-1}I\big\{0\leq \frac{2\pi s}{n}\leq\lambda\big\}=\frac{2\pi M}{n}\leq \lambda\leq \pi,
}
where $M\in\{1,\dots,\floor{\frac{n}{2}}\}$ is the integer such that $\frac{2\pi M}{n}\leq \lambda<\frac{2\pi (M+1)}{n}$. With this notation, for $|k|=1,\dots,n-1$,
\al{
w_{n,\lambda}(k)={}&\frac{2\pi}{n}\sum_{s=1}^Me^{-ik\frac{2\pi s}{n}}
={}\frac{2\pi}{n}e^{-i\frac{\pi (M+1) k}{n}}\frac{\sin\left(\frac{\pi k M}{n}\right)}{\sin\left(\frac{\pi k }{n}\right)}.
}
Hence, for $|k|=1,\dots,\floor{\frac{n}{2}}$,
\al{
|w_{n,\lambda}(k)|={}&\frac{2\pi}{n}\frac{\big|\sin\left(\frac{\pi k M}{n}\right)\big|}{\big|\sin\left(\frac{\pi k }{n}\right)\big|}=\frac{2\pi}{n}\frac{\big|\sin\left(\frac{\pi k M}{n}\right)\big|}{\sin\left(\frac{\pi |k| }{n}\right)}\leq{}\frac{2\pi}{n}\frac{\big|\sin\left(\frac{\pi k M}{n}\right)\big|}{\frac{|k|}{n}}\leq{}\frac{2\pi}{|k|}
}
where we have used the fact  that $\sup\limits_{\omega\in\R}|\sin(\omega)|\leq 1$ and $\sin(\pi x)\geq x$ for $x\in[0,1/2]$. Similarly, for~$|k|=\floor{\frac{n}{2}}+1,\dots,n-1$,
\al{
|w_{n,\lambda}(k)|\leq{}&\frac{2\pi}{n}\frac{\big|\sin\left(\frac{\pi k M}{n}\right)\big|}{1-\frac{|k|}{n}}\leq{}\frac{2\pi}{n-|k|}
}
as $\sup_{\omega\in\R}|\sin(\omega)|\leq 1$ and $\sin(\pi x)\geq 1-x$ for $x\in[1/2,1]$.\\

Summing up,
\als{\label{estimationweights}
|w_{n,\lambda}(k)|\leq{}&\begin{cases}\pi,&\text{ if }k=0,\\
\frac{2\pi}{|k|},&\text{ if }0<|k|\leq \floor{\frac{n}{2}},\\
\frac{2\pi}{n-|k|},&\text{ if }\floor{\frac{n}{2}}<|k|\leq n-1.\end{cases}
}
Next, note that, letting $C_k:=\E[I\{U_{t+k}\leq \tau_1,U_t\leq \tau_2\}]$, we have 
\als{\label{diffGnweighted}
&\sup_{\lambda\in [0,\pi]}\sup_{\substack{(\tau_1,\tau_2),(\tau_1^{\prime},\tau_2^{\prime})\in[0,1]^2\\ \llVert (\tau_1,\tau_2)-(\tau_1^{\prime},\tau_2^{\prime})\rrVert_1\leq \delta_n }}|\overline{\mathbb{G}}_{n,U}(\lambda ;\tau_1,\tau_2)-\overline{\mathbb{G}}_{n,U}(\lambda,\tau_1^{\prime},\tau_2^{\prime})|\notag\\
={}&\sqrt{n}\sup_{\lambda\in [0,\pi]}\sup_{\substack{(\tau_1,\tau_2),(\tau_1^{\prime},\tau_2^{\prime})\in[0,1]^2\\ \llVert (\tau_1,\tau_2)-(\tau_1^{\prime},\tau_2^{\prime})\rrVert_1\leq \delta_n }}\big|\widehat{\specdis}\phantom{F\!\!\!\!\!}_{n,U}(\lambda ;\tau_1,\tau_2)-\widehat{\specdis}\phantom{F\!\!\!\!\!}_{n,U}(\lambda,\tau_1^{\prime},\tau_2^{\prime})\notag\\
&\hspace{5cm}-(\E[\widehat{\specdis}\phantom{F\!\!\!\!\!}_{n,U}(\lambda ;\tau_1,\tau_2)]-\E[\widehat{\specdis}\phantom{F\!\!\!\!\!}_{n,U}(\lambda,\tau_1^{\prime},\tau_2^{\prime})])\big|\notag\\
\leq{}&n^{-1/2}\frac{1}{2\pi}\sum_{|k|\leq n-1}\sup_{\lambda\in [0,\pi]}|w_{n,\lambda}(k)|(n-|k|)\sup_{\substack{(\tau_1,\tau_2),(\tau_1^{\prime},\tau_2^{\prime})\in[0,1]^2\\ \llVert (\tau_1,\tau_2)-(\tau_1^{\prime},\tau_2^{\prime})\rrVert_1\leq \delta_n }}
\Big|C_{n,k}(\tau_1,\tau_2)-C_{n,k}(\tau_1^{\prime},\tau_2^{\prime})\notag\\
&\hspace{75mm}-\big(C_k(\tau_1,\tau_2)-C_k(\tau_1^{\prime},\tau_2^{\prime})\big)\Big|.
}
In a first step, let us show that, for any $L\in\N$, there exists a constant $d_L$ such that 
%
\als{\label{Cnko_p(1)}
&(n-|k|)\sup_{\substack{(\tau_1,\tau_2),(\tau_1^{\prime},\tau_2^{\prime})\in[0,1]^2\\ \llVert (\tau_1,\tau_2)-(\tau_1^{\prime},\tau_2^{\prime})\rrVert_1\leq \delta_n }}
\Big|C_{n,k}(\tau_1,\tau_2)-C_{n,k}(\tau_1^{\prime},\tau_2^{\prime})-\big(C_k(\tau_1,\tau_2)-C_k(\tau_1^{\prime},\tau_2^{\prime})\big)\Big|\notag \\
&\hspace{8cm}=O_{\Prob}\big(n^{2/L}(\log(n))^{d_L/2}\big).
}
For this, for $\tau:=(\tau_1,\tau_2),\tau^{\prime}:=(\tau^{\prime}_1,\tau^{\prime}_2)\in[0,1]^2$ and fixed $k\in\{-(n-1),\dots,n-1\}$, let 
\al{
|C_{n,k}(\tau_1,\tau_2)-C_{n,k}(\tau^{\prime}_1,\tau^{\prime}_2)-\big(C_k(\tau_1,\tau_2)-C_k(\tau^{\prime}_1,\tau^{\prime}_2)\big)|\leq{}T_n^{(1)}
+T_n^{(2)}+T_n^{(3)}, 
}
where
\al{
T_n^{(1)}={}&\Big|C_{n,k}(\tau_1,\tau_2)-C_{n,k}\Big(\flt{1},\flt{2}\Big)\\
&\qquad-\Big(C_k(\tau_1,\tau_2)-C_k\Big(\flt{1},\flt{2}\Big)\Big)\Big|\\
T_n^{(2)}={}&\Big|C_{n,k}(\tau^{\prime}_1,\tau^{\prime}_2)-C_{n,k}\Big(\fltp{1},\fltp{2}\Big)\\
&\qquad-(C_k(\tau^{\prime}_1,\tau^{\prime}_2)-C_k\Big(\fltp{1},\fltp{2}\Big)\Big)\Big|\\
T_n^{(3)}={}&\Big|C_{n,k}\Big(\flt{1},\flt{2}\Big)-C_{n,k}\Big(\fltp{1},\fltp{2}\Big)\\
&\qquad-\Big(C_k\Big(\flt{1},\flt{2}\Big)-C_k\Big(\fltp{1},\fltp{2}\Big)\Big)\Big|.
}
By Theorem 2.2.4 in \cite{nelsen06} and since $\Big|\tau_i-\flt{i}\Big|\leq\frac{1}{n-|k|}$; $i=1,2$, we have 
\al{
T_n^{(1)}\leq{}&\Big|C_{n,k}(\tau_1,\tau_2)-C_{n,k}\Big(\flt{1},\flt{2}\Big)\Big|\\
&\qquad+\Big|\tau_1-\flt{1}\Big|+\Big|\tau_2-\flt{2}\Big|\\
\leq{}&\Big|C_{n,k}(\tau_1,\tau_2)-C_{n,k}\Big(\flt{1},\flt{2}\Big)\Big|+\frac{2}{n-|k|}.
}
As, for $a_1\geq a_2$ and $b_1\geq b_2$, 
\al{
C_{n,k}(a_1,b_1)-C_{n,k}(a_2,b_2)={}&\frac{1}{n-|k|}\sum_{t\in\mathcal{T}_k}(I\{U_{t+k}\in (a_2,a_1],U_{t}\in [0,b_1]\}\\
&+I\{U_{t+k}\in [0,a_2],U_{t}\in (b_2,b_1]\})\geq 0,
}
we have, since $\frac{\floor{(n-|k|)\tau}}{n-|k|}\leq\tau\leq\frac{\floor{(n-|k|)\tau}+1}{n-|k}$, 
\al{
T_n^{(1)}\leq{}&\Big|C_{n,k}\Big(\cet{1},\cet{2}\Big)-C_{n,k}\Big(\flt{1},\flt{2}\Big)\Big|\\
&\hspace{90mm} + \frac{2}{n-|k|}\\
\leq{}&\Big|C_{n,k}\Big(\cet{1},\cet{2}\Big)-C_{n,k}\Big(\flt{1},\flt{2}\Big)\\
-&\Big(C_{k}\Big(\cet{1},\cet{2}\Big)-C_{k}\Big(\flt{1},\flt{2}\Big)\Big)\Big|\\
&\hspace{90mm} +\frac{4}{n-|k|}.
} 
Analogously, 
\al{
T_n^{(2)}
\leq{}&\Big|C_{n,k}\Big(\cetp{1},\cetp{2}\Big)-C_{n,k}\Big(\fltp{1},\fltp{2}\Big)\\
-&\Big(C_{k}\Big(\cetp{1},\cetp{2}\Big)-C_{k}\Big(\fltp{1},\fltp{2}\Big)\Big)\Big|\\
&\hspace{90mm}+\frac{4}{n-|k|}.
}
By bounding $T_n^{(1)}$ and $T_n^{(2)}$, we have bounded the error made by evaluating the copulas on the points of the grid
\al{
M_{n,k}:=\Big\{\Big(\frac{i}{n-|k|},\frac{j}{n-|k|}\Big):i,j=0,\dots,n-|k|\Big\},
}
whereas the copulas in $T_n^{(3)}$ are already evaluated on the grid $M_{n,k}$ and, thus, do not have to be treated separately. \\

%
%

The cardinality of the set 
\al{
\mathcal{M}_{n,k}:=\{(\tau_1,\tau_2),(\tau_1^{\prime},\tau_2^{\prime})\in M_{n,k}:\llVert(\tau_1,\tau_2)-(\tau_1^{\prime},\tau_2^{\prime})\rrVert_1\leq\delta_n+\frac{4}{n-|k|}\}
}
is of the order $O\big((n-|k|)^4(\delta_n+(n-|k|)^{-1})\big)$. Hence, by Lemma 2.2.2 in \cite{vandervaart96} using $\Psi(x)=x^{2L}$ and the upper bounds on $T_n^{(1)}$ and $T_n^{(2)}$, 
\al{
&\E\Big[(n-|k|)\sup_{\substack{(\tau_1,\tau_2),(\tau^{\prime}_1,\tau^{\prime}_2)\in[0,1]^2\\ \lVert(\tau_1,\tau_2)-(\tau^{\prime}_1,\tau^{\prime}_2)\rVert_1\leq \delta_n}}\big|C_{n,k}(\tau_1,\tau_2)-C_{n,k}(\tau^{\prime}_1,\tau^{\prime}_2)-\big(C_k(\tau_1,\tau_2)-C_k(\tau^{\prime}_1,\tau^{\prime}_2)\big)\big|\Big]\\
\leq{}&3\E\Big[(n-|k|)^{1/2}\max_{\substack{(\tau_1,\tau_2),(\tau^{\prime}_1,\tau^{\prime}_2)\in M_{n,k}\\ \lVert(\tau_1,\tau_2)-(\tau^{\prime}_1,\tau^{\prime}_2)\rVert_1\leq \frac{4}{n-|k|}+\delta_n}}\sqrt{(n-|k|)}\big|C_{n,k}(\tau_1,\tau_2)-C_{n,k}(\tau^{\prime}_1,\tau^{\prime}_2)\\
&\hspace{70mm}-\big(C_k(\tau_1,\tau_2)-C_k(\tau^{\prime}_1,\tau^{\prime}_2)\big)\big|\Big]+8\\
\leq{}&\Lambda (n-|k|)^{1/2}\Big\{(n-|k|)^4(\delta_n+(n-|k|)^{-1})\Big\}^{1/(2L)}\max_{\substack{(\tau_1,\tau_2),(\tau^{\prime}_1,\tau^{\prime}_2)\in M_{n,k}\\ \lVert(\tau_1,\tau_2)-(\tau^{\prime}_1,\tau^{\prime}_2)\rVert_1\leq \frac{4}{n-|k|}+\delta_n}}\\
&\quad\Big(\E\Big[\sqrt{n-|k|}\big|C_{n,k}(\tau_1,\tau_2)-C_{n,k}(\tau^{\prime}_1,\tau^{\prime}_2)-\big(C_k(\tau_1,\tau_2)-C_k(\tau^{\prime}_1,\tau^{\prime}_2)\big)\big|^{2L}\Big]\Big)^{\frac{1}{2L}}+8
}
where $\Lambda <\infty$ is some adequate constant.

Then, from Lemma \ref{uniformlyboundedmomentsCnk}, it follows that for any $L\in\N$ there exist $C_L$ and $d_L$ such that 
\al{
&\max_{\substack{(\tau_1,\tau_2),(\tau^{\prime}_1,\tau^{\prime}_2)\in M_{n,k}\\ \lVert(\tau_1,\tau_2)-(\tau^{\prime}_1,\tau^{\prime}_2)\rVert_1\leq \frac{4}{n-|k|}+\delta_n}}\Big(\E\Big[\sqrt{n-|k|}\big|C_{n,k}(\tau_1,\tau_2)-C_{n,k}(\tau^{\prime}_1,\tau^{\prime}_2)
\\
&\hspace{5cm}
-\big(C_k(\tau_1,\tau_2)-C_k(\tau^{\prime}_1,\tau^{\prime}_2)\big)\big|^{2L}\Big]\Big)^{1/(2L)}\\
&\hspace{0cm}\leq{}C_L\Big(\big((\delta_n+(n-|k|)^{-1})(1+|\log (\delta_n+(n-|k|)^{-1})|)^{d_L}\big)\vee (n-|k|)^{-1}\Big)^{1/2}
}
and since, by assumption, $\delta_n=O(n^{-1/\gamma})=o(n^{-1})$ for $\gamma\in(0,1)$, 
\al{
&(\delta_n+(n-|k|)^{-1})(1+|\log (\delta_n+(n-|k|)^{-1})|)^{d_L}\vee (n-|k|)^{-1}
\leq{}cst(n-|k|)^{-1}(\log (n))^{d_L},
}
whence
\al{
&\max_{|k|\leq n-1}\E\Big[(n-|k|)\sup_{\substack{(\tau_1,\tau_2),(\tau^{\prime}_1,\tau^{\prime}_2)\in[0,1]^2\\ \lVert(\tau_1,\tau_2)-(\tau^{\prime}_1,\tau^{\prime}_2)\rVert_1\leq \delta_n}}\big|C_{n,k}(\tau_1,\tau_2)-C_{n,k}(\tau^{\prime}_1,\tau^{\prime}_2)\\
&\hspace{7cm}-\big(C_k(\tau_1,\tau_2)-C_k(\tau^{\prime}_1,\tau^{\prime}_2)\big)\big|\Big]\\
\leq{}&cst_L\max_{|k|\leq n-1}(n-|k|)^{1/2}\Big\{(n-|k|)^4(\delta_n+(n-|k|)^{-1})\Big\}^{1/(2L)}(n-|k|)^{-1/2}(\log (n))^{d_L/2}\\
\leq{}&cst_Ln^{2/L}(\log (n))^{d_L/2}.
}
Thus, (\ref{Cnko_p(1)}) holds for any $L\in\N$. Next, observe that for a constant $K$ that does not depend on~$\lambda$, 
\als{\label{sumoverweightslog}
\sum_{|k|\leq n-1}\sup_{\lambda\in [0,\pi]}|w_{n,\lambda}(k)|\leq K\Big(1+\sum_{0<|k|\leq n-1}\max\big\{\frac{1}{|k|},\frac{1}{n-|k|}\big\}\Big)=O\big(\log(n)\big).
}

Finally, choose $L=5$. Then, plugging (\ref{Cnko_p(1)}) and (\ref{sumoverweightslog}) into (\ref{diffGnweighted}) yields 
\als{\label{abschzuwachstau}
\sup_{\lambda\in [0,\pi]}\sup_{\substack{(\tau_1,\tau_2),(\tau_1^{\prime},\tau_2^{\prime})\in[0,1]^2\\ \llVert (\tau_1,\tau_2)-(\tau_1^{\prime},\tau_2^{\prime})\rrVert_1\leq \delta_n }}|\overline{\mathbb{G}}_{n,U}(\lambda ;\tau_1,\tau_2)-\overline{\mathbb{G}}_{n,U}(\lambda,\tau_1^{\prime},\tau_2^{\prime})|={}&O_{\Prob}\big(n^{-1/10}(\log(n))^{1+d_5/2}\big)\notag\\
={}&o_{\Prob}(1).
}

Equations (\ref{abschzuwachslambda}) and (\ref{abschzuwachstau}) together yield the desired result.
\end{proof}

\begin{lemma}\label{uniformlyboundedmomentsCnk}
For any $L\in\N$ there exist constants $K_L$ and $d_L$ such that 
\al{
&\sup_{\substack{(x_1,x_2)\in[0,1]^2,\\(y_1,y_2)\in[0,1]^2\\ \lVert(x_1,x_2)-\\(y_1,y_2)\rVert_1\leq \delta}}\E\Big[\Big(\sqrt{n-|k|}\big|C_{n,k}(x_1,x_2)-C_{n,k}(y_1,y_2)-\big(C_k(x_1,x_2)-C_k(y_1,y_2)\big)\big|\\
&\hspace{7cm}\times\big(\delta(1+|\log \delta|)^{d_L}\vee (n-|k|)^{-1}\big)^{-1/2}\Big)^{2L}\Big]\leq K_L.
}
\end{lemma}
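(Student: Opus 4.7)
The plan is to control $\mathbb{E}[S_n^{2L}]$, where
\[
S_n := \sqrt{n-|k|}\big(C_{n,k}(x_1,x_2) - C_{n,k}(y_1,y_2) - C_k(x_1,x_2) + C_k(y_1,y_2)\big),
\]
by means of the moment--cumulant expansion together with the cumulant bound in Assumption~(C). First, I would write $S_n = (n-|k|)^{-1/2}\sum_{t\in\mathcal{T}_k} Z_t$ where $Z_t$ is the centered version of $I\{U_{t+k}\leq x_1\}I\{U_t\leq x_2\} - I\{U_{t+k}\leq y_1\}I\{U_t\leq y_2\}$. Assuming without loss of generality that $x_i\geq y_i$, the standard telescoping identity gives
\[
Z_t = W_t^{(1)}+W_t^{(2)} - \mathbb{E}[W_t^{(1)}+W_t^{(2)}],\quad W_t^{(1)}:=I\{U_{t+k}\leq x_1\}I\{U_t\in(y_2,x_2]\},
\]
and analogously for $W_t^{(2)}$, so that in every $W_t^{(j)}$ (one of the two indicator factors is of a set of probability at most $\delta$.

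Next, by the moment--cumulant formula and multilinearity, $\mathbb{E}[S_n^{2L}]$ equals
\[
\sum_{\{\nu_1,\dots,\nu_R\}}\sum_{\epsilon\in\{1,2\}^{2L}} \prod_{r=1}^R (n-|k|)^{-|\nu_r|/2}\sum_{t_j\in\mathcal{T}_k:\, j\in\nu_r}\!\!\!\cum\big(W_{t_j}^{(\epsilon_j)}:j\in\nu_r\big),
\]
with the outer sum running over partitions of $\{1,\dots,2L\}$ whose blocks all have size $\geq 2$. For each such cumulant, Theorem 2.3.2 of \cite{brillinger75} (the product formula) expresses the cumulant of products of indicators as a sum, over indecomposable partitions of the associated $|\nu_r|\times 2$ table, of products of cumulants of indicators of intervals of $\mathbb{R}$. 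At this point Assumption~(C) enters directly: each factor is bounded by $K\rho^{\text{diameter}}$, and since in every row of the table at least one of the two indicators is of a set of probability $\leq\delta$, the standard dyadic/\cite{DetteEtAl2016}-style truncation argument gives in addition an $\varepsilon$--type bound with $\varepsilon=\delta$, amounting to a bound of the form
\[
\Big|\cum\big(I\{U_{s_1}\in A_1\},\dots,I\{U_{s_p}\in A_p\}\big)\Big|\leq K\,\rho^{\max_{i,j}|s_i-s_j|}\wedge g(\delta),\qquad g(\delta):=\delta(1+|\log\delta|)^d,
\]
whenever the rectangle-factors are of the type produced by the decomposition.

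The key combinatorial step is then to bound, for each block $\nu_r$ with $|\nu_r|=q_r\geq 2$, the $q_r$-fold sum of cumulants. Using indecomposability of the inner partition and Assumption~(C) to sum over $q_r-1$ of the time indices (an argument completely analogous to the one carried out in the proof of Lemma~\ref{lemSixthMomIncrementHn} and in Lemma~A.2 of \cite{DetteEtAl2016}), each such $q_r$-fold sum is bounded by $C\,(n-|k|)\,g(\delta)$: the $(n-|k|)$ factor accounts for the free time index, and the $g(\delta)$ factor comes from the mixed cumulant bound together with a geometric sum in the remaining time differences (where the $|\log\delta|^d$ factor arises from the $O(|\log\delta|)$ summands for which $\rho^{|s_i-s_j|}>\delta$). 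This yields, for each block,
\[
(n-|k|)^{-q_r/2}\!\!\!\sum_{t_j\in\mathcal{T}_k:\, j\in\nu_r}\!\!\Big|\cum\big(Z_{t_j}:j\in\nu_r\big)\Big|\;\leq\; C\,(n-|k|)^{1-q_r/2}\,g(\delta).
\]

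Finally, to assemble the pieces, observe that a partition into $R$ blocks of sizes $q_1,\dots,q_R\geq 2$ satisfies $\sum q_r=2L$ and therefore contributes at most $C(n-|k|)^{R-L}g(\delta)^R$. Since $R\leq L$ and $(n-|k|)^{R-L}g(\delta)^R\leq \big(g(\delta)\vee(n-|k|)^{-1}\big)^L$ for every $1\leq R\leq L$, summing over the finitely many partitions yields
\[
\mathbb{E}[S_n^{2L}]\leq K_L\big(g(\delta)\vee(n-|k|)^{-1}\big)^L = K_L\big(\delta(1+|\log\delta|)^{d_L}\vee(n-|k|)^{-1}\big)^L,
\]
uniformly in $(x_1,x_2),(y_1,y_2)\in[0,1]^2$ with $\|(x_1,x_2)-(y_1,y_2)\|_1\leq \delta$, which is the claimed bound. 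The main obstacle is the verification of the mixed-type cumulant bound on individual blocks of arbitrary size $q_r$, i.e.\ obtaining the $g(\delta)$ factor jointly with the exponential decay in time differences; this requires decomposing each product cumulant via Brillinger's formula and patiently interpolating, on each indecomposable sub-partition, between the two available bounds ($\leq K\rho^{\max|s_i-s_j|}$ from Assumption~(C) and $\leq K\delta$ from the small-support property).
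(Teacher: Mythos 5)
Your proposal is correct and follows essentially the same route as the paper's proof: a moment--cumulant expansion over partitions with blocks of size at least two, Brillinger's product formula on each block, interpolation between the exponential decay from Assumption~(C) and the small-measure bound $\lesssim\delta$ available because one indicator in each row of the table lives on a set of probability at most $\delta$, an indecomposability ("hooking") count of order $(n-|k|)m^{q-1}$ for the time indices, and the geometric-sum estimate $\sum_m m^{q-1}(\delta\wedge\rho^m)\lesssim\delta(1+|\log\delta|)^q$ producing the logarithmic factor, followed by the final comparison $(n-|k|)^{R-L}g(\delta)^R\le(g(\delta)\vee(n-|k|)^{-1})^L$. The only (immaterial) difference is that you telescope the two-coordinate increment inside the sum and expand by multilinearity of cumulants, whereas the paper first splits the increment into two single-coordinate increments via the triangle inequality and treats each separately.
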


\begin{proof}

First note that 
\al{
&\sup_{\substack{(x_1,x_2),(y_1,y_2)\in[0,1]^2\\ \lVert(x_1,x_2)-(y_1,y_2)\rVert_1\leq \delta}}\E[\sqrt{n-|k|}\big|C_{n,k}(x_1,x_2)-C_{n,k}(y_1,y_2)\\
&\hspace{6cm}-\big(C_k(x_1,x_2)-C_k(y_1,y_2)\big)\big|^{2L}]\\
\leq{}& 2^{2L-1}\Big(\sup_{\substack{x_1,y_1\in[0,1]\\ |x_1-y_1|\leq \delta}}\sup_{x_2\in[0,1]}\E\big[\sqrt{n-|k|}\big|C_{n,k}(x_1,x_2)-C_{n,k}(y_1,x_2)\\
&\hspace{6cm}-\big(C_k(x_1,x_2)-C_k(y_1,x_2)\big)\big|^{2L}\big]\\
&+\sup_{\substack{x_2,y_2\in[0,1]\\ |x_2-y_2|\leq \delta}}\sup_{y_1\in[0,1]}\E\big[\sqrt{n-|k|}\big|C_{n,k}(y_1,x_2)-C_{n,k}(y_1,y_2)\\
&\hspace{6cm}-\big(C_k(y_1,x_2)-C_k(y_1,y_2)\big)\big|^{2L}\big]\Big)\\
=:{}&T_{1,n,k}+T_{2,n,k},
} 
where the terms $T_{1,n,k}$ and $T_{2,n,k}$ can be handled similarly. Concentrating on the first one,  let us prove that for any $L\in \N$ there exist $K_L$ and $d_L$ depending only on $L$ such that 
\al{
&\sup_{\substack{x_1,y_1\in[0,1]\\ |x_1-y_1|\leq \delta}}\sup_{x_2\in[0,1]}\E\Big[\Big(\sqrt{n-|k|}\big|C_{n,k}(x_1,x_2)-C_{n,k}(y_1,x_2)-\big(C_k(x_1,x_2)-C_k(y_1,x_2)\big)\\
&\hspace{6.5cm}\times\big(\delta(1+|\log \delta|)^{d_L}\vee (n-|k|)^{-1}\big)^{-1/2}\Big)^{2L}\biggr]\leq K_L
} 

Let $\mathcal{T}_k:=\big\{t\in\{0,\dots,n-1\}|t,t+k\in\{0,\dots,n-1\}\big\}$. Observe that with 
$$\sigma:=2I\{x_1>y_1\}-1,\quad  M_1:=(x_1\wedge y_1,x_1\vee y_1], \text{ and }  M_2:=[0,x_2],$$ 
we have 
\al{
&C_{n,k}(x_1,x_2)-C_{n,k}(y_1,x_2)-\big(C_k(x_1,x_2)-C_k(y_1,x_2)\big)\\
={}&\frac{1}{n-|k|}\sum_{t\in\mathcal{T}_k}\Big( I\{U_{t+k}\in M_1,U_t\in M_2\}-\E[I\{U_{t+k}\in M_1,U_t\in M_2 \}]\Big) \sigma.
}
Since  $\Big( I\{U_{t+k}\in M_1,U_t\in M_2\}-\E[I\{U_{t+k}\in M_1,U_t\in M_2 \}]\Big)$ are centered, Theorem~2.3.2 of \cite{brillinger75} yields 
\al{
&\E\big[\big(C_{n,k}(x_1,x_2)-C_{n,k}(y_1,x_2)-\big(C_k(x_1,x_2)-C_k(y_1,x_2)\big)^{2L}\big]\\
={}&\frac{1}{(n-|k|)^{2L}}\sum_{\substack{\{\nu_1,\dots,\nu_R\}\\|\nu_j|\geq 2;\,j=1,\dots,R}}\prod_{r=1}^R\cum\Big(\sum_{t_{\xi}\in\mathcal{T}_k} I\{U_{t_{\xi}+k}\in M_1,U_{t_{\xi}}\in M_2\};\xi\in\nu_r\Big),
} 

where the sum runs over all partitions $\{\nu_1,\dots,\nu_R\}$ of $\{1,\dots,2L\}$.\\

Next, for any set $\nu_r$ 
with $|\nu_r|=q$ of a partition $\{\nu_1,\dots,\nu_R\}$, we have by Theorems 2.3.1 and 2.3.2 of \cite{brillinger75} 
\al{
&\cum\big(\sum_{t_{1}\in\mathcal{T}_k}I\{U_{t_{1}+k}\in M_1,U_{t_{1}}\in M_2\},\dots,\sum_{t_{q}\in\mathcal{T}_k}I\{U_{t_{q}+k}\in M_1,U_{t_{q}}\in M_2\}\big)\\
={}&\sum_{t_{1},\dots,t_{q}\in\mathcal{T}_k}\cum\big(I\{U_{t_{1}+k}\in M_1,U_{t_{1}}\in M_2\},\dots,I\{U_{t_{q}+k}\in M_1,U_{t_{q}}\in M_2\}\big)\\
={}&\sum_{t_{1},\dots,t_{q}\in\mathcal{T}_k}\sum_{\{\mu_1,\dots,\mu_N\}}\prod_{i=1}^N\cum\big(I\{U_u\in M_v\}; (u,v)\in\mu_i\big)
}

where the sum runs over all indecomposable partitions of the table
\begin{center}
\begin{tabular}{cc}
$(t_1+k,1)$\qquad &$(t_1,2)$\\
\vdots\qquad &\vdots\\
$(t_q+k,1)$\qquad &$(t_q,2)$.
\end{tabular}
\end{center}

Note that in $\prod_{i=1}^N\cum\big(I\{U_u\in M_v\}; (u,v)\in\mu_i\big)$ there are at most $q$ cumulants of order one for which we have 
\al{
\cum(I\{U_u\in M_v\})=\E[I\{U_u\in M_v\}]=\lambda(M_v),
}
where $\lambda$ denotes the Lebesgue measure. Moreover, we will never encounter the case of the first-order cumulants $\cum(I\{U_{t_{s}}\in M_2\})$ and $\cum(I\{U_{t_{s}+k}\in M_1\})$, for some $s=1,\dots,q$, both  appear in the product since  the partition then would be decomposable.

The other cumulants in $\prod_{i=1}^N\cum\big(I\{U_u\in M_v\}; (u,v)\in\mu_i\big)$ are at least of second-order and, as can be seen from the definition of a cumulant and the triangle inequality, 
\al{
\big|\cum\big(I\{U_u\in M_v\};(u,v)\in\mu_i:|\mu_i|\geq 2\big)|\leq{}&C\min\{\lambda(M_v):v\in\mu_i\}.
}
Furthermore, if we let $\mu_i:=\{(u_1,v_1),\dots,(u_p,v_p)\}$ and define 
$$m_{\mu_i}:=\max\{|u_k-u_l|: (u_k,v_k),(u_l,v_l)\in\mu_i,\,k,l=1,\dots,p\},$$ by Assumtion (C), we have 
\al{
\big|\cum\big(I\{U_u\in M_v\};(u,v)\in\mu_i:|\mu_i|\geq 2\big)\big|\leq{}&K_{p}(\rho^{1/|\mu_i|})^{m_{\mu_i}}.
}
Hence, for all cumulants of order greater than two, we have the bound 
\al{
\big|\cum\big(I\{U_u\in M_v\};(u,v)\in\mu_i:|\mu_i|\geq 2\big)\big|\leq{}&(C+K_p)\big(\min_{v\in\mu_i}\{\lambda(M_v)\}\wedge(\rho^{1/|\mu_i|})^{m_{\mu_i}}\big).
}
Thus, for one partition $\{\mu_1,\dots,\mu_N\}$ we obtain 
\al{
&\sum_{t_{1},\dots,t_{q}\in\mathcal{T}_k}\prod_{i=1}^N\cum\big(I\{U_u\in M_v\}; (u,v)\in\mu_i\big)\\
\leq{}&K_q\sum_{t_{1},\dots,t_{q}\in\mathcal{T}_k}\prod\limits_{\{i:|\mu_i=\{(u,v)\}|=1\}}\lambda(M_{v})\prod\limits_{\{j:|\mu_j|\geq 2\}}\big(\min\{\lambda(M_{v});(u,v)\in\mu_j\}\wedge (\rho^{1/|\mu_j|})^{m_{\mu_j}}\big).
}
Since $\lambda(M_1)=|x_1-y_1|\leq 1$ and $\lambda(M_2)=x_2\leq 1$, 
\al{
\prod\limits_{\{i:|\mu_i=\{(u,v)\}|=1\}}\lambda(M_{v})\leq \min\{\lambda(M_{v});(u,v)\in\mu_i,|\mu_i|=1\};
}
since $(a\wedge b)\cdot(c\wedge d)\leq (ac\wedge bd)$ for $a,b,c,d>0$, 
\al{
&\prod\limits_{\{j:|\mu_j|\geq 2\}}\big(\min\{\lambda(M_{v});(u,v)\in\mu_j\}\wedge (\rho^{1/|\mu_j|})^{m_{\mu_j}}\big)\\
\leq{}&\prod\limits_{\{j:|\mu_j|\geq 2\}}\big(\min\{\lambda(M_{v});(u,v)\in\mu_j\}\wedge (\rho^{1/|\mu_j|})^{\sum\limits_{\{j:|\mu_j|\geq 2\}}m_{\mu_j}}\big);
}
and since $\rho<1$, 
\al{
(\rho^{1/|\mu_j|})^{\sum\limits_{\{j:|\mu_j|\geq 2\}}m_{\mu_j}}\leq{}& (\rho^{1/(\max\{|\mu_1|,\dots,|\mu_N|\})})^{\max\{m_{\mu_j};|\mu_j|\geq 2, j=1,\dots,N\}}.
}

Thus, if we let $\tilde{\rho}:=\rho^{1/(\max\{|\mu_1|,\dots,|\mu_N|\})}$ and $$m_{\mu_1,\dots,\mu_N}:=\max\limits_{j=1,\dots,N}\{\max\{|u_i-u_{i^{\prime}}|:(u_i,v_i),(u_{i^{\prime}},v_{i^{\prime}})\in\mu_j,|\mu_j|\geq 2\}\},$$ we have 
\al{
&\sum_{t_{1},\dots,t_{q}\in\mathcal{T}_k}\prod_{i=1}^N\cum\big(I\{U_u\in M_v\}; (u,v)\in\mu_i\big)\\
\leq{}&K_q\sum_{t_{1},\dots,t_{q}\in\mathcal{T}_k}\big(\min\{\lambda(M_{v});(u,v)\in\mu_1,\dots,\mu_N\}\wedge \tilde{\rho}^{m_{\mu_1,\dots,\mu_N}}\big)\\
\leq{}&K_q\sum_{t_{1},\dots,t_{q}\in\mathcal{T}_k}\big(|x_1-y_1|\wedge \tilde{\rho}^{m_{\mu_1,\dots,\mu_N}}\big).
}

Next, 
\al{
K_q\sum_{t_{1},\dots,t_{q}\in\mathcal{T}_k}&\big(|x_1-y_1|\wedge  \tilde{\rho}^{m_{\mu_1,\dots,\mu_N}}\big)\leq{}\sum_{m=0}^{\infty}\sum_{\substack{t_1,\dots,t_q\in\mathcal{T}_k\\ m_{\mu_1,\dots,\mu_N}=m}}|x_1-y_1|\wedge \tilde{\rho}^m\\
\leq{}&\sum_{m=0}^{\infty}\#\{t_1,\dots,t_q\in\mathcal{T}_k: m_{\mu_1,\dots,\mu_N}=m\}|x_1-y_1|\wedge \tilde{\rho}^m.
}


In order to estimate the cardinality  of the set $\{t_1,\dots,t_q\in\mathcal{T}_k: m_{\mu_1,\dots,\mu_N}=m\}$, consider first the case $N=1$. We have
\al{
&\Big|\sum_{t_{1},\dots,t_{q}\in\mathcal{T}_k}\prod_{i=1}^N\cum\big(I\{U_u\in M_v\}; (u,v)\in\mu_i\big)\Big|\\
={}&\Big|\sum_{t_{1},\dots,t_{q}\in\mathcal{T}_k}\cum\big(I\{U_{t_1+k}\in M_1\},I\{U_{t_1}\in M_2\},\dots,I\{U_{t_q+k}\in M_1\},I\{U_{t_q}\in M_2\}\big)\Big|\\
\leq{}&\sum_{t_{1},\dots,t_{q}\in\mathcal{T}_k}K_{2q}\big(\rho^{1/(2q)}\big)^{\max\{|a-b|:a,b\in\{t_1,t_1+k,\dots ,t_q,t_q+k\}\}}\\
\leq{}&K_{2q}\sum_{m=0}^{\infty}\#\{t_1,\dots,t_q\in\mathcal{T}_k \max\{|a-b|:a,b\in\{t_1,t_1+k,\dots ,t_q,t_q+k\}=m\}\tilde{\rho}^{m}
}
where (since there are $n-|k|$ possibilities to fix one element $t_{j_0}$ of $\{t_1,\dots,t_q\in\mathcal{T}_k\}$ and   at most $m$ possible values for the remaining $t_j$, $j=1\dots,q,\,j\neq j_0$)
$$\#\{t_1,\dots,t_q\in\mathcal{T}_k \max\{|a-b|:a,b\in\{t_1,t_1+k,\dots ,t_q,t_q+k\}=m\}\leq c_q(n-|k|)m^{q-1}.$$ 
For the case $N\geq 2$, 
\al{
&\sum_{t_{1},\dots,t_{q}\in\mathcal{T}_k}\prod_{i=1}^N\cum_{|\mu_i|}\big(I\{U_u\in M_v\}; (u,v)\in\mu_i\big)\\
\leq{}&\sum_{m=0}^{\infty}\#\{t_1,\dots,t_q\in\mathcal{T}_k \max\{m_{\mu_j};|\mu_j|\geq 2, j=1,\dots,N\}=m\}|x_1-y_1|\wedge \tilde{\rho}^m,
}
where 
\begin{equation}\label{hd2}
\#\{t_1,\dots,t_q\in\mathcal{T}_k: \max_{j:|\mu_j|\geq 2} m_{\mu_j} = m\}\leq c_q(n-|k|)m^{q-1}.
\end{equation} 
In order to prove this, start by considering the set $\mu_{i_0}$ of one partition $\{\mu_1,\dots,\mu_N\}$ which contains either $t_1$ or $t_1+k$ or both. By indecomposability of the partition there exists at least one other $t_s$ or $t_s+k$ in $\mu_{i_0}$ such that $t_s+k$ or $t_s$ 
 are not contained in $\mu_{i_0}$. Hence, there are~$n-|k|$ possible values for $t_1$ and at most $m$ possible values for any other $t_s$ so that either~$t_s$ or exclusively $t_s+k$ is contained in $\mu_{i_0}$ since $k$ is fixed. Next, observe that by indecomposability of the partition, all sets $\mu_j$ hook [for a precise definition see page 20 in \cite{brillinger75}] and thus there exists a $\mu_{j_0}$ such that $t_s$ is contained in $\mu_{i_0}$ and $t_s+k$ in $\mu_{j_0}$ or vice versa. Again by indecomposability we find another $t_r$ or exclusively $t_r+k$ in $\mu_{j_0}$ for which we have at most $m$ choices so that $\max\{m_{\mu_j};|\mu_j|\geq 2, j=1,\dots,N\}=m$. Continuing this argumentation until the maximum over all sets $\mu_j$ have been taken into consideration, we see that $\#\{t_1,\dots,t_q\in\mathcal{T}_k: \max\{m_{\mu_j};|\mu_j|\geq 2, j=1,\dots,N\}=m\}$ is at most of the order $(n-|k|)m^{q-1}$, since the indecomposable partitions $\{\mu_1,\dots,\mu_N\}$ yielding the highest order are those where each set $\mu_j$ is of size $2$ and contains $t_s$ or~$t_s+k$ and $t_r$ or~$t_r+k$.
%
%
%
%

Therefore, \eqref{hd2} follows and 
\al{
&\sum_{m=0}^{\infty}\#\{t_1,\dots,t_q\in\mathcal{T}_k :\max\{m_{\mu_j};|\mu_j|\geq 2, j=1,\dots,N\}=m\}|x_1-y_1|\wedge \tilde{\rho}^m\\
&\hspace{5cm}\leq{}c_q (n-|k|)\sum_{m=0}^{\infty}  m^{q-1}|x_1-y_1|\wedge \tilde{\rho}^m.
}
Observe that for some constant $K$, 
\al{
\sum_{m=0}^{\infty}  m^{q-1}(\varepsilon\wedge \rho^m)\leq K\varepsilon(1+|\log \varepsilon|)^q
}
because
\bi
\item[(1)] if $\varepsilon\geq \tilde{\rho}$,  then 
$
\sum_{m=0}^{\infty}  m^{q-1}(\varepsilon\wedge \tilde{\rho}^m)=\sum_{m=0}^{\infty}  m^{q-1} \tilde{\rho}^m<\infty
$;\vspace{1mm}
\item[(2)] if $\varepsilon< \tilde{\rho}$, setting $m_{\varepsilon}:=\log\varepsilon/\log \tilde{\rho}$ (so that $m_{\varepsilon}$ is   such that $\tilde{\rho}^m<\varepsilon$ for any $m>m_{\varepsilon}$), then $\tilde{\rho}^{m_{\varepsilon}}=\varepsilon$ and 
\al{
\sum_{m=0}^{\infty}  m^{q-1}(\varepsilon\wedge \tilde{\rho}^m)\leq{}&\sum_{m\leq m_{\varepsilon}}\varepsilon m^{q-1}+\sum_{m> m_{\varepsilon}}\tilde{\rho}^{m} m^{q-1}\\
\leq{}&m_{\varepsilon}m_{\varepsilon}^{q-1}\varepsilon+\tilde{\rho}^{m_{\varepsilon}}\sum_{m=0}^{\infty}(m+m_{\varepsilon})^{q-1}\tilde{\rho}^m\\
\leq{}&m_{\varepsilon}^q\varepsilon+\varepsilon m_{\varepsilon}^q\sum_{m=0}^{\infty}(m+1)^{q-1}\tilde{\rho}^m\\
\leq{}&C_q^{\prime}\varepsilon\Big(1+\Big|\frac{\log \varepsilon}{\log \tilde{\rho}}\Big|^q\Big)
\leq{}
C_{q,\tilde{\rho}}\varepsilon(1+|\log \varepsilon|)^q.
}
\ei

Hence, in total, for an indecomposable decomposition $\{\mu_1,\dots,\mu_N\}$, we have 
\al{
\sum_{t_{1},\dots,t_{q}\in\mathcal{T}_k}\prod_{i=1}^N\cum\big(I\{U_u\in M_v\}; (u,v)\in\mu_i\big)\leq C_q(n-|k|)|x_1-y_1|(1+|\log |x_1-y_1||)^q
}
and therefore, for one set $\nu_r$ of $q$ elements, 
\al{
\cum\big( I\{U_{t_{\xi}+k}\in M_1,U_{t_{\xi}}\in M_2\};\xi\in\nu_r\big)\leq \tilde{C}_{q}(n-|k|)|x_1-y_1|(1+|\log |x_1-y_1||)^q.
}


Thus, for any partition $\{\nu_1,\dots,\nu_R\}$ with $|\nu_j|\geq 2; j=1,\dots,R$ of $\{1,\dots,2L\}$, 
\al{
&\prod_{r=1}^R\cum\big( \sum_{t_{\xi}\in\mathcal{T}_k}I\{U_{t_{\xi}+k}\in M_1,U_{t_{\xi}}\in M_2\};\xi\in\nu_r\big)\\
\leq{}&\tilde{C}_R(n-|k|)^{R}\big(|x_1-y_1|(1+|\log |x_1-y_1||)^{\max\{|\nu_j|;j=1,\dots,R\}}\big)^R
}
and, if we let $d_R:=\max\{|\nu_j|;j=1,\dots,R\}$ and $d:=\max\{d_1,\dots,d_L\}$, we obtain 
\al{
&\E\big[\big(\sqrt{n-|k|}\big|C_{n,k}(x_1,x_2)-C_{n,k}(y_1,x_2)-\big(C_k(x_1,x_2)-C_k(y_1,x_2)\big|\big)^{2L}\big]\\
\leq{}&\tilde{K}_{1,L}\sum_{R=1}^L(n-|k|)^{R-L}(|x_1-y_1|(1+|\log |x_1-y_1||)^d)^R\\
\leq{}&L\tilde{K}_{1,L}((n-|k|)^{-1}\vee|x_1-y_1|(1+|\log |x_1-y_1||)^d)^L\\
\leq{}&K_{1,L}((n-|k|)^{-1}\vee|x_1-y_1|(1+|\log |x_1-y_1||)^d)^L,
}
that is, 
\al{
&\sup_{\substack{x_1,y_1\in[0,1]\\ |x_1-y_1|\leq \delta}}\sup_{x_2\in[0,1]}\E\big[\big(\sqrt{n-|k|}\big|C_{n,k}(x_1,x_2)-C_{n,k}(y_1,x_2)-\big(C_k(x_1,x_2)-C_k(y_1,x_2)\big|\big)^{2L}\big]\\
&\hspace{3cm}\leq K_{1,L}((n-|k|)^{-1}\vee\delta(1+|\log \delta|)^d)^L.
}

Analogously, 
\al{
&\sup_{\substack{x_2,y_2\in[0,1]\\ |x_2-y_2|\leq \delta}}\sup_{y_1\in[0,1]}\E\big[\sqrt{n-|k|}\big|C_{n,k}(y_1,x_2)-C_{n,k}(y_1,y_2)-\big(C_k(y_1,x_2)-C_k(y_1,y_2)\big)\big|^{2L}\big]\Big)\\
&\hspace{3cm}\leq K_{2,L}((n-|k|)^{-1}\vee\delta(1+|\log \delta|)^{d_L})^L,
}
and hence
\al{
&\sup_{\substack{(x_1,x_2),(y_1,y_2)\in[0,1]^2\\ \lVert(x_1,x_2)-(y_1,y_2)\rVert_1\leq \delta}}\E[\sqrt{n-|k|}\big|C_{n,k}(x_1,x_2)-C_{n,k}(y_1,y_2)-\big(C_k(x_1,x_2)-C_k(y_1,y_2)\big)\big|^{2L}]\\
&\hspace{3cm}\leq K_L((n-|k|)^{-1}\vee\delta(1+|\log \delta|)^d)^L,
}
which completes the proof.
\end{proof}

\section{Additional simulation results}



\subsection{Additional simulation results for the test for time-reversibility}

We show additional simulation results for the test for time-reversibility introduced in Section \ref{sec:timerev}. We set the sample size $n\in\{100, 128, 150, 200, 256,$ $400, 512, 700, 1024\}$ and the block size, for a given~$n$, to $b \in B(n) := \{2^4, 2^5, \ldots, n/2\}$, the range of maximum for frequency as $\{2\pi \ell/32; \linebreak\ell = 0,1,\ldots,16\}$, and the range of maxima for quantiles as $\{\tau_1,\tau_2=k/8; k = 1,\ldots,7\}$. The weight functions $s_1,\ldots,s_5$ defined in the Appendix and the significance level as $\alpha=0.05$ are employed.

The simulation procedure is as follows: generate time series and calculate the $p$-values based on $T_{\rm TR1}^{(n,b,t)}$ and $T_{\rm TR1\_fpc}^{(n,b,t)}$, which is defined as $T_{\rm TR1\_fpc}^{(n,b,t)}:=(1 - b/n)^{-1/2}T_{\rm TR1}^{(n,b,t)}$. Then, iterate $R=1000$ times and compute empirical size or power. In the figures, $b$ is chosen by the rule of thumb defined by \eqref{eqn:rt_bw}.

Figures \ref{fig:Appx:TR_nofpc_M8_M9}--\ref{fig:Appx:TR_fpc_M10_M11} illustrate tha fact  that the power of the tests increases as the degree of time-irreversibility increases and as the sample size increases.
The weight functions $s_1$, $s_2$, and~$s_4$ provide better power among $s_1,\ldots,s_5$ since the tests based on $s_3$ and $s_5$ have low power for many models and M10, respectively.

Figures \ref{fig:TR_nofpc_weights}--\ref{fig:TR_fpc_weights} display results with the same settings as Figures \ref{fig:TR_nofpc}--\ref{fig:TR_fpc} in the main manuscript but with weight functions $s_1,s_2,s_3,s_5$.

\begin{figure}[H]
	\begin{center}
\includegraphics[width = \linewidth]{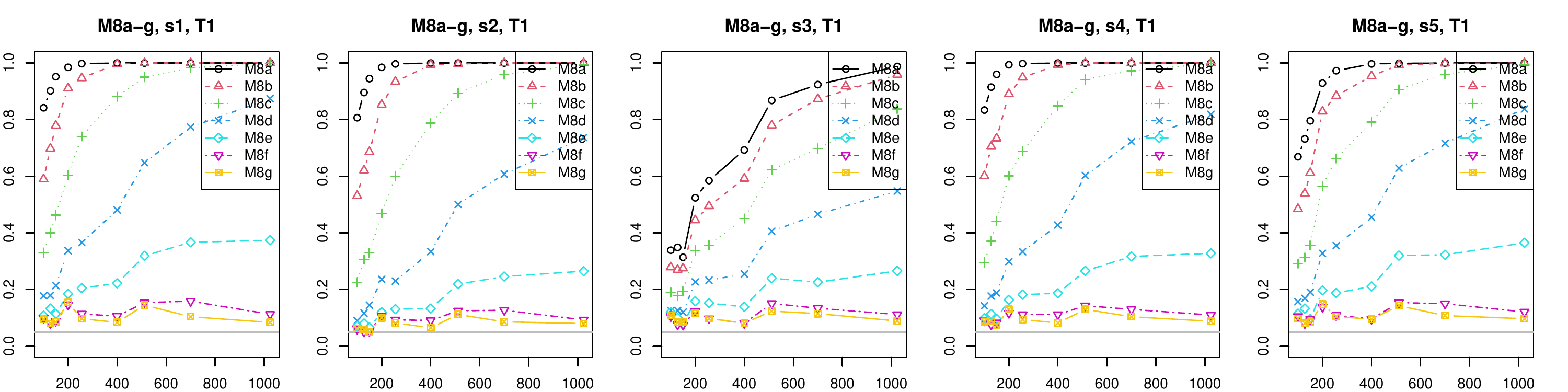}
\includegraphics[width = \linewidth]{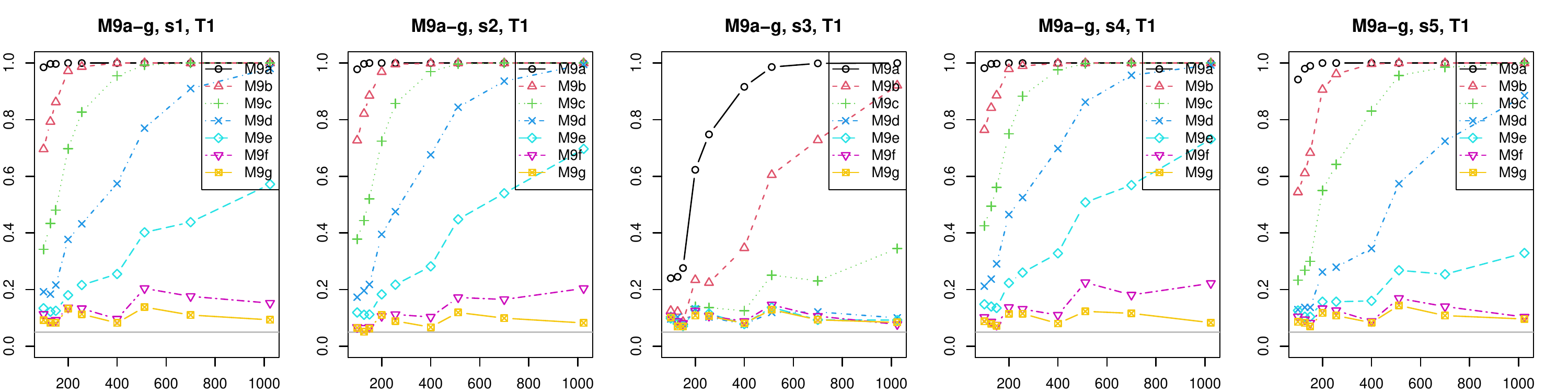}
	\end{center}
	\vspace{-0.4cm}
	\caption{Empirical size (top) and power (bottom) of the tests for time-reversibility based on $T_{\rm TR1}^{(n,b,t)}$ described in Section   \ref{sec:timerev}.
The upper plots and lower plots correspond to M8$a$--$g$ and M9$a$--$g$, respectively. 
Columns correspond to the weight functions $s_1,\ldots,s_5$ from left to right, respectively.
The horizontal axis of the plots corresponds to the parameters of models ($\lambda_i$ or $\gamma_i^{-1}$) and the vertical axis corresponds to empirical power.
}
	\label{fig:Appx:TR_nofpc_M8_M9}
\end{figure}

\begin{figure}[H]
	\begin{center}
\includegraphics[width = \linewidth]{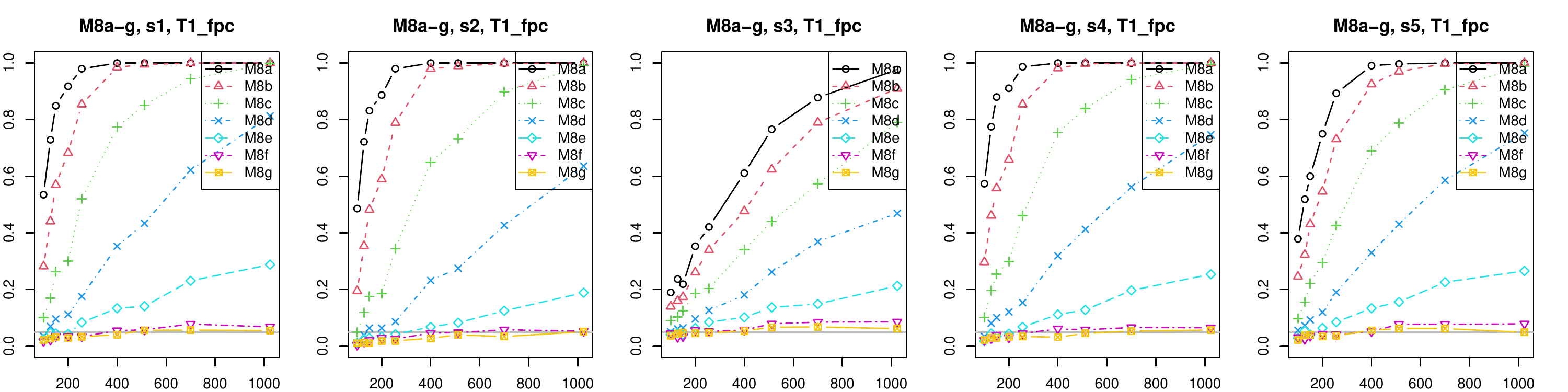}
\includegraphics[width = \linewidth]{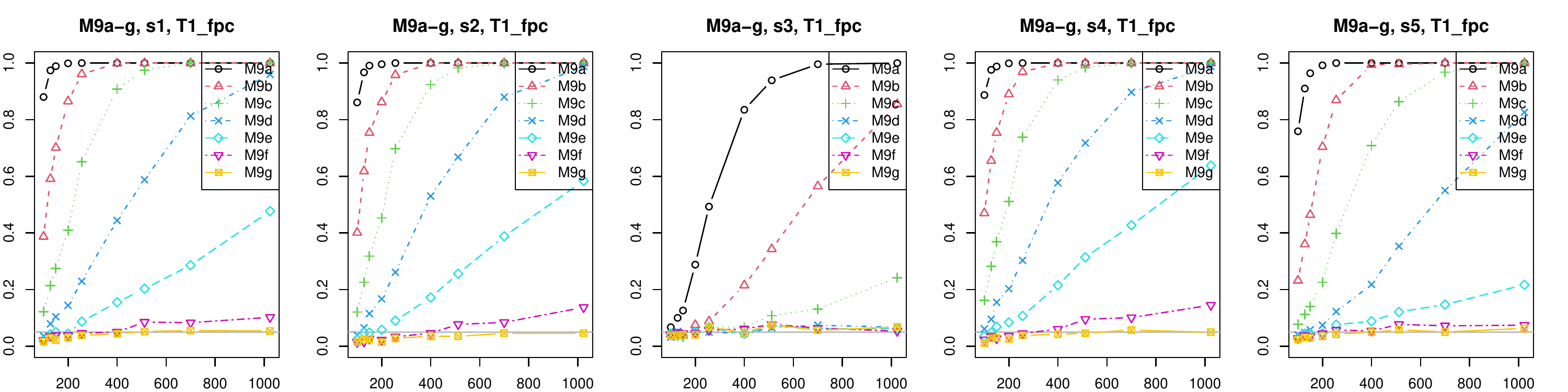}
	\end{center}
	\vspace{-0.4cm}
	\caption{Empirical size (top) and power (bottom) of the tests for time-reversibility based on $T_{\rm TR1\_fpc}^{(n,b,t)}$ described in Section   \ref{sec:timerev}.
The upper plots and lower plots correspond to M8$a$--$g$ and M9$a$--$g$, respectively. 
Columns correspond to the weight functions $s_1,\ldots,s_5$ from left to right, respectively.
The horizontal axis of the plots corresponds to the parameters of models ($\lambda_i$ or $\gamma_i^{-1}$) and the vertical axis corresponds to empirical power.}
	\label{fig:Appx:TR_fpc_M8_M9}
\end{figure}

\begin{figure}[H]
	\begin{center}
\includegraphics[width = \linewidth]{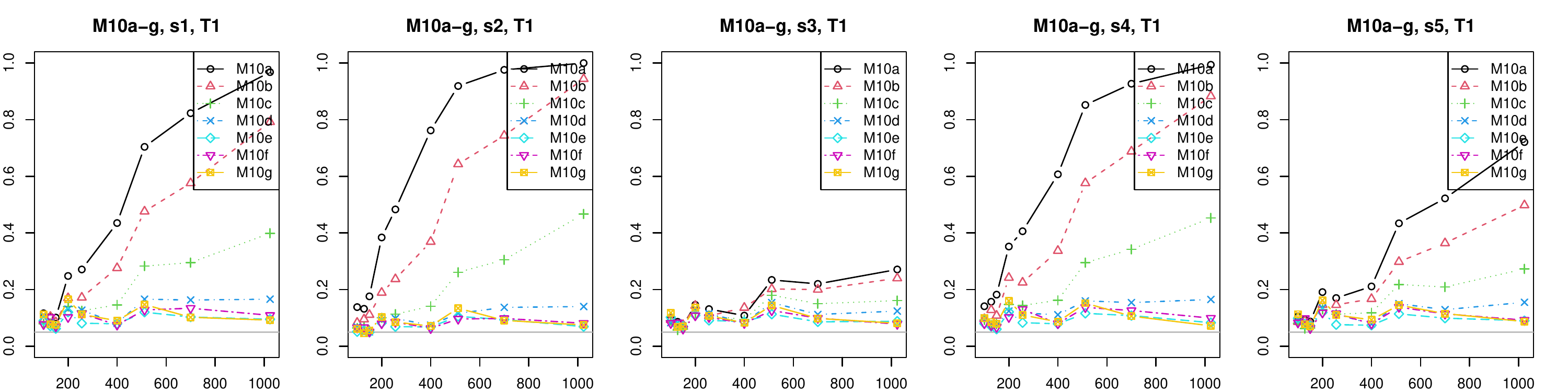}
\includegraphics[width = \linewidth]{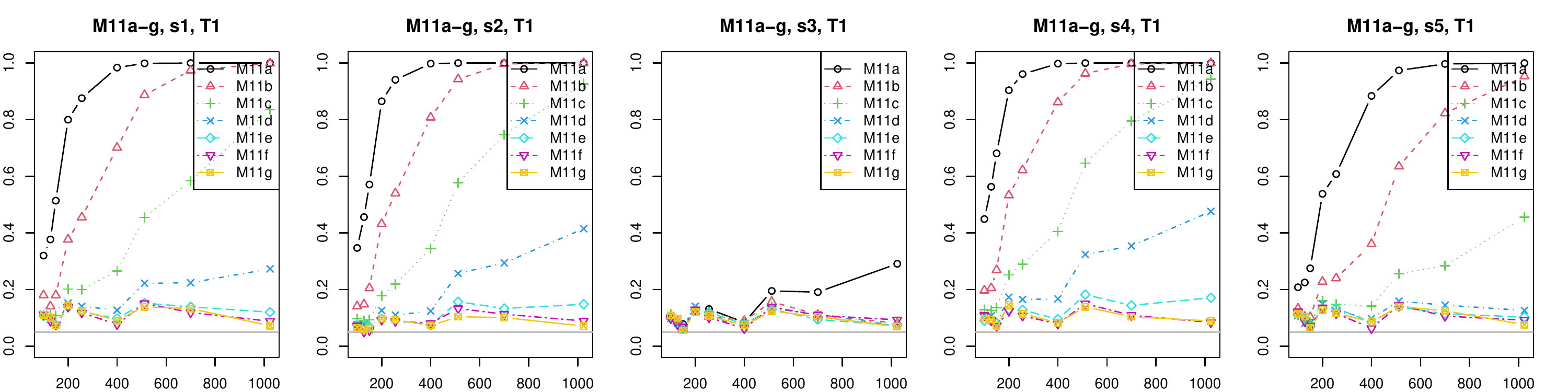}
	\end{center}
	\vspace{-0.4cm}
	\caption{Empirical size (top) and power (bottom) of the tests for time-reversibility based on $T_{\rm TR1}^{(n,b,t)}$ described in Section   \ref{sec:timerev}.
The upper plots and lower plots correspond to M10$a$--$g$ and M11$a$--$g$, respectively. 
Columns correspond to the weight functions $s_1,\ldots,s_5$ from left to right, respectively.
The horizontal axis of the plots corresponds to the parameters of models ($\lambda_i$ or $\gamma_i^{-1}$) and the vertical axis corresponds to empirical power.}
	\label{fig:Appx:TR_nofpc_M10_M11}
\end{figure}

\begin{figure}[H]
	\begin{center}
\includegraphics[width = \linewidth]{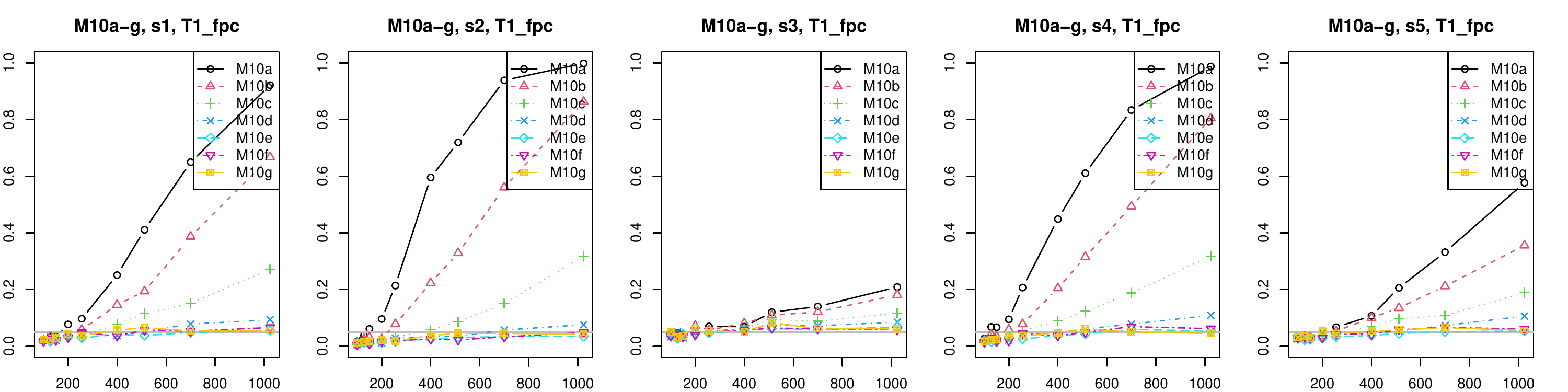}
\includegraphics[width = \linewidth]{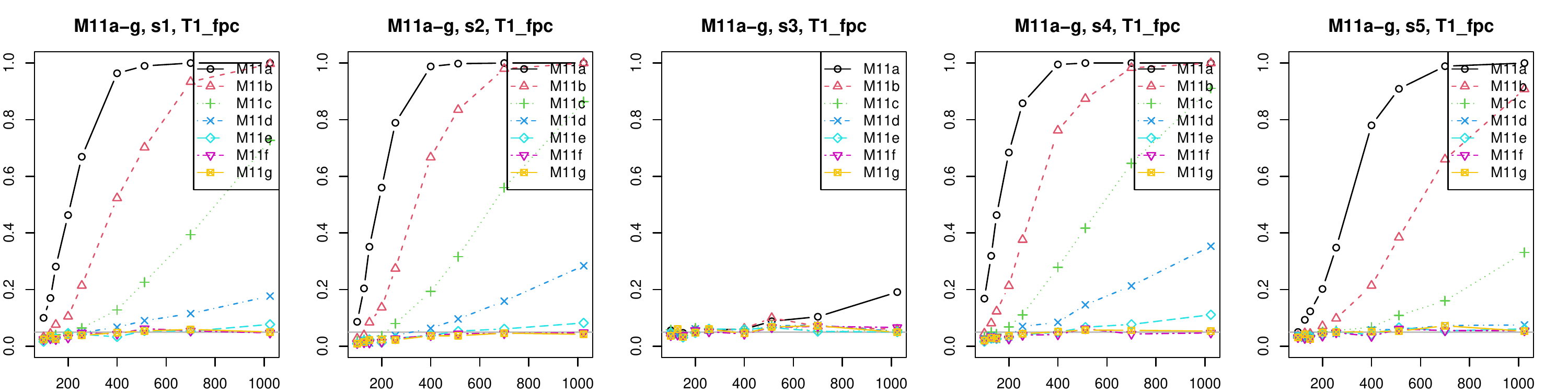}
	\end{center}
	\vspace{-0.4cm}
	\caption{Empirical size (top) and power (bottom) of the tests for time-reversibility based on $T_{\rm TR1\_fpc}^{(n,b,t)}$ described in Section   \ref{sec:timerev}.
The upper plots and lower plots correspond to M10$a$--$g$ and M11$a$--$g$, respectively. 
Columns correspond to the weight functions $s_1,\ldots,s_5$ from left to right, respectively.
The horizontal axis of the plots corresponds to the parameters of models ($\lambda_i$ or $\gamma_i^{-1}$) and the vertical axis corresponds to empirical power.}
	\label{fig:Appx:TR_fpc_M10_M11}
\end{figure}

\begin{figure}[ht]
	\begin{center}
\includegraphics[width = \linewidth]{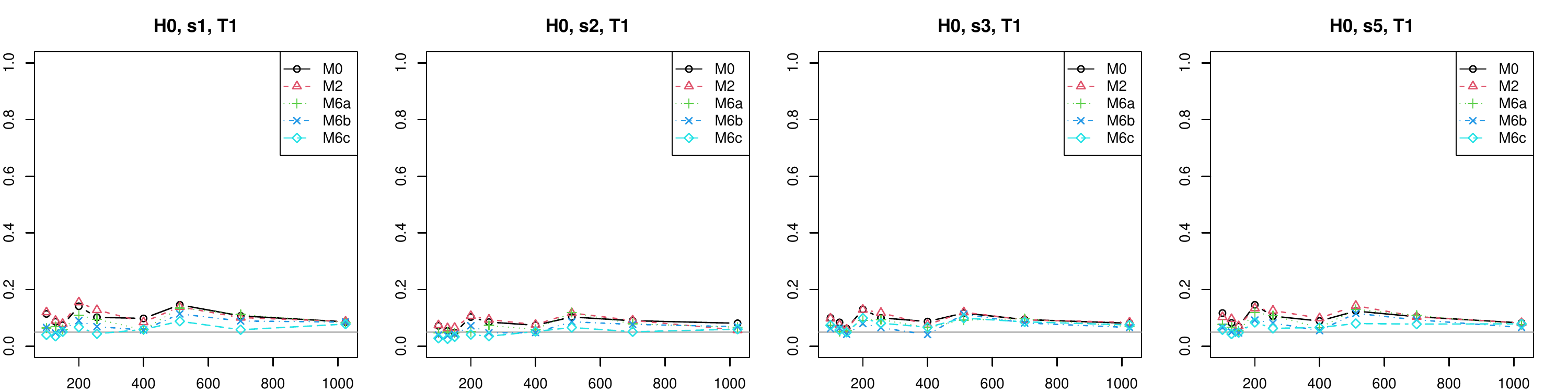}
\includegraphics[width = \linewidth]{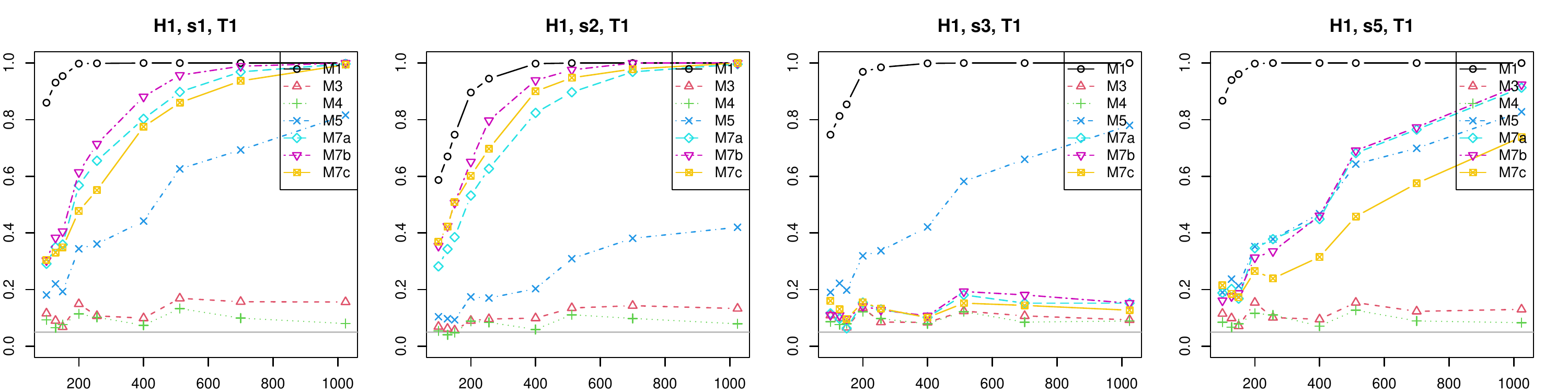}
	\end{center}
	\caption{\small Empirical sizes (top, time-reversible models M0, M2, and M6a-c) and powers (bottom, time-irreversible models M1, M3, M4, M5, and M7a-c) as functions of $n$, of the tests for time-reversibility based on $T_{\rm TR1\_fpc}^{(n,b,t)}$ (without finite-population correction). Columns correspond to weight functions $s_1,s_2,s_3,s_5$, respectively.}
	\label{fig:TR_nofpc_weights}
\end{figure}

\begin{figure}[ht]
	\begin{center}
		\includegraphics[width = \linewidth]{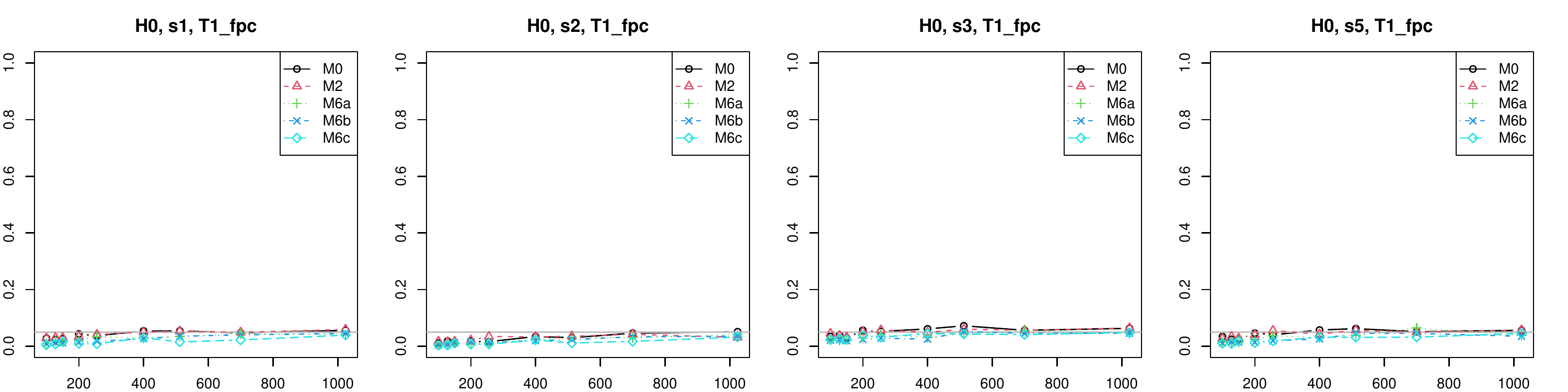}
		\includegraphics[width = \linewidth]{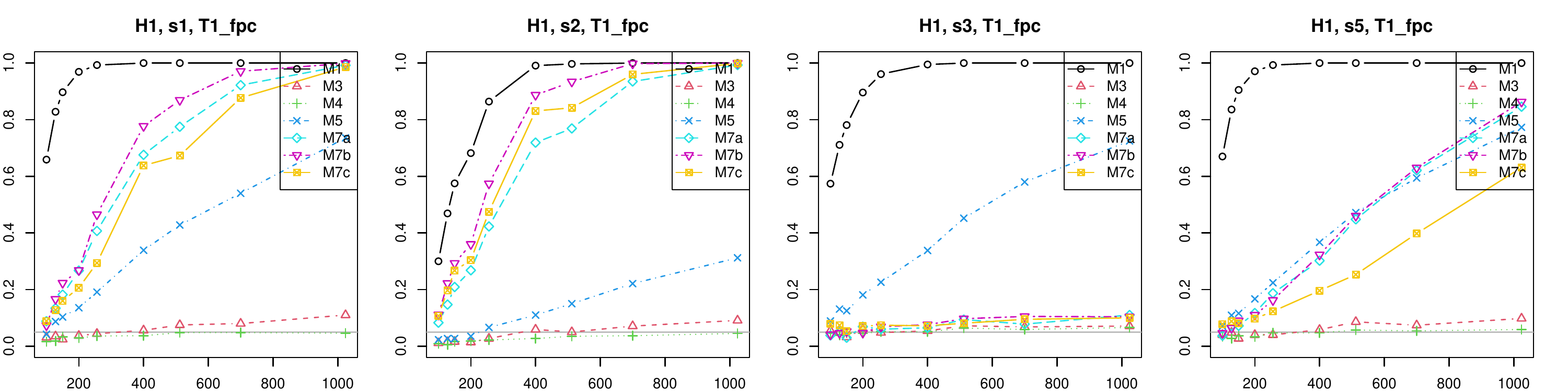}
	\end{center}
	\caption{\small Empirical sizes (top, time-reversible models M0, M2, and M6a-c) and powers (bottom, time-irreversible models M1, M3, M4, M5, and M7a-c) as functions of $n$, of the tests for time-reversibility based on $T_{\rm TR1\_fpc}^{(n,b,t)}$ (with  finite-population correction). Columns correspond to weight functions $s_1,s_2,s_3,s_5$, respectively.}
	\label{fig:TR_fpc_weights}
\end{figure}

\clearpage

\subsection{Additional simulation results for the test for asymmetry in tail dynamics}

Here we provide additional simulation results with the same settings as in Figures \ref{fig:EQ_nofpc} -- \ref{fig:EQ_fpc} in the main manuscript but with weight functions $s_1,s_2,s_3,s_5$.

\begin{figure}[h!]
\begin{center}
\includegraphics[width = \linewidth]{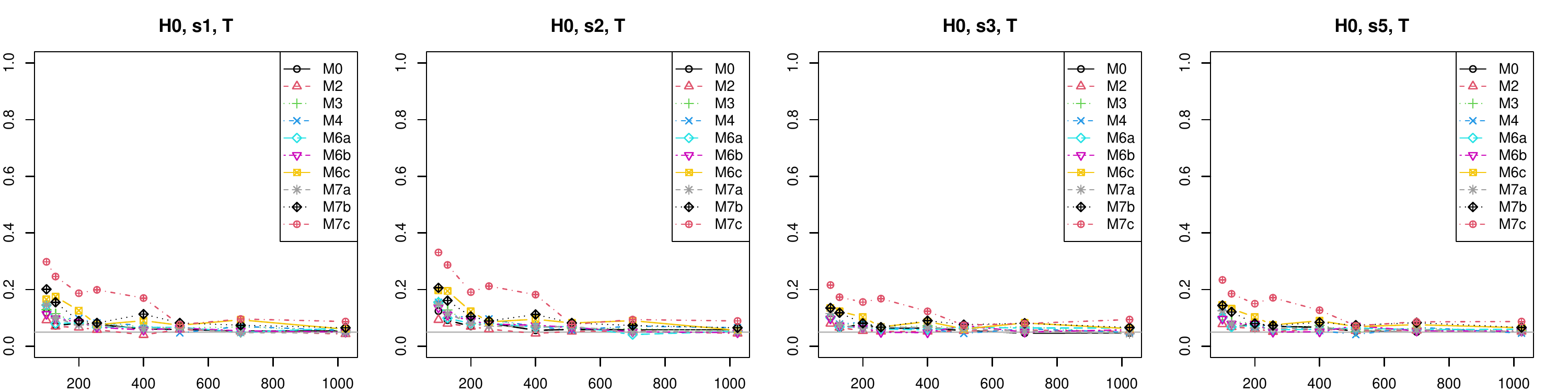}
\includegraphics[width = \linewidth]{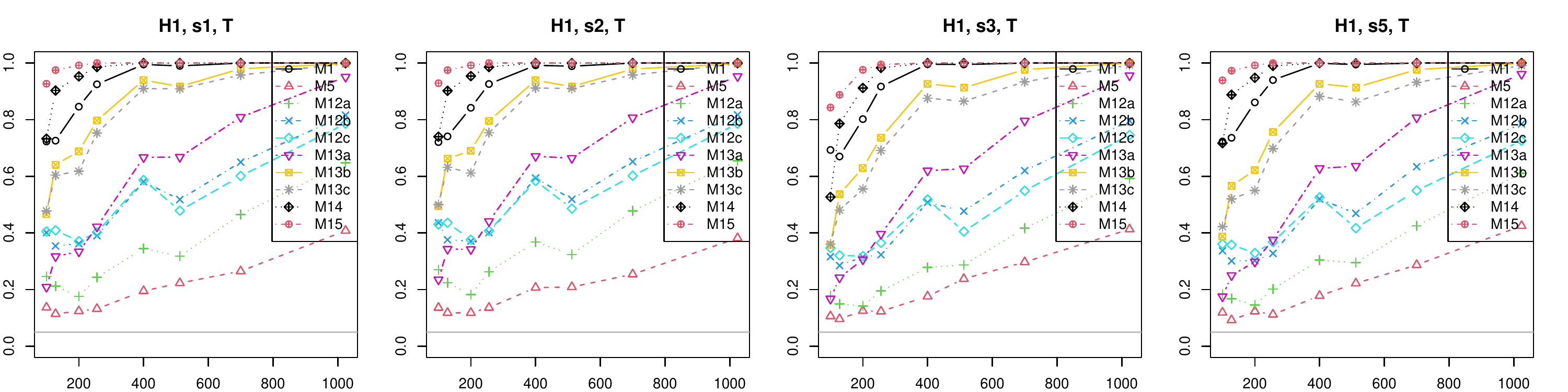}
\end{center}
\vspace{-0.4cm}
\caption{\small 
Empirical sizes (top) and powers (bottom), as functions of $n$, of the tests for tail symmetry based on $T_{\rm EQ}^{(n,b,t)}$ under various models. Columns correspond to weight functions $s_1,s_2,s_3,s_5$, respectively.
\vspace{-4mm}}
\label{fig:EQ_nofpc_weights}
\end{figure}

\begin{figure}[h!]
\begin{center}
\includegraphics[width = \linewidth]{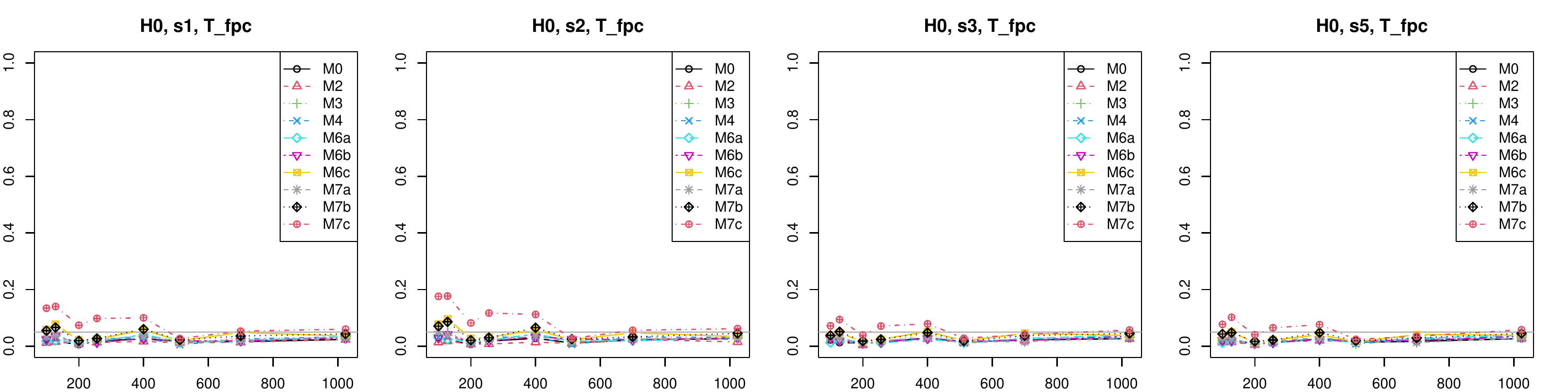}
\includegraphics[width = \linewidth]{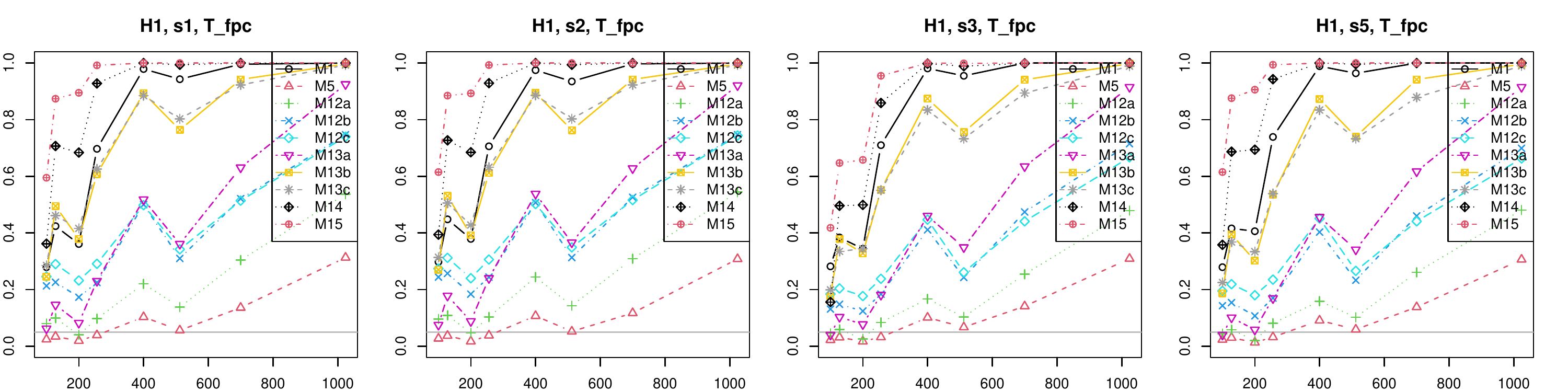}
\end{center}
\vspace{-0.4cm}
\caption{\small 
Empirical sizes (top) and powers (bottom), as functions of $n$, of the tests for tail symmetry based on $T_{\rm EQ\_fpc}^{(n,b,t)}$ (with finite--population correction) under various models. Columns correspond to weight functions $s_1,s_2,s_3,s_5$, respectively.
\vspace{-4mm}}
	\label{fig:EQ_fpc_weights}
\end{figure}

\end{document}